\newcommand{\TN}{\mathbb{T}^2_N}
\newcommand{\TT}{\mathbb{T}^2}
\newcommand{\SSS}{\mathbb{S}}
\newcommand{\LL}{\mathcal{L}}
\newcommand{\LLWA}{\mathcal{L}^{\text{WA}}}
\newcommand{\LLR}{\mathcal{L}^{\text{R}}}
\newcommand{\LLD}{\mathcal{L}^{\text{D}}}
\newcommand{\LLS}{\mathcal{L}}
\newcommand{\del}{\bm{\delta}}
\newcommand{\dd}{\mathrm{d}}
\newcommand{\etah}{\hat{\eta}}
\newcommand{\etat}{\widetilde{\eta}}
\newcommand{\zerot}{\widetilde{0}}
\newcommand{\etapo}{\eta^{p,\omega}}
\newcommand{\etaao}{\eta^{a,\omega}}
\newcommand{\etapob}{\eta^{p, \hat{\omega}^p}}
\newcommand{\etaaob}{\eta^{a, \hat{\omega}^a}}
\newcommand{\jop}{j^{p,\omega}}
\newcommand{\joa}{j^{a,\omega}}
\newcommand{\jpoh}{j^{p, \hat{\omega}^p}}
\newcommand{\jaoh}{j^{a, \hat{\omega}^a}}
\newcommand{\ro}{r^{\omega}}
\newcommand{\SK}[1][1]{\Sigma_{{#1}}^{\hat{K}}}
\newcommand{\Eu}[1][1]{
    \EuScript{{#1}}
}
\newcommand{\mf}[1][1]{
    \mathfrak{{#1}}
}
\newcommand{\E}{\mathbb{E}}
\newcommand{\PP}{\mathbb{P}}
\newcommand{\PN}{\mathbb{P}_{\mu^N}}
\newcommand{\EN}{\mathbb{E}_{\mu^N}}
\newcommand{\iip}[1][1]{\ll\!\!  {#1}  \!\!\gg}
\newcommand{\oh}{\hat{\omega}}
\newcommand{\ohw}{\widehat{\omega}}
\newcommand{\oha}{\hat{\omega}^a}
\newcommand{\ohp}{\hat{\omega}^p}
\newcommand{\ob}{\overline{\omega}}
\newcommand{\oba}{\overline{\omega}^a}
\newcommand{\obp}{\overline{\omega}^p}
\newcommand{\ah}{\hat{\alpha}}
\newcommand{\ba}{\bm{\alpha}}
\newcommand{\bah}{\bm{\hat{\alpha}}}
\newcommand{\Kh}{\widehat{K}}
\newcommand{\Kt}{\widetilde{\mathbb{K}}}
\newcommand{\To}{\mathcal{T}_0^\omega}
\DeclareMathOperator{\var}{Var}
\DeclareMathOperator{\Span}{Span}
\DeclareMathOperator{\grad}{grad}
\DeclareMathOperator{\supp}{supp}
\DeclareMathOperator{\Ker}{Ker}
\newtheorem{theorem}{Theorem}[section]
\newtheorem{lemma}[theorem]{Lemma}
\newtheorem{proposition}[theorem]{Proposition}
\newtheorem{corollary}[theorem]{Corollary}
\newtheorem{definition}[theorem]{Definition}
\newtheorem{remark}[theorem]{Remark}
\title{Hydrodynamic limit for an active-passive exclusion process}
\author{Deyue Li\thanks{Shanghai Center for Mathematical Sciences; Fudan University, Shanghai, China 200433 ({lidy21@m.fudan.edu.cn})} 
}
\begin{document}
\allowdisplaybreaks
\maketitle

\begin{abstract}
    The collective non-equilibrium dynamics of multi-component
 mixtures of interacting active (self-propelled) and passive (diffusive) particles 
 have garnered  great interest in physics community.  
 However, the mathematical understanding of these systems remains partial. 
 In this work,  we consider a lattice gas model of active-passive particle mixtures with
 exclusion, where the self-propulsion orientations of active particles 
 undergo Brownian motion on a torus. We derive the hydrodynamic equations
 governing the particle densities. 
 Due to the presence of two types of particles with continuous-valued orientations, 
 further generalizations of non-gradient decomposition and spectral gap estimation developed for 
 pure active case are necessitated,  
 which entails novel challenges and new proofs.

 \noindent
 {\bf Key words}:
 Hydrodynamic limit,  Active matter, Active-passive mixtures, Non-gradient systems, 
 Lattice gases,  Exclusion processes, Statistical physics.

\noindent
{\bf AMS subject classification}:  60K35,  82C22.  
\end{abstract}

\section{Introduction}
Active matter systems, composed of self-propelled agents that extract energy from 
their environment, are inherently out of equilibrium. 
These systems exhibit a rich diversity of collective behaviors arising 
from agent interactions, such as  clustering of 
chemically interacting particles~\cite{agudo2019active}, 
flocking of birds~\cite{VICSEK201271}, 
self-organization of quorum sensing bacteria~\cite{ridgway2023motility}, 
and self-assembly of active materials from
catalytic colloids~\cite{doi:10.1126/science.1230020}. 
Despite their diversity, these systems display 
spontaneous condensate even without attractive or aligning 
interactions~\cite{annurev-conmatphys-031214-014710}.  
This phenomenon, known as 
\textit{Motility Induced Phase Separation} (MIPS) in the physics community, 
results from 
interplay between particle crowding and velocity reduction: 
as particles aggregate, their motion slows, further promoting aggregation. 
Recently, there has been great practical interest in 
the phase-separation behavior of active systems where the micro-swimmers are mixed 
with passive particles. 
Examples include active-passive segregation between rodlike  particles~\cite{mccandlish2012spontaneous}, 
crystallization of glassy systems comprising  mixtures of active and passive hard spheres~\cite{ni2014crystallizing}, 
and  collective dynamics in phase-separated mixtures of active and passive Brownian particles~\cite{wysocki2016propagating}. 

Active matter systems also have drawn great attention from the mathematical community. 
Many mathematical progress has been achieved under  
mean-field assumption~\cite{frouvelle2012dynamics,frouvelle2012continuum,bruna2022phase}. 
While the mean-field assumption simplifies several complexities, 
and allows  explicit 
derivation of macroscopic equations,   
these derivations depend on the BBGKY hierarchy and molecular chaos assumption,  
which generally fail to fully capture 
the microscopic mechanism underlying the emergence of MIPS~\cite{bruna2022phase}. 
A more rigorous and systematic approach is utilizing 
hydrodynamic limit theory to derive hydrodynamic equations from microscopic dynamics. 
One of the most fundamental and widely studied class of models 
amenable to hydrodynamic limit theory is lattice gas models~\cite{kipnis1999scaling}, 
where particles hop randomly on a discrete
lattice according to some local rules. 
By taking the lattice spacing to zero and applying appropriate temporal and spatial rescaling, 
one can  rigorously derive   
continuum equations that describe the large-scale behaviour of active systems 
under relatively general assumptions. 

The application of hydrodynamic limit theory to various classes 
of active lattice gas (ALG) models has achieved remarkable success~\cite{Erignoux21,mason2023exact,bruna2022phase}, 
providing valuable insights into the physics of active matter~\cite{kourbane2018exact}. 
However, while most of the existing studies  focus on the hydrodynamic limit of 
pure active lattice models, 
mathematical progress for active-passive mixtures remains  partial. 
To the best of our knowledge, the first result on hydrodynamic limit of 
active-passive lattice gas (APLG) model, 
restricted to only two self-propelled directions, 
was studied in~\cite{mason2024dynamical}. 

In this work, we remove such restriction on directions and 
consider a more general APLG model  
where active and passive particles interact via exclusion, 
and the continuous-valued  orientations of active particles undergo Brownian motion on a torus. 
We extend the proof of the hydrodynamic limit for pure active exclusion process on lattice~\cite{Erignoux21,mason2023exact}  
to the active-passive mixed exclusion process.  
This extension introduces additional degrees of freedom
into the set of measure-valued parameters characterizing local equilibrium 
and necessitates a broader working function class for the non-gradient method. 
Consequently,  significant adaptations to the proof of the hydrodynamic 
limit in pure active case are required to address these difficulties.  

\subsection{Previous works  and our contributions}
Starting from the seminal work of Vicsek~\cite{vicsek1995novel}, 
active matter has become a focal point of interest for both the physics and 
mathematics communities.    
Active lattice gas (ALG) models were first introduced through the 
active Ising model in~\cite{solon2013revisiting}, 
where active self-propulsion was modeled by biased diffusion 
and there are no exclusion interactions between particles. 
Hard-core interactions, one of the key microscopic mechanisms 
resulting the emergence of MIPS, were first incorporated into the 
ALG framework by Erignoux in his PhD thesis~\cite{erignoux2016limite} 
and later refined in~\cite{Erignoux21}, 
where the  exclusion rules were used to model the hard-core interactions.  
By incorporating the exclusion rule into active Ising model, valuable insights 
into the physics of active matter were successfully provided in~\cite{kourbane2018exact}.

The incorporation of the exclusion rule turns the system from gradient type to non-gradient type,   
significantly increasing the mathematical complexity. 
The first proof of a non-gradient hydrodynamic limit was established by Varadhan in~\cite{key1354152m}, 
and Quastel~\cite{quastel1992diffusion}, where they extended the entropy method proposed in~\cite{guo1988nonlinear} to non-gradient systems.   
In his seminal work~\cite{quastel1992diffusion}, 
Quastel considered two-type mixed exclusion process 
and constructed  a suitable working function class on which the suitable spectral gap shrink rate can be reached. 
This result was later  partially extended to the weakly asymmetric  exclusion process  
to derive 
Navier-Stokes equations via lattice gas~\cite{esposito1994diffusive}. 
The weak asymmetry property was later broadly used to model self-propulsion in active matter systems.   
A further mathematical breakthrough was achieved in~\cite{Erignoux21}, 
where the author extended the multi-typed symmetric exclusion process studied by Quastel~\cite{quastel1992diffusion} 
to an active (weakly asymmetric) exclusion process with continuous angular orientations.   
If each orientation of a particle is treated as a 
particle type in the sense of~\cite{quastel1992diffusion}, 
the model in~\cite{Erignoux21} involves uncountably many types due to the continuous-valued orientations.  
Consequently, significant adaptations were made in~\cite{Erignoux21}, 
including the introduction of measure-valued parameters for local equilibrium and 
the construction of a new working function class for non-gradient method.  
Based on this mathematical breakthrough, 
the authors of~\cite{mason2023exact}  studied an active exclusion process where self-propulsion orientations undergo diffusive dynamics, 
instead of the Glauber dynamics considered in~\cite{Erignoux21}. 
However, all of the above mentioned works focus on  purely active system.  
Regarding active-passive lattice gas (APLG) models,   
the only work to date, to the best of our knowledge, is~\cite{mason2024dynamical}, 
where the authors considered  an APLG model restricted to only two orientations (left and right).  
This restriction limits the model's applicability to many realistic physical systems, 
making its generalization both physically and mathematically meaningful. 

In this work, we consider a more general APLG model  
where active and passive particles interact via exclusion, 
and the continuous valued  orientations of active particles undergo Brownian motion on a torus. 
Due to the presence of mixed active and passive particles, 
which introduces additional degrees of freedom, the  measure-valued parameters and 
the working function class for non-gradient method introduced in~\cite{Erignoux21} 
are no longer sufficient to establish the hydrodynamic limit in our case.  
Drawing  insights from~\cite{quastel1992diffusion,mason2024dynamical,mason2023exact}, 
we further generalize the results of Erignoux~\cite{Erignoux21} by introducing  
new parameters of local equilibrium and new function class for the non-gradient method.  
This generalization  requires novel proofs for non-gradient decomposition and spectral gap estimation. 

\subsection{Structure of the article}
In Section~\ref{sec:notations}, we introduce our model and present the main result 
concerning the hydrodynamic limit. Section~\ref{sec:proof} is dedicated to the proof 
of this main result. In Section~\ref{sec:replaceV}, we establish the non-gradient 
decomposition and the corresponding replacement lemmas, which are essential for the 
proof of the main result. Finally, in Section~\ref{sec:5}, we prove the spectral gap 
estimate and the closed-form decomposition, both of which are crucial for the non-gradient 
method.

\section{Notations and results}\label{sec:notations}

Throughout this article, let 
$\mathbb{T}^2=[0,1)^2$ denote the 2-dimensional torus, which models 
the macroscopic spatial domain, and let
$\SSS=[0, 2\pi)$ denote the 1-dimensional torus, which models  the angular domain. 
For  $N\in \mathbb{N}$, let  $\TN = \{ 0, 1, \ldots, N-1 \}^2$ denote the 2-dimensional lattice, 
equipped with Manhattan norm $|\cdot|$, 
representing the microscopic spatial domain.  
The lattice $\TN$ is naturally embedded into  $\mathbb{T}^2$ 
by corresponding each $x \in \TN$ to $x/N\in \mathbb{T}^2$.
Specially,  $\TT_{\infty}=\mathbb{Z}^2$ represents an infinite lattice. 

Let denote $C^{1,2,2}([0,T]\times \mathbb{T}^2\times \SSS)$  
the set of continuous functions that are 
$C^1$ in time and $C^2$ in both spacial and angular domain. 
For a measurable space $E$, 
let $\mathcal{M}(E)$ denote 
the set of non-negative measures, and 
$\mathcal{P}(E)$ denote the set of 
probability measures, both equipped with the weak topology. 
We also let $\mathcal{M}_1(E)$ denote 
the set of non-negative measures with total mass less than or equal to $1$. 
The space $L^p(E,\mu)$ consists of $p$-th order integrable functions on $E$ 
with respect to the measure $\mu$. 
For $f\in L^1(E,\mu)$,  
we use $<\!\!f,\,\mu\!\!>$ or $<\!\!f\!\!>_\mu$ to denote the integral $\int_{E}f\dd \mu$.  
Similarly, for $f,g \in L^2(E,\mu)$, the $L^2$ inner product is denoted by $<\!\!f,\,g\!\!>_{\mu}=\int_{E}fg\dd \mu$. 
For a polish space $E$, let $D([0,T],E)$ represent the space of 
right-continuous, left-limit (RCLL) $E$-valued processes  
equipped with the Skorohod metric.  

The translation operator $\tau_x$, which shifts a function by $x$, is defined by  
$\tau_x f(y)=f(y+x)$.  
The canonical basis of $\mathbb{R}^2$ is denoted by $\{e_1,e_2\}$. 
For any site $x$ on the lattice $\TN$, 
and any integer $l$, we denote by $B_l(x)=\{y\in \TN:|y-x|\leq l\}$  the closed box of side length $2l + 1$ around $x$. 
In particular, let $B_l=B_l(0)$, the box centered at the origin. 

\subsection{Particle configurations}
The model involves two types of particles: active and passive. 
The \textit{Exclusion rule} ensures that at most one particle can occupy each site, 
regardless of its type. 
The \textit{occupation configuration} at site $x\in \TN$ is denoted by 
$\etat_x =(\eta_x^a,\eta^p_x)$ where $\eta^\sigma_x \in \{0,1\}$ for $\sigma\in \{ \text{a}, \text{p} \}$,
indicates the number of particles of type $\sigma$ at site $x$.   
Let $\eta_x = \eta_x^a+\eta_x^p$ denote the total number of particles at site $x$ regardless of type. 
To simplify notation, we write: $1^a\coloneqq (1,0)$, $1^p\coloneqq (0,1)$, and $\zerot\coloneqq (0,0)$, 
which represent the three possible occupation configuration of a site. 
For any occupied site $x\in \TN$, i.e. $\eta_x=1$, 
we associate an angle $\theta_x \in \SSS$ to indicate the direction of the particle 
 occupying that site. If the site $x$ is unoccupied, i.e. $\eta_x=0$, 
the angle is set to $\theta_x=0$ by default. 
The \textit{complete configuration} (or simply \textit{configuration}) is denoted by 
$\etah=(\etah_x)_{x\in \TN}$, where the pair $\etah_x=(\etat_x,\theta_x)$ can 
fully determines the scenario at site $x$. 
The \textit{configurations space} is given by 
\begin{equation}\label{eq:config_space}
    \Sigma_N = \Bigg\{ \etah=(\etat_x, \theta_x)_{x\in \TN}\in 
    \big( \{1^a,1^p,\zerot  \} \times \SSS \big)^{\TN}
    \Big| \theta_x=0 \text{ if } \etat_x=\zerot  
    \Bigg\}
\end{equation}
which will serve as the state space of our stochastic process. 
In particular, $\Sigma_\infty$ denotes 
the set of infinite configurations, where $\TN$ in~\eqref{eq:config_space} 
is replaced by $\mathbb{Z}^2$. 

We will  refer to  any function $f:\Sigma_\infty \to \mathbb{R}$ that 
depends on the configuration 
through only a finite set of vertices as \textit{local function}. 
For such local function $f$, by slightly abusing notation, we denote this finite set of vertices by $\supp(f)\subset \mathbb{Z}^2$.  
Note that  any local function  admits a natural image as a function on $\Sigma_N$, for  any sufficiently large $N$.  
A local function $f$ that is also $C^2$ with respect to each $\theta_x$  for every $x\in \supp(f)$ is called a \textit{cylinder function}.  
The set of all cylinder functions will be denoted by $\mathcal{C}$.

\subsection{Active-passive exclusion process}
\label{subsec:def_of_MP}

The dynamics of active-passive exclusion process consists of two parts: 
random displacement and rotational diffusion. 
The random displacement part governs the random walk of particles 
and may differ between active and passive particles, 
while the rotational diffusion part governs the angle update process 
and applies equally to both particle types. 

We begin by describing the random displacement dynamics. 
All particles perform 
a continuous time nearest-neighbor random walk with exclusion simultaneously.  
A particle at site $x$ waits an independent, exponentially distributed random time 
before selecting a nearest-neighbor site $x + z$ with $|z|=1$ according to a specified jump law 
that may depend on the particle type. 
The jump is executed 
if the destination site is unoccupied;  
otherwise, the jump is canceled, and  the particle
waits for another new exponential random time before attempting a new move. 

The jump laws and the parameter of the exponential distribution
are chosen  such that  the active and passive jump rates from site  $x$ to a nearest-neighbor 
site $x+z$ with $|z|=1$ are given by 
\begin{equation}\label{eq:jump_rate}
    R^a(x,x+z)=D_T +\frac{v_0}{2N}z^\top \mathbf{e}(\theta_x), \quad \text{ and } \quad R^p(x,x+z)=D_T,
\end{equation}
respectively. 
In~\eqref{eq:jump_rate}, $N$ is sufficiently large to ensure that all quantities remain non-negative.   
The vector $\mathbf{e}(\theta)=[\cos(\theta), \sin(\theta)]^\top$ represents the self-proposed direction,  
 $D_T>0$ is the \textit{translational diffusion constant}, 
and $v_0 >0$ is the \textit{self-propulsion speed}. 
Note the mean velocity of an active particle at site $x$ is 
\[ 
\sum_{|z|=1}zR^a(x,x+z)= \frac{v_0}{N} \mathbf{e}(\theta_x).
\]
The generator $\LLD_N$ of the random displacement dynamics is given by 
\[ 
\LLD_N = D_T \LL_N + \frac{1}{N}\LLWA_N, 
\]
where $\LL_N$ is the generator of 
the nearest-neighbor simple symmetric exclusion process (SSEP) defined in~\eqref{eq:LLS_N}, 
and $\LLWA_N$ is the generator contribute to the weakly asymmetric motion defined in~\eqref{eq:LLWA_N}. 

For a cylinder function $f\in \mathcal{C}$ on  $\Sigma_N$, the SSEP generator $\LL_N$ acts as  
\begin{equation}
    \label{eq:LLS_N}
    \LL_N f(\etah) = \sum_{x\in \mathbb{T}^2_N} \sum_{ |z|=1 } \nabla_{x,x+z} f(\etah), 
\end{equation}
where 
\[ 
    \nabla_{x,x+z} f(\etah) = \eta_x(1-\eta_{x+z})\widetilde{\nabla}_{x,x+z} f(\etah)\quad \text{and} \quad  \widetilde{\nabla}_{x,x+z} f(\etah)=f(\etah^{x,x+z})-f(\etah)
\]
are referred to as \textit{directed gradient} and \textit{undirected gradient} of $f$ respectively. 
Here $\etah^{x,x+z}$ denotes the configuration obtained by swapping $\etah_x$ and  $\etah_{x+z}$:     
\[ 
    \etah^{x,x+z}_y = \begin{cases}
        \etah_{x+z} & \text{if } y=x, \\ 
        \etah_{x} & \text{if } y=x+z, \\ 
        \etah_y & \text{otherwise. }
    \end{cases}
\]
The generator $\LLWA_N$ act on  $f\in \mathcal{C}$ as  
\begin{equation}\label{eq:LLWA_N}
    \LLWA_N f(\etah) = \sum_{x\in \mathbb{T}^2_N} \sum_{ |z|=1 } \frac{v_0}{2}z^\top \mathbf{e}(\theta_x)  \eta^a_x \nabla_{x,x+z} f(\etah).
\end{equation}
\begin{remark}$\quad$
    \normalfont
    \begin{enumerate}
        \item The term $\eta^a_x$  in the generator $\LLWA_N$ 
        indicates  that the symmetric jump rate  $D_T$ at $x$
        is biased only if the site is occupied by an active particle.  
        Passive particles, on the other hand, only  execute the SSEP with jump rate $D_T$. 
        \item 
        Although in our setting, the angles of passive particles do not affect their random displacement,   
        each passive particle is still assigned an angle.  
        This ensures  our non-gradient method may cover a broader class of models, 
        such as multi-type particle systems with continuous orientations,  
        where each type follows its own distinct angle-dependent displacement. 
        \item While $\LLWA_N$ alone is not a markov generator, the combined generator 
        $\LLD_N$ is, provided $N$ is sufficiently  large. 
    \end{enumerate}
\end{remark}

Next, the rotational diffusion part is described as follows. 
Each particles's orientations undergoes independent Brownian motion on $\SSS$ with a 
\textit{rotational diffusion constant} $D_R$. 
The generator $\LLR_N$ of the rotational diffusion acts on  $f\in \mathcal{C}$  as follows: 
\[
\LLR_N f(\etah) = D_R \sum_{x\in \mathbb{T}^2_N} \eta_x \partial_{\theta_x}^2 f(\etah).
\]

Now, the complete generator of the active-passive lattice gas (APLG) model is the superposition of 
the accelerated  displacement dynamics 
and the rotational diffusion: 
\begin{equation}\label{eq:generator}
    L_N = N^2\LLD_N + \LLR_N =N^2 D_T \LLS_N + N \LLWA_N + \LLR_N.
\end{equation}
Here we accelerate the displacement dynamic on a diffusive time scale, 
so that in the hydrodynamic limit, the contributions from $\LLS_N$, $\LLWA_N$, and $\LLR_N$
are balanced, capturing the relevant physical features. 

In order to introduce the initial distribution of our process, 
we define the \textit{density profile} 
as a pair of maps 
$\bm{\hat{f}}=(\hat{f}^a,\hat{f}^p):\mathbb{T}^2 \to \mathbb{M}_1(\SSS)$, where 
\[ 
    \mathbb{M}_1(\SSS)=\Big\{ \bah=(\hat{\alpha}^a,\hat{\alpha}^p)\in \mathcal{M}_1(\SSS)^2 
    \Big| \int_{\SSS}  \hat{\alpha}^a(\dd \theta)+\int_{\SSS} \hat{\alpha}^p(\dd \theta)\leq 1
    \Big\},
\] 
is referred to as the set of \textit{grand canonical parameter}s. 
If, for each $u$, the images measure $\hat{f}^\sigma(u,\dd \theta)$ 
has a density $f^\sigma(u,\theta)\dd \theta$ with $\sigma\in \{ \text{a}, \text{p} \}$, 
we refer to the pair of density functions 
$\bm{f}=(f^a,f^p):\mathbb{T}^2\times \SSS \to \mathbb{R}_+^2$ as the \textit{density profile functions}. 

The initial configuration is chosen from $\mu_{\bm{\zeta}}^N$,   
the \textit{product measure associated with the density profile functions} 
$\bm{\zeta}=(\zeta^a,\zeta^p)$ on $\TN$, which is defined as follows. 
For macroscopic point $u\in \mathbb{T}^2$,  
the integral   
\begin{equation}
    \label{eq:rho_0}
    \rho_0^\sigma(u)=\int_{\SSS}\zeta^\sigma(u,\theta)\dd\theta
\end{equation}
represent the local particle density of type $\sigma\in \{ \text{a}, \text{p} \}$ at $u$.  
Under $\mu_{\bm{\zeta}}^N$, each site $x\in \TN$ is initially occupied by a type $\sigma\in \{ \text{a}, \text{p} \}$ particle 
with probability $\rho^\sigma(x/N)$, whose angle $\theta_x$ is then sampled 
from the probability distribution 
\begin{equation}\label{eq:zeta_theta}
    \frac{\zeta^\sigma(x/N,\theta)}{\rho_0^\sigma(x/N)}\dd\theta.
\end{equation}
Starting from $\mu^N=\mu_{\bm{\zeta}}^N$, we define $\mathbb{P}_{\mu^N}$ the path distribution of the Markov process 
$\etah^{[0,T]}\in D([0,T],\Sigma_N)$ driven by the generator $L_N$ defined in~\eqref{eq:generator}. 
The corresponding expectation is denoted by $\EN$. 

Although the invariant measures of our process are not known, the symmetric displacement 
happens at the fastest rate $N^2$. 
Consequently, one can expect that  the invariant measures of SSEP generator $\LLS$ 
could  serve as 
 important reference measures. 
 For any grand canonical parameter $\bah\in \mathbb{M}_1(\SSS)$, 
 we denote by $\mu_{\bm{\hat{\alpha}}}$ the
\textit{grand canonical measure with parameter} $\bm{\hat{\alpha}}$, which 
is a product measure associated with a constant density profile 
$\bm{\hat{\alpha}}\equiv (\hat{\alpha}^a,\hat{\alpha}^p)$.  
Specially, 
when the parameter $(\hat{\alpha}^a,\hat{\alpha}^p)$ corresponds to uniform distributions, 
\begin{equation}\label{eq:mustar}
        \hat{\alpha}^\sigma(\dd \theta)=\frac{\alpha^\sigma}{2\pi}\dd \theta,\quad \sigma \in \{ \text{a}, \text{p} \}, 
\end{equation}
where $\bm{\alpha}\coloneqq (\alpha^a,\alpha^p)\in [0,1]^2$ such that $\alpha\coloneqq \alpha^a+\alpha^p\leq 1$, 
this special class of  grand canonical measure is denoted by $\mu^*_{\bm{\alpha}}$. 
The expectations associated with $\mu_{\bm{\hat{\alpha}}}$ and $\mu^*_{\bm{\alpha}}$ 
are denoted by $\E_{\bm{\hat{\alpha}}}$ 
and $\E^*_{\bm{\alpha}}$, respectively. 

$\quad$

\noindent \textbf{Conventions. }

For clarity, unless otherwise specified, through this paper,  $\sigma$ 
is used to refer to  both active and passive particle cases in a given context. 
Notations without subscripts are understood to represent the sum of active and passive quantities, 
such as  
$\alpha= \alpha^a+\alpha^p$, $\eta= \eta^a+\eta^p$. 
Boldface notation is used to denote pairs of active and passive quantities, such as  
$\bm{\alpha}=(\alpha^a,\alpha^p)$, $\bm{f}=(f^a,f^p)$. 

\subsection{Hydrodynamic limit}
We now introduce our main result, the hydrodynamic limit of the orientation density for the APLG, whose
derivation is given in Sction~\ref{sec:proof}.

Let $\mathcal{M}(\mathbb{T}^2\times \SSS)$ denote the space of non-negative measures on the macroscopic configuration space 
endowed with the weak topology, and let $\mathcal{M}^{[0,T]}=D([0,T],\mathcal{M}(\mathbb{T}^2\times \SSS))$. 
Each trajectory $\etah^{[0,T]}\in D([0,T],\Sigma_N)$ of the process 
admits a projection $\bm{\pi}^N=(\pi^{a,N},\pi^{p,N})$,  
where $\pi^{\sigma,N}=(\pi^{\sigma,N}_t)\in \mathcal{M}^{[0,T]}$ denotes the trajectory of \textit{empirical measure} 
\[
\pi^{\sigma,N}_t(\hat{\eta}^{[ 0,T ]}) = \frac{1}{N^2}\sum_{x\in \mathbb{T}^2_N} 
\eta^\sigma_x(t)\delta_{x/N,\theta_x(t)}, 
\]
with $\delta_u$ denoting the Dirac measure concentrated at $u$. 
Furthermore, we  define $Q^N\in \mathcal{P}(\mathcal{M}^{[0,T]}\times \mathcal{M}^{[0,T]})$ as 
the distribution of  $\pi^N(\etah^{[0,T]})$.  

Heuristically, in the hydrodynamic limit, we will show that the trajectory $\bm{\pi}^N_t(\dd u,\dd\theta)$ of empirical measures
converges as $N\to \infty$, in probability, to a deterministic trajectory 
\begin{equation}\label{eq:pi_t}
    \bm{\pi}_t(\dd u,\dd\theta)=\big( f^a_t(u,\theta), f^p_t(u,\theta) \big)\dd u\dd\theta
\end{equation}
where $(f^a_t,f^p_t)$ represents the local densities of active and passive particles with angle  $\theta$.  
Formally, they can be interpreted  as
\[ 
f^\sigma_t(u,\theta) \dd\theta \approx 
\frac{1}{(2N\varepsilon+1)^2}\sum_{|x-Nu|\leq N\varepsilon}\eta^\sigma_x(t)\mathbbm{1}_{\{\theta_x(t)\in [\theta,\theta+d\theta]\}}, 
\]
where $\varepsilon$ is the side length of a small macroscopic box. 
We further  define the  \textit{polarization}  as
\[ 
\mathbf{p}^a_t(u)=\int_{\SSS}\mathbf{e}(\theta) f^a_t(u,\theta)\dd\theta,
\]
and similar to~\eqref{eq:rho_0}, the particle density as  
\begin{equation}\label{eq:rho_t}
     \rho_t^\sigma(u)=\int_{\SSS}f^\sigma_t(u,\theta)\dd\theta. 
\end{equation}

Following the notations in~\cite{mason2023exact}, we introduce two functions $\mathcal{D}$ and 
$s$, defined on $[0,1]$ as follows:  
\[ 
\mathcal{D}(\rho)=\frac{1-d_s(\rho)}{\rho},\quad \text{ and } \quad 
s(\rho)=\mathcal{D}(\rho)-1,
\]
where $d_s(\rho)$  is the self-diffusion coefficient for the SSEP in dimension $2$, 
defined in the Appendix~\ref{sec:crossdiffusion}. 
Note $\mathcal{D}(\rho)$ has been continuously extended to $[0,1]$, see Remark~\ref{remark:regularity}. 

The main result of this paper (cf. Theorem~\ref{thm:main})  states that density profile functions  
$(f^a_t,f^p_t)$, defined in~\eqref{eq:pi_t}, are
weak solutions  (cf. Definition~\ref{def:weak}), to the coupled partial differential equations (PDEs):
\begin{equation}\label{eq:pde}
  \begin{cases}
\partial_t f^a_t =D_T \nabla \cdot [d_s(\rho_t)\nabla f^a_t+ f^a_t \mathcal{D}(\rho_t)\nabla \rho_t ]- v_0 \nabla \cdot [f^a_t s(\rho_t) \mathbf{p}^a_t +  f^a_t d_s(\rho_t) \mathbf{e}] + D_R \partial_\theta^2 f^a_t,\\
\partial_t f^p_t =D_T \nabla \cdot [d_s(\rho_t)\nabla f^p_t+ f^p_t \mathcal{D}(\rho_t)\nabla \rho_t ]+ D_R \partial_\theta^2 f^p_t ,
\end{cases}  
\end{equation}
with initial values $(f^a_0,f^p_0)=(\zeta^a,\zeta^p)$ introduced above~\eqref{eq:rho_0}. 

\begin{definition}[Weak solution to~\eqref{eq:pde}]\label{def:weak}
    \normalfont
    We say  $\bm{\pi}^{[0,T]}=(\pi^{a},\pi^{p}) = \{ ( \pi^{a}_t,\pi^{p}_t ) \}_{t\in [0,T]}\in \mathcal{M}^{[0,T]}\times \mathcal{M}^{[0,T]}$  
    is a weak solution to~\eqref{eq:pde} if it satisfies the following conditions:
    \begin{enumerate}
        \item $\pi^a_0(du,d\theta)=\zeta^a(u,\theta)\dd u \dd \theta$ and $\pi^p_0(du,d\theta)=\zeta^p(u,\theta)\dd u \dd \theta$.
        \item For all $t\in [0,T]$, the measures $\pi^a_t,\pi^p_t$ 
        are absolutely continuous with respect to the lebesgue measure on $\mathbb{T}^2\times \SSS$, 
        i.e., there exists a pair of density profile functions  
        $(f^a,f^p):[0,T]\times\mathbb{T}^2\times \SSS\to \mathbb{R}_+\times \mathbb{R}_+$ 
        such that 
        \[ 
            \pi_t^a(\dd u,\dd \theta)=f^a_t(u,\theta)\dd u \dd \theta,\quad \text{ and }\quad 
            \pi_t^p(\dd u,\dd \theta)=f^p_t(u,\theta)\dd u \dd \theta. 
        \]
        \item The functions $\rho^a_t(u)$ and $\rho^p_t(u)$ as defined in~\eqref{eq:rho_t}  
        are in $L^2([0,T];H^1(\mathbb{T}^2))$. 
        \item For every  test function $H\in C^{1,2,2}([0,T]\times \mathbb{T}^2\times \SSS)$, the following holds:
        \begin{equation}\label{eq:weak1}
            \begin{aligned}
                &\big< f^a_T , H_T \big>-\big< f^a_0 , H_0 \big>=\int_0^T \big< f^a_t , \partial_t H+D_R \partial_\theta^2 H \big> \dd t \\
                &\qquad + \int_0^T \int_{\mathbb{T}^2\times \SSS} \Bigg\{  D_T 
                \Big[ \nabla^2 H d_s(\rho_t)f^a_t -\nabla H \cdot \nabla \rho_t  \big( \mathcal{D}(\rho_t) -d_s'(\rho_t) \big)f^a_t \Big]\\
                &\qquad +v_0\nabla H \cdot \big(   s(\rho_t) \mathbf{p}^a_t + d_s(\rho_t) \mathbf{e}(\theta) \big)f^a_t 
                \Bigg\} \dd u\dd \theta \dd t,
            \end{aligned}
        \end{equation}
        and 
        \begin{equation}\label{eq:weak2}
            \begin{aligned}
                &\big< f^p_T , H_T \big>-\big< f^p_0 , H_0 \big>=\int_0^T \big< f^p_t , \partial_t H +D_R \partial_\theta^2 H\big> \dd t \\
                &\quad+ \int_0^T \int_{\mathbb{T}^2\times \SSS} \Bigg\{  D_T 
                \Big[ \nabla^2 H d_s(\rho_t)f^p_t -\nabla H \cdot \nabla \rho_t  \big( \mathcal{D}(\rho_t) -d_s'(\rho_t) \big)f^p_t \Big]\Bigg\} \dd u\dd \theta \dd t,
            \end{aligned}
        \end{equation}
        Here, $\big< f , g \big>=\int_{\mathbb{T}^2\times \SSS}fg  \mathrm{d}  u \mathrm{d}\theta $. 
    \end{enumerate}
\end{definition}
We are now  state the main result.
\begin{theorem}
    \label{thm:main}
    The sequence $\big(Q^N \big)_{n\in \mathbb{N}}$ is weakly relatively compact, and
 any of its limit points $Q^*$ are concentrated on trajectories $\bm{\pi}^{[0,T]}=(\pi^{a},\pi^{p}) \in \mathcal{M}^{[0,T]}\times \mathcal{M}^{[0,T]}$, 
 which are weak solutions of~\eqref{eq:pde} in the sense of Definition~\ref{def:weak}.
\end{theorem}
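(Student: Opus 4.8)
The plan is to follow the classical entropy/non-gradient strategy of Guo--Papanicolaou--Varadhan, Varadhan, Quastel and Erignoux, adapted to the two-component setting. First I would establish tightness of $(Q^N)$ in $\mathcal{P}(\mathcal{M}^{[0,T]}\times\mathcal{M}^{[0,T]})$: since the empirical measures take values in a compact space ($\mathcal{M}_1(\TT\times\SSS)$), it suffices to verify an Aldous-type condition on the time-regularity of $\langle \pi^{\sigma,N}_t, H\rangle$ for $H\in C^{1,2,2}$; this comes from Dynkin's formula applied to $M_t^{\sigma,N,H}=\langle \pi^{\sigma,N}_t,H_t\rangle-\langle\pi^{\sigma,N}_0,H_0\rangle-\int_0^t(\partial_s+L_N)\langle\pi^{\sigma,N}_s,H_s\rangle\,ds$, together with a bound on the martingale bracket of order $O(N^{-2})$. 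This shows every limit point is concentrated on continuous trajectories.

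Next I would fix a limit point $Q^*$ and identify it. The Dynkin martingale decomposition produces, after the diffusive rescaling, terms of the form $N^2 D_T \LL_N$ acting on $H$ (which yields a discrete Laplacian, converging to $D_T\nabla^2 H$ against the current), the weakly asymmetric term $N\LLWA_N$, and the rotational part $\LLR_N\to D_R\partial_\theta^2$. The discrete gradient currents are \emph{not} functions of the empirical measure alone — this is the non-gradient obstruction. Here I invoke the non-gradient decomposition and replacement lemmas proved in Section~\ref{sec:replaceV}: the microscopic current $W_{x,x+e_i}$ is written, up to a term of the form $\LL_N(\text{cylinder function})$ and an $L^2(\mu^N)$-error vanishing in the limit, as a linear combination of gradients of the density fields with coefficients given by $d_s(\rho)$, $\mathcal{D}(\rho)$, $s(\rho)$ evaluated at the local density — precisely the transport coefficients appearing in~\eqref{eq:pde}. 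Combined with the local ergodicity / one-and-two-block estimates (replacing local averages of cylinder functions by their grand-canonical expectations as functions of the empirical density and polarization), this lets me pass to the limit in the martingale identity and recover the weak formulations~\eqref{eq:weak1}--\eqref{eq:weak2}. The $L^2([0,T];H^1(\TT^2))$ regularity of $\rho^\sigma_t$ (condition 3 of Definition~\ref{def:weak}) follows from an energy estimate obtained by bounding $\sup_N \E_{\mu^N}\!\int_0^T \|\nabla \pi^{\sigma,N}_t\|^2\,dt$ via the entropy inequality and the spectral gap, as in~\cite{Erignoux21,mason2023exact}.

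The main obstacle — and the place where the new work concentrated in Sections~\ref{sec:replaceV} and~\ref{sec:5} enters — is the non-gradient replacement itself, i.e.\ establishing that the current lies, up to fluctuation terms, in the closure of $\{\LL_N f : f\in\mathcal{C}\}\oplus\{\text{gradients}\}$ with the correct coefficients. This requires (i) a \emph{spectral gap estimate} for the symmetric generator $\LL_N$ restricted to a box with fixed numbers of active and passive particles and a frozen (or integrated) angular marginal, uniform in the density — the two-component, continuous-orientation analogue of Quastel's and Erignoux's estimates; and (ii) the \emph{closed-form / germ-of-a-closed-form decomposition}, identifying the relevant Hilbert space of closed forms and showing it is spanned by gradients of the conserved quantities. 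The extra passive species adds a genuinely new conserved quantity, so the dimension of the space of closed forms grows and the $2\times 2$ diffusion matrix (with cross-diffusion term $\mathcal{D}(\rho)\nabla\rho$ coupling the two species through the common exclusion constraint) must be correctly identified; I would handle this by adapting the decomposition of Erignoux to the enlarged set of grand canonical parameters $\mathbb{M}_1(\SSS)$, using the variational characterization of $d_s$ and the duality between currents and closed forms. Everything else — the one- and two-block estimates, the entropy production bound, the replacement of $\theta$-dependent local functions by their conditional expectations — is a routine (if lengthy) adaptation of the pure-active arguments, since the angular dynamics $\LLR_N$ is unchanged.
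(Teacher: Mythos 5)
Your proposal follows the same overall route as the paper: tightness via Aldous and a martingale bracket bound, Dynkin's formula, non-gradient replacement underpinned by a spectral gap estimate on $T^\omega$ and a closed-form decomposition over the enlarged parameter set $\mathbb{M}_1(\SSS)$, an energy estimate for the $H^1$ regularity of $\rho^\sigma_t$, and the identification of the transport coefficients $d_s$, $\mathcal{D}$, $s$. These are exactly the ingredients the paper uses.

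There is, however, a genuine gap at the last step. You claim the replacement machinery lets you ``pass to the limit in the martingale identity and recover the weak formulations~\eqref{eq:weak1}--\eqref{eq:weak2}.'' Those formulations are stated in terms of \emph{densities} $f^\sigma_t(u,\theta)$: Definition~\ref{def:weak} requires $\pi^\sigma_t$ to be absolutely continuous with respect to Lebesgue on $\TT\times\SSS$, including in the angular variable. The hydrodynamic-limit apparatus you invoke (one-block/two-block estimates, non-gradient decomposition, equivalence of ensembles parametrized by $\bah\in\mathbb{M}_1(\SSS)$) only produces the \emph{weaker} formulation of Definition~\ref{def:weaker}: absolute continuity with respect to Lebesgue on $\TT$, with an angular marginal $\hat f^\sigma_t(u,\dd\theta)$ that is merely a finite measure on $\SSS$. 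Nothing in tightness, in the replacement lemmas, or in the spectral gap forces this angular measure to have a Lebesgue density; indeed, the empirical angular distribution is a sum of Dirac masses, and the compactness argument lives in the weak topology on measures, which does not preserve absolute continuity. The paper addresses this explicitly in Section~\ref{subsect:weaker}: it first proves Theorem~\ref{thm:main2} (concentration on \emph{weaker} solutions) and then invokes the angular-regularization argument of Remark 5.8 in~\cite{mason2023exact}, which uses the presence of the angular Laplacian $D_R\partial_\theta^2$ in the limiting PDE to smooth the $\theta$-marginal and promote weaker solutions to weak solutions. Without this reduction-and-lifting step your argument establishes Theorem~\ref{thm:main2}, not Theorem~\ref{thm:main} as stated.
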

\begin{remark}
    \normalfont 
    In Definition~\ref{def:weak}, we only require $\rho_t \in H^1$, 
        so the term $d_s(\rho)\nabla f^\sigma$ in the PDEs~\eqref{eq:pde} is understood as
        \begin{equation}\label{eq:rkintbyparts}
            d_s(\rho)\nabla f^\sigma=\nabla \big( d_s(\rho)f^\sigma \big)-d'_s(\rho)f^\sigma\nabla \rho
        \end{equation}
        This reformulation explains the second line of~\eqref{eq:weak1} and~\eqref{eq:weak2}.

\end{remark}

\section{Proof of Theorem~\ref{thm:main}}\label{sec:proof}
In this section we prove the main result of this paper Theorem~\ref{thm:main}.  
The strategy of proof mainly follows the classical entropy method presented 
in~\cite{Erignoux21,mason2023exact}.  
In Section~\ref{subsect:weaker}, 
we follow the route of~\cite{mason2023exact} to reduce the problem to considering the weaker solution of PDEs.  
In Section~\ref{subsect:dynkin}, we use Dynkin's formula to derive the  fluctuation of the microscopic system around the hydrodynamic limit.
The main novelty and difficulty arise in Section~\ref{subsect:nongrad}, 
where we explain a generalized non-gradient decomposition of typed currents 
and introduce  replacement lemmas. 
In Section~\ref{subsec:compactreg} we tackle  the compactness and regularity issues. 
In Section~\ref{subsec:secondint}, we apply the second discrete integration by parts.    
In Section~\ref{subsec:funcofem} , We convert  all relevant terms into functions of empirical measures, 
which are amenable for taking limit in Section~\ref{subsec:limit}.  

\subsection{Reduction to weaker solutions}\label{subsect:weaker}
Follow the Remark 5.8 in~\cite{mason2023exact}, which leverages the angular diffusion feature of the PDE to 
lift   
the weaker solution (cf. Definition~\ref{def:weaker}) whose angular part is measure 
to the weak solution (cf. Definition~\ref{def:weak}) whose angular part is absolutely continuous with respect to the Lebesgue
measure on $\SSS$.  
Taking advantage of this fact, it suffices to prove that  
the trajectory $\bm{\pi}^{[0,T]}=(\pi^a, \pi^p)$  
is a weaker solution of~\eqref{eq:pde} as defined in Definition~\ref{def:weaker}, 
which was introduced on page 19 of~\cite{Erignoux21}. 

\begin{definition}[Weaker solution of~\eqref{eq:pde}]\label{def:weaker}
    \normalfont
    We say  $\bm{\pi}^{[0,T]}=(\pi^{a},\pi^{p}) \in \mathcal{M}^{[0,T]}\times \mathcal{M}^{[0,T]}$  
    is a weaker solution to~\eqref{eq:pde} if it satisfies the following conditions:
    \begin{enumerate}
        \item $\pi^a_0(du,d\theta)=\zeta^a(u,\theta)\dd u \dd \theta$ and $\pi^p_0(du,d\theta)=\zeta^p(u,\theta)\dd u \dd \theta$.
        \item For all $t\in [0,T]$, the measures $\pi^a_t,\pi^p_t$ 
        are absolutely continuous with respect to the lebesgue measure on $\mathbb{T}^2$, 
        that is there exists a pair of density profiles 
        $(\hat{f}^a,\hat{f}^p):[0,T]\times\mathbb{T}^2 \to \mathcal{M}(\SSS)^2$ 
        such that 
        $$\pi_t^\sigma (du,d\theta)=\hat{f}^\sigma_t(u,\dd \theta)\dd u ,\quad \forall \sigma\in \{a,p\}.$$
        \item The function $\rho^\sigma_t(u)=\int_{\SSS} \hat{f}^\sigma_t(u,\dd \theta)$, 
        is in $L^2([0,T];H^1(\mathbb{T}^2))$ 
        \item For all test function $H\in C^{1,2,2}([0,T]\times \mathbb{T}^2\times \SSS)$,
        \begin{equation}\label{eq:weaka}
            \begin{aligned}
                &\left< \pi^a_T , H_T \right>-\left< \pi^a_0 , H_0 \right>
                =\int_0^T \left< \pi_t^a , \partial_t H + D_R \partial_\theta^2 H \right> dt \\
                &+ \int_0^T \int_{\mathbb{T}^2\times \SSS} \Bigg\{  D_T 
                \Big[ \nabla^2 H d_s(\rho_t)\hat{f}_t^a(u,\dd \theta) 
                -\nabla H \cdot \nabla \rho_t(u)  \big[ \mathcal{D}(\rho_t)
                -d_s'(\rho_t) \big]\hat{f}^a_t(u,\dd \theta) \Big]\\
                &\quad +v_0\nabla H \cdot 
                \Big[   s(\rho_t ) \mathbf{p}^a_t + d_s(\rho_t) \mathbf{e}(\theta) \Big]\hat{f}^a_t(u,\dd \theta)
                \Bigg\} \dd u \dd t,
            \end{aligned}
        \end{equation}
        and 
        \begin{equation}\label{eq:weakp}
            \begin{aligned}
                &\left< \pi^p_T , H_T \right>-\left< \pi^p_0 , H_0 \right>=
                \int_0^T \left< \pi_t^p , \partial_t H+D_R \partial_\theta^2 H   \right> \dd t\\ 
                &\qquad + \int_0^T \int_{\mathbb{T}^2\times \SSS} \Bigg\{  D_T 
                \Big[ \nabla^2 H d_s(\rho_t)\hat{f}_t^p(u,\dd \theta) \\
                &\qquad -\nabla H \cdot \nabla \rho_t(u)  \big[ \mathcal{D}(\rho_t) 
                -d_s'(\rho_t)\big]\hat{f}^p_t (u,\dd \theta) \Big]
                \Bigg\} \dd u\dd t.
            \end{aligned}
        \end{equation}
    \end{enumerate}
\end{definition}

By Remark 5.8 in~\cite{mason2023exact}, proving Theorem~\ref{thm:main} can be reduced to proving Theorem~\ref{thm:main2}, 
which becomes our main task in the rest of this paper. 
\begin{theorem}
    \label{thm:main2}
    The sequence $\big(Q^N \big)_{n\in \mathbb{N}}$ is weakly relatively compact, and
 any of its limit points $Q^*$ are concentrated on trajectories $\bm{\pi}^{[0,T]}=(\pi^{a},\pi^{p}) \in \mathcal{M}^{[0,T]}\times \mathcal{M}^{[0,T]}$, 
 which are weak solutions of~\eqref{eq:pde} in the sense of Definition~\ref{def:weaker}.
\end{theorem}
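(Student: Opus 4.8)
The plan is to run the non-gradient entropy method of~\cite{Erignoux21,mason2023exact}, enlarged to accommodate two species, following exactly the block structure announced in the outline of Section~\ref{sec:proof}. \textbf{Tightness and Dynkin's formula.} Since $\mu^N=\mu^N_{\bm{\zeta}}$ is a product measure, its relative entropy with respect to a reference grand canonical measure $\mu_{\bah}$ is $O(N^2)$; the usual entropy and Dirichlet-form estimates then give that $(Q^N)$ is relatively compact in $\mathcal{P}(\mathcal{M}^{[0,T]}\times\mathcal{M}^{[0,T]})$ and that any limit point is concentrated on weakly continuous trajectories, absolutely continuous in $u$. For a fixed $H\in C^{1,2,2}([0,T]\times\TT\times\SSS)$ and $\sigma\in\{a,p\}$, Dynkin's formula applied to $t\mapsto\langle\pi^{\sigma,N}_t,H_t\rangle$ produces a martingale whose quadratic variation is $O(N^{-2})$, hence vanishing in $L^2(\PN)$. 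The drift term is obtained by letting $L_N=N^2 D_T\LLS_N+N\LLWA_N+\LLR_N$ act on $\eta^\sigma_x\delta_{x/N,\theta_x}$: the rotational part reproduces $D_R\partial_\theta^2 H$ integrated against the empirical measure, while a first discrete summation by parts converts the symmetric and weakly asymmetric parts into a space integral of $\nabla H$ against the instantaneous currents — the symmetric SSEP current $W^\sigma_{x,x+e_i}$ and, for the active species, the drift current proportional to $\eta^a_x\,e_i^\top\et$.

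\textbf{Non-gradient replacement.} The crux is that $W^\sigma_{x,x+e_i}$ is not a discrete gradient. Invoking the generalized non-gradient decomposition of Section~\ref{sec:replaceV}, one writes it, in $L^2(\mu^*_{\ba})$ and modulo terms negligible after integration in time and passage to the limit, as a linear combination — with coefficients that are explicit functions of the local total density $\rho$, and, for $\sigma=a$, of the local polarization — of the genuine gradient $\tau_x(\eta^\sigma_{e_i}-\eta^\sigma_0)$, of the angular drift current, and of a term $\LLS_N g$ for a cylinder function $g$. The last term, since $L_N$ contains the accelerated piece $N^2 D_T\LLS_N$, again yields a vanishing martingale plus a boundary contribution once integrated against $H$; the density- and polarization-dependent coefficients are pinned down by the variational characterization of the self-diffusion coefficient $d_s$ recalled in Appendix~\ref{sec:crossdiffusion}, which is precisely how $d_s(\rho)$, $\mathcal{D}(\rho)$ and $s(\rho)$ enter~\eqref{eq:pde}. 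This is combined with the one-block and two-block estimates and the local-equilibrium statement: on mesoscopic boxes the joint empirical profile of occupations and angles is close to a grand canonical measure $\mu_{\bah}$, so every local function may be replaced by its grand canonical expectation, a continuous functional of the empirical measures $\bm{\pi}^N$.

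\textbf{Energy estimate and passage to the limit.} An energy estimate, carried out as in~\cite{Erignoux21,mason2023exact}, shows that any limit point is concentrated on trajectories with $\rho^\sigma_t\in L^2([0,T];H^1(\TT))$, which legitimizes the reformulation~\eqref{eq:rkintbyparts}. A second discrete integration by parts then transfers one discrete derivative from the current onto $H$. Using the replacement lemmas once more to express all local averages of $\eta^\sigma$, $\eta$ and $\eta^a_x\et$ through $f^\sigma_t$, $\rho_t$ and $\mathbf{p}^a_t$, every surviving term becomes a continuous functional of $\bm{\pi}^N$, so passing to the limit along a convergent subsequence shows that $Q^*$ is supported on trajectories satisfying~\eqref{eq:weaka}--\eqref{eq:weakp}. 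Combined with the initial condition — a law of large numbers under $\mu^N_{\bm{\zeta}}$ — and the regularity just obtained, this is exactly Definition~\ref{def:weaker}, and hence proves Theorem~\ref{thm:main2}.

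\textbf{Main obstacle.} The hard part is the non-gradient decomposition and the accompanying spectral gap estimate of Section~\ref{sec:5}. Because active and passive particles both carry continuous orientations, the Hilbert space of closed and exact forms has extra directions compared with the pure active case of~\cite{Erignoux21}, so one must enlarge the germ (working function) class so that it is stable under the relevant symmetries and rich enough to represent both currents $W^a$ and $W^p$, and prove an $O(\ell^2)$ spectral gap for the symmetric generator on the canonical manifolds with fixed numbers of each species and fixed empirical angle profile, uniformly in these parameters. Identifying the resulting matrix of diffusion coefficients and checking that its entries match those appearing in~\eqref{eq:pde} is where the bulk of the new work lies.
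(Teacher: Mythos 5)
Your high-level plan matches the paper's: Dynkin's formula with separable test functions, first summation by parts to expose the symmetric current $j^{a,\omega}_i$ and the drift current $r^\omega_i$, a non-gradient replacement $j^{a,\omega}_i \approx -d_s(\rho)\del_i\eta^{a,\omega} - \overline{\omega}^a\rho^a\mathcal{D}(\rho)\del_i\eta - \LL f^\omega_{i,n}$ justified via the variational characterization of $d_s$, then one-block/two-block, energy estimate, second summation by parts, and passage to the limit. The spectral-gap/closed-form program in Section~\ref{sec:5} is also correctly identified as the new ingredient.

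There is, however, a genuine gap in your treatment of the $\LL f$ term. You assert that since $L_N$ ``contains the accelerated piece $N^2 D_T\LLS_N$,'' the term $N\,\LLS_N f$ is controlled by a vanishing martingale. But the generator is $L_N = N^2\LLD_N + \LLR_N$ with $\LLD_N = D_T\LLS_N + N^{-1}\LLWA_N$: Dynkin's formula produces a martingale for $\LLD_N$, not for $\LLS_N$ alone. Writing
\[
N\,\LL f \;=\; \frac{N}{D_T}\,\LLD f \;-\; \frac{1}{D_T}\,\LLWA f,
\]
only the first piece is negligible via the martingale estimate (Lemma~\ref{lemma:diminishLD}); the second piece $-D_T^{-1}\LLWA f$ is $O(1)$ and survives. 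In the paper this surviving term is precisely what is added to $r^\omega_i$ to form $\mathcal{R}^{f,\varepsilon N}_i$, and after local-equilibrium replacement its grand canonical expectation combines with $\E_{\bah}[r^\omega_i]$ to produce (Lemma~\ref{lemma:X2})
\[
2\Big(d_s(\alpha)\E_{\bah}[\eta_0^{a,\lambda_i\omega}] + \E_{\bah}[\eta_0^{a,\lambda_i}]\E_{\bah}[\eta_0^{a,\omega}]\,s(\alpha)\Big),
\]
which is \emph{not} equal to $\E_{\bah}[r^\omega_i]=2\alpha^a(1-\alpha)\overline{\lambda_i\omega}^a$. Without the $\LLWA f$ compensation you would obtain the wrong drift coefficients in~\eqref{eq:pde} — in particular the $d_s(\rho)\mathbf{e}(\theta)$ and $s(\rho)\mathbf{p}^a$ structure would not emerge. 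A related minor imprecision: the coefficients of the non-gradient decomposition depend on $\rho$, $\rho^a$ and the angular mean $\overline{\omega}^a$, not on the polarization $\mathbf{p}^a$; the polarization only enters through the drift replacement just discussed.
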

To make the exposition clearer, 
we divide the proof in several steps, detailed in the following subsections. 

\subsection{Dynkin's formula and instantaneous currents}\label{subsect:dynkin}
Fix a test function 
$H\in C^{1,2,2}([0,T]\times \mathbb{T}^2\times \SSS)$, and by Dynkin's formula we define a martingale
\begin{equation}\label{eq:dynkin}
    M^{\sigma,H,N}_t = \big< \pi^{\sigma,N}_t , H_t \big> - \big< \pi^{\sigma,N}_0 , H_0 \big>-
    \int_{0}^t \Big[ \big< \pi^{\sigma,N}_s ,\partial_s H_s \big>+L_N \big< \pi^{\sigma,N}_s , H_s \big> \Big]\dd s.
\end{equation}
By similar computations as those on page 21 of~\cite{Erignoux21},  
the quadratic variation $[M^{\sigma,H,N}]_t$  can be bounded as follows:   
\[ 
    [M^{\sigma,H,N}]_t\leq C \frac{t\|H\|_{L^\infty}}{N^2}.
\]
Applying the Burkholder-Davis-Gandy inequality~\cite{le2016brownian} and Markov inequality 
yields that 
\begin{equation}\label{eq:mgvanish}
    \lim_{N\to \infty}\PN \Bigg( \sup_{0\leq t\leq T}\big|
   M_t^{\sigma,H,N} \big|\geq \delta \Bigg)=0,\quad \forall T>0,\delta>0.
\end{equation}

Next using a periodic version of the Weierstrass approximation theorem proposed on page 74-75 of~\cite{Erignoux21}, 
we only need to consider test functions  of the following form: 
\begin{equation}\label{eq:testfunc}
    H_t(u,\theta)=G_t(u)\omega(\theta)
\end{equation}
where $G\in C^{1,2}([0,T]\times \mathbb{T}^2)$ and $\omega \in C^{2}( \SSS)$. 

In order to further expand the right hand side of~\eqref{eq:dynkin} and to 
get weak formulations~\eqref{eq:weaka} and~\eqref{eq:weakp}, we introduce some notations. 
For any angular function $\omega:\SSS\to \mathbb{R}$, any configuration $\etah$, we shorten 
\begin{equation}
    \eta_x^{\omega}=\omega(\theta_x)\eta_x, \quad \text{ and } \quad 
    \eta_x^{\sigma,\omega}=\omega(\theta_x)\eta^{\sigma}_x.
\end{equation}
For any test function $H$ of the form~\eqref{eq:testfunc}, 
applying discrete integration by parts  absorbing an $N$ factor from diffusive scaling $N^2$ yields: 
\begin{equation}\label{eq:dyna}
    \begin{aligned}
        L_N\big<\pi_t^{a,N},H_t\big>=&
        D_T \frac{1}{N^2}\sum_{x\in \mathbb{T}^2_N}\sum_{i=1,2}
        \tau_x\big(  N \joa_i+\ro_i \big)(t) 
\partial_{i,N}G_t (x/N)\\ 
&\quad + \big< \pi_t^{a,N} , D_R \partial_\theta^2 H_t \big>,
    \end{aligned}
\end{equation}
and 
\begin{equation}\label{eq:dynp}
    \begin{aligned}
        L_N\big<\pi_t^{p,N},H_t\big>=&
        D_T \frac{1}{N^2}\sum_{x\in \mathbb{T}^2_N}\sum_{i=1,2}
        \tau_x N \jop_i(t) 
\partial_{i,N}G_t (x/N)\\ 
&\quad + \big< \pi_t^{p,N} , D_R \partial_\theta^2 H_t \big>. 
    \end{aligned}
\end{equation}
Here  $\partial_{i,N}$ is the discrete partial derivative in the $i$-th direction: 
\[ 
    \partial_{i,N}G_t (x/N)=N\Big[ G_t\big( \frac{x+e_i}{N} \big)-G_t\big(\frac{x}{N}\big) \Big], 
\]
and $j^{\sigma,\omega}_i$ is 
the \textit{symmetric  current with intensity $\omega$} 
from $0$ to $e_i$:
\begin{equation}\label{eq:j}
    j^{\sigma,\omega}_i = \eta_0^{\sigma,\omega}(1-\eta_{e_i})-\eta_{e_i}^{\sigma,\omega}(1-\eta_{0}),
\end{equation}
and $r^{\omega}_i$ is 
the \textit{asymmetric  current with intensity $\omega$} 
from $0$ to $e_i$:
\begin{equation}\label{eq:r}
        r^{\omega}_i = \eta_0^{a,\omega\lambda_i}(1-\eta_{e_i})+\eta_{e_i}^{a,\omega\lambda_i}(1-\eta_{0}),
\end{equation}
where $\lambda_i$ is the normalized velocity: 
\begin{equation}\label{eq:lambda}
    \lambda_i(\theta) = \frac{v_0}{2 D_T }\mathbf{e}(\theta)^{\top}e_i. 
\end{equation}

\begin{remark}
    Define the edge availability function $a(x,y)= \eta_x(1-\eta_y)+\eta_y(1-\eta_x)$ 
indicating availability of particle transport through $(x,y)$  under the exclusion rule.  
This allows  compact reformulations of the currents in~\eqref{eq:j} and~\eqref{eq:r}:   
\[ 
    \begin{aligned}
        &j_i^{\sigma,\omega}=a(0,e_i)(\eta_0^{\sigma,\omega}-\eta_{e_i}^{\sigma,\omega}),\\
        &r_i^{\omega}=a(0,e_i)(\eta_0^{a,\omega\lambda_i}+\eta_{e_i}^{a,\omega\lambda_i})
    \end{aligned}
\]
These observations will be useful throughout this paper.
\end{remark}

Note that $r_i^\omega$ only appears in~\eqref{eq:dyna} 
which introduces additional  technical complexities in the treatments of~\eqref{eq:dyna} compared to~\eqref{eq:dynp}.  
Once we complete the proofs related to~\eqref{eq:dyna}, the results for~\eqref{eq:dynp} can be obtained similarly.  
Therefore, in the rest of the proof of Theorem~\ref{thm:main2}, 
we only need to focus on the active part of the PDEs. 

\subsection{Non-gradient decomposition and replacement lemma}\label{subsect:nongrad}

Note that in~\eqref{eq:dyna}, 
we must balance the remaining extra factor $N$ multiplying  $j^{a,\omega}_i$ 
by a second discrete integration by parts. Therefore, further expanding the current $j^{a,\omega}_i$ 
into a discrete gradient form is needed. 
This task is done in Lemma~\ref{lemma:nongradreplacement}, which is the main goal of this subsection. 
Before stating the lemma precisely, we provide a heuristic explanation of the underlying ideas. 

One main challenge in deriving the 
hydrodynamic limit of  our model 
comes from the non-gradient feature of the generator $L_N$.    
This means that we cannot further expand  $j^{a,\omega}_i$ 
into a discrete gradient form:  
\begin{equation}\label{eq:grad}
    \del_i h = \tau_{e_i}h-h,
\end{equation}
for some local function $h$. 
Without such discrete gradient expansion, 
we are unable to perform a second  discrete integration by parts, 
which absorbs the remaining factor $N$ in~\eqref{eq:dyna}. 
To simplify the notations, 
we define the \textit{local gradients} as follows: 
$$\del_i\eta^{\sigma,\omega}=\eta^{\sigma,\omega}_{e_i}-\eta^{\sigma,\omega}_0, $$ 
for any 
angular function $\omega$.  Similarly, 
$\del_i\eta^{\sigma}$, $\del_i\eta$ are defined in the same manner. 

Next we perform the non-gradient decomposition of  $j^{a,\omega}_i$, i.e., the current  
$j^{a,\omega}_i$, in a certain Hilbert space, can be approximated by a linear combination of local gradients 
$\{\bm{\delta}_i \eta^{a, \omega}, \del_i \eta\}$ 
and a function in the range of the SSEP generator $\LL$ defined in~\eqref{eq:LLS_N}:  
\begin{equation}\label{eq:localreplace}
    j_i^{a,\omega}=\lim_{n\to \infty}  \big( -d_s(\rho) \bm{\delta}_i \eta^{a, \omega}
    -\overline{\omega}^a \rho^a \mathcal{D}(\rho) 
    \del_i \eta 
    - \LL  f^{\omega}_{i,n} \big),
\end{equation}
for some local function $f^{\omega}_{i,n}$. For a more precise statement, see Lemma~\ref{lemma:uniform}.  

With the non-gradient decomposition~\eqref{eq:localreplace},   
the combination of symmetric and asymmetric currents $N j^{a,\omega}_i + r^\omega_i$ in~\eqref{eq:dyna}, 
after adding and subtracting $D_T^{-1}\LLWA f^{\omega}_{i,n}$, can be approximated by 
\begin{equation}\label{eq:localcomb}
    -N (d_s(\rho) \bm{\delta}_i \eta^{a, \omega}
    +\overline{\omega}^a \rho^a \mathcal{D}(\rho) 
    \del_i \eta 
    ) + (r^\omega_i + D_T^{-1}\LLWA f^{\omega}_{i,n})  - D_T^{-1} N  \LLD  f^{\omega}_{i,n} ,
\end{equation}
where $D_T^{-1} N\LLD f^{\omega}_{i,n}$  is a small perturbation which can be averaged out using classical martingale estimate, 
as detailed Lemma~\ref{lemma:diminishLD}. 

Furthermore, to close the microscopic equation~\eqref{eq:dynkin} and to take limits in~\eqref{eq:dyna} and~\eqref{eq:dynp}, 
we need to replace the local quantities in~\eqref{eq:localcomb} 
with their mesoscopic averages as follows:  
    \begin{equation}\label{eq:localdiff}
        d_s(\rho) \bm{\delta}_i \eta^{a, \omega}
        +\overline{\omega}^a \rho^a \mathcal{D}(\rho) 
        \del_i \eta 
         \approx  d_s(\rho_{\varepsilon N}) \bm{\delta}_i \rho^{a,\omega}_{\varepsilon N}
        + \rho^{a,\omega}_{\varepsilon N} \mathcal{D}(\rho_{\varepsilon N}) 
        \del_i \rho_{\varepsilon N} ,
    \end{equation}
and 
    \begin{equation}\label{eq:localdrift}
        r^\omega_i +D_T^{-1} \LLWA f^{\omega}_{i,n}  \approx 
        \E_{\bm{\hat{\rho}}_{\varepsilon N}}\big[ r^\omega_i + D_T^{-1} \LLWA f^{\omega}_{i,n} \big],
    \end{equation}
Here, we define \textit{empirical densities} $\del_i \rho^\sigma_{\epsilon N}$, $\del_i \rho_{\epsilon N}$, 
and $\del_i \rho^{\sigma,\omega}_{\epsilon N}$ as the averages of local gradients 
$\del_i \eta^{\sigma}$, $\del_i \eta $, and $\del_i \eta^{\sigma,\omega}$ over a mesoscopic box $B_{\epsilon N }$: 
\[ 
\begin{aligned}
    \rho^\sigma_{\varepsilon N} &=  \frac{1}{(2\varepsilon N + 1)^2} \sum_{x\in B_{\varepsilon N}} \eta^\sigma_x =
    \frac{1}{(2\varepsilon)^2}\big< \mathbbm{1}_{[-\varepsilon,\varepsilon]^2} , \pi^{\sigma,N} \big>+o_N(1), \\
    \rho^{\sigma,\omega}_{\varepsilon N} &=  \frac{1}{(2\varepsilon N + 1)^2} \sum_{x\in B_{\varepsilon N}} \eta^{\sigma,\omega}_x =
    \frac{1}{(2\varepsilon)^2}\big< \mathbbm{1}_{[-\varepsilon,\varepsilon]^2} \omega, \pi^{\sigma,N} \big>+o_N(1).
\end{aligned}
\]
The expectation $\E_{\bm{\hat{\rho}}_{\varepsilon N}}$ is taken 
with respect to the grand canonical measure with parameter 
$\widehat{\bm{\rho}}_{\varepsilon N}=(\hat{\rho}^a_{\varepsilon N},\hat{\rho}^p_{\varepsilon N})$, 
where $\hat{\rho}^\sigma_{\varepsilon N}$ is the \textit{empirical angular density}, 
\[  
    \hat{\rho}^\sigma_{\varepsilon N} = 
    \frac{1}{(2\varepsilon N + 1)^2} \sum_{x\in B_{\varepsilon N}} \eta^\sigma_x \delta_{\theta_x}\in \mathcal{M}_1(\SSS).
\]
In Lemma~\ref{lemma:X2} we can further calculate the expectation in~\eqref{eq:localdrift} 
and get the replacement: 
\begin{equation}\label{eq:localdrift2}
    r^\omega_i + D_T^{-1} \LLWA f^{\omega}_{i,n} \approx 
    2\Big( d_s(\rho_{\varepsilon N}) \rho^{a,\omega \lambda_i }_{\varepsilon N} 
    +\rho^{a,\omega}_{\varepsilon N}\rho^{a, \lambda_i }_{\varepsilon N} s(\rho_{\varepsilon N})
    \Big).
\end{equation}

In summary, combing the replacements~\eqref{eq:localdiff} and~\eqref{eq:localdrift2}, 
and discarding the small perturbation term  $D_T^{-1} N \LLD f^{\omega}_{i,n}$ in~\eqref{eq:localcomb}, 
we can finally replace  $N j^{a,\omega}_i + r^\omega_i $ by
\begin{equation}\label{eq:fullreplace}
        -N \Big(d_s(\rho_{\varepsilon N}) \bm{\delta}_i \rho^{a,\omega}_{\varepsilon N}
        + \rho^{a,\omega}_{\varepsilon N} \mathcal{D}(\rho_{\varepsilon N}) 
        \del_i \rho_{\varepsilon N}
        \Big)+2\Big( d_s(\rho_{\varepsilon N}) \rho^{a,\omega \lambda_i }_{\varepsilon N} 
        +\rho^{a,\omega}_{\varepsilon N}\rho^{a, \lambda_i }_{\varepsilon N} s(\rho_{\varepsilon N})
        \Big),  
\end{equation}
which is formalized by the following replacement lemma:
\begin{lemma}\label{lemma:nongradreplacement}
    Let 
\[ 
Y_i^{\varepsilon N}(\etah)=N\Big( j^{a,\omega}_i +  d_s(\rho_{\varepsilon N}) \bm{\delta}_i \rho^{a,\omega}_{\varepsilon N}
+ \rho^{a,\omega}_{\varepsilon N} \mathcal{D}(\rho_{\varepsilon N}) 
\del_i \rho_{\varepsilon N} \Big),
\]
and 
\[ 
Z_i^{\varepsilon N}(\etah) = r^\omega_i  -
2\Big( d_s(\rho_{\varepsilon N}) \rho^{a,\omega \lambda_i }_{\varepsilon N} 
+\rho^{a,\omega}_{\varepsilon N}\rho^{a, \lambda_i }_{\varepsilon N} s(\rho_{\varepsilon N})
\Big).
\]
Then for any $T>0$, $i=1,2$, and $G\in C^{1,2}([0,T]\times \TT)$,  
\[ 
\limsup_{\varepsilon\to 0} \limsup_{N\to \infty}\EN  
\Bigg[ \Bigg| 
\int_{0}^T \frac{1}{N^2}\sum_{x\in \TN} G_t(x/N)\tau_x  
\big( Y_i^{\varepsilon N}+Z_i^{\varepsilon N} \big)(\etah(t)) \dd t
\Bigg| \Bigg]=0.
\]
\end{lemma}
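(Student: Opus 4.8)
The two summands are controlled by different mechanisms, since $Y_i^{\varepsilon N}$ carries an extra factor of $N$ while $Z_i^{\varepsilon N}$ does not; following the reduction already made in the text, I treat only the active current, the passive case (which has no counterpart of $r^\omega_i$, and whose symmetric current is handled exactly as below) being strictly simpler. The plan is first to fuse the two summands via the algebra sketched around \eqref{eq:localcomb}: invoking the precise non-gradient decomposition of Lemma~\ref{lemma:uniform} behind \eqref{eq:localreplace}, fix $n$ and write $j^{a,\omega}_i = -d_s(\rho)\del_i\eta^{a,\omega} - \oba\rho^a\mathcal{D}(\rho)\del_i\eta - \LL f^{\omega}_{i,n} + \mathcal{E}^{\omega}_{i,n}$, where the remainder $\mathcal{E}^{\omega}_{i,n}$ has $H_{-1}$-seminorm tending to $0$ as $n\to\infty$ (uniformly over the grand-canonical parameters). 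Using $\LL = D_T^{-1}(\LLD - N^{-1}\LLWA)$, this turns $N j^{a,\omega}_i + r^\omega_i$ into the four terms of \eqref{eq:localcomb}: the gradient block $-N\big(d_s(\rho)\del_i\eta^{a,\omega} + \oba\rho^a\mathcal{D}(\rho)\del_i\eta\big)$, the drift $r^\omega_i + D_T^{-1}\LLWA f^{\omega}_{i,n}$, the perturbation $-D_T^{-1}N\LLD f^{\omega}_{i,n}$, and the remainder $N\mathcal{E}^{\omega}_{i,n}$. It then suffices to treat these four contributions separately, keeping $n$ fixed until the very end.

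The perturbation $-D_T^{-1}N\LLD f^{\omega}_{i,n}$ is the content of Lemma~\ref{lemma:diminishLD}: since $N^2\LLD$ is the bulk of the accelerated generator $L_N$, Dynkin's formula applied to $\Phi_t = \sum_x G_t(x/N)\tau_x f^{\omega}_{i,n}$ expresses its space-time integral, after the $1/N^2$ normalization and the extra $N$, as $1/N$ times boundary terms, a $\partial_t$ term, a martingale and a lower-order $\LLR_N$ contribution, all $O(1/N)$ for fixed $n$ and hence negligible as $N\to\infty$. For the remainder $N\mathcal{E}^{\omega}_{i,n}$ I would use the central quantitative estimate of the non-gradient method: for a current-type cylinder function $g$, for which the $H_{-1}$-type seminorm $\|g\|_{-1}$ built from the SSEP Dirichlet form is finite,
\[
\limsup_{N\to\infty}\EN\!\left[\;\left|\int_0^T \frac{1}{N^2}\sum_{x\in\TN} G_t(x/N)\,\tau_x(Ng)(\etah(t))\,\dd t\right|\;\right]\;\le\; C(G,T)\,\|g\|_{-1},
\]
which follows from the entropy inequality, the Feynman--Kac formula and the spectral gap estimate of Section~\ref{sec:5}. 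Applied to $g = \mathcal{E}^{\omega}_{i,n}$ this bounds the remainder's contribution by $C(G,T)\,\|\mathcal{E}^{\omega}_{i,n}\|_{-1}$, which vanishes once we finally let $n\to\infty$.

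It remains to replace the two surviving terms by their mesoscopic expressions. For the drift $r^\omega_i + D_T^{-1}\LLWA f^{\omega}_{i,n}$, which carries no factor $N$, I would apply the one- and two-block estimates to replace it, as in \eqref{eq:localdrift}, by its conditional grand-canonical expectation $\E_{\rh_{\varepsilon N}}\!\big[r^\omega_i + D_T^{-1}\LLWA f^{\omega}_{i,n}\big]$ given the empirical angular densities on $B_{\varepsilon N}$, and then invoke Lemma~\ref{lemma:X2} to evaluate that expectation explicitly as $2\big(d_s(\rho_{\varepsilon N})\rho^{a,\omega\lambda_i}_{\varepsilon N} + \rho^{a,\omega}_{\varepsilon N}\rho^{a,\lambda_i}_{\varepsilon N}s(\rho_{\varepsilon N})\big)$, i.e.\ \eqref{eq:localdrift2}. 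For the gradient block I would first perform a discrete summation by parts, transferring the factor $N$ onto a discrete derivative of $G$, and then use a two-block estimate to pass from the local gradients $\del_i\eta^{a,\omega},\del_i\eta$ with configuration-dependent coefficients to the mesoscopic gradients $\del_i\rho^{a,\omega}_{\varepsilon N},\del_i\rho_{\varepsilon N}$ with coefficients frozen at the empirical densities $\rho_{\varepsilon N}$, using continuity and boundedness of $d_s$ and $\mathcal{D}$ together with the continuous extension at $\rho=1$ recorded in Remark~\ref{remark:regularity}; this is \eqref{eq:localdiff}. Collecting the four contributions reconstitutes precisely \eqref{eq:fullreplace}, so the $\EN$ of the absolute value of the displayed integral is $o_N(1) + C(G,T)\,\|\mathcal{E}^{\omega}_{i,n}\|_{-1} + o_\varepsilon(1)$; taking $\limsup_N$, then $\limsup_\varepsilon$, then $n\to\infty$ yields the claim.

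The step I expect to be the main obstacle is the bookkeeping of scales around the block estimates. The auxiliary radius $n=\supp(f^{\omega}_{i,n})$, the mesoscopic scale $\varepsilon N$, and whatever fixed intermediate box appears inside the two-block argument must be ordered so that all error terms --- the $H_{-1}$ remainder $\|\mathcal{E}^{\omega}_{i,n}\|_{-1}$, the $O(1/N)$ cost of discarding $N\LLD f^{\omega}_{i,n}$, and the block-replacement errors, which degrade with $\supp(f^{\omega}_{i,n})$ --- are simultaneously negligible in the iterated limit $\limsup_{\varepsilon\to0}\limsup_{N\to\infty}$ followed by $n\to\infty$. Beyond this, the genuinely new feature relative to the pure-active analysis of~\cite{Erignoux21,mason2023exact} is that passive particles make $\rho^a$ and $\rho=\rho^a+\rho^p$ separately conserved, so the decomposition \eqref{eq:localreplace} acquires the additional gradient $\del_i\eta$; consequently the Hilbert-space / germ-of-closed-forms arguments underlying Lemma~\ref{lemma:uniform} and the conditional-expectation computation of Lemma~\ref{lemma:X2} must be carried out over the enlarged two-parameter family of grand-canonical measures $\mu_{\bah}$, which is exactly where new proofs are required.
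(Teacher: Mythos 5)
Your approach is essentially the paper's. You perform the same algebraic fusion: after inserting $\pm\, D_T^{-1}N\LLD f^{\omega}_{i,n}$ and using $\LL = D_T^{-1}(\LLD - N^{-1}\LLWA)$, the quantity $Y_i^{\varepsilon N}+Z_i^{\varepsilon N}$ splits into the paper's three blocks $\mathcal{V}_i^{f^{\omega}_{i,n},\varepsilon N}$, $\mathcal{R}_i^{f^{\omega}_{i,n},\varepsilon N}$, and $-D_T^{-1}N\LLD f^{\omega}_{i,n}$, and you then dispatch them with exactly the three ingredients the paper does (Lemmas~\ref{lemma:diminishLD}, \ref{lemma:replaceR} via the block estimates and Lemma~\ref{lemma:X2}, and the non-gradient variance bound underpinning Lemma~\ref{lemma:replaceV}).

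The one place you should tighten is your framing of the non-gradient piece as ``$j^{a,\omega}_i = -d_s(\rho)\del_i\eta^{a,\omega} - \oba\rho^a\mathcal{D}(\rho)\del_i\eta - \LL f^{\omega}_{i,n} + \mathcal{E}^{\omega}_{i,n}$ with $\|\mathcal{E}^{\omega}_{i,n}\|_{-1}\to 0$.'' As written this is ambiguous about what $\rho$ means: Lemma~\ref{lemma:uniform} controls $\iip[\cdot]_{\bah}$ of a cylinder function whose coefficients are \emph{constants} $d_s(\alpha)$, $\mathfrak{d}(\bah)$ attached to a fixed grand-canonical parameter $\bah$, whereas the quantity you need to kill inside $\mathcal{V}_i^{f,\varepsilon N}$ has coefficients frozen at the empirical densities $\rho_{\varepsilon N}$. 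Converting the uniform-in-$\bah$ bound on the semi-norm into the dynamical $H_{-1}$ estimate requires precisely the layered decomposition of Lemma~\ref{lemma:replaceV} ($\mathcal{W}_{i,1}$: local averaging of $j+\LL f$; $\mathcal{W}_{i,2}$: full-cluster removal; $\mathcal{W}_{i,3}$: micro-to-meso replacement of coefficients and gradients; $\mathcal{W}_{i,4}$: the variance bound~\eqref{eq:limitW} over canonical states via equivalence of ensembles). You do mention block estimates for the gradient piece, but you fold them into a one-line ``summation by parts plus two-block,'' omitting the local-average and full-cluster steps and not making explicit that the $H_{-1}$ control has to be stated against canonical measures on boxes (hence Theorem~\ref{thm:main3}) rather than a single $\mu_{\bah}$. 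This is a matter of precision in the sketch, not a wrong route; your concluding remark about ordering the scales $n$, $l$, $\varepsilon N$, $N$ shows you are aware of the issue.
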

Before giving the proof of Lemma~\ref{lemma:nongradreplacement}, 
we now close the equation~\eqref{eq:dynkin}. 
For any $\delta>0$,  Lemma~\ref{lemma:nongradreplacement} and~\eqref{eq:mgvanish} 
together yield 
\begin{equation}\label{eq:limittilde}
    \limsup_{\varepsilon \to 0} \lim_{N\to \infty} 
    \PN \Big( \big| \widetilde{M}^{a,H,N,\varepsilon}_T \big|\geq \delta \Big) =0,
\end{equation}
where 
\begin{equation}\label{eq:mgtilde}
    \begin{aligned}
        \widetilde{M}^{a,H,N,\varepsilon}_T=&\big< \pi^{a,N}_T , H_T \big> - \big< \pi^{a,N}_0 , H_0 \big>-
        \int_{0}^T \big< \pi^{a,N}_t ,\partial_t H_t + D_R \partial_{\theta}^2 H_t\big>\dd t\\
        &+D_T\sum_{i=1,2}\int_{0}^T \dd t \frac{1}{N^2}\sum_{x\in \mathbb{T}^2_N}
        \tau_x\Big(  N(d_s(\rho_{\varepsilon N}) \del_i \rho^{a,\omega}_{\varepsilon N}
        + \rho^{a,\omega}_{\varepsilon N} \mathcal{D}(\rho_{\varepsilon N}) 
        \del_i \rho_{\varepsilon N}) \\
        &\qquad\qquad\qquad  -2( d_s(\rho_{\varepsilon N}) \rho^{a,\omega \lambda_i }_{\varepsilon N} 
        +\rho^{a,\omega}_{\varepsilon N}\rho^{a, \lambda_i }_{\varepsilon N} s(\rho_{\varepsilon N})
        ) \Big)(t) \partial_{i,N}G_t (x/N).
    \end{aligned}
\end{equation}

\begin{proof}[Proof of Lemma~\ref{lemma:nongradreplacement}]
Following the ideas illustrated in~\eqref{eq:localcomb},~\eqref{eq:localdiff}, and~\eqref{eq:localdrift2}, 
by adding and subtracting $N \LLD f^{\omega}_{i,n}$, we rewrite 
\[ 
Y_i^{\varepsilon N}+Z_i^{\varepsilon N} 
=\mathcal{V}_i^{f^{\omega}_{i,n},\varepsilon N} + \mathcal{R}_i^{f^{\omega}_{i,n},\varepsilon N}
-D_T^{-1} N\LLD f^{\omega}_{i,n}, \quad \forall n\in \mathbb{N},
\]
where 
\[ 
    \begin{aligned}
        &\mathcal{V}_i^{f,l}=N\big( j^{a,\omega}_i +  d_s(\rho_{l}) \bm{\delta}_i \rho^{a,\omega}_{l}
        + \rho^{a,\omega}_{l} \mathcal{D}(\rho_{l}) 
        \del_i \rho_{l} + \LL f \big),\\
        &\mathcal{R}_i^{f,l} = r_i^\omega + D_T^{-1} \LLWA f - 
    2\big( d_s(\rho_{l}) \rho^{a,\omega \lambda_i }_{l} 
    +\rho^{a,\omega}_{l}\rho^{a, \lambda_i }_{l} s(\rho_{l}) \big).
    \end{aligned}
\]
Now, the following Lemma~\ref{lemma:replaceV},~\ref{lemma:replaceR}, and~\ref{lemma:diminishLD} 
together conclude the proof of Lemma~\ref{lemma:nongradreplacement}.
\end{proof}
\begin{lemma}[Non-gradient replacement]\label{lemma:replaceV}
    For any $G\in C^{1,2}([0,T]\times \TT)$, and $\omega \in C^2(\SSS)$, 
    For each $i=1,2$, there exists a sequence  of cylinder functions $(f^{\omega}_{i,n})_{n\in \mathbb{N}}$ such that  
    \[ 
        \lim_{n\to \infty}\limsup_{\varepsilon\to 0} \limsup_{N\to \infty} \EN \Bigg[ \Bigg| 
    \int_{0}^T \frac{1}{N^2}\sum_{x\in \TN}G_t(x/N)\tau_x \mathcal{V}_i^{f^{\omega}_{i,n},\varepsilon N}(\etah(t)) \dd t
    \Bigg| \Bigg]=0.
    \]
\end{lemma}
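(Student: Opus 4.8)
The plan is to follow the Varadhan–Quastel non-gradient scheme as adapted in~\cite{Erignoux21,mason2023exact}, producing the cylinder functions $f^{\omega}_{i,n}$ as the minimizers (or near-minimizers) of a variational problem in an appropriate Hilbert space attached to the grand canonical measures $\mu_{\bah}$. First I would reduce the time-integrated expectation to a static estimate: by the standard replacement of the path expectation $\EN[\,\cdot\,]$ with an entropy/Dirichlet-form bound (the $L^2$ energy estimate, using that the entropy of $\mu^N$ relative to $\mu^*_{\ba}$ is $O(N^2)$ and that $\LLS_N$ contributes $N^2 D_T$ to the generator), it suffices to control, uniformly over the relevant density parameters, a variational expression of the form $\langle \mathcal{V}_i^{f,l}, (\text{test density})\rangle$ against the Dirichlet form. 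This is the place where the decomposition $L_N = N^2 D_T\LLS_N + N\LLWA_N + \LLR_N$ matters: the $N^2$ in front of $\LLS_N$ is exactly what is needed to absorb the factor $N$ sitting in $\mathcal{V}_i^{f,l}$ after one integration by parts, leaving a bounded quadratic form.

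Next I would invoke the local central limit / equivalence of ensembles reduction to pass from the grand canonical measures to the canonical ones on a box $B_\ell$, and then set up the germ Hilbert space $\mathcal{H}$ spanned by the currents $j^{a,\omega}_i$, the gradients $\{\del_i\eta^{a,\omega},\del_i\eta\}$, and $\LLS$-exact functions, with inner product given by the limiting variance $\lim_\ell \ell^{-2}\langle\cdot,\cdot\rangle_{\mu_{\bah},\ell}$ appearing in the spectral-gap-weighted norm. The content of the non-gradient decomposition~\eqref{eq:localreplace} (made precise in Lemma~\ref{lemma:uniform}, which I may assume) is that in this Hilbert space $j^{a,\omega}_i$ lies in the closure of $\mathrm{Span}\{\del_i\eta^{a,\omega},\del_i\eta\}\oplus \mathrm{Range}(\LLS)$, with the specific coefficients $-d_s(\rho)$ and $-\ob^a\rho^a\mathcal{D}(\rho)$ forced by projecting onto the gradient directions and matching first-order Taylor coefficients of the expectations under $\mu_{\bah}$ — here the extra passive degree of freedom enlarges the gradient subspace from $\{\del_i\eta^\omega\}$ to the pair $\{\del_i\eta^{a,\omega},\del_i\eta\}$, which is the novelty relative to~\cite{Erignoux21}. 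Choosing $f^{\omega}_{i,n}$ to realize this approximation up to error $1/n$ in the $\mathcal{H}$-norm, the static variational quantity controlling $\mathcal{V}_i^{f^{\omega}_{i,n},\ell}$ is then $O(1/n)$ plus a boundary/finite-size error that vanishes as $\ell=\varepsilon N\to\infty$ after $N\to\infty$.

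Finally I would chain the estimates in the order $N\to\infty$, then $\varepsilon\to 0$, then $n\to\infty$: the one-block and two-block estimates replace $\tau_x\mathcal{V}_i^{f^{\omega}_{i,n},\varepsilon N}$ by its average over a mesoscopic box and then by its $\mu_{\bah}$-expectation with $\bah=\widehat{\bm\rho}_{\varepsilon N}$, the equivalence of ensembles identifies the gradient coefficients $d_s(\rho_{\varepsilon N})$ and $\rho^{a,\omega}_{\varepsilon N}\mathcal{D}(\rho_{\varepsilon N})\del_i\rho_{\varepsilon N}$ with the expectations of the corresponding local quantities up to $o_N(1)$, and the residual is the $\mathcal{H}$-norm error $\lesssim 1/n$. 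The main obstacle I anticipate is the spectral gap input: to run the $L^2$ energy estimate and the two-block estimate one needs the spectral gap of the symmetric generator $\LLS_N$ restricted to a box, with a bound uniform in the number and in the \emph{angular configuration} of particles of both types — this is precisely the generalization of Quastel's and Erignoux's spectral-gap estimates that Section~\ref{sec:5} is devoted to, and the delicate point is that the continuous orientations plus two species mean one cannot simply cite a finite-type result; the gap must be shown to degrade at most polynomially in the box size uniformly over $\Sigma_{B_\ell}$, and the non-gradient variational bound must be shown to be continuous in the density parameter $\bah$ (including up to the boundary $\alpha\to 1$, where $d_s$ and $\mathcal{D}$ are only continuously extended, cf. Remark~\ref{remark:regularity}).
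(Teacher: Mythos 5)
Your outline follows the same Varadhan--Quastel scheme the paper uses: reduce via entropy/Dirichlet estimates to a static variational bound, average over mesoscopic boxes with one-block and two-block estimates, pass to canonical measures by equivalence of ensembles, and identify the gradient coefficients $d_s(\rho)$ and $\overline{\omega}^a\rho^a\mathcal{D}(\rho)$ through the closed-form/Hilbert-space decomposition of Theorem~\ref{thm:main3}, with $(f^{\omega}_{i,n})$ taken as near-minimizers as in Lemma~\ref{lemma:uniform}. The order of limits you give ($N\to\infty$, then $\varepsilon\to 0$, then $n\to\infty$) is also the paper's order. So the approach is right.

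There is, however, one concrete step missing, and your phrasing of the spectral gap input is where the gap shows. You ask for a spectral gap of $\mathcal{L}_l$ on canonical hyperplanes ``uniformly over $\Sigma_{B_\ell}$,'' but no such uniform bound exists: on a hyperplane $\Sigma_l^{\widehat K}$ with fewer than two empty sites the SSEP is not even irreducible, so there is no gap at all, and as $K \to |B_l|$ the gap degenerates. The paper handles this by inserting the cutoff indicator $\mathbbm{1}_{E_r}$ (at least two holes in $B_r$) into the gradient average, splitting $\del_i\rho^{a,\omega}_{\varepsilon N}$ into a ``good'' part $\del_i\rho^{a,\omega,r}_{\varepsilon N}$ and a ``full-cluster'' remainder $\del_i\overline{\rho}^{a,\omega,r}_{\varepsilon N}$; the latter is the term $\mathcal{W}_{i,2}^{\varepsilon N,r}$, and a separate argument (Lemma~\ref{lemma:W2}, resting on the density estimate of Proposition~\ref{prop:full_cluster}) shows it is negligible after $r\to\infty$. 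Without this cutoff your chain of reductions cannot even state the variational inequality on the canonical surface, so the static step you describe would fail on the nearly full hyperplanes. The rest of your argument (the reduction to~\eqref{eq:limitW}, the structure of $\mathcal{H}^\omega_{\bah}$, the projection of $j_i^{a,\omega}$ onto $\mathrm{Span}\{\del_i\eta^{a,\omega},\del_i\eta\}\oplus\overline{\LL\To}$, the compactness of $(\mathbb{M}_1(\SSS),d)$ to make the approximation uniform in $\bah$) matches the paper's route, and your identification of the enlarged gradient span as the active--passive novelty is on point.
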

Due to the involvement of the non-gradient decomposition, 
the proof of Lemma~\ref{lemma:replaceV} is the most technical one, 
and hence has to be handled carefully. We postpone it to Section~\ref{sec:replaceV}.  
\begin{lemma}[Drift term replacement]\label{lemma:replaceR}
    Let $G\in C^{1,2}([0,T]\times \TT)$,  $\omega \in C^2(\SSS)$, and  $(f^{\omega}_{i,n})_{n\in \mathbb{N}}$ 
    be as defined in Lemma~\ref{lemma:replaceV}. It holds that for $i=1,2$,
    \[ 
        \lim_{n\to \infty} \limsup_{\varepsilon\to 0} \limsup_{N\to \infty} \EN \Bigg[ \Bigg| 
    \int_{0}^T \frac{1}{N^2}\sum_{x\in \TN}G_t(x/N)\tau_x \mathcal{R}_i^{f^{\omega}_{i,n},\varepsilon N}(\etah(t)) \dd t
    \Bigg| \Bigg]=0.
    \]
\end{lemma}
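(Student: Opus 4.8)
The strategy is to reduce the integrated expectation to a static estimate via the standard one-block/two-block machinery, and then to identify the limiting value of the space-averaged drift current using the equilibrium state $\mu_{\bm{\hat{\rho}}}$. More concretely, I would proceed as follows. First, since $\mathcal R_i^{f,l}$ is a local function (for fixed $n$, recall $f^\omega_{i,n}$ is a cylinder function, so $D_T^{-1}\LLWA f^\omega_{i,n}$ is local and bounded, and $r_i^\omega$ is local and bounded), I invoke the entropy inequality together with the bound on the entropy of $\mu^N$ relative to a reference product measure (e.g. $\mu^*_{\bm\alpha}$ or $\mu_{\bm{\hat\rho}}$), exactly as in the classical non-gradient scheme of~\cite{Erignoux21,mason2023exact}: the time integral $\int_0^T \frac1{N^2}\sum_x G_t(x/N)\tau_x\mathcal R_i^{f,l}$ is controlled, up to $o_N(1)$, by a variational expression of the form $\sup_h\{ \langle \mathcal R_i^{f,l},h\rangle_{\mu} - CN^2 D(\sqrt h)\}$, where $D$ is the Dirichlet form of $N^2\LLS_N$. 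Because the displacement dynamics runs at rate $N^2$ while $\mathcal R_i^{f,l}$ has bounded local support, the Dirichlet-form penalty dominates and we may replace the microscopic configuration, on each mesoscopic box $B_{\varepsilon N}$, by the grand canonical measure carrying the empirical angular profile $\widehat{\bm\rho}_{\varepsilon N}$ — this is precisely the content of the one-block estimate, which in this continuous-orientation setting relies on the spectral gap estimate proven later in Section~\ref{sec:5}.

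After the one-block reduction, the problem becomes the purely static computation of $\E_{\widehat{\bm\rho}_{\varepsilon N}}[r_i^\omega + D_T^{-1}\LLWA f^\omega_{i,n}]$, and here I would use Lemma~\ref{lemma:X2} (invoked in the text around~\eqref{eq:localdrift2}), which asserts exactly that this grand-canonical expectation equals $2\big(d_s(\rho_{\varepsilon N})\rho^{a,\omega\lambda_i}_{\varepsilon N} + \rho^{a,\omega}_{\varepsilon N}\rho^{a,\lambda_i}_{\varepsilon N} s(\rho_{\varepsilon N})\big)$ up to an error that vanishes as $n\to\infty$ (the $\LLWA f^\omega_{i,n}$ piece is what supplies the $s(\rho)$-term in the limit — it is the asymmetric counterpart of the non-gradient decomposition~\eqref{eq:localreplace}). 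Subtracting this from the definition of $\mathcal R_i^{f,l}$ shows that the space average of $\tau_x\mathcal R_i^{f^\omega_{i,n},\varepsilon N}$ is, after the one-block estimate, a function only of the empirical measure $\pi^{\sigma,N}$ smeared over boxes of size $\varepsilon N$, minus its own mesoscopic average — so a two-block / continuity argument drives it to zero as first $N\to\infty$, then $\varepsilon\to0$, then $n\to\infty$. The roles of the three limits must be kept in the order stated: $n\to\infty$ outermost so that the non-gradient approximation error from $f^\omega_{i,n}$ is sent to zero last, consistent with Lemma~\ref{lemma:replaceV}.

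The main obstacle is the one-block estimate in the presence of continuous orientations and two particle types. Unlike the finite-type case, the local state space is infinite-dimensional (each occupied site carries an $\SSS$-valued angle), so the equivalence of ensembles and the replacement of the canonical measure by the grand-canonical one must be handled through the measure-valued parameter $\widehat{\bm\rho}_{\varepsilon N}\in\mathcal M_1(\SSS)$ rather than a finite density vector; this is where the generalized spectral gap estimate of Section~\ref{sec:5} is indispensable, and where the extra degree of freedom coming from the active/passive split (captured by carrying the pair $(\hat\alpha^a,\hat\alpha^p)$ rather than a single measure) forces the adaptations alluded to in the introduction. A secondary technical point is verifying the cutoff: $r_i^\omega$ and $\LLWA f^\omega_{i,n}$ must be shown to be uniformly integrable so that the bounded-function hypotheses of the replacement lemmas apply, which follows since $\omega\in C^2(\SSS)$ and $\lambda_i$ is bounded, making all these currents bounded by a constant depending only on $\|\omega\|_\infty$, $v_0/D_T$, and $\|f^\omega_{i,n}\|$. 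Once Lemma~\ref{lemma:X2} and the one-block estimate are in hand, the remainder is the routine two-block argument already used for~\eqref{eq:localdiff}.
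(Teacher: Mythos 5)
Your proposal matches the paper's proof in every essential respect: the paper also splits $\mathcal{R}_i^{f^\omega_{i,n},\varepsilon N}$ by adding and subtracting $\E_{\bm{\hat{\rho}}_{\varepsilon N}}[r_i^\omega + D_T^{-1}\LLWA f^\omega_{i,n}]$, then invokes Lemma~\ref{lemma:X1} (the one-block/two-block replacement, whose continuous-orientation form relies on the equivalence of ensembles and the compact metric on $\mathbb{M}_1(\SSS)$) for the fluctuation piece, and Lemma~\ref{lemma:X2} (the uniform static computation, driven by the non-gradient decomposition of Lemma~\ref{lemma:uniform}) to show the expectation piece tends to zero uniformly in $\bah$ as $n\to\infty$. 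Your remarks on the order of limits, the boundedness of $r_i^\omega$ and $\LLWA f^\omega_{i,n}$, and the role of the spectral gap for the measure-valued parameter space are all consistent with the paper's argument.
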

The ideas of Lemma~\ref{lemma:replaceR} is the replacement~\eqref{eq:localdrift} 
and the computation of the expectation in Lemma~\ref{lemma:X2}. 
Therefore, Lemma~\ref{lemma:replaceR} follows directly from Lemma~\ref{lemma:X1} and~\ref{lemma:X2}. 
Its proof is shown in Section~\ref{sec:replaceR}. 
\begin{lemma}\label{lemma:diminishLD}
    Let $G\in C^{1,2}([0,T]\times \TT)$,  $\omega \in C^2(\SSS)$, and  $(f^{\omega}_{i,n})$ 
    be as defined in Lemma~\ref{lemma:replaceV}. It holds that for $i=1,2$, 
    \[ 
        \lim_{n\to \infty} \limsup_{N\to \infty}\EN \Bigg[ \Bigg| 
    \int_{0}^T \frac{1}{N^2}\sum_{x\in \TN}G_t(x/N)\tau_x \LLD f^{\omega}_{i,n}(\etah(t)) \dd t
    \Bigg| \Bigg]=0.
    \]
\end{lemma}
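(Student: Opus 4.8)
The plan is to prove Lemma~\ref{lemma:diminishLD} using the classical martingale/energy estimate that controls time-integrals of terms of the form $L_N g$ for a cylinder function $g$. The key observation is that $\LLD f^{\omega}_{i,n}$ differs from $N^{-2}L_N f^{\omega}_{i,n}$ only by lower-order pieces: indeed $L_N = N^2 D_T \LLS_N + N\LLWA_N + \LLR_N$, so $D_T \LLS_N f + N^{-1}\LLWA_N f = \LLD_N f$, and hence
\[
\frac{1}{N^2}\sum_{x\in\TN} G_t(x/N)\tau_x \LLD f^{\omega}_{i,n}
= \frac{1}{N^2}\sum_{x\in\TN}G_t(x/N)\tau_x\Big( \frac{1}{N^2}L_N f^{\omega}_{i,n} - \frac{1}{N^2}\LLR_N f^{\omega}_{i,n}\Big).
\]
The $\LLR_N$ contribution is trivially $O(N^{-2})$ in $L^\infty$ since $\LLR_N f^{\omega}_{i,n}$ is a bounded cylinder function (its angular derivatives of $\omega$ are bounded and $\supp(f^{\omega}_{i,n})$ is fixed), so after the $N^{-2}$ prefactor and time integration it vanishes. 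Thus the whole matter reduces to controlling $\int_0^T N^{-4}\sum_x G_t(x/N)\tau_x L_N f^{\omega}_{i,n}(\etah(t))\,\dd t$.

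First I would apply Dynkin's formula to the function $t\mapsto \tfrac{1}{N^2}\sum_x G_t(x/N)\tau_x f^{\omega}_{i,n}(\etah(t))$ (or rather to $\tfrac1{N^4}\sum_x$, tracking the scaling carefully): the time integral of $L_N$ applied to this observable equals a boundary term plus a martingale, minus the $\partial_t G$ term. The boundary term is $\le C N^{-2}\|f^{\omega}_{i,n}\|_\infty \|G\|_\infty$ after dividing by the appropriate power of $N$, hence negligible as $N\to\infty$ for fixed $n$; the $\partial_t G$ term carries the same $N^{-2}$ gain and is likewise negligible. The martingale term is handled as in~\eqref{eq:mgvanish}: a direct computation of the quadratic variation shows $[\,\cdot\,]_T \le C_n T N^{-2}$ (the generator has rate $N^2$ on each of $O(N^2)$ bonds, each jump changes $\tau_x f^{\omega}_{i,n}$ only for $x$ in a neighborhood of the bond, and we carry an overall $N^{-2}$ or $N^{-4}$ prefactor), so Burkholder--Davis--Gundy and Markov's inequality give that the martingale converges to $0$ in $\PN$-probability, and since it is bounded in $L^2$ uniformly it also vanishes in $L^1$-expectation. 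Combining these, the inner $\limsup_{N\to\infty}$ of the expectation is $0$ for each fixed $n$, so the outer $\lim_{n\to\infty}$ is trivially $0$ as well.

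The point worth emphasizing, and the only mild subtlety, is the order of limits: here we do \emph{not} need any $n$-uniformity (unlike in Lemma~\ref{lemma:replaceV}), because $N\to\infty$ is taken first with $n$ fixed, and for fixed $n$ the function $f^{\omega}_{i,n}$ is a fixed bounded cylinder function with fixed finite support, so all constants $C_n$ appearing above are finite and the $N^{-2}$ gain from the diffusive scaling kills every term. Concretely, I would write $g_n \coloneqq f^{\omega}_{i,n}$, note $\|g_n\|_\infty<\infty$ and $|\supp(g_n)|<\infty$, and run the estimate
\[
\EN\Bigg[\Bigg|\int_0^T \frac{1}{N^2}\sum_{x\in\TN}G_t(x/N)\tau_x \LLD g_n(\etah(t))\,\dd t\Bigg|\Bigg]
\le \frac{C(n,G,T)}{N}\xrightarrow[N\to\infty]{}0,
\]
so that taking $\lim_{n\to\infty}$ afterwards is immediate. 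The main (and essentially only) obstacle is bookkeeping the powers of $N$ correctly when passing from $\LLD = N^{-2}(L_N-\LLR_N)$ to the Dynkin martingale of the $N^{-2}$-normalized empirical observable; once that is done the argument is a routine instance of the replacement-by-martingale technique already invoked for~\eqref{eq:mgvanish}.
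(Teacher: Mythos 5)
Your proposal is essentially the paper's own approach: the paper delegates this lemma to the classical Dynkin--martingale estimate (citing Lemma~6.7.2 of~\cite{Erignoux21}), and your argument is precisely that estimate, including the key observation that no $n$-uniformity is required because $N\to\infty$ is taken first with $f^{\omega}_{i,n}$ a fixed bounded cylinder function. Two small bookkeeping remarks: (i) for the observable $\phi_N=\tfrac{1}{N^4}\sum_x G_t(x/N)\tau_x f^{\omega}_{i,n}$ the quadratic variation is actually $O(TN^{-4})$, not $O(TN^{-2})$ as you state --- each bond gradient is $O(N^{-4})$ since only $O(1)$ translates $\tau_x f^{\omega}_{i,n}$ see any given bond, and $N^2\cdot N^2\cdot(N^{-4})^2 = N^{-4}$ --- so the margin is even larger than you claim; (ii) to close the proof of Lemma~\ref{lemma:nongradreplacement} one actually needs the estimate with an extra factor $N$ in front of $\LLD f^{\omega}_{i,n}$ (cf.\ the term $-D_T^{-1}N\LLD f^{\omega}_{i,n}$ in the decomposition of $Y_i^{\varepsilon N}+Z_i^{\varepsilon N}$), which the printed statement seems to have dropped; your $O(1/N)$ final bound already has exactly the slack to absorb that missing factor, so the argument is robust to this discrepancy.
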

The proof of Lemma~\ref{lemma:diminishLD} 
follows  similar martingale estimate in Lemma 6.7.2 on page 122 of~\cite{Erignoux21}, we omit it for brevity. 

\subsection{Relative compactness and regularity of the densities}\label{subsec:compactreg}
The proof of the relative compactness of $\big(Q^N \big)_{N\in \mathbb{N}}$ is  standard 
and relies on Aldous criterion  which gives a necessary and sufficient condition for a sequence of measures
to be weak relative compact in the Skorokhod topology. Following the same arguments as those for the pure active exclusion process, 
see Proposition 5.2.2 on page 71 and Theorem 5.3.1 on page 76 of~\cite{Erignoux21}, we can have similar results.
\begin{lemma}[Relative compactness and regularity of the densities]\label{lemma:compactreg}
    The sequence $(Q^N)_{N\in \mathbb{N}}$  is relative compact, 
    and any of its accumulation points $Q^*$ is concentrated on trajectories that are
    \begin{enumerate}
        \item  absolutely continuous with respect to the  Lebesgue measure on $\TT$:
        \[ 
        Q^*\big( \big\{ \pi:
        \pi_t^\sigma(\dd u,\dd \theta)= \hat{f}^\sigma(u,\dd \theta) \dd u
        \quad \forall t \in [0,T],\sigma\in \{a,p\}
        \big\}  \big)=1.
        \]
        \item such that the local density 
        \[ 
        \rho_t^\sigma(u)=\int_{\SSS} \hat{f}_t^\sigma(u,\dd \theta), 
        \]
        is in $H^1([0,T]\times \TT )$, meaning  that, $Q^*$-a.s., for $i=1,2$,  there exists a function 
        $\partial_{u_i}\rho^\sigma\in L^2([0,T]\times \TT )$ such that for any  test function $H\in C^{\infty}([0,T]\times \TT )$, 
        \[
            \int_0^T \int_{\TT} \partial_{u_i}H_t(\theta)\rho_t^\sigma(u)\dd \theta \dd t=
            -\int_0^T \int_{\TT} H_t(\theta)\partial_{u_i}\rho_t^\sigma(u)\dd \theta \dd t.
        \]
    \end{enumerate}
\end{lemma}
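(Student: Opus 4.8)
The plan is to follow the standard two-step route for relative compactness and regularity in the non-gradient setting, adapting the arguments of~\cite{Erignoux21} (Proposition 5.2.2 and Theorem 5.3.1) and~\cite{mason2023exact} to the active-passive case; since all the relevant currents have already been decomposed in Section~\ref{subsect:nongrad}, the active-passive structure adds no genuinely new difficulty here, only bookkeeping for the extra passive species.

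\textit{Relative compactness.} First I would establish tightness of $(Q^N)$ via Aldous' criterion. Because $\pi^{\sigma,N}_t$ takes values in $\mathcal{M}_1(\TT\times\SSS)$, which is compact in the weak topology, it suffices to control the time-modulus of continuity of $\langle \pi^{\sigma,N}_t, H\rangle$ for $H$ in a countable dense subset of $C(\TT\times\SSS)$, and by the Weierstrass reduction~\eqref{eq:testfunc} we may take $H=G(u)\omega(\theta)$. For such $H$, Dynkin's formula~\eqref{eq:dynkin} together with the current expansions~\eqref{eq:dyna}--\eqref{eq:dynp} expresses $\langle\pi^{\sigma,N}_t,H\rangle$ as its initial value plus the integral of bounded (in $N$, after the discrete integration by parts that absorbs one factor of $N$, and using that $G\in C^{1,2}$) functionals of the configuration, plus the martingale $M^{\sigma,H,N}_t$. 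The martingale part is controlled by~\eqref{eq:mgvanish} (its quadratic variation is $O(t\|H\|_\infty/N^2)$), and the drift part has bounded time-derivative uniformly in $N$ because the remaining sums $\frac{1}{N^2}\sum_x \tau_x(Nj^{a,\omega}_i+\ro_i)\partial_{i,N}G$ still carry a factor $N$ multiplying $j^{a,\omega}_i$; this is exactly the point where one must invoke the non-gradient replacement — after Lemma~\ref{lemma:nongradreplacement} the problematic $Nj^{a,\omega}_i$ term is replaced by $-N(d_s(\rho_{\varepsilon N})\del_i\rho^{a,\omega}_{\varepsilon N}+\rho^{a,\omega}_{\varepsilon N}\mathcal{D}(\rho_{\varepsilon N})\del_i\rho_{\varepsilon N})$, and a further discrete integration by parts in Section~\ref{subsec:secondint} transfers the gradient onto $G$, leaving an $O(1)$ integrand. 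This yields the Aldous estimate $\mathbb{E}_{\mu^N}[|\langle\pi^{\sigma,N}_{\tau+\theta},H\rangle - \langle\pi^{\sigma,N}_\tau,H\rangle|]\le C(\theta + o_N(1))$ uniformly over stopping times $\tau\le T$, hence tightness, hence relative compactness of $(Q^N)$; and absolute continuity of the limiting $\pi^\sigma_t$ with respect to $\dd u$ follows because $\pi^{\sigma,N}_t(\dd u,\dd\theta)$ is bounded by $\frac{1}{N^2}\sum_x\delta_{x/N}\otimes(\text{point mass})$, which converges to a measure with bounded density in $u$ (the argument is the one on page 71 of~\cite{Erignoux21}, applied coordinatewise to $\sigma\in\{a,p\}$).

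\textit{Sobolev regularity of the density.} For the second assertion I would use the standard energy estimate: the claim is that $\rho^\sigma_t(u)=\int_\SSS\hat f^\sigma_t(u,\dd\theta)$ lies in $L^2([0,T];H^1(\TT))$. Taking $\omega\equiv 1$ in the test function reduces the empirical measure to the total-density field $\frac{1}{N^2}\sum_x\eta^\sigma_x\delta_{x/N}$. One then writes, for any smooth vector field $H=(H_1,H_2)$ on $[0,T]\times\TT$, the identity obtained by summation by parts expressing $\int_0^T\sum_i\langle \partial_{u_i}H_i, \rho^{\sigma,N}_t\rangle\,\dd t$ in terms of $\frac1{N^2}\sum_{x,i}\tau_x(\text{current})\,H_i(x/N,t)$, and uses the Cauchy--Schwarz/entropy inequality together with the spectral gap estimate (Section~\ref{sec:5}) and the bound on the current two-point functions to get
\[
\mathbb{E}_{Q^*}\Bigl[\int_0^T\sum_{i=1,2}\langle\partial_{u_i}H_i,\rho^\sigma_t\rangle\,\dd t - C\int_0^T\|H_t\|_{L^2}^2\,\dd t\Bigr]\le C'
\]
uniformly in $H$; optimizing over $H$ in a countable dense family and applying the Riesz representation theorem identifies $\partial_{u_i}\rho^\sigma\in L^2([0,T]\times\TT)$. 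This is precisely the scheme of Theorem 5.3.1 on page 76 of~\cite{Erignoux21}; the only adaptation is that the relevant current $j^{\sigma,1}_i$ for the passive species is of the clean SSEP form (its decomposition has no drift contribution), so the $\sigma=p$ estimate is strictly easier than $\sigma=a$, and for $\sigma=a$ the extra terms coming from $r^\omega_i$ and $\LLWA$ are lower order (carry no extra factor $N$) and are absorbed into the constant $C'$.

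\textit{Main obstacle.} The genuinely delicate input is not internal to this lemma but is the non-gradient machinery it relies on: the energy estimate and the Aldous bound both need the replacement of $Nj^{a,\omega}_i$ by a discrete gradient with the correct diffusion coefficients, i.e.\ Lemma~\ref{lemma:nongradreplacement} (hence Lemma~\ref{lemma:replaceV}) and the spectral gap estimate of Section~\ref{sec:5}. Granting those, the proof of Lemma~\ref{lemma:compactreg} is a routine transcription of the pure-active argument, carried out separately and in parallel for $\sigma=a$ and $\sigma=p$; I would simply remark that the passive case is a special case of the active one with $v_0$ effectively set to zero along the passive current, so no new estimate is required beyond what Section~\ref{sec:5} provides.
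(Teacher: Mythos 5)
Your overall plan — transcribe Erignoux's Proposition 5.2.2 and Theorem 5.3.1, running the argument coordinatewise for $\sigma\in\{a,p\}$ — is indeed exactly what the paper does (the paper gives no further details and simply cites those two results). Your treatment of absolute continuity of the accumulation points and your sketch of the $H^1$ energy estimate via duality and Riesz representation are also essentially right. However, there is a genuine problem in your tightness argument that needs to be flagged.

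\medskip
\noindent\textbf{The tightness route via Lemma~\ref{lemma:nongradreplacement} is the wrong tool.}
You claim that to control the drift part of the Aldous modulus one ``must invoke the non-gradient replacement,'' and then use Lemma~\ref{lemma:nongradreplacement} together with the second integration by parts to produce an $O(1)$ integrand. There are three distinct difficulties with this.
First, it is not in fact what the cited reference does: Erignoux's Proposition 5.2.2 establishes tightness \emph{before} and \emph{independently of} the non-gradient section. The standard tool for bounding the drift modulus in non-gradient systems is the entropy inequality combined with the Feynman--Kac variational formula for the additive functional $\int_\tau^{\tau+\theta} N^{-1}\sum_x \tau_x j_i^{\sigma,\omega}\,\partial_{i,N}G\,\dd s$; the resulting bound exploits only the entropy production estimate (Proposition~\ref{prop:entropy}) and the variational characterisation of the largest eigenvalue, and costs nothing from the non-gradient machinery. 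Second, even if you wanted to go through Lemma~\ref{lemma:nongradreplacement}, the post-replacement drift term $N^{-1}\sum_x \tau_x\bigl(N D_0^{\varepsilon N}\del_i\rho_{\varepsilon N}\bigr)\partial_{i,N}G$ is \emph{not} $O(1)$: without any a priori regularity on $\rho$, the telescoping sum $\del_i\rho_{\varepsilon N}$ is only $O\bigl((\varepsilon N)^{-1}\bigr)$, so the integrand is $O(1/\varepsilon)$ (or $O(\varepsilon^{-3})$ after the reformulation of Lemma~\ref{lemma:I2}). To deduce an $O(1)$ bound one would need the $H^1$ estimate on $\rho$ — which is item (2) of the very lemma you are trying to prove — so the route as written is circular. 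Third, Lemma~\ref{lemma:nongradreplacement} is an asymptotic statement about $\int_0^T$ in the iterated limit $N\to\infty$ then $\varepsilon\to 0$, whereas the Aldous criterion requires a uniform-in-$\tau$ estimate on $\int_\tau^{\tau+\theta}$ with the limits ordered $N\to\infty$ then $\theta\to 0$; these orderings do not match, so even a non-circular version of your argument would require a stronger, ``local-in-time'' replacement statement that the paper does not prove.

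\medskip
\noindent\textbf{Smaller issue in the energy estimate.} You cite ``the spectral gap estimate (Section~\ref{sec:5})'' as an ingredient of the $H^1$ energy estimate. It is not. The energy estimate of Erignoux's Theorem 5.3.1 requires only the entropy and Dirichlet-form bounds of Proposition~\ref{prop:entropy} together with the Feynman--Kac formula; the spectral gap (Proposition~\ref{thm:3}) enters exclusively into the two-block estimate and the non-gradient decomposition, not here. This misattribution is harmless to the structure of the proof but worth correcting, since it suggests a dependency of Lemma~\ref{lemma:compactreg} on Section~\ref{sec:5} that does not exist — and preserving that independence is exactly what makes the tightness and regularity statements usable as inputs elsewhere.
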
   

\subsection{The second integration by parts}\label{subsec:secondint} 
After replacing  $N j^{a,\omega}_i + r^\omega_i $ in~\eqref{eq:dyna} with 
\[ 
    -N \Big(d_s(\rho_{\varepsilon N}) \bm{\delta}_i \rho^{a,\omega}_{\varepsilon N}
    + \rho^{a,\omega}_{\varepsilon N} \mathcal{D}(\rho_{\varepsilon N}) 
    \del_i \rho_{\varepsilon N}
    \Big)+2\Big( d_s(\rho_{\varepsilon N}) \rho^{a,\omega \lambda_i }_{\varepsilon N} 
    +\rho^{a,\omega}_{\varepsilon N}\rho^{a, \lambda_i }_{\varepsilon N} s(\rho_{\varepsilon N})
    \Big),  
\]which has been done in Section~\ref{subsect:nongrad}, 
we are now ready to perform the second discrete integration by parts, absorbing the remaining factor $N$. 

Due to the regularity of $\rho_t\in H^1(\TT)$ obtained in Lemma~\ref{lemma:compactreg}, 
we can hope that the limits related to $ \del_i \rho_{\varepsilon N}$ can be handled relatively straightforwardly. 
However, since we lack regularity results for $f^\sigma$,  in the limit process,  
we need to circumvent dealing directly with  $\del_i \rho^{a,\omega}_{\varepsilon N}$. 
To this goal, we use a discrete version of~\eqref{eq:rkintbyparts} to write:  
\[ 
    d_s(\rho_{\varepsilon N}) \del_i\rho^{a,\omega}_{\varepsilon N}=
    \del_i \big( d_s(\rho_{\varepsilon N}) \rho^{a,\omega}_{\varepsilon N} \big)
    -\rho^{a,\omega}_{\varepsilon N}d_s'(\rho_{\varepsilon N}) \del_i \rho_{\varepsilon N}
    +o_N(1/N).
\]
Next we sum up all terms in front of $\del_i \rho_{\varepsilon N}$:  
\[ 
\begin{aligned}
    D^{\varepsilon N}_0 \coloneqq &  \rho^{a,\omega}_{\varepsilon N} \mathcal{D}(\rho_{\varepsilon N})
        - \rho^{a,\omega}_{\varepsilon N}d_s'(\rho_{\varepsilon N})\\
        =&  \rho^{a,\omega}_{\varepsilon N} \big[ \mathcal{D}- d_s' \big](\rho_{\varepsilon N}),  
\end{aligned}
\]
and denote $D_x^{\varepsilon N}=\tau_x D^{\varepsilon N}_0$, for any $x\in \TN$. 

After above preparations, the second discrete integration by parts is performed.   
We can now rewrite~\eqref{eq:mgtilde} as: 
\begin{equation}\label{eq:mgtilde2}
    \begin{aligned}
        \widetilde{M}^{a,H,N,\varepsilon}_T=&\big< \pi^{a,N}_T , H_T \big> - \big< \pi^{a,N}_0 , H_0 \big>-
        \int_{0}^T \big< \pi^{a,N}_t ,\partial_t H_t + D_R \partial_{\theta}^2 H_t\big>\dd t\\
        &\quad  -D_T\sum_{i=1,2}\int_{0}^T \big( I_i^1(t,\etah_t)-I_i^2(t,\etah_t) \big) \dd t 
        +o_N(1),
    \end{aligned}
\end{equation}
where 
\[ 
    \begin{aligned}
        I_i^1(t,\etah_t)=&\frac{1}{N^2}\sum_{x\in \TN} \tau_x 
        \Big[ 
        d_s(\rho_{\varepsilon N}) \rho^{a,\omega}_{\varepsilon N}  \partial_{i,N}^2 G_t(x/N)\\
        &\qquad \qquad +2\big( d_s(\rho_{\varepsilon N}) \rho^{a,\omega \lambda_i }_{\varepsilon N} 
        +\rho^{a,\omega}_{\varepsilon N}\rho^{a, \lambda_i }_{\varepsilon N} s(\rho_{\varepsilon N})
        \big)\partial_{i,N} G_t(x/N)\Big],
    \end{aligned}
\]
and
\[ 
I_i^2(t,\etah_t)=\frac{1}{N^2}\sum_{x\in \TN} \tau_x 
\big( N D^{\varepsilon N}_0 \del_i \rho_{\varepsilon N} \partial_{i,N} G_t(x/N) \big).
\]
Each term in $I^1_i$ has already been 
written in a function of empirical measures, and hence only waits taking limit.  
However, for $I^2_i$, some treatments are needed to 
utilize the regularity of $\rho\in H^1$ to absorb the remaining $N$.  
This task is done in the following lemma.
\begin{lemma}\label{lemma:I2}
 \[ 
\limsup_{\varepsilon\to 0}\limsup_{N\to \infty} 
\big| I_i^2(t,\etah) - \widetilde{I_i^2}(t,\etah) \big| = 0,\quad \text{uniformly in } \etah,
\]
where 
\[ 
    \widetilde{I_i^2}(t,\etah) = \frac{1}{N^2}\sum_{x\in \TN} 
    D_x^{\varepsilon N}
    \frac{\tau_{x+\varepsilon^3 N e_i} \rho_{\varepsilon N}-\tau_{x-\varepsilon^3 N e_i} \rho_{\varepsilon N}}
    {2\varepsilon^3}\partial_{i,N} G_t(x/N).
\]
\end{lemma}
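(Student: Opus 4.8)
The plan is to establish Lemma~\ref{lemma:I2} as a purely deterministic estimate, uniform in $\etah$, with no probabilistic input whatsoever; the $H^1$-regularity of $\rho$ is not used here and enters only later, when one actually passes to the limit in $\widetilde{I_i^2}$. The mechanism is that $N\,\del_i\rho_{\varepsilon N}$ (a one-step microscopic gradient, rescaled by $N$) and the $\varepsilon^3$-scale symmetric difference quotient appearing in $\widetilde{I_i^2}$ are, after the $\varepsilon N$-scale local averaging already built into $\rho_{\varepsilon N}$, two discretizations of the same macroscopic derivative; their difference should therefore wash out once it is integrated against the slowly varying weight $D^{\varepsilon N}_x\,\partial_{i,N}G_t(x/N)$.

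First I would set $g(x):=\tau_x\rho_{\varepsilon N}(\etah)$, so that $\tau_x(\del_i\rho_{\varepsilon N})=g(x+e_i)-g(x)$, and $M:=\varepsilon^3 N$ (an integer, after the harmless replacement by $\lfloor\varepsilon^3 N\rfloor$). A telescoping identity rewrites the symmetric quotient as an average of rescaled one-step gradients,
\[
\frac{\tau_{x+Me_i}\rho_{\varepsilon N}-\tau_{x-Me_i}\rho_{\varepsilon N}}{2\varepsilon^3}=\frac{1}{2M}\sum_{k=-M}^{M-1}N\bigl(g(x+(k+1)e_i)-g(x+ke_i)\bigr).
\]
Plugging this into $\widetilde{I_i^2}$ and shifting the summation index $x\mapsto x-ke_i$ (a bijection of $\TN$) exhibits $\widetilde{I_i^2}$ as the average over $-M\le k\le M-1$ of $\frac{1}{N^2}\sum_x\Phi_k(x)\,N\bigl(g(x+e_i)-g(x)\bigr)$, where $\Phi_k(x):=D^{\varepsilon N}_{x-ke_i}\,\partial_{i,N}G_t((x-ke_i)/N)$, while $I_i^2$ is exactly the $k=0$ term $\frac{1}{N^2}\sum_x\Phi_0(x)\,N\bigl(g(x+e_i)-g(x)\bigr)$. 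Hence it remains to bound $\frac{1}{2M}\sum_{k=-M}^{M-1}\frac{1}{N^2}\sum_x\bigl|\Phi_k(x)-\Phi_0(x)\bigr|\cdot N\bigl|g(x+e_i)-g(x)\bigr|$.

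Two elementary surface-to-volume bounds close the argument. Since $g(x)$ is an average of $\eta_y\in\{0,1\}$ over the box $B_{\varepsilon N}$ centered at $x$, the symmetric difference of two such boxes with centers at distance $|v|\le\varepsilon N$ has $O(|v|\,\varepsilon N)$ sites, so $|g(x+v)-g(x)|\le C|v|/(\varepsilon N)$ uniformly in $\etah$. Taking $v=e_i$ gives the crude but uniform bound $N|g(x+e_i)-g(x)|\le C/\varepsilon$; taking $v=-ke_i$ with $|k|\le M$ gives $|\rho_{\varepsilon N}(x-ke_i)-\rho_{\varepsilon N}(x)|\le C\varepsilon^2$, and similarly $|\rho^{a,\omega}_{\varepsilon N}(x-ke_i)-\rho^{a,\omega}_{\varepsilon N}(x)|\le C\|\omega\|_\infty\varepsilon^2$. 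Since $D^{\varepsilon N}_x=\rho^{a,\omega}_{\varepsilon N}(x)\,[\mathcal{D}-d_s'](\rho_{\varepsilon N}(x))$ with $\mathcal{D}$ and $d_s'$ bounded and Lipschitz on $[0,1]$ (Remark~\ref{remark:regularity}), this yields $|D^{\varepsilon N}_{x-ke_i}-D^{\varepsilon N}_x|\le C(\omega)\varepsilon^2$; combined with $|D^{\varepsilon N}_x|\le C(\omega)$, $\|\partial_{i,N}G_t\|_\infty\le C(G)$, and $|\partial_{i,N}G_t((x-ke_i)/N)-\partial_{i,N}G_t(x/N)|\le C(G)|k|/N\le C(G)\varepsilon^3$, one obtains $|\Phi_k(x)-\Phi_0(x)|\le C(G,\omega)\varepsilon^2$ for all $x$, all $\etah$, and all $|k|\le M$. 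Multiplying the two bounds gives $|\widetilde{I_i^2}-I_i^2|\le C(G,\omega)\varepsilon^2\cdot\varepsilon^{-1}=C(G,\omega)\varepsilon$, uniformly in $\etah$ and in $N$ (large); letting $N\to\infty$ and then $\varepsilon\to0$ finishes the proof.

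The only non-routine point — and the main obstacle — is controlling how far the mesoscopic average $D^{\varepsilon N}_x$ can move under a shift of order $\varepsilon^3 N$ lattice sites: this is precisely where the intermediate scale $\varepsilon^3$ in the definition of $\widetilde{I_i^2}$ is calibrated, so that the $O(\varepsilon^2)$ gain from the smoothness of the mesoscopic average dominates the $O(1/\varepsilon)$ loss incurred by retaining a genuine microscopic gradient. One must also be careful that this step uses boundedness and Lipschitz continuity of $d_s,\mathcal{D},d_s'$ on the full closed interval $[0,1]$, i.e. the continuous extension recorded in Remark~\ref{remark:regularity}, since a priori $\rho_{\varepsilon N}$ may take the boundary values $0$ and $1$.
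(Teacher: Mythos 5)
Your proof is correct and follows the same route as the argument the paper defers to in~\cite{Erignoux21} (pp.\ 130--131): telescope the $\varepsilon^3$-scale difference quotient into an average of shifted microscopic gradients of $\rho_{\varepsilon N}$, change the summation variable, and use deterministic surface-to-volume estimates to show that the weight $D^{\varepsilon N}_x\,\partial_{i,N}G_t(x/N)$ varies only to order $\varepsilon^2$ under shifts of size $\varepsilon^3 N$ while $N\,\del_i\rho_{\varepsilon N}$ is bounded by $C/\varepsilon$, giving the $O(\varepsilon)$ bound uniformly in $\etah$. The remark that $H^1$-regularity plays no role here and that Lipschitz continuity of $\mathcal{D}-d_s'$ up to the endpoints is needed is also the right reading of the lemma.
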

The proof of Lemma~\ref{lemma:I2} follows similar arguments as those on page 130-131 of~\cite{Erignoux21}. 
For brevity, we omit the detailed proof here. 
\subsection{Conversion to function of empirical measures}\label{subsec:funcofem}
We now wish to express the right hand side of~\eqref{eq:mgtilde2}  
as an explicit function of empirical measures. To this end, we define a family of mollifiers 
$(\varphi_{\varepsilon})$ on $\TT$:
\[ 
\varphi_{\varepsilon}(\cdot)=\frac{1}{(2\varepsilon)^2}\mathbbm{1}_{[-\varepsilon,\varepsilon]^2}(\cdot).
\]
Furthermore, for any angular function $\omega$, and any $u\in \TT$, 
we define $\varphi^{\omega}_{\varepsilon,u}:\TT \times \SSS \to \mathbb{R}$ by 
\[ 
    \varphi^{\omega}_{\varepsilon,u}(v,\theta)=\varphi_{\varepsilon}(u-v)\omega(\theta).
\]
Specially, when the angular function is the constant function $\omega\equiv 1$, 
we omit the subscript and write $\varphi_{\varepsilon,u}\coloneqq \varphi_{\varepsilon,u}^1$. 

With these notations, we can therefore write: 
\begin{equation}\label{eq:funcofem}
    \begin{aligned}
        \tau_x \rho^\sigma_{\varepsilon N} &= 
        \big< \varphi_{\varepsilon,x/N} , \pi^{\sigma,N} \big>+o_N(1), \\
        \tau_x \rho^{\sigma,\omega}_{\varepsilon N} &= 
        \big< \varphi_{\varepsilon,x/N}^\omega, \pi^{\sigma,N} \big>+o_N(1).
    \end{aligned}
\end{equation}
Now we deduce from~\eqref{eq:mgtilde2},~\eqref{eq:funcofem} and 
Lemma~\ref{lemma:I2}  that 
\begin{equation}\label{eq:funcofem2}
    \begin{aligned}
        \widetilde{M}^{a,H,N,\varepsilon}_T=&\big< \pi^{a}_T , H_T \big> - \big< \pi^{a}_0 , H_0 \big>-
        \int_{0}^T \big< \pi^{a}_t ,\partial_t H_t + D_R \partial_{\theta}^2 H_t\big>\dd t\\
        &-D_T\sum_{i=1,2}\int_{0}^T \frac{1}{N^2}\sum_{x\in \TN} \widetilde{d}_{\varepsilon,x/N}(\pi_t^a, \pi_t^p) \partial_{i,N}^2 G_t(x/N) \dd t\\
        &-2D_T\sum_{i=1,2}\int_{0}^T \frac{1}{N^2}\sum_{x\in \TN} \widetilde{R}_{\varepsilon,x/N}(\pi_t^a, \pi_t^p) \partial_{i,N} G_t(x/N) \dd t\\
        &+D_T\sum_{i=1,2}\int_{0}^T \frac{1}{N^2}\sum_{x\in \TN} \widetilde{D}_{\varepsilon,x/N}(\pi_t^a, \pi_t^p) \partial_{i,N} G_t(x/N) \dd t
        + o_{N,\varepsilon}(1),  
    \end{aligned}
\end{equation}
where we define  
\[ 
\begin{aligned}
    \widetilde{d}_{\varepsilon,x/N}(\pi_t^a, \pi_t^p)\coloneqq&
    d_s\big(\big< \varphi_{\varepsilon,x/N} , \pi_t \big>\big)
    \big< \varphi_{\varepsilon,x/N}^{\omega} , \pi^a_t \big>,\\
    \widetilde{R}_{\varepsilon,x/N}(\pi_t^a, \pi_t^p)\coloneqq&
    d_s\big(\big< \varphi_{\varepsilon,x/N} , \pi_t \big>\big)
    \big< \varphi_{\varepsilon,x/N}^{\omega\lambda_i} , \pi^a_t \big>\\
    &\quad +\big< \varphi_{\varepsilon,x/N}^{\omega} , \pi^a_t \big>
    \big< \varphi_{\varepsilon,x/N}^{\lambda_i} , \pi^a_t \big>
    s\big(\big< \varphi_{\varepsilon,x/N} , \pi_t \big>\big),\\
    \widetilde{D}_{\varepsilon,x/N}(\pi_t^a, \pi_t^p)\coloneqq&
    \big< \varphi_{\varepsilon,x/N}^{\omega} , \pi^a_t \big>\big[ 
     \mathcal{D}
    - d_s' \big]\big(\big< \varphi_{\varepsilon,x/N} , \pi_t \big>\big)\\
    &\quad \times \Big< \frac{\varphi_{\varepsilon,x/N+\varepsilon^3 e_i}
    -\varphi_{\varepsilon,x/N-\varepsilon^3 e_i}}
    {2\varepsilon^3} , \pi_t \Big>.
\end{aligned}
\]

Since $G$ is a smooth function, one can replace
in~\eqref{eq:funcofem2} 
the discrete spacial derivatives $\partial_{i,N}$ 
with the continuous derivative $\partial_{u_i}$, 
the sums $N^{-2}\sum_{x\in \TN}$ 
with the integral $\int_{\TT}\dd u$, 
and the variables $x/N$ 
with $u$ after paying an error of $o_N(1)$. 
Thus,~\eqref{eq:limittilde} and~\eqref{eq:funcofem2} imply that 
\begin{equation}\label{eq:finallimit}
    \limsup_{\varepsilon\to 0} \limsup_{N\to \infty}  Q^N\Big( \big| N^{a,H,\varepsilon}_T(\bm{\pi}^{[0,T]}) \big|\geq \delta \Big) =0,\quad \forall \delta>0, 
\end{equation}
where $N^{a,H,\varepsilon}_T(\cdot)$ is a continuous function on $\mathcal{M}^{[0,T]}\times \mathcal{M}^{[0,T]}$: 
\begin{equation}
    \begin{aligned}
        N^{a,H,\varepsilon}_T(\bm{\pi}^{[0,T]})=&\big< \pi^{a}_T , H_T \big> - \big< \pi^{a}_0 , H_0 \big>-
        \int_{0}^T \big< \pi^{a}_t ,\partial_t H_t + D_R \partial_{\theta}^2 H_t\big>\dd t\\
        &-D_T\sum_{i=1,2}\int_{0}^T \int_{\TT}
         \widetilde{d}_{\varepsilon,u}(\pi_t^a, \pi_t^p) \partial_{u_i}^2 G_t(u) \dd u \dd t\\
        &-2D_T\sum_{i=1,2}\int_{0}^T \int_{\TT} \widetilde{R}_{\varepsilon,u}(\pi_t^a, \pi_t^p) \partial_{u_i} G_t(u)\dd u \dd t\\
        &+D_T\sum_{i=1,2}\int_{0}^T \int_{\TT} \widetilde{D}_{\varepsilon,u}(\pi_t^a, \pi_t^p) \partial_{u_i} G_t(u)\dd u \dd t.
    \end{aligned}
\end{equation}

\subsection{Proof of Theorem~\ref{thm:main2}}\label{subsec:limit} 
Having completed all preparatory steps, in this section 
we gather all the previous results  to prove Theorem~\ref{thm:main2}. 

\begin{proof}[Proof of Theorem~\ref{thm:main2}] 
Now we have  balanced out all the factors $N$, it left to take the limit. 

  We proved in Lemma~\ref{lemma:compactreg} 
that the sequence of distribution $(Q^N)_{N\in \mathbb{N}}$ is relatively compact. 
Since $N^{a,H}(\cdot)$ is a continuous function 
for product Skorohod's topology on $\mathcal{M}^{[0,T]}\times \mathcal{M}^{[0,T]}$, 
we deduce from~\eqref{eq:finallimit} that 
for any weak limit $Q^*$ of $(Q^N)_{N\in \mathbb{N}}$, and any $\delta>0$, 
\begin{equation}
    \begin{aligned}
        \limsup_{\varepsilon \to 0}Q^*\big( \big|N^{a,H,\varepsilon}_T(\bm{\pi}^{[0,T]}) \big| >\delta \big)=0,
    \end{aligned}
\end{equation}
Therefore, it remains to consider the limit $\varepsilon\to 0$. 
As detailed on page 134 of~\cite{Erignoux21}, 
using the regularity results in Lemma~\ref{lemma:compactreg}, $Q^*$-a.s., we have  
\[ 
    \begin{aligned}
        \Big< \frac{\varphi_{\varepsilon,u+\varepsilon^3 e_i}
        -\varphi_{\varepsilon,u-\varepsilon^3 e_i}}
        {2\varepsilon^3} , \pi_t \Big>\to \partial_{u_i}\rho_t(u), 
        \quad \text{ in }L^2([0,T]\times \TT),
    \end{aligned}
\]
and for each $t\in [0,T]$ and a.e. $u\in \TT$, 
\[ 
    \begin{aligned}
        &\big< \varphi_{\varepsilon,u} , \pi_t \big>\to \rho_t(u), \\   
        &\big< \varphi_{\varepsilon,u}^\omega , \pi_t^a \big>\to 
        \int_{\SSS}\omega(\theta)\hat{f}^a(u, \dd \theta).
    \end{aligned}
\]
Recall we take test functions $H_t(u,\theta)=G_t(u)\omega(\theta)$, and  $\lambda_i(\theta)$ in~\eqref{eq:lambda}, we get 
\begin{equation}\label{eq:39}
    \begin{aligned}
        Q^*\Bigg( \Bigg| & \big< \pi^{a}_T , H_T \big> - \big< \pi^{a}_0 , H_0 \big>-
        \int_{0}^T \big< \pi^{a}_t ,\partial_t H_t + D_R \partial_{\theta}^2 H_t\big>\dd t\\
        &-D_T\sum_{i=1,2}\int_{0}^T \int_{\TT\times \SSS} \partial_{u_i}^2 H_t(u,\theta)
     d_s(\rho_t)\hat{f}^a(u,\dd \theta) \dd u \dd t\\
     &-2D_T\sum_{i=1,2}\int_{0}^T \int_{\TT\times \SSS} \partial_{u_i} H_t(u,\theta) 
      d_s(\rho_t) \lambda_i(\theta)  \hat{f}^a(u,\dd \theta) \dd u \dd t\\
     &-2D_T\sum_{i=1,2}\int_{0}^T \int_{\TT\times \SSS}  \partial_{u_i} H_t(u,\theta)
      s(\rho_t) \int_{ \SSS}\lambda_i(\vartheta )\hat{f}^a(u,\dd \vartheta ) 
      \hat{f}^a(u,\dd \theta) \dd u \dd t\\
      &+D_T\sum_{i=1,2}\int_{0}^T \int_{\TT\times \SSS} \partial_{u_i} H_t(u,\theta)\partial_{u_i} \rho_t(u)
      \big[ \mathcal{D}- d_s' \big] (\rho_t)
      \hat{f}^a(u,\dd \theta) \dd u \dd t
        \Bigg|>\delta \Bigg)=0. 
    \end{aligned}
\end{equation}
Therefore,~\eqref{eq:39} completes the proof of Theorem~\ref{thm:main2}  and, consequently,  completes the proof of Theorem~\ref{thm:main}.
\end{proof}

\section{Non-gradient replacement}\label{sec:replaceV}
The main goal of this section is to prove Lemmas~\ref{lemma:replaceV} and~\ref{lemma:replaceR}. 
In Section~\ref{subsec:replaceV} we give a proof of Lemma~\ref{lemma:replaceV} using a serious lemmas. 
In Section~\ref{subsec:limitingvariance}, we introduce a semi-norm $\iip[\cdot,\cdot]_{\bah}$.  
In Section~\ref{subsec:Hilbertspace}, 
we introduce a Hilbert space $\mathcal{H}_{\bah}^\omega$ on which we perform the non-gradient decomposition. 
Finally, in Section~\ref{sec:replaceR}, we prove Lemma~\ref{lemma:replaceR}.

\subsection{Proof of lemma~\ref{lemma:replaceV}}\label{subsec:replaceV}
Before proving Lemma~\ref{lemma:replaceV}, we introduce some notation and definitions. 

We define the spatial average of the function $\varphi$ over $B_l(x)$ as
\[ 
    \langle \varphi \rangle_x^l = \frac{1}{(2l+1)^2} \sum_{y \in B_l(x)} \tau_y \varphi. 
\]
For $l>0$, let  $l_r\coloneqq l-r-1$, and  $l'\coloneqq l-1$. 
For a cylinder function $f\in \mathcal{C}$, let $s_f$ denote the diameter of $\supp(f)$, i.e., $\supp(f)\subset B_{s_f}$, and  
define $l_f\coloneqq l-s_f-1$. The purpose of defining $l_f$ is to ensure that 
$\sum_{x\in B_{l_f}}\tau_x f$ is measurable with respect to the sites in $B_l$. 
Next, for $r>0$, we introduce the set 
\[ 
E_{r}\coloneqq \Bigg\{\sum_{x \in B_r} \eta_x \leq |B_r| - 2 \Bigg\}, 
\]
indicating that at least two sites in $B_r$ are empty. With this we write 
\[ 
    \rho^{a,\omega,r}_{l}=\frac{1}{|B_l|}\sum_{x\in B_l} \eta_x^{a,\omega,r} \mathbbm{1}_{E_{r,x}}, 
    \quad \text{ and }\quad \overline{\rho}^{a,\omega,r}_{l}=\rho^{a,\omega}_{l}-\rho^{a,\omega,r}_{l}
\]
where  $E_{r,x}=\tau_x E_r$. 
\begin{proof}[Proof of lemma~\ref{lemma:replaceV}]
Recall 
\[ 
    \mathcal{V}_i^{f,\varepsilon N}=N\big( j^{a,\omega}_i +  d_s(\rho_{\varepsilon N}) \bm{\delta}_i \rho^{a,\omega}_{\varepsilon N}
        + \rho^{a,\omega}_{\varepsilon N} \mathcal{D}(\rho_{\varepsilon N}) 
        \del_i \rho_{\varepsilon N} + \LL f \big). 
\]
To prove  lemma~\ref{lemma:replaceV}, 
we decompose $\mathcal{V}_i^{f,\varepsilon N}$ into four parts: 
\[ 
    \mathcal{V}_i^{f,\varepsilon N} = 
    N \big( \mathcal{W}_{i,1}^{f,l}+ \mathcal{W}_{i,2}^{\varepsilon N,r}
     +\mathcal{W}_{i,3}^{l,\varepsilon N,r}+\mathcal{W}_{i,4}^{f,l,r} \big)
\]
where we define: 
$$
\mathcal{W}_{i,1}^{f,l} = 
\big( j_i^{a,\omega} + \mathcal{L} f \big) -  
\Big( \big< j_i^{a,\omega} \big>_0^{l'} + \big< \mathcal{L} f \big>_0^{l_f} \Big)
$$
as the difference between $j_i^{a,\omega}+\LL f$ and their local average, and  
$$
\begin{aligned}
    \mathcal{W}_{i,2}^{\varepsilon N,r} =
    d_s(\rho_{\varepsilon N}) \bm{\delta}_i \overline{\rho}^{a,\omega,r}_{\varepsilon N}
\end{aligned}
$$
as the full cluster part, 
and 
$$
\begin{aligned}
    \mathcal{W}_{i,3}^{l,\varepsilon N,r} =& 
    \Big( d_s(\rho_{\varepsilon N}) \bm{\delta}_i \rho^{a,\omega,r}_{\varepsilon N}
            + \rho^{a,\omega}_{\varepsilon N} \mathcal{D}(\rho_{\varepsilon N}) 
            \del_i \rho_{\varepsilon N} \Big)\\
            &\quad -\Big( d_s(\rho_{l}) \bm{\delta}_i \rho^{a,\omega,r}_{l}
            + \rho^{a,\omega}_{l} \mathcal{D}(\rho_{l}) 
            \del_i \rho_{l} \Big)
\end{aligned}
$$
as the difference between the microscopic and mesoscopic gradients, and 
$$
\mathcal{W}_{i,4}^{f,l,r} = 
\big< j_i^{a,\omega} \big>_0^{l'} + \big< \mathcal{L} f \big>_0^{l_f}+d_s(\rho_{l}) \bm{\delta}_i \rho^{a,\omega,r}_{l}
+ \rho^{a,\omega}_{l} \mathcal{D}(\rho_{l}) 
\del_i \rho_{l}
$$
as the difference between local averaged currents and microscopic gradients. 

The following Lemmas~\ref{lemma:W1},~\ref{lemma:W2},~\ref{lemma:W3}, 
and~Lemma~\ref{lemma:W4} together conclude the proof of 
Lemma~\ref{lemma:replaceV}. 
\end{proof}

\begin{lemma}\label{lemma:W1}
    For any $G\in C^{1,2}([0,T]\times \TT)$,  $\omega\in C^2(\SSS)$, and any cylinder function $f\in \mathcal{C}$,   
    it holds that 
    \[ 
    \limsup_{l\to \infty}\limsup_{N\to \infty}\EN \Bigg[ \Bigg| 
    \int_{0}^T \frac{1}{N^2}\sum_{x\in \TN}G_t(x/N) \tau_x\big( N \mathcal{W}_{i,1}^{f,l} \big) \dd t
    \Bigg| \Bigg]=0,\quad i=1,2.
    \]
\end{lemma}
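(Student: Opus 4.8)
The plan is to recognize $\mathcal{W}_{i,1}^{f,l}$ as a difference between a local function and its spatial average over a box, and then exploit the fact that when such a quantity is summed against a slowly varying test function $G$, the contribution is small. Concretely, write $g \coloneqq j_i^{a,\omega} + \mathcal{L}f$, so that $\mathcal{W}_{i,1}^{f,l} = g - \langle g\rangle_0^{l'}$ up to the harmless discrepancy between the averaging radii $l'$ and $l_f$ (which differ by a fixed constant depending only on $s_f$, hence produce an error $o_l(1)$ uniformly). Thus it suffices to show
\[
\limsup_{l\to\infty}\limsup_{N\to\infty}\EN\Bigg[\Bigg|\int_0^T \frac{1}{N^2}\sum_{x\in\TN}G_t(x/N)\,\tau_x\big(N(g-\langle g\rangle_0^{l'})\big)(\etah(t))\,\dd t\Bigg|\Bigg]=0.
\]

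First I would perform a discrete summation by parts in space. Since $\langle g\rangle_0^{l'}=\frac{1}{(2l'+1)^2}\sum_{|y|\le l'}\tau_y g$, after interchanging the sums over $x$ and $y$ one can rewrite $\sum_x G_t(x/N)\tau_x(g-\langle g\rangle_0^{l'})$ as $\sum_x \big(G_t(x/N)-\langle G_t(\cdot/N)\rangle_x^{l'}\big)\tau_x g$, where the local average of $G$ is taken over the same box. Because $G$ is $C^{1,2}$, we have $\big|G_t(x/N)-\langle G_t(\cdot/N)\rangle_x^{l'}\big|\le C\|G\|_{C^2}\,(l/N)^2$ pointwise (the first-order terms cancel by symmetry of the box, leaving a second-order remainder of order $(l'/N)^2$). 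Hence the absolute value of the integrand is bounded, uniformly in $\etah$, by
\[
\frac{N}{N^2}\sum_{x\in\TN} C\|G\|_{C^2}\frac{l^2}{N^2}\,\big|\tau_x g(\etah)\big|\le C\|G\|_{C^2}\,\|g\|_{L^\infty}\,\frac{l^2}{N},
\]
using that $|g|=|j_i^{a,\omega}+\mathcal{L}f|$ is a bounded local function (the currents are bounded by $\|\omega\|_\infty$ and $\mathcal{L}f$ is bounded since $f$ is a cylinder function on finitely many sites). Integrating over $t\in[0,T]$ gives a bound $C\|G\|_{C^2}\|g\|_\infty T\, l^2/N$, which for fixed $l$ vanishes as $N\to\infty$, and therefore also after taking $\limsup_{l\to\infty}$. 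Taking the expectation $\EN$ is then trivial since the bound is deterministic.

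The only genuinely delicate point is the bookkeeping around the two different averaging radii $l'=l-1$ and $l_f=l-s_f-1$ appearing in $\langle j_i^{a,\omega}\rangle_0^{l'}$ versus $\langle\mathcal{L}f\rangle_0^{l_f}$: one must check that replacing $\langle\mathcal{L}f\rangle_0^{l_f}$ by $\langle\mathcal{L}f\rangle_0^{l'}$ costs only $o_l(1)$ in $L^\infty$, which follows because the two averages differ on a shell of $O(l)$ sites out of $O(l^2)$, each contributing a bounded amount, so the difference is $O(1/l)\,\|\mathcal{L}f\|_\infty\to 0$; moreover $\langle\mathcal{L}f\rangle_0^{l_f}$ being measurable with respect to $B_l$ is precisely why $l_f$ was chosen this way, but this measurability plays no role in the $L^\infty$ estimate and is only relevant for later lemmas. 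Apart from this routine adjustment, the whole lemma is a soft, deterministic gradient-estimate argument — no spectral gap, no replacement, no entropy — so I expect essentially no obstacle; the ``hard'' lemmas are $\mathcal{W}_{i,2},\mathcal{W}_{i,3},\mathcal{W}_{i,4}$, not this one.
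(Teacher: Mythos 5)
Your overall strategy — rewrite $\sum_x G_t(x/N)\,\tau_x(g - \langle g\rangle_0^l)$ as $\sum_x\big(G_t(x/N) - \langle G_t(\cdot/N)\rangle_x^l\big)\tau_x g$, then use the symmetry of $B_l$ so that the linear Taylor terms of $G$ cancel and only a second-order remainder $O(\|G\|_{C^2}\,l^2/N^2)$ survives, giving a deterministic bound $O(l^2/N)$ that beats the factor $N$ — is exactly the mechanism the paper points to (it cites Section~6.2 of~\cite{Erignoux21}, which runs precisely this pointwise summation-by-parts argument). So your main line of reasoning is correct and is essentially the intended proof.

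There is, however, one genuine gap in the ``bookkeeping'' paragraph. You replace $\langle\mathcal{L}f\rangle_0^{l_f}$ by $\langle\mathcal{L}f\rangle_0^{l'}$ on the grounds that the difference is $O(1/l)$ in $L^\infty$. But the quantity under study carries the extra factor $N$, so an $L^\infty$ bound of size $O(1/l)$ contributes
\[
\int_0^T\frac{N}{N^2}\sum_{x\in\TN}\big|G_t(x/N)\big|\cdot O(1/l)\,\|\mathcal{L}f\|_\infty\,\dd t \;=\; O\!\Big(\frac{N}{l}\Big)\,\|G\|_\infty\,\|\mathcal{L}f\|_\infty\,T,
\]
which blows up as $N\to\infty$ for fixed $l$; the outer $\limsup_{l\to\infty}$ cannot rescue a term that has already diverged in the inner $\limsup_{N\to\infty}$. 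The fact that the two averages are $L^\infty$-close by $o_l(1)$ is therefore not, by itself, a valid reason to discard the discrepancy.

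The fix is small and does not change the architecture of your proof. The cleanest option is simply not to merge the two averaging radii: apply your second-order Taylor summation-by-parts argument separately to $j_i^{a,\omega}-\langle j_i^{a,\omega}\rangle_0^{l'}$ (with radius $l'$) and to $\mathcal{L}f - \langle\mathcal{L}f\rangle_0^{l_f}$ (with radius $l_f$). Each gives a bound $O(l^2/N)$ and the issue of comparing $l'$ with $l_f$ never arises. Alternatively, if you insist on unifying, note that the discrepancy $\langle\mathcal{L}f\rangle_0^{l'}-\langle\mathcal{L}f\rangle_0^{l_f}$ can be written as $\sum_y c_y\,\tau_y(\mathcal{L}f)$ with $\sum_y c_y=0$ and $\sum_y c_y\,y=0$ (by symmetry of both boxes), so the same summation-by-parts gain applies, yielding a contribution of order $\|G\|_{C^2}\,\|\mathcal{L}f\|_\infty\,l/N$, again vanishing as $N\to\infty$. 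Either way the conclusion stands; only the stated justification needs repair.
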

\begin{lemma}\label{lemma:W2}
    For any $G\in C^{1,2}([0,T]\times \TT)$,  $\omega\in C^2(\SSS)$, it holds that 
    \[ 
    \lim_{r\to \infty}\limsup_{\varepsilon \to 0}\limsup_{N\to \infty}\EN \Bigg[ \Bigg| 
    \int_{0}^T \frac{1}{N^2}\sum_{x\in \TN}G_t(x/N) \tau_x\big( N \mathcal{W}_{i,2}^{\varepsilon N, r} \big) \dd t
    \Bigg| \Bigg]=0,\quad i=1,2.
    \]
\end{lemma}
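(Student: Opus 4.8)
The plan is to show that the ``full cluster'' term $\mathcal{W}_{i,2}^{\varepsilon N,r} = d_s(\rho_{\varepsilon N})\,\bm{\delta}_i\overline{\rho}^{a,\omega,r}_{\varepsilon N}$ is negligible after averaging because the discrepancy $\overline{\rho}^{a,\omega,r}_{\varepsilon N} = \rho^{a,\omega}_{\varepsilon N} - \rho^{a,\omega,r}_{\varepsilon N}$ only picks up contributions from boxes $B_r(x)$ that are almost completely full (at most one empty site), and such configurations are rare under the reference measures. First I would write $\overline{\rho}^{a,\omega,r}_{\varepsilon N} = \frac{1}{|B_{\varepsilon N}|}\sum_{x\in B_{\varepsilon N}} \eta^{a,\omega}_x \mathbbm{1}_{E_{r,x}^c}$, where $E_{r,x}^c$ is the event that $B_r(x)$ has at most one empty site. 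Since $\|\omega\|_\infty < \infty$ and $\eta^a_x \in \{0,1\}$, this quantity is bounded by $\|\omega\|_\infty \cdot \frac{1}{|B_{\varepsilon N}|}\sum_{x\in B_{\varepsilon N}}\mathbbm{1}_{E_{r,x}^c}$, which is a spatial average of a local function; the discrete gradient $\bm{\delta}_i$ of it is a difference of two such averages over shifted boxes, and $d_s(\rho_{\varepsilon N})$ is bounded.

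The key step is an entropy (or relative-entropy) estimate: the integral $\int_0^T \frac{1}{N^2}\sum_x G_t(x/N)\tau_x(N\mathcal{W}_{i,2}^{\varepsilon N,r})\,\dd t$ carries a factor $N$, but the discrete gradient $\bm{\delta}_i$ of a block average over $B_{\varepsilon N}$ is of order $1/(\varepsilon N)$ in a suitable averaged sense (a standard summation-by-parts / telescoping bound for averages of the form $\langle\cdot\rangle^{\varepsilon N}_x$), so $N\bm{\delta}_i\overline{\rho}^{a,\omega,r}_{\varepsilon N}$ is effectively of order $1/\varepsilon$ times the density of ``bad'' (nearly full) boxes. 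I would then invoke the classical replacement/one-block type argument (as in the entropy method of~\cite{guo1988nonlinear,Erignoux21}): bound the expectation under $\PN$ by $\tfrac{1}{\beta N^2}H(\mu^N\mid \mu^*_{\ba})$ plus $\tfrac{1}{\beta N^2}\log\EN^*\exp(\beta N^2 |\cdots|)$, use that the entropy is $O(N^2)$, and reduce via the Feynman–Kac / Rayleigh estimate to a static variational problem on a box of size $\varepsilon N$. There the spectral gap of the SSEP (uniform in the number of particles, cf.\ Section~\ref{sec:5}) controls the Dirichlet form, and the crucial input is that $\E^*_{\ba}[\mathbbm{1}_{E_{r}^c}] \to 0$ as $r\to\infty$ uniformly in $\ba\in[0,1]^2$ with $\alpha \le 1$, since a box $B_r$ being full forces all $|B_r|$ sites occupied, an event of probability at most $\alpha^{|B_r|}$ — but this needs care near $\alpha = 1$, which is exactly where passive particles can pile up.

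The main obstacle I expect is precisely the regime $\rho \approx 1$: when the total density is close to $1$, nearly-full boxes are not rare, and one cannot simply bound $\E^*_{\ba}[\mathbbm{1}_{E_r^c}]$ by a small constant. The resolution, following the pure active case in~\cite{Erignoux21}, is that in this regime $d_s(\rho_{\varepsilon N})$ vanishes (the self-diffusion coefficient $d_s(\rho)\to 0$ as $\rho\to 1$, by Remark~\ref{remark:regularity}), so the product $d_s(\rho_{\varepsilon N})\cdot\overline{\rho}^{a,\omega,r}_{\varepsilon N}$ is still small: one splits the sum according to whether $\rho_{\varepsilon N}$ is above or below a threshold $1-\kappa$, using the continuity and vanishing of $d_s$ on the high-density part and the rarity of full $r$-boxes on the low-density part, then sends $N\to\infty$, $\varepsilon\to 0$, $r\to\infty$, and finally $\kappa\to 0$. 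I would also need to confirm that the $N$ factor is genuinely absorbed by the gradient structure — the telescoping identity $\bm{\delta}_i\langle\varphi\rangle^{\varepsilon N}_0 = \frac{1}{|B_{\varepsilon N}|}\sum(\tau_{y+e_i}\varphi - \tau_y\varphi)$ collapses to a boundary term of size $O(1/\varepsilon N)$ per site, so after multiplying by $N$ and averaging against the smooth $G_t$ one is left with an $O(1/\varepsilon)$ prefactor multiplying a quantity that the entropy estimate forces to $0$ as $N\to\infty$ for each fixed $\varepsilon, r$, and then to $0$ as $r\to\infty$.
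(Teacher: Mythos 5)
Your broad strategy — absorb the factor $N$ into the gradient of the mesoscopic average, then control the remainder by showing almost-full $r$-boxes are rare — is the right one, and this is indeed what the paper (via Erignoux's Section 6.3) does. However, two of your specific mechanisms fail. First, your ``crucial input'' $\E^*_{\ba}[\mathbbm{1}_{E_r^c}]\to 0$ uniformly in $\ba\in[0,1]^2$ is simply false: at $\alpha^a+\alpha^p=1$ the box is a.s.\ full and $\E^*_{\ba}[\mathbbm{1}_{E_r^c}]=1$ for every $r$. The tool the paper actually uses is the \emph{dynamical} high-density estimate, Proposition~\ref{prop:full_cluster}, which says the process (starting from a profile bounded away from $1$) spends vanishing time on configurations with almost-full $r$-boxes; this is proven by a Gronwall argument on the density, not by a static grand-canonical computation, and you do not invoke it. Your proposed patch — split by $\rho_{\varepsilon N}\lessgtr 1-\kappa$ and use $d_s(\rho_{\varepsilon N})\to 0$ on the high-density part — also does not close the gap, because $\rho_{\varepsilon N}$ is an average over $B_{\varepsilon N}$ while $E_{r,y}^c$ concerns a tiny sub-box $B_r(y)$ with $r\ll\varepsilon N$: a single full $B_r(y)$ (or even a macroscopic fraction of sites $y$ with full $B_r(y)$) does \emph{not} force $\rho_{\varepsilon N}$ above $1-\kappa$, so $d_s(\rho_{\varepsilon N})$ need not be small where the indicators fire.

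Second, and independently, the $O(1/\varepsilon)$ prefactor you correctly identify is not disposed of by your plan. After summation by parts, the piece carrying $N\bm{\delta}_i$ applied to the slowly-varying weight $G_t(x/N)\,\tau_x d_s(\rho_{\varepsilon N})$ contributes a term of size $N\bm{\delta}_i d_s(\rho_{\varepsilon N})=O(1/\varepsilon)$ pointwise, multiplied by $\overline{\rho}^{a,\omega,r}_{\varepsilon N}$. You claim this is handled because the full-cluster density ``is forced to $0$ as $N\to\infty$ for each fixed $\varepsilon,r$''; this is not what Proposition~\ref{prop:full_cluster} gives. For fixed $r$, $\limsup_{N}\EN\big[\int_0^T N^{-2}\sum_y\mathbbm{1}_{E_{r,y}^c}\,\dd t\big]$ converges to a strictly positive quantity $a_r>0$ (it vanishes only as $r\to\infty$), and since Lemma~\ref{lemma:W2} takes $\limsup_{\varepsilon\to 0}$ \emph{before} $\lim_{r\to\infty}$, the naive bound $C\,a_r/\varepsilon$ diverges. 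The resolution must exploit that $N\bm{\delta}_i d_s(\rho_{\varepsilon N})$, although $O(1/\varepsilon)$ in sup norm, is of order one in $L^2(\PN\otimes\dd t)$ (this is the gradient energy estimate underlying the $H^1$ regularity in Lemma~\ref{lemma:compactreg}); pairing it with $\overline{\rho}^{a,\omega,r}_{\varepsilon N}$ via Cauchy--Schwarz then yields a bound of order $\sqrt{a_r}$, uniform in $\varepsilon$, which closes as $r\to\infty$. This energy-estimate/Cauchy--Schwarz step (or an equivalent re-organisation of the summation by parts) is the missing ingredient; without it, and with the static estimate in place of the dynamical one, your plan does not prove the lemma.
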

\begin{lemma}\label{lemma:W3}
    For any $G\in C^{1,2}([0,T]\times \TT)$,  $\omega\in C^2(\SSS)$, and $r>1$, it holds that 
    \[ 
        \limsup_{l\to \infty} \limsup_{\varepsilon \to 0}\limsup_{N\to \infty}\EN \Bigg[ \Bigg| 
    \int_{0}^T \frac{1}{N^2}\sum_{x\in \TN}G_t(x/N) \tau_x\big( N \mathcal{W}_{i,3}^{l,\varepsilon N,r} \big) \dd t
    \Bigg| \Bigg]=0,\quad i=1,2.
    \]
\end{lemma}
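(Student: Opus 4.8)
The plan is to treat $\mathcal{W}_{i,3}^{l,\varepsilon N,r}$ as a difference of two mesoscopic averages of the same microscopic observables, one at scale $l$ and one at scale $\varepsilon N$, and to show that when $l \ll \varepsilon N$ these two averages are close after integrating in time. Concretely, I would first recall that, since $r$ is fixed, the quantities $d_s(\rho_{l})\,\bm{\delta}_i\rho^{a,\omega,r}_{l}$, $\rho^{a,\omega}_{l}\mathcal{D}(\rho_{l})\del_i\rho_{l}$ and their counterparts at scale $\varepsilon N$ are all bounded (the cutoff on $E_{r,x}$ keeps $\mathcal{D}$ and $d_s$ evaluated away from the singularity, and $\|\omega\|_\infty$ controls the angular factors). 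Thus $N\mathcal{W}_{i,3}^{l,\varepsilon N,r}$ is $O(N)$ pointwise, and the whole time integral is $O(1)$; the claim is that after averaging against the smooth profile $G_t(x/N)$ it vanishes. The natural device here is a \emph{two-block / two-scale estimate}: writing each mesoscopic average as an average of local averages, one uses the translation invariance of the dynamics together with the fact that $\del_i$ applied to a block average of size $m$ produces a telescoping gain of order $1/m$, so that the discrete gradient of a block average over $B_l$ differs from the discrete gradient over $B_{\varepsilon N}$ by a quantity of relative order $l/(\varepsilon N)\to 0$. I would make this rigorous by the standard entropy inequality: bound the expectation under $\PN$ by $\tfrac{1}{\beta N^2}H(\mu^N\mid \mu^*_{\bm\alpha}) + \tfrac{1}{\beta N^2}\log \E^*_{\bm\alpha}\exp\big(\beta N^2\,|\!\int_0^T \cdots|\big)$, reduce via Feynman--Kac and the spectral estimates (available from Section~\ref{sec:5}) to a static variational problem, and then invoke the equivalence of ensembles to replace the empirical averages $\rho^{a,\omega,r}_{l}$, $\rho_{l}$ by functions of the densities at scale $\varepsilon N$ up to an error that is $o(1)$ as $l\to\infty$ and then $\varepsilon\to 0$.

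More explicitly, I would split the difference into the ``$d_s$'' part and the ``$\mathcal{D}$'' part. For the $d_s$ part, $d_s(\rho_{\varepsilon N})\bm{\delta}_i\rho^{a,\omega,r}_{\varepsilon N} - d_s(\rho_{l})\bm{\delta}_i\rho^{a,\omega,r}_{l}$, I add and subtract $d_s(\rho_{l})\bm{\delta}_i\rho^{a,\omega,r}_{\varepsilon N}$; the term $\big(d_s(\rho_{\varepsilon N})-d_s(\rho_{l})\big)\bm{\delta}_i\rho^{a,\omega,r}_{\varepsilon N}$ is handled by the Lipschitz continuity of $d_s$ on the relevant range plus a one-block estimate showing $\rho_{\varepsilon N}-\rho_l\to 0$ in the averaged sense (after multiplying by the factor $N$ from $\bm{\delta}_i$, one needs the extra smallness: note $\bm{\delta}_i\rho^{a,\omega,r}_{\varepsilon N}$ is itself $O(1/(\varepsilon N))$, so $N\bm{\delta}_i\rho^{a,\omega,r}_{\varepsilon N}=O(1/\varepsilon)$ and the product with $\rho_{\varepsilon N}-\rho_l=o_l(1)$ still vanishes in the iterated limit). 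The remaining term $d_s(\rho_l)\big(\bm{\delta}_i\rho^{a,\omega,r}_{\varepsilon N}-\bm{\delta}_i\rho^{a,\omega,r}_{l}\big)$, once multiplied by $N$ and summed against $G$, is exactly where a summation-by-parts in $x$ transfers the discrete gradient onto $G$, turning $N\bm{\delta}_i$ into $\partial_{i,N}G$ and leaving $\langle\rho^{a,\omega,r}_{\varepsilon N}\rangle - \langle\rho^{a,\omega,r}_{l}\rangle$, which is controlled by a two-block estimate. The $\mathcal{D}$ part, $\rho^{a,\omega}_{\varepsilon N}\mathcal{D}(\rho_{\varepsilon N})\del_i\rho_{\varepsilon N} - \rho^{a,\omega}_{l}\mathcal{D}(\rho_{l})\del_i\rho_{l}$, is treated identically, with the one extra point that $\rho^{a,\omega}$ (no $r$-cutoff) must be compared at the two scales by a one-block estimate.

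The main obstacle I anticipate is the presence of the factor $N$ in front of $\mathcal{W}_{i,3}$ together with the discrete gradients $\bm{\delta}_i$ and $\del_i$: one cannot naively bound $N\mathcal{W}_{i,3}$ by $N\cdot o(1)$, because the ``$o(1)$'' is only in the time-and-space-averaged sense, not pointwise. The resolution is to be careful about \emph{which} smallness absorbs the $N$: for the terms where the gradient acts on the \emph{coarser} density, one performs the second summation-by-parts to move the factor $N$ onto $G$ and reduces to a genuine block-replacement with no leftover factor; for the terms where the difference of scales is the source of smallness, one uses that $\bm{\delta}_i$ of a block average is itself $O(1/\text{scale})$, so the product $N\times(\text{block gradient})\times(\text{one-block error})$ is $O(\varepsilon^{-1})\times o_l(1)$ and vanishes under $\lim_{l}\lim_{\varepsilon}$. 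Keeping this bookkeeping straight — and verifying that all constants are uniform in $N$ via the spectral gap estimate of Section~\ref{sec:5} applied on the box of size $\varepsilon N$ — is the technical heart; the rest follows the template on pages 125--130 of~\cite{Erignoux21} with only cosmetic changes to accommodate the active/passive pair $\etat_x=(\eta^a_x,\eta^p_x)$.
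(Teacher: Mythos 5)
Your proposal identifies the correct framework: a two‑block / two‑scale comparison, reduced via the entropy inequality and Feynman--Kac to a static variational problem, with the equivalence of ensembles handling the final identification. This is indeed the route that the paper (deferring to Erignoux's thesis, Section 6.4) takes. However, there is a genuine gap in how you handle the factor $N$, and the gap sits exactly at the point you yourself flag as ``the technical heart.''

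The concrete problem is the pointwise size of the $l$-scale terms. Each of $\bm{\delta}_i \rho^{a,\omega,r}_{l}$ and $\del_i \rho_{l}$ is a telescoping sum of local gradients over $B_l$, hence $O(1/l)$ deterministically, so $N\mathcal{W}_{i,3}^{l,\varepsilon N,r} = O(N/l)$ \emph{pointwise} --- not $O(N)$, and certainly the time--space average is not $O(1)$ for fixed $l$ as $N\to\infty$. Your summation-by-parts step is meant to absorb this $N$, and it does for the leading part, but because the prefactors $d_s(\tau_x\rho_l)$ and $\tau_x\rho^{a,\omega}_l\mathcal{D}(\tau_x\rho_l)$ themselves vary with $x$, the discrete Leibniz rule leaves behind terms of the form
\[
G\bigl((x-e_i)/N\bigr)\,\tau_x\rho^{a,\omega,r}_l\,N\bigl[d_s(\tau_x\rho_l)-d_s(\tau_{x-e_i}\rho_l)\bigr],
\]
and its analogue with $\rho^{a,\omega}_l\mathcal{D}(\rho_l)$. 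The leading Taylor term $d_s'(\rho_l)N\del_i\rho_l$ is still $O(N/l)$; it only becomes manageable because it partially cancels against the $\mathcal{D}$ contribution (producing the $[\mathcal{D}-d_s']$ combination that appears in the paper's $D_0^{\varepsilon N}$), and the quadratic remainder is $O(N/l^2)$, which likewise does not vanish pointwise for fixed $l$. Your proposal does not identify this $d_s'$/$\mathcal{D}$ cancellation, and it does not supply the Dirichlet-form path estimate (the ``moving particle'' argument) that controls the surviving $N\del_i\rho_l$ term inside the Feynman--Kac variational bound; without those ingredients, the argument as written does not close. Your reliance on ``$\bm{\delta}_i$ of a block average is $O(1/\text{scale})$'' works only at scale $\varepsilon N$, where $N\cdot O(1/(\varepsilon N)) = O(1/\varepsilon)$ is bounded, but fails at scale $l$.

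Finally, the assertion that ``the product with $\rho_{\varepsilon N}-\rho_l = o_l(1)$ still vanishes in the iterated limit'' inverts the roles: $1/\varepsilon$ diverges \emph{after} the two-block smallness has been used, so this bound alone is insufficient without the prior summation-by-parts, and the order of limits $\limsup_l\limsup_\varepsilon\limsup_N$ requires one to be careful that the two-block estimate holds uniformly over the relevant range. In summary: right framework and decomposition, but a missing cancellation between the $d_s$- and $\mathcal{D}$-parts plus a missing Dirichlet-form control of the residual $N\del_i\rho_l$ leave the key step unjustified.
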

\begin{lemma}\label{lemma:W4}
    For any $G\in C^{1,2}([0,T]\times \TT)$,  $\omega\in C^2(\SSS)$,  $i=1,2$, it holds that 
    \[ 
       \inf_{f\in \mathcal{C}} \lim_{r\to \infty} \limsup_{l\to \infty} \limsup_{\varepsilon \to 0}\limsup_{N\to \infty}\EN \Bigg[ \Bigg| 
    \int_{0}^T \frac{1}{N^2}\sum_{x\in \TN}G_t(x/N) \tau_x\big( N \mathcal{W}_{i,4}^{f,l,r} \big) \dd t
    \Bigg| \Bigg]=0.
    \]
    Furthermore, there exists a family of cylinder functions $(f_{i,n}^\omega)_{n\in \mathbb{N}}$ 
    such that 
    \[ 
       \lim_{n\to \infty} \lim_{r\to \infty} \limsup_{l\to \infty} \limsup_{\varepsilon \to 0}\limsup_{N\to \infty}\EN \Bigg[ \Bigg| 
    \int_{0}^T \frac{1}{N^2}\sum_{x\in \TN}G_t(x/N) \tau_x\big( N \mathcal{W}_{i,4}^{f_{i,n}^\omega,l,r} \big) \dd t
    \Bigg| \Bigg]=0.
    \]
\end{lemma}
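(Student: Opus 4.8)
\textbf{Proof proposal for Lemma~\ref{lemma:W4}.}

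The plan is to realize $\mathcal{W}_{i,4}^{f,l,r}$ as a quantity whose time-average against $G$ can be controlled, up to vanishing errors, by a static variational quantity on the grand canonical measures, and then to invoke the non-gradient machinery to show that this quantity can be made arbitrarily small by a suitable choice of $f\in\mathcal{C}$. First I would apply the standard reduction for replacement lemmas in the non-gradient setting: using the entropy inequality together with the relative-entropy bound $H(\mu^N_t \mid \mu^*_{\bm\alpha}) = O(N^2)$ and the elementary superexponential estimates, the expression
\[
\EN\Bigg[\Bigg|\int_0^T \frac{1}{N^2}\sum_{x\in\TN} G_t(x/N)\,\tau_x\big(N\mathcal{W}_{i,4}^{f,l,r}\big)(\etah(t))\,\dd t\Bigg|\Bigg]
\]
is bounded, in the limit $N\to\infty$ (with $\varepsilon$ fixed, then $\varepsilon\to0$), by a constant times $\|G\|_\infty$ times the square root of a Dirichlet-form-weighted variance of $N\mathcal{W}_{i,4}^{f,l,r}$ evaluated on the canonical measures of a box of size $l$, in the same spirit as the passage from the dynamic to the static problem on pages 103--110 of~\cite{Erignoux21}. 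Since $\mathcal{W}_{i,4}^{f,l,r}$ is, by construction, the difference between the box-averaged currents $\langle j_i^{a,\omega}\rangle_0^{l'}+\langle \LL f\rangle_0^{l_f}$ and the discrete gradients $d_s(\rho_l)\del_i\rho^{a,\omega,r}_l+\rho^{a,\omega}_l\mathcal{D}(\rho_l)\del_i\rho_l$, this variance is exactly the object that the limiting variance semi-norm $\iip[\cdot]_{\bah}$ introduced in Section~\ref{subsec:limitingvariance} is designed to estimate.

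Next I would pass to the limit $l\to\infty$ in this static estimate. The key input is that, for each grand canonical parameter $\bah$, the $l\to\infty$ limit of the box-variance of $N\mathcal{W}_{i,4}^{f,l,r}$ equals (a constant multiple of)
\[
\pnorm{\, j_i^{a,\omega} + \LL f + d_s(\rho)\,\del_i\eta^{a,\omega} + \ob^a\rho^a\mathcal{D}(\rho)\,\del_i\eta \,}_{\bah}^2
\]
in the Hilbert space $\mathcal{H}_{\bah}^\omega$ of Section~\ref{subsec:Hilbertspace}; here the cluster-cutoff parameter $r$ enters only through the $\rho^{a,\omega,r}_l$ versus $\rho^{a,\omega}_l$ discrepancy, which is uniformly small in $L^2$ once $r$ is large (this is where the $\lim_{r\to\infty}$ is used, and also why Lemma~\ref{lemma:W2} is separated out). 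This identification is the heart of the non-gradient method: it is the statement that the closed-form decomposition~\eqref{eq:localreplace} holds \emph{in the relevant seminorm}, i.e. that the current $j_i^{a,\omega}$, projected onto the orthogonal complement of the space of gradients, lies in the closure of $\LL\mathcal{C}$. Granting the spectral gap estimate (Section~\ref{sec:5}) and the decomposition of the current space established there, one then has
\[
\inf_{f\in\mathcal{C}}\ \sup_{\bah}\ \pnorm{\, j_i^{a,\omega} + \LL f + d_s(\rho)\,\del_i\eta^{a,\omega} + \ob^a\rho^a\rho^a\mathcal{D}(\rho)\,\del_i\eta \,}_{\bah}^2 = 0,
\]
which, after undoing the reduction, yields the first display of the lemma. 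For the second display, I would extract a minimizing sequence $(f_{i,n}^\omega)_{n\in\mathbb{N}}$ realizing this infimum --- this is precisely the sequence referenced in Lemmas~\ref{lemma:replaceV},~\ref{lemma:replaceR}, and~\ref{lemma:diminishLD} --- and note that the convergence is uniform in $\bah$ over the compact parameter set, so the order of limits $\lim_n\lim_r\limsup_l\limsup_\varepsilon\limsup_N$ is legitimate.

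The main obstacle is the identification of the $l\to\infty$ limit of the box-variance with the Hilbert-space seminorm $\pnorm{\cdot}_{\bah}$, together with the proof that the current decomposition in $\mathcal{H}_{\bah}^\omega$ genuinely produces the coefficients $-d_s(\rho)$ and $-\ob^a\rho^a\mathcal{D}(\rho)$ appearing in~\eqref{eq:localreplace}. This requires: (i) the spectral gap estimate for the SSEP generator restricted to a box with the active-passive and continuous-angle degrees of freedom, uniformly in the parameter --- the genuinely new ingredient relative to~\cite{Erignoux21}, since the extra passive species enlarges the state space and one must check the gap does not degenerate; (ii) the explicit computation of the limiting variance of the gradients $\del_i\eta^{a,\omega}$ and $\del_i\eta$ and of their cross-terms with the currents, which pins down the self-diffusion coefficient $d_s$ and the function $\mathcal{D}$ via the Green--Kubo-type formula; and (iii) showing that the orthogonal complement of the gradient subspace inside the current space is exactly $\overline{\LL\mathcal{C}}$, i.e. a closed-range / coboundary argument. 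All three are carried out in Section~\ref{sec:5}; here I would simply assemble them. A secondary technical point is the careful bookkeeping of the nested limits and of the auxiliary parameters $l'$, $l_f$, $l_r$, which ensure the various box-averaged quantities are measurable with respect to the right sites and that boundary effects are $o(1)$; this is routine but must be done with care to keep the Dirichlet-form estimates uniform in $N$.
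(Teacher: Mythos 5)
Your proposal is correct and follows essentially the same route as the paper: reduce the dynamic time-average to the static $H_{-1}$-type variance $\sup_{\widehat{K}}(2l+1)^2\langle(-\LL_l)^{-1}\mathcal{W}_{i,4}^{f,l,r},\mathcal{W}_{i,4}^{f,l,r}\rangle_{l,\widehat{K}}$, identify its $l\to\infty$ limit with the seminorm $\iip[\cdot]_{\bah}$ on $\mathcal{H}_{\bah}^\omega$ via equivalence of ensembles (Theorem~\ref{thm:main3}), handle the $r$-cutoff as a negligible cluster correction, project $j_i^{a,\omega}$ onto $\text{Grad}^\omega\oplus\overline{\LL\To}$ using the spectral gap and the closed-form decomposition from Section~\ref{sec:5} to get~\eqref{eq:main3}, and finally extract the uniform minimizing sequence $(f_{i,n}^\omega)$ by compactness of $(\mathbb{M}_1(\SSS),d)$ as in Lemma~\ref{lemma:uniform}. (Minor note: your second display contains a typographical duplication of $\rho^a$; the intended coefficient is $\ob^a\rho^a\mathcal{D}(\rho)=\mf[d](\bah)$, matching~\eqref{eq:localreplace}.)
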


The proofs of Lemmas~\ref{lemma:W1},~\ref{lemma:W2}, and~\ref{lemma:W3} 
are similar to the proofs in Section 6.2 (Page 85), Section 6.3 (Page 85), and Section 6.4 (Page 93) in~\cite{Erignoux21}, 
with the minor modification that all $\eta^\omega$ are replaced by $\eta^{a,\omega}$, and using the 
fact that $\eta^a\leq \eta$ in every related  estimates. Thus, their proofs are omitted. 
Lemma~\ref{lemma:W4} is the main difficulty, and its proof is given in the next subsection. 

\subsection{Limiting variance}\label{subsec:limitingvariance}
To prove Lemma~\ref{lemma:W4}, we begin by introducing some necessary notations.   
Let $l$ be a fixed positive integer. Denote by $K^a$ and $K^p$ the total number of active and passive particles in $B_l$, 
satisfying $K^a+K^p\leq |B_l|$.  
Let $\Theta^a=\{\theta_1^a, \ldots, \theta_{K^a}^a\}$ and $\Theta^p=\{\theta_1^p, \ldots, \theta_{K^p}^p\}$ 
represent the sets 
of angles of active and passive particles in $B_l$. Now  
we define  the \textit{canonical state} in $B_l$ as:
\begin{equation}\label{eq:K}
    \widehat{K} = (K^a, \Theta^a ,K^p, \Theta^p ). 
\end{equation}
The set of 
all possible canonical states in $B_l$ with at least two unoccupied sites is denote by:  
   \begin{equation*}
     \widetilde{\mathbb{K}}_l = 
     \big\{  \widehat{K} = (K^a, \Theta^a ,K^p, \Theta^p ) \big|  K^a+K^p\leq |B_l|-2 \big\}.
   \end{equation*}
For a fixed $\widehat{K} \in \widetilde{\mathbb{K}}_l $, 
we define the hyperplane of configurations on $B_l$ with the canonical state $\widehat{K}$ as: 
\[ 
\Sigma_l^{\widehat{K}} = \Bigg\{ (\etah_x)_{x\in B_l} \Bigg| 
\sum_{x\in B_l}\eta^a_x \delta_{\theta_x}=\sum_{k=1}^{K^a}\delta_{\theta_k^a},\, 
\sum_{x\in B_l}\eta^p_x \delta_{\theta_x}=\sum_{k=1}^{K^p}\delta_{\theta_k^p} \Bigg\}. 
\]  
Recall that  $\LL_l$ defined in~\eqref{eq:LLS_N} is the SSEP generator $\LLS$ confined on $B_l$. The reason for leaving two unoccupied sites  
is to make the SSEP with state space $\Sigma_l^{\widehat{K}}$  irreducible. 
Consequently,    
it admits  a unique ergodic invariant measure $\mu_{l,\widehat{K}}$ 
which is referred to as the \textit{canonical measure}. More precisely, 
 let $\mu_{\ba,l}^*$ represent  the grand canonical measure $\mu^*_{\ba}$ confined on $B_l$. 
 Then 
$\mu_{l,\widehat{K}}$ is the conditional measure of $\mu_{\ba,l}^*$ on the hyperplane $\Sigma_l^{\widehat{K}}$: 
\begin{equation}\label{eq:mu}
        \mu_{l,\widehat{K}}\big(\cdot\big) = \mu_{\ba,l}^*\big( \cdot \big| \Sigma_l^{\widehat{K}}\big).
\end{equation}
In the rest of this paper, we will denote the inner product in $L^2(\mu_{l,\widehat{K}})$ 
by $<\! \cdot , \cdot\! >_{l,\widehat{K}}$. 

By a similar argument as presented in Section 6.5 of~\cite{Erignoux21}, we can reduce the proof of 
Lemma~\ref{lemma:W4} to the proof of the following variance bound: 
\begin{equation}\label{eq:limitW}
    \inf_f \lim_{r \to \infty} \limsup_{l \to \infty} \sup_{\widehat{K} \in \widetilde{\mathbb{K}}_l } 
    (2l+1)^2
    \Big< (-\LL_l)^{-1} \mathcal{W}_{i,4}^{f,l,r}  , \mathcal{W}_{i,4}^{f,l,r} \Big>_{l,\widehat{K}} = 0,
\end{equation}
where we recall that 
$$
\mathcal{W}_{i,4}^{f,l,r} = 
\big< j_i^{a,\omega} \big>_0^{l'} + \big< \mathcal{L} f \big>_0^{l_f}+d_s(\rho_{l}) \bm{\delta}_i \rho^{a,\omega,r}_{l}
+ \rho^{a,\omega}_{l} \mathcal{D}(\rho_{l}) 
\del_i \rho_{l}.
$$

To proceed, for any $\Kh \in \Kt_l $, we define the grand canonical parameter 
$\bah_{\Kh}=(\ah^a_{\Kh}, \ah^p_{\Kh})$ where 
\begin{equation}\label{eq:bah}
    \ah^\sigma_{\Kh}=\frac{1}{|B_l|}\sum_{k=1}^{K^\sigma}\delta_{\theta_k^\sigma}\in  
    \mathcal{M}_1(\SSS), 
\end{equation}
and for any configuration $\etah$ on $B_l$, we  define 
$\bm{\hat{\rho}_l}=(\hat{\rho}_l^a,\hat{\rho}_l^p)$ where 
\[ 
    \hat{\rho}_l^\sigma(\etah) =  \frac{1}{(2l + 1)^2} \sum_{x\in B_l} \eta^\sigma_x \delta_{\theta_x}\in  
    \mathcal{M}_1(\SSS).
\]
Note that $\bm{\hat{\rho}_l}(\etah)\equiv \bah_{\hat{K}} $ for all $\etah\in \Sigma_l^{\widehat{K}}$, and 
\begin{equation}\label{eq:rhoh}
    \rho^\sigma_l=\int_{\SSS} \hat{\rho}_l^\sigma(\dd \theta)=\frac{1}{(2l + 1)^2} \sum_{x\in B_l} \eta^\sigma_x =
 \frac{K^\sigma}{(2l + 1)^2}  ,
\end{equation}
and also  
\begin{equation}\label{eq:rhoa}
    \rho^{a,\omega}_{l} = \frac{1}{(2l + 1)^2} \sum_{x\in B_l} \eta^a_x\omega(\theta_x)=
    \E_{\bah_{\hat{K}}}\big[ \eta^{a,\omega}_0 \big],
\end{equation}
where $\E_{\bah_{\hat{K}}}$ denotes  the expectation with respect to the 
grand canonical measure $\mu_{\bah_{\hat{K}}}$. 
Using~\eqref{eq:rhoa} and~\eqref{eq:rhoh}, for $\etah\in \Sigma_l^{\widehat{K}}$, 
we can write
\[ 
    \rho^{a,\omega}_{l} \mathcal{D}(\rho_{l}) = \mf[d](\bah_{\hat{K}}), 
\]
where we define  
\begin{equation}\label{eq:defofd}
    \mf[d](\bah) \coloneqq \E_{\bah}\big[ \eta^{a,\omega}_0  \big]\mathcal{D}(\alpha),  \quad \forall \bm{\hat{\alpha}}\in \mathbb{M}_1(\SSS).  
\end{equation} 

Now, following the similar argument as presented on page 110 of~\cite{Erignoux21}, to prove~\eqref{eq:limitW}, it suffices to show that 
\begin{equation}\label{eq:l}
    \inf_f  \limsup_{l \to \infty} \sup_{\widehat{K} \in \widetilde{\mathbb{K}}_l } 
    \frac{1}{(2l+1)^2}
    \Bigg< (-\LL_l)^{-1} \sum_{x\in B_{l_f}}\tau_x V^{f,l}_i( \bah_{\hat{K}})  , 
    \sum_{x\in B_{l_f}}\tau_x V^{f,l}_i( \bah_{\hat{K}}) \Bigg>_{l,\widehat{K}} = 0,
\end{equation}
where 
\[ 
V^{f,l}_i(\bah) = j_i^{a,\omega} + d_s(\alpha) \bm{\delta}_i \eta^{a,\omega}
+ \mf[d](\bah) \bm{\delta}_i \eta_0 + \LL f.
\]
\begin{remark}
    In the definition of  $\mathcal{W}_{i,4}^{f,l,r}$, the term  $\del_i \rho^{a,\omega,r}_{l}$ can be written as 
    $$\del_i \rho^{a,\omega,r}_{l} = \frac{1}{(2l+1)^2}\sum_{x\in B_l} \tau_x \del_i (\eta_0^{a,\omega} \mathbbm{1}_{E_r}). $$  
    Here in the definition of $V^{f,l}_i$, for clarity reason, 
    we replace  $\del_i (\eta_0^{a,\omega} \mathbbm{1}_{E_r})$  with $\del_i\eta^{a,\omega} $
    incurring an error term $\del_i (\eta_0^{a,\omega} \mathbbm{1}_{E_r^c})$. 
    However, this error is negligible in the limit, 
    for a detailed treatment of this negligible error, we refer readers to Corollary 6.6.5 on page 111 of~\cite{Erignoux21}. 
\end{remark}

To prove~\eqref{eq:l}, we start with introducing some notations. Recall 
that the set of cylinder functions is denoted by $\mathcal{C}$.  
Let $\mathcal{C}_0$ be the space of cylinder functions with mean $0$ with respect to
any canonical measure 
\[ 
\mathcal{C}_0 = \Big\{ \psi \in \mathcal{C} \big| \E_{s_\psi, \Kh}\big[ \psi \big] = 0 ,
\quad \forall \Kh \in \Kt_{s_\psi} \Big\}.
\]
where $s_\psi$ denote the diameter of $\supp(\psi)$. 
Recall that for any angular function $\Phi$ on $S$, the type-$\sigma$ symmetric current associated with $\Phi$ is give by  
$$j_i^{\sigma,\Phi} = \Phi(\theta_0) \eta_0^\sigma (1 - \eta_{e_i}) - \Phi(\theta_{e_i}) \eta_{e_i}^\sigma (1 - \eta_0).$$
Specially,  $j^\sigma=j_i^{\sigma,1}$ by taking $\Phi\equiv 1$. 
We define $J^*$ as the set of linear combinations of all such currents: 
\begin{equation}
J^* =  \Span \Big\{ j_i^{a,\Phi_i^a}, j_i^{p,\Phi_i^p} \Big|  \Phi_i^a,  \Phi_i^p \in C^2(\SSS),i=1,2 \Big\}\subset \mathcal{C}_0.
\end{equation}

Next, we introduce a function class that is rich enough to contain all functions of our interest, 
while being small enough to ensure a spectral gap estimate of order $n^{-2}$ still holds. 
Such function class is defined as   
\begin{equation}\label{eq:To}
    T^\omega = \Span \Bigg\{   f(\etah)=\sum_{x\in \mathbb{Z}^2}
    \big(a\eta^a_x+b\eta^p_x+ c\etaao_x + d\etapo_x  \big)F_x(\eta)\Big| 
     f \in \mathcal{C}, F_x \in \mathcal{S}, \forall x  \Bigg\}, 
\end{equation}
where $\mathcal{S}$ is the set of functions that are both angle-blind and type-blind, i.e. ITA-everywhere functions 
defined in Remark~\ref{rk:2}. 

We now restrict  $\mathcal{C}_0$ to the function class  $T^\omega$ and define 
$$\mathcal{T}_0^\omega = \mathcal{C}_0 \cap T^\omega.$$  
Similarly, we restrict $J^*$ to $T^\omega$ and define
\begin{align*}
J^\omega = J^* \cap T^\omega = \big\{ 
    u^\top j^a+v^\top j^p + m^\top j^{a,\omega }+ n^\top j^{p,\omega }\mid 
      u,v, m, n \in \mathbb{R}^2 \big\}\subset \mathcal{T}_0^\omega.
\end{align*}
where $j^\sigma=[j^\sigma_1,\, j^\sigma_2]^\top$ and $j^{\sigma,\omega}=[j^{\sigma,\omega}_1,\, j^{\sigma,\omega}_2]^\top$. 

Next our goal is to define a semi-norm on the space $\mathcal{T}_0^\omega + J^*+\LL \mathcal{C}$, 
to 
achieve this goal we first define $\iip[\cdot]_{\bah}$ on the set $J^*+\LL \mathcal{C}$. 
\begin{definition}\label{def:iip}
    \normalfont
 For any $\psi = j^* + \LL g \in J^*+\LL \mathcal{C}$, 
 where $j^*= \sum_{i=1,2}(j_i^{a,\Phi_i^a}+ j_i^{p,\Phi_i^p}) \in J^*$, 
  $g\in \mathcal{C}$, and any $\varphi\in \mathcal{T}_0^\omega$, 
 we define 
 \begin{enumerate}
    \item $<\!\!<\!\cdot\!>\!\!>_{\bah}$ on $J^*+\LL \mathcal{C}$: 
\begin{equation}\label{eq:JandLC}
    \iip[j^*+\LL g]_{\bah}=
    \sum_{\sigma\in \{a,p\}}\sum_{i=1,2} \mathbb{E}_{\bah} \big[ 
        \eta^\sigma_0(1-\eta_{e_i})(\Phi^\sigma_i(\theta_0) + \widetilde{\nabla}_{0,e_i}\Sigma_g  )^2\big].
\end{equation}   
    \item $<\!\!<\!\cdot,\, \cdot\!>\!\!>_{\bah}$ on $\mathcal{T}_0^\omega\times (J^*+\LL \mathcal{C})$: 
    \begin{equation}\label{eq:bilinear}
        \iip[\varphi,\, j^*+\LL g ]_{\bah} 
        = -\E_{\bah }\Big[\varphi \big( \Sigma_g + \sum_{i=1,2} \sum_{x \in \mathbb{Z}^2 } x_i(\eta_x^{a,\Phi^a_i}+\eta_x^{p,\Phi^p_i}) \big)\Big].
    \end{equation}
    \item $<\!\!<\!\cdot\!>\!\!>_{\bah}$ on $\mathcal{T}_0^\omega$:
    \begin{equation}
        \iip[\varphi]_{\bah} = \sup_{\substack{g\in \mathcal{T}_0^\omega \\ j \in J^\omega } }
        \Big\{ 2 \iip[\varphi,\, j+\LL g]_{\bah} - \iip[ j+\LL g]_{\bah} \Big\}.
    \end{equation}
    \item $<\!\!<\!\cdot\!>\!\!>_{\bah}$ on $\mathcal{T}_0^\omega + J^*+\LL \mathcal{C}$: 
    \begin{equation}
        \iip[ \varphi + j^*+\LL g  ]_{\bah} = \iip[ \varphi  ]_{\bah} 
        + \iip[  j^*+\LL g  ]_{\bah}
        +2 \iip[ \varphi ,\, j^*+\LL g  ]_{\bah}.
    \end{equation}
    \item $<\!\!<\!\cdot,\, \cdot\!>\!\!>_{\bah}$ on $(\mathcal{T}_0^\omega + J^*+\LL \mathcal{C})^2$: we use the polarization identity.
\end{enumerate}  
\end{definition}

To proceed, we define a metric on the set of grand-canonical parameters $\mathbb{M}_1(\SSS)$. 
Let $B^*$ be the unit ball in $(C^2(\SSS),\|\cdot\|_{W^{2,\infty}})$, 
and we endow a metric $d$ on $\mathbb{M}_1(\SSS)$ by 
\begin{equation}\label{eq:d}
    d (\bah, \bah')=\sup_{f\in B^*} \Bigg\{ \int_{\SSS} f\, \dd \big( \ah^a - \ah'^a \big)  \Bigg\}+
    \sup_{g\in B^*} \Bigg\{ \int_{\SSS} g\, \dd \big( \ah^p -  \ah'^p  \big)  \Bigg\}.
\end{equation}
Next, we present the main theorem of this section, whose proof can be found in Section~\ref{sec:main3}.

\begin{theorem}\label{thm:main3}
    Fix $\bah \in  \mathbb{M}_1(\SSS)$, 
    and a sequence $(\widehat{K}_l)_{l \in \mathbb{N}} $ such that $\widehat{K}_l \in \widetilde{\mathbb{K}}_l$ and 
    $d(\bah_{\widehat{K}_l}, \bah)\to 0$, 
    where $\bah_{\widehat{K}_l}$ is defined in~\eqref{eq:bah}. 
    For any functions $\psi, \varphi \in \mathcal{T}_0^\omega + J^* + \mathcal{LC}$,
    \begin{equation}
        \lim_{l \to \infty} \frac{1}{(2l+1)^2}
        \Big< (-\mathcal{L}_l^{-1}) \sum_{x \in B_{l,\psi}} \tau_x \psi,\, \sum_{x \in B_{l,\varphi}} \tau_x 
        \varphi\Big>_{l,\widehat{K}_l} = 
        \iip[\psi, \varphi]_{\bah}.
    \end{equation}with the convergence being uniform in $\bah$.
    Furthermore, 
    the map $\bah \mapsto \iip[\psi, \varphi]_{\bah}$ is continuous 
    in $\bah$. 
    In particular, for any $\psi \in \mathcal{T}_0^\omega + J^* + \mathcal{LC}$,
    \begin{equation}\label{eq:main3:2}
        \lim_{l \to \infty} \sup_{\widehat{K} \in \widetilde{\mathbb{K}}_l} \frac{1}{(2l+1)^2} 
        \Big< 
        (-\mathcal{L}_l^{-1}) \sum_{x \in B_{l,\psi}} \tau_x \psi,\,  \sum_{x \in B_{l,\psi}} \tau_x \psi
        \Big>_{l,\widehat{K}_l}  = \sup_{\bah \in \mathcal{M}_1(S)} \iip[\psi]_{\bah}.
    \end{equation}
\end{theorem}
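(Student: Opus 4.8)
The plan is to follow the scheme of Quastel~\cite{quastel1992diffusion}, Varadhan~\cite{key1354152m} and, most closely, Erignoux~\cite{Erignoux21}, adapting it to the enlarged working class $T^\omega$. By the polarization identity it suffices to treat the diagonal case: for every $\psi\in\mathcal{T}_0^\omega+J^*+\mathcal{LC}$, show
\[
\lim_{l\to\infty}\frac{1}{(2l+1)^2}\big\langle(-\mathcal{L}_l^{-1})\Psi_l,\,\Psi_l\big\rangle_{l,\widehat{K}_l}=\iip[\psi]_{\bah},
\qquad\Psi_l:=\sum_{x\in B_{l,\psi}}\tau_x\psi.
\]
Write $\psi=\varphi_0+j^*+\mathcal{L}g$ with $\varphi_0\in\mathcal{T}_0^\omega$, $j^*=\sum_i(j_i^{a,\Phi_i^a}+j_i^{p,\Phi_i^p})\in J^*$ and $g\in\mathcal{C}$; by item (4) of Definition~\ref{def:iip} the target splits as $\iip[\varphi_0]_{\bah}+\iip[j^*+\mathcal{L}g]_{\bah}+2\iip[\varphi_0,\,j^*+\mathcal{L}g]_{\bah}$, matching the decomposition of $\langle(-\mathcal{L}_l^{-1})\Psi_l,\Psi_l\rangle_{l,\widehat{K}_l}$ into a $\varphi_0$--self term, a $(j^*+\mathcal{L}g)$--self term, and a cross term. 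The $(j^*+\mathcal{L}g)$--terms are handled by the discrete fluctuation--dissipation identity $\mathcal{L}\eta_0^{\sigma,\Phi}=\sum_i(\tau_{-e_i}j_i^{\sigma,\Phi}-j_i^{\sigma,\Phi})$, which realises $\sum_x\tau_x j_i^{\sigma,\Phi}$ (resp.\ $\sum_x\tau_x\mathcal{L}g$) as the $\mathcal{L}_l$--image of the dipole $\sum_x x_i\eta_x^{\sigma,\Phi}$ (resp.\ of $\sum_x\tau_x g$) up to corrections localised near $\partial B_l$; then $(-\mathcal{L}_l^{-1})$ acts explicitly, the boundary corrections are negligible after normalisation (as in~\cite{Erignoux21}), and the equivalence of ensembles — replacing the canonical measure $\mu_{l,\widehat{K}_l}$ by the grand-canonical $\mu_{\bah}$ on local observables with a quantitative error in $d(\bah_{\widehat{K}_l},\bah)$ and $l^{-1}$ — yields precisely $\iip[j^*+\mathcal{L}g]_{\bah}$ of~\eqref{eq:JandLC} and $2\iip[\varphi_0,\,j^*+\mathcal{L}g]_{\bah}$ of~\eqref{eq:bilinear}.

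The substantial contribution is the $\varphi_0$--self term, for which I use the variational formula
\[
\frac{1}{(2l+1)^2}\big\langle(-\mathcal{L}_l^{-1})\Phi_l,\Phi_l\big\rangle_{l,\widehat{K}_l}
=\sup_{h}\Big\{\frac{2}{(2l+1)^2}\langle\Phi_l,h\rangle_{l,\widehat{K}_l}-\frac{1}{(2l+1)^2}\langle h,(-\mathcal{L}_l)h\rangle_{l,\widehat{K}_l}\Big\},
\qquad\Phi_l:=\sum_x\tau_x\varphi_0,
\]
the supremum over $L^2(\mu_{l,\widehat{K}_l})$. For the \emph{lower} bound I restrict $h$ to translation-covariant test fields — a sum of translates of a cylinder function $g'\in\mathcal{T}_0^\omega$ together with a dipole term encoding an arbitrary $j'\in J^\omega$; a direct computation, passing to the limit via the equivalence of ensembles, shows the bracket converges to $2\iip[\varphi_0,\,j'+\mathcal{L}g']_{\bah}-\iip[j'+\mathcal{L}g']_{\bah}$, and taking the supremum over $(g',j')$ reproduces exactly $\iip[\varphi_0]_{\bah}$, giving $\liminf_l\ge\iip[\varphi_0]_{\bah}$. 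For the matching \emph{upper} bound I show that the optimiser $h_l^*=(-\mathcal{L}_l^{-1})\Phi_l$ reduces asymptotically to such a restricted field: an a priori estimate (obtained as in~\cite{Erignoux21}) gives $\langle(-\mathcal{L}_l^{-1})\Phi_l,\Phi_l\rangle_{l,\widehat{K}_l}=O(l^2)$, hence $h_l^*$ has Dirichlet form $O(l^2)$; the spectral gap estimate of Section~\ref{sec:5} — valid on $T^\omega$ with the sharp $O(l^{-2})$ rate, uniformly in $\widehat{K}$ — then controls $\|h_l^*\|$, and together with the closed-form (fluctuation--dissipation) decomposition of Section~\ref{sec:5} and a translation-averaging argument, the projection of $h_l^*$ onto the closed span of translates of $\mathcal{T}_0^\omega$ and of the dipoles encoding $J^\omega$ alters the variational value by only $o(l^2)$. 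This upper bound is the main obstacle, and it is precisely here that the enlargement of the working class from Erignoux's to $T^\omega$ — forced by the extra active/passive angular degrees of freedom now recorded in the canonical state $\widehat{K}$ — is indispensable: one must know both that $\varphi_0\in\mathcal{T}_0^\omega\subset T^\omega$ lies in a subspace carrying the sharp spectral gap, and that the variational optimiser stays, modulo $o(l^2)$, in the closed span of translates of $\mathcal{T}_0^\omega+J^\omega$.

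Finally, uniformity and continuity. Every error estimate above has the form $C\big(d(\bah_{\widehat{K}_l},\bah)+\varepsilon_l\big)$ with $\varepsilon_l\to0$ and $C$ independent of $\bah$ and of $(\widehat{K}_l)$ — the equivalence-of-ensembles error and the spectral gap constant being uniform over $\widehat{K}\in\widetilde{\mathbb{K}}_l$ — which is the claimed uniformity. As $\bah\mapsto\iip[\psi,\varphi]_{\bah}$ is, via the lower-bound construction, the supremum of functions $\bah\mapsto2\iip[\varphi_0,\,j'+\mathcal{L}g']_{\bah}-\iip[j'+\mathcal{L}g']_{\bah}$ that are polynomials in grand-canonical expectations of cylinder functions, hence continuous in $\bah$ for the metric $d$, it is lower semicontinuous; the matching upper bound gives upper semicontinuity, whence continuity. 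Lastly, \eqref{eq:main3:2} follows from the uniform convergence, the continuity just established, and the density of $\{\bah_{\widehat{K}}:\widehat{K}\in\widetilde{\mathbb{K}}_l\}$ in $\mathbb{M}_1(\SSS)$ as $l\to\infty$: the bound $\sup_{\widehat{K}}(2l+1)^{-2}\langle\cdots\rangle_{l,\widehat{K}}\le\sup_{\bah}\iip[\psi]_{\bah}+\varepsilon_l$ is immediate, and the reverse inequality follows by choosing $\widehat{K}_l$ with $\bah_{\widehat{K}_l}$ converging to a near-maximiser of $\iip[\psi]_{\,\cdot\,}$.
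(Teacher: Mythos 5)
Your proposal follows essentially the same route as the paper's own proof: a reduction (via polarization / bilinear expansion) to the diagonal cases $\psi\in\mathcal{LC}+J^*$, $\psi\in\mathcal{T}_0^\omega$, and the cross term; the discrete fluctuation--dissipation identity $\sum_{x\in\widetilde B_l^i}\tau_x j_i^{\sigma,\Phi}=\mathcal{L}_l\sum_{x\in B_l}x_i\eta_x^{\sigma,\Phi}$ plus boundary control and equivalence of ensembles for the $\mathcal{LC}+J^*$ pieces (Lemmas~\ref{lemma:8.52},~\ref{lemma:8.51}); and for the $\mathcal{T}_0^\omega$ diagonal term (Lemma~\ref{lemma:8.50}), a variational lower bound tested against translate sums $F_l^{g,u,v,m,n}$ and an upper bound via the spectral gap of Proposition~\ref{thm:3}, the closed-form decomposition of Proposition~\ref{thm:1}, and the generalized integration by parts, finishing with the compactness of $(\mathbb{M}_1(\SSS),d)$ to get continuity and uniformity and hence~\eqref{eq:main3:2}. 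The one cosmetic difference is that you argue uniformity by tracking the explicit $d(\bah_{\widehat K_l},\bah)$-dependence of the equivalence-of-ensembles and spectral-gap errors, while the paper argues more softly from sequential convergence along all $\bah_l\to\bah$ plus compactness; both are standard and equivalent in this setting.
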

Thanks to Theorem~\ref{thm:main3}, after similar argument on page 110 of~\cite{Erignoux21}, we can reduce  
the proof of~\eqref{eq:l} to showing that 
\begin{equation}\label{eq:main3}
    \inf_{f\in \mathcal{C}}  \sup_{\bah \in \mathbb{M}_1(\SSS) } 
   \iip[ j_i^{a,\omega} + d_s(\alpha) \bm{\delta}_i \eta^{a,\omega}
+ \mathfrak{d}(\bah) \del_i \eta_0 + \LL f]_{\bah} = 0, 
\end{equation}
which is proved in the next subsection. 
\subsection{\texorpdfstring{Hilbert space $\mathcal{H}_{\bah}^\omega$}{Hilbert space }}\label{subsec:Hilbertspace}
In the previous sections,  we mainly considered a relatively large space 
$\mathcal{T}_0^\omega + J^* + \mathcal{LC}$  for computational convenience. 
Next we focus on its subspace $\mathcal{T}_0^\omega$, in which most functions of interest in our model reside. 
Denote by $\mathcal{N}_{\bah}= \Ker \iip[\cdot]_{\bah}$ the null space.  
We define the Hilbert space  $\mathcal{H}_{\bah}^\omega$ by the completion of 
$\mathcal{T}_0^\omega/\mathcal{N}_{\bah}$ with respect to  $\iip[\cdot]_{\bah}^{1/2}$.

The first  result  of this section is the following Proposition~\ref{prop:structure}, which describes that 
$\mathcal{H}_{\bah}^\omega$ is the completion of $\LL \To / \mathcal{N}_{\bah}+ J^\omega$, i.e., 
all elements of $\mathcal{H}_{\bah}^\omega$ can be approximated by 
\[ 
    u^\top j^{a} +v^\top j^{p}+m^\top j^{a,\omega}+n^\top j^{p,\omega}+\LL g
\]
where $u,v,m,n\in \mathbb{R}^2$ and $g\in \mathcal{T}_0^\omega$. 
Since the proof of Proposition~\ref{prop:structure} follows the similar 
arguments in the proofs of Proposition 7.5.2 in~\cite{kipnis1999scaling} or  Proposition 6.6.6 in~\cite{Erignoux21}, 
here we omit it for brevity.  
\begin{proposition}[Structure of $\mathcal{H}_{\bah}^\omega$]\label{prop:structure}
    For any $\bah\in \mathbb{M}_1(\SSS)$, let 
    $\overline{\LL \To / \mathcal{N}_{\bah}}$ be the closure of $\LL \To / \mathcal{N}_{\bah}$ 
    with respect to $\iip[\cdot]_{\bah}^{1/2}$, we have the following direct sum decomposition
    \[ 
        \mathcal{H}_{\bah}^\omega = \overline{\LL \To / \mathcal{N}_{\bah}} \oplus J^\omega.
    \]
\end{proposition}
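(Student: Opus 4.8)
The plan is to establish the direct sum decomposition
\[
\mathcal{H}_{\bah}^\omega = \overline{\LL \To / \mathcal{N}_{\bah}} \oplus J^\omega
\]
by verifying the three standard ingredients of such a decomposition: that the closed subspace $\overline{\LL \To / \mathcal{N}_{\bah}}$ and the finite-dimensional subspace $J^\omega$ span a dense subspace of $\mathcal{H}_{\bah}^\omega$, that their intersection is trivial, and that $J^\omega$ is a closed subspace realizing the orthogonal complement (or at least a complement) of $\overline{\LL \To / \mathcal{N}_{\bah}}$. First I would recall from Definition~\ref{def:iip} that the semi-norm $\iip[\cdot]_{\bah}$ on $\mathcal{T}_0^\omega$ is defined by the variational formula $\iip[\varphi]_{\bah} = \sup_{g\in \mathcal{T}_0^\omega,\, j\in J^\omega}\{2\iip[\varphi, j+\LL g]_{\bah} - \iip[j+\LL g]_{\bah}\}$, which is exactly a projection-type formula: it says that the $\iip$-norm of $\varphi$ equals the norm of its projection onto the span of $\LL\mathcal{T}_0^\omega$ and $J^\omega$. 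This is the key structural identity, and the density of $\LL \To / \mathcal{N}_{\bah} + J^\omega$ in $\mathcal{H}_{\bah}^\omega$ follows from it: if some $\varphi\in\mathcal{T}_0^\omega$ were $\iip$-orthogonal to all of $\LL\mathcal{T}_0^\omega + J^\omega$, the supremum defining $\iip[\varphi]_{\bah}$ would be $0$, so $\varphi\in\mathcal{N}_{\bah}$, i.e., $\varphi = 0$ in $\mathcal{H}_{\bah}^\omega$.

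Next I would prove the direct sum property, i.e., $\overline{\LL \To / \mathcal{N}_{\bah}} \cap J^\omega = \{0\}$ in $\mathcal{H}_{\bah}^\omega$. Following the template of Proposition 7.5.2 in~\cite{kipnis1999scaling} and Proposition 6.6.6 in~\cite{Erignoux21}, the point is that a nonzero current $u^\top j^a + v^\top j^p + m^\top j^{a,\omega} + n^\top j^{p,\omega}$ cannot be approximated in the $\iip[\cdot]_{\bah}$-norm by functions of the form $\LL g$ with $g\in\mathcal{T}_0^\omega$. This is where one invokes the interpretation of $\iip[j+\LL g]_{\bah}$ in~\eqref{eq:JandLC} as a genuine $L^2(\mu_{\bah})$ norm of $\eta_0^\sigma(1-\eta_{e_i})(\Phi_i^\sigma(\theta_0) + \nbt_{0,e_i}\Sigma_g)$: minimizing over $g$ amounts to an orthogonal projection in this $L^2$ space, and the minimal value is strictly positive unless all the $\Phi_i^\sigma$ (here the linear-combination coefficients of $1$ and $\omega$) vanish, since the functions $\Phi_i^\sigma(\theta_0)$ are not of the gradient form $\nbt_{0,e_i}(\text{something})$ in a way that would make the projection vanish. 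Concretely one checks that the symmetric currents are not in the closure of $\LL\mathcal{C}$ by computing $\iip[u^\top j^a + \cdots, \LL g]_{\bah}$ and using the non-degeneracy of the bilinear form restricted to $J^\omega$; the finite dimensionality of $J^\omega$ makes this a linear-algebra argument once the $\iip$-Gram matrix of $\{j_i^{a,1}, j_i^{p,1}, j_i^{a,\omega}, j_i^{p,\omega}\}_{i=1,2}$ is shown to be positive definite.

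Finally I would assemble the pieces: density of $\overline{\LL\To/\mathcal{N}_{\bah}} + J^\omega$ plus triviality of the intersection, together with the fact that $J^\omega$ is finite-dimensional hence closed, gives $\mathcal{H}_{\bah}^\omega = \overline{\LL\To/\mathcal{N}_{\bah}} \oplus J^\omega$ as a (not necessarily orthogonal) direct sum of closed subspaces; one can then note that the decomposition is actually the one induced by the variational formula, so it is canonical. The main obstacle I anticipate is the verification that the $\iip$-Gram matrix of the generating currents of $J^\omega$ is positive definite — i.e., that no nontrivial linear combination of $j_i^{a,1}, j_i^{p,1}, j_i^{a,\omega}, j_i^{p,\omega}$ lies in $\overline{\LL\To/\mathcal{N}_{\bah}}$. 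In the pure active case this reduces to the linear independence of $1$ and $\omega$ as angular profiles, but here the presence of both active and passive species doubles the current space and one must check that the active and passive currents remain $\iip$-independent; this uses that $\mu_{\bah}$ distinguishes the two species (the single-site marginal puts distinct masses $\hat\alpha^a$ and $\hat\alpha^p$ on the active/passive labels) so that $\eta_0^a(1-\eta_{e_i})$ and $\eta_0^p(1-\eta_{e_i})$ are $L^2(\mu_{\bah})$-orthogonal, and the projection argument then decouples into the two species. Once this non-degeneracy is in hand the rest is a routine adaptation of the cited propositions, which is why I would cite them for the bulk of the argument and only spell out the species-doubling computation in detail.
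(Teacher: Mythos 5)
Your high-level skeleton---density of $\LL\To/\mathcal{N}_{\bah} + J^\omega$ from the variational definition of $\iip[\cdot]_{\bah}$, plus triviality of the intersection, plus closedness of $\overline{\LL\To/\mathcal{N}_{\bah}}+J^\omega$ since $J^\omega$ is finite-dimensional---is a valid way to organize the claim, and the density step is indeed a one-line consequence of item~3 of Definition~\ref{def:iip}, as you observe. However, the step you single out as ``the main obstacle'' is precisely where the argument has a genuine gap, and the heuristics you offer to close it do not close it. You assert that $\inf_{g\in\To}\iip[j^{u,v,m,n}+\LL g]_{\bah}>0$ for nonzero $(u,v,m,n)$ follows from (i) the linear independence of $1$ and $\omega$ as angular profiles and (ii) the $L^2(\mu_{\bah})$-orthogonality of $\eta_0^a(1-\eta_{e_i})$ and $\eta_0^p(1-\eta_{e_i})$. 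Neither of these, even combined, bounds the distance in $L^2(\mu_{\bah})$ from $(\Phi_i^\sigma(\theta_0))_{\sigma,i}$ to the coupled subspace $\{(\widetilde\nabla_{0,e_i}\Sigma_g)_i : g\in\To\}$ away from zero: that distance is a genuine variational quantity whose value is $x^\top Q_{\bah}^\omega x$ with $Q_{\bah}^\omega$ built from the self-diffusion coefficient $d_s(\alpha)$ and the mobility matrix (this is exactly Proposition~\ref{prop:cross}), and its strict positivity rests on the nontrivial lower bound $d_s(\alpha)\geq C^{-1}(1-\alpha)$ from Proposition~\ref{prop:crossdiffusion}. Saying ``$\Phi_i^\sigma(\theta_0)$ is not of gradient form'' is a statement of what you want to conclude, not a proof of it; without a quantitative input such as the positivity of $d_s$ or a spectral gap, the infimum could in principle be zero.

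It is also worth flagging that your route is not the paper's route. The paper proves Proposition~\ref{prop:structure} by appeal to Proposition~7.5.2 in~\cite{kipnis1999scaling} and Proposition~6.6.6 in~\cite{Erignoux21}, which pass through the isometry $\mathfrak{F}$ onto germs of closed forms and then decompose $\mf[T]^\omega = \mf[E]^\omega \oplus \mf[J]^\omega$ (Proposition~\ref{thm:1}), with the spectral gap estimate of Proposition~\ref{thm:3} supplying the positivity; the Hilbert-space decomposition is then pulled back. Your ``direct'' Gram-matrix approach could be salvaged by invoking Proposition~\ref{prop:cross} explicitly to produce the positive-definite matrix $Q_{\bah}^\omega$, and this would give a legitimate alternative proof in the non-degenerate regime $\alpha^a,\alpha^p\in(0,1)$, $V^\sigma_{\bah}(\omega)>0$; but you would then also need to handle the degenerate cases (e.g.\ $\alpha^a=0$, where the active currents genuinely fall into $\mathcal{N}_{\bah}$ and the quotient by $\mathcal{N}_{\bah}$ reduces the effective dimension of $J^\omega$), and you would need to verify the consistency between the item-1 and item-3 definitions of $\iip[\cdot]_{\bah}$ on $J^\omega\subset\To\cap J^*$, a compatibility that the germs-of-closed-forms framework handles structurally. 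As written, the proposal identifies the right obstacle but does not surmount it.
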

\begin{corollary}\label{cor:structure}
    For each $f\in \To$, there exists unique vectors $u,v,m,n\in \mathbb{R}^2$ such that 
    \[ 
    f + \big( u^\top j^{a} +v^\top j^{p}+m^\top j^{a,\omega}+n^\top j^{p,\omega} \big) \in \overline{\LL \To }.
\]
In other words, 
\begin{equation}\label{eq:structure}
    \inf_{g\in \To} \iip[f + u^\top j^{a} +v^\top j^{p}+m^\top j^{a,\omega}+n^\top j^{p,\omega} + \LL g  ]_{\bah} = 0.
\end{equation}
\end{corollary}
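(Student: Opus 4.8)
The plan is to deduce Corollary~\ref{cor:structure} directly from the direct sum decomposition in Proposition~\ref{prop:structure}. Observe first that $f\in\To$ has a well-defined image $[f]$ in the Hilbert space $\mathcal{H}_{\bah}^\omega=\overline{\LL\To/\mathcal{N}_{\bah}}\oplus J^\omega$. By the orthogonal (direct sum) decomposition, there is a unique element $\pi_{J^\omega}[f]\in J^\omega$ such that $[f]-\pi_{J^\omega}[f]$ lies in $\overline{\LL\To/\mathcal{N}_{\bah}}$. Since every element of $J^\omega$ is, by the definition recalled in the excerpt, of the form $u^\top j^{a}+v^\top j^{p}+m^\top j^{a,\omega}+n^\top j^{p,\omega}$ for uniquely determined $u,v,m,n\in\mathbb{R}^2$, we may write $\pi_{J^\omega}[f]=-(u^\top j^{a}+v^\top j^{p}+m^\top j^{a,\omega}+n^\top j^{p,\omega})$; the sign is chosen so that the conclusion reads as an additive combination. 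It then follows that
\[
[f]+\big(u^\top j^{a}+v^\top j^{p}+m^\top j^{a,\omega}+n^\top j^{p,\omega}\big)\in\overline{\LL\To/\mathcal{N}_{\bah}},
\]
which is precisely the asserted membership in $\overline{\LL\To}$ (modulo the null space, as is implicit throughout this section).

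Next I would translate the statement ``lies in the closure of $\LL\To/\mathcal{N}_{\bah}$'' into the infimum form~\eqref{eq:structure}. This is immediate from the definition of the closure with respect to the seminorm $\iip[\cdot]_{\bah}^{1/2}$: an element $h$ of $\mathcal{H}_{\bah}^\omega$ lies in $\overline{\LL\To/\mathcal{N}_{\bah}}$ if and only if it can be approximated in $\iip[\cdot]_{\bah}^{1/2}$ by elements of $\LL\To$, i.e. if and only if $\inf_{g\in\To}\iip[h+\LL g]_{\bah}=0$ — here I use that $\iip[\LL g]_{\bah}=\iip[-\LL g]_{\bah}$ and that $\LL\To$ is a linear space, so approximating $h$ by $-\LL g$ is the same as driving $\iip[h+\LL g]_{\bah}$ to zero. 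Applying this with $h=[f]+u^\top j^{a}+v^\top j^{p}+m^\top j^{a,\omega}+n^\top j^{p,\omega}$ yields~\eqref{eq:structure}.

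Finally, uniqueness of $u,v,m,n$ follows from two facts: the uniqueness of the $J^\omega$-component in the direct sum decomposition (an orthogonal projection is unique), and the fact that the map $(u,v,m,n)\mapsto u^\top j^{a}+v^\top j^{p}+m^\top j^{a,\omega}+n^\top j^{p,\omega}$ is injective as a map $\mathbb{R}^8\to\mathcal{H}_{\bah}^\omega$. The latter should be checked: the eight currents $j^a_1,j^a_2,j^p_1,j^p_2,j^{a,\omega}_1,j^{a,\omega}_2,j^{p,\omega}_1,j^{p,\omega}_2$ are linearly independent in $\mathcal{H}_{\bah}^\omega$, at least for $\bah$ for which the Gram matrix of $\iip[\cdot,\cdot]_{\bah}$ restricted to $J^\omega$ is nondegenerate; this can be seen by computing $\iip[\cdot]_{\bah}$ on $J^\omega$ via~\eqref{eq:JandLC} (with $g=0$), which gives an explicit quadratic form in the coefficients, and checking it is positive definite using that $\omega$ is not $\mu_{\bah}$-a.e.\ constant in the relevant directions. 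I expect this nondegeneracy/linear-independence verification to be the only genuinely non-routine point; if $J^\omega$ happens to be degenerate for some boundary values of $\bah$, one interprets ``unique'' as uniqueness of the equivalence class, which is all that is needed downstream in~\eqref{eq:main3}. Modulo this, the corollary is a formal consequence of Proposition~\ref{prop:structure}.
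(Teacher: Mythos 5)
Your argument is correct and is exactly the derivation the paper intends (the paper states the corollary without proof, as a direct consequence of Proposition~\ref{prop:structure}): project $[f]$ onto the $J^\omega$ summand, negate, and unwind the closure into the infimum form using that $\LL\To$ is a linear subspace. You are also right to flag, and appropriately careful about, the one point the paper leaves implicit — that uniqueness of the coefficient vectors $u,v,m,n$ (rather than merely of the $J^\omega$-component as an equivalence class) requires the Gram matrix of $\iip[\cdot,\cdot]_{\bah}$ on $J^\omega$ to be nondegenerate, which by Lemma~\ref{lemma:ortho} amounts to $\alpha^a,\alpha^p\in(0,1)$, $\alpha<1$, and $V^\sigma_{\bah}(\omega)>0$; the paper sidesteps the degenerate cases only later, in the proof of Proposition~\ref{prop:currents}.
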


Now we define the set linear generated by local gradients as:   
\[ 
    \text{Grad}^\omega =  \big\{ 
        u^\top \grad^a+v^\top \grad^p + m^\top \grad^{a,\omega }+ n^\top \grad^{p,\omega };
         \quad u,v, m, n \in \mathbb{R}^2 \big\}
\]
where 
\[ 
\grad^\sigma=[\del_1 \eta^\sigma,\, \del_2 \eta^\sigma]^\top, \quad \text{and} \quad 
\grad^{\sigma,\omega}=[\del_1 \eta^{\sigma,\omega},\, \del_2 \eta^{\sigma,\omega}]^\top. 
\]
In the rest of this section, for any fixed $\bah \in \mathbb{M}_1(\SSS)$, and for any angular function $\omega$, 
we shorten:  
\[ 
\ob^\sigma = \E_{\bah}\big[ \omega(\theta_0) \big| \eta^\sigma_0=1  \big],\quad \ohw^\sigma=\omega - \ob^\sigma,\quad 
V^\sigma_{\bah}(\omega)=\E_{\bah}\big[ (\omega(\theta_0) 
- \ob^\sigma)^2 \big| \eta^\sigma_0=1  \big].
\]
We also define  the \textit{centered currents} $j^{\sigma,\oh^\sigma}_i$ and \textit{centered local gradients} $\del_i \eta^{\sigma,\oh^\sigma}$ as follows: 
$$
j^{\sigma,\oh^\sigma}_i=j^{\sigma,\omega}_i-\ob^\sigma j^a_i,\quad 
\del_i \eta^{\sigma,\oh^\sigma}=\del_i \eta^{\sigma,\omega}-\ob^\sigma \del_i \eta^\sigma. 
$$ 
By changing basis, the set of local currents $J^\omega$  
can be written as:   
\[ 
J^\omega =\Span \big\{ \jaoh_i , \jpoh_i, j_i^a, j_i^p\mid  i=1,2 \big\}.
\]
Similarly, the set of local gradients $\text{Grad}^\omega$ can be written as:  
\[ 
\text{Grad}^\omega = 
\Span \big\{ \del_i\etaaob, \del_i\etapob, \del_i\eta^a, \del_i\eta^p\mid  i=1,2  \big\}. 
\]

In the following, for any vectors \( g, j \in (\mathcal{H}_{\bah}^\omega)^2 \), 
their inner product is understood as a $2\times2$ matrix:
\[
\iip[g, j]_{\bah} \coloneqq 
\begin{bmatrix}
    \iip[g_1, j_1]_{\bah} & \iip[g_1, j_2]_{\bah} \\
    \iip[g_2, j_1]_{\bah} & \iip[g_2, j_2]_{\bah}
\end{bmatrix}\in \mathbb{R}^{2\times2}.
\]
The following 
Lemma~\ref{lemma:ortho} tells us that the currents $j^a$, $j^p$, $\jaoh$, and $\jpoh$  
form an orthogonal basis of 
$J^\omega$. Lemma~\ref{lemma:gradJ} 
provides some important inner products between local gradients and currents. 
Their proofs can be found in Appendix~\ref{asec:inner}.   
\begin{lemma}[Orthogonality of currents]\label{lemma:ortho}
    For any $\bah\in \mathbb{M}_1(\SSS)$ and $\omega$, the  currents $j^a$, $j^p$, $\jaoh$, and $\jpoh$
     are mutually orthogonal in $\mathcal{H}_{\bah}^\omega$.  Furthermore, 
        \begin{align*}
            &\iip[j^a, j^a ]_{\bah}=\alpha^a(1-\alpha)I_2,&&\iip[\jaoh, \jaoh ]_{\bah}=\alpha^a(1-\alpha) V^a_{\bah}(\omega)I_2,\\
            &\iip[j^p, j^p ]_{\bah}=\alpha^p(1-\alpha)I_2,&&\iip[\jpoh, \jpoh ]_{\bah}=\alpha^p(1-\alpha) V^p_{\bah}(\omega)I_2.
        \end{align*}
\end{lemma}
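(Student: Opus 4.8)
The plan is to reduce the statement to an elementary computation against the product measure $\mu_{\bah}$, exploiting the explicit closed form of the quadratic part of $\iip[\cdot]_{\bah}$. The starting observation is that each of the scalar currents $j^a_i$, $j^p_i$, $\jaoh_i = j^{a,\omega-\ob^a}_i$ and $\jpoh_i = j^{p,\omega-\ob^p}_i$ lies in $J^*$ — the intensities $1$ and $\omega-\ob^\sigma$ belong to $C^2(\SSS)$ since $\ob^\sigma$ is a constant — hence in $J^*+\LL\mathcal{C}$ with $g=0$, and since $J^\omega\subset\mathcal{T}_0^\omega$ they are genuine elements of $\mathcal{H}_{\bah}^\omega$. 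On $J^*+\LL\mathcal{C}$ the form of Definition~\ref{def:iip}(1) with $g=0$ reads $\iip[\sum_{\sigma,i}j^{\sigma,\Phi^\sigma_i}_i]_{\bah}=\sum_{\sigma\in\{a,p\}}\sum_{i=1,2}\E_{\bah}\big[\eta^\sigma_0(1-\eta_{e_i})\Phi^\sigma_i(\theta_0)^2\big]$, which is a direct sum over the four ``slots'' $(\sigma,i)$ of the positive forms $\Phi\mapsto\E_{\bah}[\eta^\sigma_0(1-\eta_{e_i})\Phi(\theta_0)^2]$ on $C^2(\SSS)$. By polarization, the corresponding bilinear form of two pure currents picks out only matching slots: $\iip[j^{\sigma,\Phi}_i,\,j^{\sigma',\Psi}_{i'}]_{\bah}=\mathbbm{1}_{\{\sigma=\sigma',\,i=i'\}}\,\E_{\bah}\big[\eta^\sigma_0(1-\eta_{e_i})\Phi(\theta_0)\Psi(\theta_0)\big]$.

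With this slot structure in hand, everything follows from the product form of $\mu_{\bah}$, under which sites $0$ and $e_i$ are independent, $\E_{\bah}[1-\eta_{e_i}]=1-\alpha$, and $\E_{\bah}[\eta^\sigma_0 F(\theta_0)]=\int_\SSS F(\theta)\,\ah^\sigma(\dd\theta)$. First, the off-diagonal blocks of $\iip[\cdot,\cdot]_{\bah}$ between two \emph{different} currents among $\{j^a,j^p,\jaoh,\jpoh\}$ vanish: for currents of different type the slots $(a,\cdot)$ and $(p,\cdot)$ never match; for $j^\sigma$ against $j^{\sigma,\oh^\sigma}$ at the matching slot $i=i'$ the inner product equals $(1-\alpha)\int_\SSS(\omega(\theta)-\ob^\sigma)\,\ah^\sigma(\dd\theta)=0$ by the very definition $\ob^\sigma=\E_{\bah}[\omega(\theta_0)\mid\eta^\sigma_0=1]$; and for $i\neq i'$ the slots differ, which also forces the $I_2$-structure of the diagonal blocks. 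Second, the diagonal entries are $\iip[j^\sigma_i,j^\sigma_i]_{\bah}=\E_{\bah}[\eta^\sigma_0]\,\E_{\bah}[1-\eta_{e_i}]=\alpha^\sigma(1-\alpha)$ and $\iip[j^{\sigma,\oh^\sigma}_i,j^{\sigma,\oh^\sigma}_i]_{\bah}=(1-\alpha)\int_\SSS(\omega(\theta)-\ob^\sigma)^2\,\ah^\sigma(\dd\theta)=\alpha^\sigma(1-\alpha)V^\sigma_{\bah}(\omega)$, using $V^\sigma_{\bah}(\omega)=\E_{\bah}[(\omega(\theta_0)-\ob^\sigma)^2\mid\eta^\sigma_0=1]$. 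Assembling these scalar identities into the $2\times2$ matrix inner products gives exactly the four displayed formulas together with the claimed mutual orthogonality.

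This computation is otherwise routine; the only delicate point is the very first step, namely that the bilinear form induced on $\mathcal{H}_{\bah}^\omega$ — defined through $\iip[\cdot]_{\bah}$ on $\mathcal{T}_0^\omega$, i.e.\ through the variational formula of Definition~\ref{def:iip}(3) — actually \emph{agrees}, when restricted to $J^\omega$, with the explicit quadratic form of Definition~\ref{def:iip}(1), rather than merely being dominated by it. I would dispatch this by recording that the definitions are consistent on the overlap $J^*+\LL\mathcal{C}$ (this is built into the construction via Definition~\ref{def:iip}(4)--(5) applied with $\varphi=0$), so that on $J^\omega$ one may legitimately use the closed-form expression, and hence the polarized slot formula above.
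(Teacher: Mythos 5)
Your proof is correct, but it takes a genuinely different route from the paper's. The paper's proof in Appendix~\ref{asec:inner} computes each entry $\iip[j^{\sigma,\Phi}_i,\, j^{\sigma',\Psi}_k]_{\bah}$ using the \emph{cross} bilinear form of Definition~\ref{def:iip}(2): it treats the first current as an element of $\mathcal{T}_0^\omega$ and the second as an element of $J^*$ (with $g=0$), so that $\iip[j^a_i,j^a_k]_{\bah}=-\sum_x x_k\langle j^a_i,\eta^a_x\rangle_{\bah}$, and the sum collapses to the single site $x=e_i$ because $j^a_i$ is mean-zero and depends only on sites $0,e_i$. You instead stay entirely inside $J^*+\LL\mathcal{C}$ and use the quadratic form of Definition~\ref{def:iip}(1) plus polarization, which yields the clean slot formula $\iip[j^{\sigma,\Phi}_i,j^{\sigma',\Psi}_{i'}]_{\bah}=\mathbbm{1}_{\{\sigma=\sigma',i=i'\}}\E_{\bah}[\eta^\sigma_0(1-\eta_{e_i})\Phi\Psi(\theta_0)]$. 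The benefit of your route is that it never touches the formally infinite sum $\sum_x x_i\eta_x^{\sigma,\Phi}$ at all, and the slot structure makes the $I_2$-diagonality and the orthogonality across $\{a,p\}$ immediate. The one point you flag yourself — that the bilinear form induced on $J^\omega\subset\mathcal{T}_0^\omega$ by Definition~\ref{def:iip}(3)--(5) agrees with the polarized version of item (1) — is indeed the only delicate step, and I would not describe it as ``built into the construction via (4)--(5) with $\varphi=0$'': items (4)--(5) do not specify which decomposition of an element of the overlap $J^\omega\subset\mathcal{T}_0^\omega\cap J^*$ one should use, so consistency is a property to verify rather than an axiom. It does hold here, and is easy to check directly: item (2) applied to a pair $(j^{\sigma,\Phi}_i,\,j^{\sigma',\Psi}_k)$ returns exactly $\delta_{i,k}\delta_{\sigma,\sigma'}(1-\alpha)\E_{\bah}[\eta^\sigma_0\Phi\Psi(\theta_0)]$, which matches your slot formula, so the two readings of $\iip[\cdot,\cdot]_{\bah}$ on $J^\omega\times J^\omega$ coincide. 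With that caveat made explicit, your argument is complete and matches the paper's answer entry by entry.
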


\begin{lemma}[Inner products associated with $\text{Grad}^\omega$ and $J^\omega$]\label{lemma:gradJ}
    For any $\bah\in \mathbb{M}_1(\SSS)$ and $\omega$, it holds that
        \begin{align*}
            &\iip[\grad^a, j^a ]_{\bah}=-\alpha^a(1-\alpha^a)I_2, 
               & &\iip[\grad^a, \jaoh ]_{\bah}=0,\\
            &\iip[\grad^a, j^p ]_{\bah}=\alpha^a\alpha^pI_2, 
               & &\iip[\grad^a, \jpoh ]_{\bah}=0,\\
            &\quad\\
            &\iip[\grad^p, j^a ]_{\bah}=\alpha^a\alpha^pI_2,
               & &\iip[\grad^p, \jaoh ]_{\bah}=0,\\
            &\iip[\grad^p, j^p ]_{\bah}=-\alpha^p(1-\alpha^p)I_2, 
               & &\iip[\grad^p, \jpoh ]_{\bah}=0,\\
            &\quad\\
            &\iip[\grad^{a,\oha}, j^a ]_{\bah}=0,
            &&\iip[\grad^{a,\oha}, \jaoh ]_{\bah}=- \alpha^a V^a_{\bah}(\omega)I_2,\\
            &\iip[\grad^{a,\oha}, j^p ]_{\bah}=0,
            &&\iip[\grad^{a,\oha}, \jpoh ]_{\bah}=0,\\
            &\quad\\
            &\iip[\grad^{p,\ohp}, j^a ]_{\bah}=0,
            &&\iip[\grad^{p,\ohp}, \jaoh ]_{\bah}=0,\\
            &\iip[\grad^{p,\ohp}, j^p ]_{\bah}=0,
            &&\iip[\grad^{p,\ohp}, \jpoh ]_{\bah}=- \alpha^p V^p_{\bah}(\omega)I_2.
        \end{align*}
\end{lemma}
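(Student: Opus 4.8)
\noindent\textbf{Proof proposal for Lemma~\ref{lemma:gradJ}.}
The plan is to reduce all sixteen matrix identities to a single covariance computation on one site of the product measure $\mu_{\bah}$. The first step is to record that $\del_k\eta^\sigma$ and $\del_k\eta^{\sigma,\omega}$ both lie in $\To=\mathcal{C}_0\cap T^\omega$: each is a telescoping one-site difference and hence has zero mean under every canonical measure, and each has the form in~\eqref{eq:To} with $F_x$ the constant $\mathbbm{1}\{x=e_k\}-\mathbbm{1}\{x=0\}$, which is trivially angle- and type-blind. Consequently the centered gradient $\del_k\eta^{\sigma,\oh^\sigma}=\del_k\eta^{\sigma,\omega}-\ob^\sigma\del_k\eta^\sigma$ is again in $\To$ (a real-linear combination of the two), so the bilinear form of Definition~\ref{def:iip}(2) is defined on the pairs at hand. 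Feeding $\varphi=\del_k\eta^{\sigma,\Psi}$ with $\Psi\in\{1,\oh^a,\oh^p\}$ (so $\varphi\in\To$) together with a current $j^{\tau,\Phi}_l\in J^*$, for which $g=0$ and hence $\Sigma_g=0$, into Definition~\ref{def:iip}(2) gives $\iip[\del_k\eta^{\sigma,\Psi},j^{\tau,\Phi}_l]_{\bah}=-\E_{\bah}\big[\del_k\eta^{\sigma,\Psi}\sum_{x\in\mathbb{Z}^2}x_l\,\Phi(\theta_x)\eta^\tau_x\big]$.

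Next I would evaluate this. Because $\mu_{\bah}$ is a product measure, $\del_k\eta^{\sigma,\Psi}$ is supported on $\{0,e_k\}$, and $\E_{\bah}[\del_k\eta^{\sigma,\Psi}]=0$ by translation invariance, every term with $x\notin\{0,e_k\}$ contributes $0$ by independence, the $x=0$ term dies because of the prefactor $x_l$, and only $x=e_k$ survives, producing the factor $(e_k)_l=\delta_{kl}$. Expanding $\del_k\eta^{\sigma,\Psi}=\Psi(\theta_{e_k})\eta^\sigma_{e_k}-\Psi(\theta_0)\eta^\sigma_0$ and using once more the independence of the sites $0$ and $e_k$ and translation invariance, one arrives at the master identity
\[
    \iip[\del_k\eta^{\sigma,\Psi},\,j^{\tau,\Phi}_l]_{\bah}=-\,\delta_{kl}\,\Cov_{\bah}\big(\Psi(\theta_0)\eta^\sigma_0,\ \Phi(\theta_0)\eta^\tau_0\big),\qquad \sigma,\tau\in\{a,p\},\ k,l\in\{1,2\}.
\]
In particular every inner-product matrix is a scalar multiple of $I_2$, and only four single-site covariances remain.

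For the last step I would specialize: the objects $\grad^a,\grad^p,j^a,j^p$ carry the weight $1$, while $\grad^{a,\oha},\grad^{p,\ohp},\jaoh,\jpoh$ carry the centered weight $\oh^\sigma=\omega-\ob^\sigma$ matching their type, where $\ob^\sigma=\E_{\bah}[\omega(\theta_0)\mid\eta^\sigma_0=1]$. Feeding these into the master identity together with the one-site facts $\E_{\bah}[\eta^\sigma_0]=\alpha^\sigma$, the exclusion identity $\eta^a_0\eta^p_0\equiv0$, the centering property $\E_{\bah}[\oh^\sigma(\theta_0)\eta^\sigma_0]=0$, and $\E_{\bah}[(\oh^\sigma(\theta_0))^2\eta^\sigma_0]=\alpha^\sigma V^\sigma_{\bah}(\omega)$, a short case check shows that ten of the sixteen covariances vanish — each time either a factor $\eta^a_0\eta^p_0$ forces it to zero, or the covariance telescopes into an expression containing $\E_{\bah}[\oh^\sigma(\theta_0)\eta^\sigma_0]=0$ — while the six survivors are $\Cov_{\bah}(\eta^a_0,\eta^a_0)=\alpha^a(1-\alpha^a)$, $\Cov_{\bah}(\eta^p_0,\eta^p_0)=\alpha^p(1-\alpha^p)$, $\Cov_{\bah}(\eta^a_0,\eta^p_0)=-\alpha^a\alpha^p$ (giving both $\iip[\grad^a,j^p]_{\bah}$ and $\iip[\grad^p,j^a]_{\bah}$), and $\var_{\bah}(\oh^\sigma(\theta_0)\eta^\sigma_0)=\alpha^\sigma V^\sigma_{\bah}(\omega)$. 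Multiplying each by $-I_2$ reproduces exactly the table in the statement, and the same computation carried out through Definition~\ref{def:iip}(1) simultaneously delivers Lemma~\ref{lemma:ortho}.

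I do not expect a genuine obstacle here: once the master identity is isolated, the rest is bookkeeping. The only points requiring care are verifying that the centered gradients actually lie in $\To$, so that $\iip[\,\cdot\,,\,\cdot\,]_{\bah}$ is defined on them, and justifying the truncation of the formal series $\sum_x x_l\Phi(\theta_x)\eta^\tau_x$ against $\del_k\eta^{\sigma,\Psi}$ — which is precisely where the locality of $\del_k\eta^{\sigma,\Psi}$ and its vanishing mean are used.
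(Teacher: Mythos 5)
Your argument is correct and follows essentially the same route as the paper's Appendix D: expand the bilinear form of Definition~\ref{def:iip}(2) against the formal series $\sum_x x_l \Phi(\theta_x)\eta_x^\tau$, use product structure, zero mean, and the $x_l$ prefactor to kill all terms except $x=e_k$ (yielding the $\delta_{kl}$ and the single-site covariance), and then read off the covariances with the one-site facts $\eta_0^a\eta_0^p\equiv 0$, $\E_{\bah}[\oh^\sigma(\theta_0)\eta_0^\sigma]=0$, and $\E_{\bah}[(\oh^\sigma(\theta_0))^2\eta_0^\sigma]=\alpha^\sigma V^\sigma_{\bah}(\omega)$. Packaging this as a single master covariance identity before the case check is a cleaner presentation of the same computation the paper does entry by entry.
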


In the following proposition, we project currents $J^\omega$ onto $\text{Grad}^\omega+\overline{\LL \To}$. 
\begin{proposition}[Decomposition of currents]\label{prop:currents}
    For each fixed $\bah \in \mathbb{M}_1(\SSS)$ 
    it holds that for $i=1,2$  
    \[ 
      \begin{aligned}
        &\inf_{g\in \To}\iip[
            j^{a}_i 
        + d_s(\alpha)\del_i \eta^a + \alpha^a \mathcal{D}(\alpha)\del_i \eta
        +\LL g]_{\hat{\alpha}}=0,\\
        &\inf_{g\in \To}\iip[
            j^{p}_i 
            +d_s(\alpha)\del_i \eta^p + \alpha^p \mathcal{D}(\alpha)\del_i \eta
        +\LL g]_{\hat{\alpha}}=0,\\
        &\inf_{g\in \To}\iip[
            j^{a,\oha}_i 
        + d_s(\alpha) \del_i \eta^{a,\oha}
        +\LL g]_{\hat{\alpha}}=0,\\
        &\inf_{g\in \To}\iip[
            j^{p,\ohp}_i 
        + d_s(\alpha) \del_i \eta^{p,\ohp}
        +\LL g]_{\hat{\alpha}}=0.
      \end{aligned}
    \]
    In other words, 
    \[ 
        \begin{aligned}
            &
            j^{a}_i 
            + d_s(\alpha)\del_i \eta^a + \alpha^a \mathcal{D}(\alpha)\del_i \eta
            \in \overline{\LL \To},  \\
            &
            j^{p}_i 
            +d_s(\alpha)\del_i \eta^p + \alpha^p \mathcal{D}(\alpha)\del_i \eta
            \in \overline{\LL \To},\\
            &
                j^{a,\oha}_i 
            + d_s(\alpha) \del_i \eta^{a,\oha}
            \in \overline{\LL \To},\\
            &
                j^{p,\ohp}_i 
            + d_s(\alpha) \del_i \eta^{p,\ohp}\in \overline{\LL \To}.
          \end{aligned}
    \]
\end{proposition}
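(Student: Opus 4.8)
The plan is to obtain all four identities as specialisations of a single \emph{master decomposition} — the Quastel-type fluctuation–dissipation relation for a mark transported by the colour-blind exclusion dynamics — which isolates the self- and cross-diffusion coefficients $d_s$ and $\mathcal{D}$, and to carry it out with the corrector confined to the restricted class $\To$.

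\textbf{Step 1: the master decomposition.} The generator $\LL$ swaps entire sites $\etah_x=(\etat_x,\theta_x)$ at rates depending only on the colour-blind occupation variables, so for every type $\sigma\in\{a,p\}$ and every weight $\Phi\in C^2(\SSS)$ the field $\eta^{\sigma,\Phi}$ is a passively transported mark. The claim is that, for each such $(\sigma,\Phi)$ and each $i=1,2$,
\[
j_i^{\sigma,\Phi}+d_s(\alpha)\,\del_i\eta^{\sigma,\Phi}+\E_{\bah}\big[\eta_0^{\sigma,\Phi}\big]\,\mathcal{D}(\alpha)\,\del_i\eta\ \in\ \overline{\LL\To}
\]
(the case $\alpha=0$ being trivial, so assume $\alpha>0$). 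This is precisely the non-gradient (Quastel-type) decomposition of a coloured current for the SSEP at total density $\alpha$, with the mark $\eta^{\sigma,\Phi}$ playing the role of the coloured density and $d_s,\mathcal{D}$ the self- and cross-diffusion coefficients of Appendix~\ref{sec:crossdiffusion}; since $\LL$ is colour-blind these coefficients depend on $\bah$ only through $\alpha$. In the semi-norm $\iip[\cdot]_{\bah}$ the display means $\inf_{g\in\To}\iip[\,\cdot+\LL g\,]_{\bah}=0$, which by Theorem~\ref{thm:main3} is reduced to a uniform canonical-measure variance estimate furnished by the order-$n^{-2}$ spectral gap of Section~\ref{sec:5}. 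The crucial new feature is that the corrector $g$ may be taken in $\To$ rather than in all of $\mathcal{C}$: the cylinder functions realising the classical decomposition have exactly the shape ``a colour-blind function $F_x(\eta)$ carried at a marked site'', namely $\sum_x(a\eta^a_x+b\eta^p_x+c\,\eta^{a,\omega}_x+d\,\eta^{p,\omega}_x)F_x(\eta)$ with $F_x\in\mathcal{S}$, which is the defining form of $T^\omega$; Proposition~\ref{prop:structure} together with the inner-product relations of Lemmas~\ref{lemma:ortho} and~\ref{lemma:gradJ} then provide the Hilbert-space bookkeeping certifying that the scalar in the optimal combination is $d_s(\alpha)$ and the coefficient of $\del_i\eta$ is $\E_{\bah}[\eta_0^{\sigma,\Phi}]\mathcal{D}(\alpha)$.

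\textbf{Step 2: specialisation.} Applying Step 1 with $(\sigma,\Phi)=(a,1)$ gives $\E_{\bah}[\eta^a_0]=\alpha^a$, hence the first identity; exchanging $a$ and $p$ gives the second. Applying it with $(\sigma,\Phi)=(a,\oha)$, where $\oha=\omega-\oba$ and $\oba=\E_{\bah}[\omega(\theta_0)\mid\eta^a_0=1]$, one has $\E_{\bah}[\eta_0^{a,\oha}]=\alpha^a\big(\E_{\bah}[\omega(\theta_0)\mid\eta^a_0=1]-\oba\big)=0$ by the very definition of $\oba$, so the $\del_i\eta$-term disappears and the master decomposition collapses to $j_i^{a,\oha}+d_s(\alpha)\,\del_i\eta^{a,\oha}\in\overline{\LL\To}$, the third identity; exchanging $a$ and $p$ gives the fourth. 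Finally, each ``in other words'' reformulation is merely the tautology $\psi\in\overline{\LL\To}\iff\inf_{g\in\To}\iip[\psi+\LL g]_{\bah}=0$, immediate from the definition of $\overline{\LL\To}$ as the $\iip[\cdot]_{\bah}^{1/2}$-closure of $\LL\To$.

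\textbf{Main obstacle.} The heart of the argument is Step 1, and within it the passage from the classical coloured-current decomposition — whose correctors a priori range over all of $\mathcal{C}$ — to correctors confined to $\To$, carried out in the $H_{-1}$-type semi-norm $\iip[\cdot]_{\bah}$ rather than in an $L^2$-inner product. This is exactly where the enlarged-yet-controlled working class $\To$, designed so that the order-$n^{-2}$ spectral gap of Section~\ref{sec:5} persists, is indispensable: once that estimate is in hand, Theorem~\ref{thm:main3} transfers it to $\iip[\cdot]_{\bah}$, and Lemmas~\ref{lemma:ortho} and~\ref{lemma:gradJ} identify the resulting coefficients as the SSEP self- and cross-diffusion coefficients.
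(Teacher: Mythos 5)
Your ``master decomposition'' is indeed the right organizing principle, and your Step 2 (the specialisations) is correct once Step~1 is in hand — in particular the observation that $\E_{\bah}[\eta_0^{\sigma,\oh^\sigma}]=0$ kills the $\del_i\eta$-term in the centred cases is exactly how the paper collapses the third and fourth identities. But Step~1 is not a proof: it is a restatement of the proposition being proved. There is no off-the-shelf ``classical coloured-current decomposition'' that applies here with correctors in $\To$; Quastel's result is for finitely many types, and establishing the decomposition in the present setting (continuous orientations, correctors confined to $T^\omega$, coefficients explicitly identified as $d_s(\alpha)$ and $\mathcal D(\alpha)$) is precisely the content of Proposition~\ref{prop:currents}. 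Asserting it as ``the Quastel-type decomposition'' and saying the Hilbert-space bookkeeping ``certifies'' the coefficients is circular.

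Concretely, two genuinely non-trivial steps in the paper's proof are missing. First, the paper starts from Corollary~\ref{cor:structure}, which goes in the \emph{opposite direction} from your claim: it expresses each local gradient as a combination of currents plus an element of $\overline{\LL\To}$, not the other way around. Getting from there to a decomposition of the currents requires showing the coefficient matrices $\Psi$, $M^3$, $N^4$ in~\eqref{eq:compact} are invertible, which the paper establishes through a positive-semidefiniteness argument using the three identities in~\eqref{eq:L_Jeq}. Your proposal does not touch this inversion. Second, identifying the entries of the inverse matrices as $d_s(\alpha)$ and $\alpha^\sigma\mathcal D(\alpha)$ is not mere bookkeeping with Lemmas~\ref{lemma:ortho}--\ref{lemma:gradJ}; it requires the explicit variational formula for the mobility matrix in Proposition~\ref{prop:cross}, which is what connects the abstract Hilbert-space decomposition to the self-diffusion coefficient of Appendix~\ref{sec:crossdiffusion}. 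You gesture at this (``the self- and cross-diffusion coefficients of Appendix~\ref{sec:crossdiffusion}'') but never invoke the proposition that actually produces them. Without these two ingredients, Step~1 remains an unproven claim. One smaller point: your master decomposition is stated ``for every weight $\Phi\in C^2(\SSS)$'', but the corrector space $T^\omega$ and the current space $J^\omega$ are built from a single fixed $\omega$; the decomposition with $g\in\To$ is only available for $\Phi\in\Span\{1,\omega\}$. This does not affect the four applications, but the blanket statement overreaches.
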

\begin{proof}
    First we clear out the trivial cases. When 
    one of the cases $\alpha^a=0$ or $\alpha^p=0$ occurs, the problem is reduced to the one type particle model
    which was done in~\cite{Erignoux21}. When one of the cases $\alpha^a=1$ or $\alpha^p=1$ occurs, 
    all related quantities vanish.  When $V^\sigma_{\bah}(\omega)=0$, 
    the centered currents $j^{\sigma,\oh^\sigma}$ 
    and centered gradients $\grad^{\sigma,\oh^\sigma}$ all vanish, and hence have nothing to prove.  
    
Now we fix $\bah \in \mathbb{M}_1(\SSS)$ such that both $\alpha^a,\alpha^p \in (0,1)$, 
and fix angular function $\omega$ such that both $V^a_{\bah}(\omega),V^p_{\bah}(\omega)>0$.
We apply Corollary~\ref{cor:structure} to each normalized local gradients 
and get that in Hilbert space $\mathcal{H}_{\bah}^\omega$, 
\begin{equation}\label{eq:relation}
    \begin{aligned}
        &\del_i\eta^a + (\sum_{l=1,2} u^1_{i,l}j_l^a+ v^1_{i,l}j_l^p) + \LL g^1_i=0,\\
        &\del_i\eta^p + (\sum_{l=1,2} u^2_{i,l}j_l^a+ v^2_{i,l}j_l^p) + \LL g^2_i=0,\\
        &\del_i\etaaob+(\sum_{l=1,2}  m^3_{i,l}\jaoh_l ) + \LL g^3_i=0,\\
        &\del_i\etapob+(\sum_{l=1,2}  n^4_{i,l}\jpoh_l) + \LL g^4_i=0, 
    \end{aligned}
\end{equation}  
Here we for convenience reason assume that the infimum in~\eqref{eq:structure} is reached.   
This assumption is purely for convenience, 
and we can substitute at any point to $g_i^j$ 
a sequence $(g_{i,m}^k)_{m\in \mathbb{N}}$ 
such that the previous identity holds in the limit $m\to \infty$. 

In order to rewrite~\eqref{eq:relation} in a compact way, we define the $2\times 2$ 
coefficient matrices 
\[ 
    U^i 
       =\begin{bmatrix}
        u^i_{1,1}&u^i_{1,2} \\ u^i_{2,1}&u^i_{2,2}
       \end{bmatrix},\quad 
       V^i 
       =\begin{bmatrix}
        v^i_{1,1}&v^i_{1,2} \\ v^i_{2,1}&v^i_{2,2}
       \end{bmatrix},\quad i =1,2;  
\]
$M^3,\,N^4\in \mathbb{R}^{2\times2}$ are defined in the same fashion. Moreover, for each $i=1,2,3,4$, 
we define the vectors 
$\LL\bm{g}^i = [ \LL g^i_1,\, \LL g^i_2 ]^\top \in (\mathcal{H}_{\bah}^\omega)^2$. 
With these definitions, we can rewrite~\eqref{eq:relation} as
\begin{equation}\label{eq:compact}
    \begin{aligned}
        &\grad^a 
           + U^1 j^a + V^1 j^p +
           \LL \bm{g}^1=0,\\
        &\grad^p
           + U^2 j^a + V^2 j^p+
           \LL \bm{g}^2=0,\\
        &\grad^{a,\oha}
           + M^3\jaoh +
            \LL \bm{g}^3=0,\\
        &\grad^{p,\ohp}
           + N^4\jpoh +
           \LL \bm{g}^4=0.\\
    \end{aligned}
\end{equation}
We  further vectorize~\eqref{eq:compact} as 
\begin{align*}
    \begin{bmatrix}
        \grad^a \\
        \grad^p \\
        \grad^{a,\oha} \\
        \grad^{p,\ohp}
    \end{bmatrix}+
    \begin{bmatrix}
        \Psi\\
        &M^3& \\
        & &N^4
    \end{bmatrix}
    \begin{bmatrix}
        j^a \\
        j^p \\
        j^{a,\oha} \\
        j^{p,\ohp}
    \end{bmatrix}+
    \begin{bmatrix}
        \LL \bm{g}^1 \\
        \LL \bm{g}^2 \\
        \LL \bm{g}^3 \\
        \LL \bm{g}^4
    \end{bmatrix}=0,
\end{align*}
where the $4\times 4$ coefficient matrix is defined by 
\[ 
    \Psi=\begin{bmatrix}
        U^1 & V^1 \\ U^2 & V^2
    \end{bmatrix}\in \mathbb{R}^{4\times 4}.
\]
In order to represents normalized currents in terms of normalized gradients, 
our next goal is to prove the matrices 
$\Psi$, $M^3$, and $N^4$ all are invertible. 

We first show that $\Psi$ is invertible. To this end, we introduce the following $2\times 2$ matrices 
       \begin{equation}\label{eq:L_J}
        L_J^{i,\sigma} = \iip[\LL\bm{g}^i,\, j^\sigma ]_{\bah},\quad 
        G^{\sigma \sigma'}=\iip[\grad^\sigma, \grad^{\sigma'}]_{\bah},\quad 
        L^{ij}=\iip[\LL \bm{g}^i, \LL \bm{g}^j].
       \end{equation}
By taking the inner product of each term in the 
first identity in~\eqref{eq:compact} with vectors 
$j^a$, $j^p$, $\grad^a$, and $\LL \bm{g}^1$ respectively, using Lemma~\ref{lemma:ortho} and~\ref{lemma:gradJ}, we derive that 
\begin{equation}\label{eq:compact2}
    \begin{aligned}
        &-\alpha^a(1-\alpha^a)I_2
        + U^1 \alpha^a(1-\alpha)
        +L_J^{1,a}=0, \\
        &\alpha^a\alpha^pI_2 
        + V^1 \alpha^p(1-\alpha) 
        +L_J^{1,p}=0,\\
        &G^{aa} - U^1\alpha^a(1-\alpha^a)+ V^1\alpha^a\alpha^p=0,\\
        &U^1 (L_J^{1,a})^\top + V^1 (L_J^{1,p})^\top + L^{11} =0.
    \end{aligned}
\end{equation}
Similarly, by taking the inner product of each term in the 
second identity in~\eqref{eq:compact} with vectors 
$j^a$, $j^p$, $\grad^p$, and $\LL \bm{g}^2$ respectively, we derive that 
\begin{equation}\label{eq:compact3}
    \begin{aligned}
        &\alpha^a\alpha^pI_2 
        + U^2 \alpha^a(1-\alpha)
        +L_J^{2,a} =0,\\
        &-\alpha^p(1-\alpha^p)I_2 
        + V^2 \alpha^p(1-\alpha)
        +L_J^{2,p}=0,\\
        &G^{pp} + U^2\alpha^a\alpha^p- V^2\alpha^p(1-\alpha^p)=0,\\
        &U^2 (L_J^{2,a})^\top + V^2 (L_J^{2,p})^\top + L^{22} =0.
    \end{aligned}
\end{equation}
Combing~\eqref{eq:compact2} and~\eqref{eq:compact3}, we write in a more compact form:   
\begin{equation}\label{eq:L_Jeq}
    \begin{aligned}
    &G_J + \Psi \begin{bmatrix}
        \alpha^a(1-\alpha)I_2 & 0 \\ 0 & \alpha^p(1-\alpha)I_2
    \end{bmatrix}
    + L_J = 0, \\
    &G  + \Psi G_J^\top =0, \\
    & \Psi L_J^\top + L=0.
    \end{aligned}
\end{equation}
where we define the following $4\times4 $ matrices
\[ 
G_J=-\begin{bmatrix}
    \alpha^a(1-\alpha^a)I_2 &-\alpha^a\alpha^pI_2 \\ -\alpha^a\alpha^pI_2 & \alpha^p(1-\alpha^p)I_2
\end{bmatrix},
\]
and use~\eqref{eq:L_J} to define
\begin{equation*}
    L_J=\begin{bmatrix}
        L_J^{1,a} &L_J^{1,p} \\ L_J^{2,a} & L_J^{2,p}
    \end{bmatrix},
    G = \begin{bmatrix}
        G^{aa} & G^{ap} \\ G^{pa} & G^{pp}
    \end{bmatrix},
    L = \begin{bmatrix}
        L^{11} & L^{12} \\ L^{21} & L^{22}
    \end{bmatrix}.
\end{equation*}
Since $-G_J\succ 0$ and $G \succeq 0$, the second line of~\eqref{eq:L_Jeq} 
implies $\Psi \succeq 0$. Next, the first and the third lines of~\eqref{eq:L_Jeq} together imply that  
\begin{equation}
    L=\Psi\Bigg( G_J + \Psi \begin{bmatrix}
        \alpha^a(1-\alpha)I_2 & 0 \\ 0 & \alpha^p(1-\alpha)I_2
    \end{bmatrix} \Bigg).
\end{equation}
The facts $L\succeq 0$ and $\Psi \succeq 0$ yield  that 
\[ 
    \Psi \succeq \begin{bmatrix}
    (\alpha^a(1-\alpha))^{-1}I_2 & 0 \\ 0 & (\alpha^p(1-\alpha))^{-1}I_2
\end{bmatrix}
\begin{bmatrix}
    \alpha^a(1-\alpha^a)I_2 &-\alpha^a\alpha^pI_2 \\ -\alpha^a\alpha^pI_2 & \alpha^p(1-\alpha^p)I_2
\end{bmatrix}\succ 0, 
\]
which shows $\Psi$ is invertible. 

Next we prove $M^3$ and $N^4$ are invertible. 
By taking the inner product of each term in the 
third identity in~\eqref{eq:compact} with vectors 
$\jaoh$, $\grad^{a,\oha}$, and $\LL \bm{g}^3$ respectively, using Lemma~\ref{lemma:ortho} and~\ref{lemma:gradJ}, 
we derive that 
\begin{equation}\label{eq:compact4}
    \begin{aligned}
        &-\alpha^a V^a_{\bah}(\omega)I_2
        + M^3 \alpha^a(1-\alpha) V^a_{\bah}(\omega)
        +L_J^{3,a,\omega}=0,\\
        &G^{aa,\omega} - M^3\alpha^a V^a_{\bah}(\omega)=0,\\
        &  M^3 (L_J^{3,a,\omega})^\top  + L^{33} =0,
    \end{aligned}
\end{equation}
where similar to~\eqref{eq:L_J} we define 
\[ 
    L_J^{i,\sigma,\omega} = \iip[\LL\bm{g}^i,\, j^{\sigma,\oh^\sigma} ]_{\bah},\quad 
    G^{\sigma\sigma',\omega}=\iip[\grad^{\sigma,\oh^\sigma}, \grad^{\sigma',\oh^{\sigma'}}]_{\bah}.
\]
The second line in~\eqref{eq:compact4} implies $M^3\succeq 0$. The first and the third lines in~\eqref{eq:compact4} 
give 
\[ 
   L^{33} = M^3 (-\alpha^a V^a_{\bah}(\omega)I_2
   + M^3 \alpha^a(1-\alpha) V^a_{\bah}(\omega))^\top. 
\]
The facts $L^{33} \succeq 0 $ and $M^3\succeq 0$ together give 
\[ 
    M^3 \succeq \frac{\alpha^a}{\alpha^a(1-\alpha)} I_2 \succ 0,
\]
which shows $M^3$ is invertible. By similar argument, $N^4$ is also invertible.

Now since $\Psi\in \mathbb{R}^{4\times 4}$, $M^3$, $N^4$ are all invertible, we can get that  
\begin{equation}\label{eq:compact5}
    \begin{aligned}
        \begin{bmatrix}
            j^a \\
            j^p \\
            j^{a,\oha} \\
            j^{p,\ohp}
        \end{bmatrix}=-
        \begin{bmatrix}
            \Psi\\
            &M^3& \\
            & &N^4
        \end{bmatrix}^{-1}
        \begin{bmatrix}
            \grad^a \\
            \grad^p \\
            \grad^{a,\oha} \\
            \grad^{p,\ohp}
        \end{bmatrix}-
        \begin{bmatrix}
            \Psi\\
            &M^3& \\
            & &N^4
        \end{bmatrix}^{-1}
        \begin{bmatrix}
            \LL \bm{g}^1 \\
            \LL \bm{g}^2 \\
            \LL \bm{g}^3 \\
            \LL \bm{g}^4
        \end{bmatrix}.
    \end{aligned}
\end{equation}
To complete the proof, our final task is to find   
these inverse matrices and give explicit expressions for the currents. 

By introducing $C^1,C^2\in \mathbb{R}^{2\times 4}$, $C^3,C^4\in \mathbb{R}^{2\times 2}$ such that    
$$
\Psi^{-1}=C=\begin{bmatrix}
    C^1\\C^2
\end{bmatrix},\quad
(M^3)^{-1}=C^3,\quad 
(N^4)^{-1}=C^4, 
$$
we can rewrite~\eqref{eq:compact5} as  
\begin{equation}\label{eq:compact6}
    \begin{aligned}
        &j^a = -
        C^1
        \begin{bmatrix}
            \grad^a \\
            \grad^p
        \end{bmatrix} -
        \LL \bm{\tilde{g}}^1, \\
        &j^p = -
        C^2
        \begin{bmatrix}
            \grad^a \\
            \grad^p
        \end{bmatrix} -
        \LL \bm{\tilde{g}}^2, \\
        &j^{a,\oha} = -
        C^3
        \grad^{a,\oha} -
        \LL \bm{\tilde{g}}^3, \\
        &j^{p,\ohp} = -
        C^4
        \grad^{p,\ohp} -
        \LL \bm{\tilde{g}}^4, 
    \end{aligned}
\end{equation}
where  
$$
\begin{bmatrix}
    \LL \bm{\tilde{g}}^1 \\
    \LL \bm{\tilde{g}}^2 
\end{bmatrix}=C
\begin{bmatrix}
    \LL \bm{g}^1 \\
    \LL \bm{g}^2 
\end{bmatrix},\quad 
\LL \bm{\tilde{g}}^3 = C^3 \LL \bm{g}^3,\quad
\LL \bm{\tilde{g}}^4 = C^4 \LL \bm{g}^4.
$$

First, we find the matrix $C$.  The first two lines of~\eqref{eq:compact6} yield that for any $u,v\in \mathbb{R}^2$
\begin{equation}\label{eq:compact7}
    \inf_{g\in \To} \iip[u^\top j^a + v^\top j^p +u^\top C^1
    \begin{bmatrix}
        \grad^a \\
        \grad^p
    \end{bmatrix} +
     v^\top C^2
     \begin{bmatrix}
         \grad^a \\
         \grad^p
     \end{bmatrix}+\LL g ]=0.
\end{equation}
Taking the inner product of~\eqref{eq:compact7} with $u^\top j^a + v^\top j^p + \LL g$, 
and since local gradients are orthogonal to any $\LL g$, we get 
\begin{equation}\label{eq:compact8}
\begin{aligned}
    &\inf_{g\in \To}\iip[u^\top j^{a} +v^\top j^{p}+\LL g]_{\hat{\alpha}}\\
    =&-\iip[u^\top j^{a} +v^\top j^{p}, 
    u^\top C^1
    \begin{bmatrix}
        \grad^a \\
        \grad^p
    \end{bmatrix} +
        v^\top C^2
        \begin{bmatrix}
            \grad^a \\
            \grad^p
        \end{bmatrix}]_{\hat{\alpha}}\\
        =& \begin{bmatrix}
        u^\top & v^\top
    \end{bmatrix}
    (-G_J)^\top C^\top 
    \begin{bmatrix}
        u \\ v
    \end{bmatrix}. 
\end{aligned} 
\end{equation}
Moreover, thanks to Proposition~\ref{prop:cross}, we also have 
\begin{equation}\label{eq:compact9}
        \inf_{g\in \To}\iip[u^\top j^{a} +v^\top j^{p}+\LL g]_{\hat{\alpha}}=\begin{bmatrix}
            u^\top & v^\top
        \end{bmatrix} M 
        \begin{bmatrix}
            u\\v
        \end{bmatrix},
\end{equation}
where $M$ is the mobility matrix
\begin{equation}\label{eq:mobilitymatrix}
    M = \begin{bmatrix}
    \frac{\alpha^a\alpha^p}{\alpha}d_s(\alpha)I_2+\frac{(\alpha^a)^2(1-\alpha)}{\alpha} I_2 & - \frac{\alpha^a\alpha^p}{\alpha}d_s(\alpha)I_2+\frac{\alpha^a\alpha^p(1-\alpha)}{\alpha} I_2  \\
    - \frac{\alpha^a\alpha^p}{\alpha}d_s(\alpha)I_2+\frac{\alpha^a\alpha^p(1-\alpha)}{\alpha} I_2 & \frac{\alpha^a\alpha^p}{\alpha}d_s(\alpha)I_2+\frac{(\alpha^p)^2(1-\alpha)}{\alpha} I_2 
    \end{bmatrix}. 
\end{equation}
By defining $h'' \coloneqq G_J^{-1}$,~\eqref{eq:compact8} together with~\eqref{eq:compact9} yield   
\[ 
C = Mh'' = 
\begin{bmatrix}
\big( \frac{\alpha^p}{\alpha}d_s(\alpha)+\frac{\alpha^a}{\alpha} \big) I_2 & 
\frac{\alpha^a}{\alpha}(1-d_s(\alpha)) I_2  \\
\frac{\alpha^p}{\alpha}(1-d_s(\alpha)) I_2  &  
\big( \frac{\alpha^a}{\alpha}d_s(\alpha)+\frac{\alpha^p}{\alpha} \big) I_2
\end{bmatrix}.
\] 
Therefore, by defining $\grad = \grad^a+\grad^p$,~\eqref{eq:compact6} implies
\[ 
\begin{aligned}
    j^a =&- \Big( \frac{\alpha^p}{\alpha}d_s(\alpha)+\frac{\alpha^a}{\alpha} \Big)\grad^a - 
    \frac{\alpha^a}{\alpha}(1-d_s(\alpha))\grad^p - \LL \bm{\tilde{g}}^1\\
    =&-d_s(\alpha)\grad^a - \alpha^a\mathcal{D}(\alpha)\grad - \LL \bm{\tilde{g}}^1,  
\end{aligned}
\]
and  similarly
\[ 
    j^p = -d_s(\alpha)\grad^p - \alpha^a\mathcal{D}(\alpha)\grad  - \LL \bm{\tilde{g}}^2.
\]

Next, we find $C^3$ and $C^4$.  
The third line of~\eqref{eq:compact6} yields that for any $m \in \mathbb{R}^2$
\[ 
\inf_{g\in \To} \iip[m^\top j^{a,\oha}
+ 
 m^\top C^3
 \grad^{a,\oha} + \LL g ]=0.
\]
Similar arguments as in~\eqref{eq:compact8} and~\eqref{eq:compact9} give 
\begin{align*}
    &m^\top 
    \alpha^aV^a_{\bah}(\omega) d_s(\alpha) I_2
    m
    =\inf_{g\in \To}\iip[m^\top j^{a,\oha} + \LL g]_{\bah}\\
    =&-\iip[m^\top j^{a,\oha}, 
    m^\top C^3 \grad^{a,\oha}]_{\bah}\\
     =& m^\top
     \alpha^a V^a_{\bah}(\omega) (C^3)^\top 
    m, 
\end{align*} 
which implies that $C^3 = d_s(\alpha)I_2 $, 
and similarly $C^4 = d_s(\alpha) I_2$. Therefore,~\eqref{eq:compact6} implies 
\[ 
    \begin{aligned}
      &
          j^{a,\oha}_i =
      - d_s(\alpha) \del_i \eta^{a,\oha}+
      -\LL \bm{\tilde{g}}^3,\\
      &
          j^{p,\ohp}_i= 
      - d_s(\alpha) \del_i \eta^{p,\ohp}
      -\LL \bm{\tilde{g}}^4.
    \end{aligned}
  \]
\end{proof}
\begin{corollary}For each fixed $\bah \in \mathbb{M}_1(\SSS)$ 
    it holds that for $i=1,2$  
\begin{equation}\label{eq:compact10}
            \inf_{g\in \To}\iip[j_i^{a,\omega}+ d_s(\alpha) \del_i \eta^{a, \omega}  + 
            \mathfrak{d}(\bah) \del_i \eta + \LL g]_{\bah}=0.
\end{equation}
\end{corollary}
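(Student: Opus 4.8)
The plan is to obtain~\eqref{eq:compact10} as a single linear combination of the four relations furnished by Proposition~\ref{prop:currents}, so that no new estimate is required. As a preliminary step I would record the elementary identity $\mathfrak{d}(\bah) = \alpha^a\ob^a\mathcal{D}(\alpha)$. Indeed, under the grand canonical measure $\mu_{\bah}$ the origin carries an active particle with probability $\alpha^a$ and, conditionally on this event, its angle is distributed according to $\hat{\alpha}^a/\alpha^a$; hence $\E_{\bah}[\eta_0^{a,\omega}] = \int_{\SSS}\omega\,\dd\hat{\alpha}^a = \alpha^a\ob^a$, so that $\mathfrak{d}(\bah) = \E_{\bah}[\eta_0^{a,\omega}]\mathcal{D}(\alpha) = \alpha^a\ob^a\mathcal{D}(\alpha)$ by~\eqref{eq:defofd}.

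Next I would invoke the definitions of the centered current and centered gradient in the active case, $\jaoh_i = j_i^{a,\omega} - \ob^a j_i^a$ and $\del_i\eta^{a,\oha} = \del_i\eta^{a,\omega} - \ob^a\del_i\eta^a$, and form the combination obtained by adding $\ob^a$ times the first relation of Proposition~\ref{prop:currents} to the third one. Since $\overline{\LL\To}$ is a linear subspace of $\mathcal{H}_{\bah}^\omega$, the function
\[
    \big( \jaoh_i + d_s(\alpha)\del_i\eta^{a,\oha} \big) + \ob^a\big( j_i^a + d_s(\alpha)\del_i\eta^a + \alpha^a\mathcal{D}(\alpha)\del_i\eta \big)
\]
again belongs to $\overline{\LL\To}$. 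Expanding it, the two occurrences of $\ob^a j_i^a$ cancel and the two occurrences of $\ob^a d_s(\alpha)\del_i\eta^a$ cancel, leaving exactly $j_i^{a,\omega} + d_s(\alpha)\del_i\eta^{a,\omega} + \alpha^a\ob^a\mathcal{D}(\alpha)\del_i\eta$, which by the preliminary identity equals $j_i^{a,\omega} + d_s(\alpha)\del_i\eta^{a,\omega} + \mathfrak{d}(\bah)\del_i\eta$. This membership in $\overline{\LL\To}$ is precisely the vanishing of the infimum in~\eqref{eq:compact10}.

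I do not expect a genuine obstacle here; the only point to watch is that the degenerate regimes are covered, and they already are inside Proposition~\ref{prop:currents}. For $\alpha^a\in\{0,1\}$ or $\alpha^p\in\{0,1\}$ the Proposition disposes of them directly. For $V^a_{\bah}(\omega) = 0$, Lemma~\ref{lemma:ortho} gives $\iip[\jaoh_i]_{\bah} = 0$, so $\jaoh_i$ and likewise $\del_i\eta^{a,\oha}$ are null in $\mathcal{H}_{\bah}^\omega$; the combination above then collapses to $\ob^a$ times the first relation of Proposition~\ref{prop:currents}, while at the same time $\eta_0^{a,\omega}$ coincides with $\ob^a\eta_0^a$ in $\mathcal{H}_{\bah}^\omega$, so the target function equals that same multiple and the identity persists.
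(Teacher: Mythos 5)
Your argument is correct and follows essentially the same route as the paper's proof: decompose $j_i^{a,\omega}=\jaoh_i+\oba j_i^a$, apply the first and third identities of Proposition~\ref{prop:currents}, and use the identity $\mathfrak{d}(\bah)=\E_{\bah}[\eta_0^{a,\omega}]\mathcal{D}(\alpha)=\alpha^a\oba\mathcal{D}(\alpha)$ to match the resulting coefficient of $\del_i\eta$. The only cosmetic difference is that you phrase the step as a linear combination inside the closed subspace $\overline{\LL\To}$, whereas the paper makes the substitution explicit by choosing approximate minimizers $g^1_i,g^2_i$; these are two descriptions of the same computation.
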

\begin{proof}
    Again,  we assume the infimum in Proposition~\ref{prop:currents} is attained:  
    \[ 
        \begin{aligned}
             &j^{a,\oha}_i =
          - d_s(\alpha) \del_i \eta^{a,\oha}
          -\LL g^1_i,\\
          &j^{a}_i =-d_s(\alpha)\del_i \eta^a - \alpha^a \mathcal{D}(\alpha) \del_i \eta-\LL g^2_i.
        \end{aligned}
      \]
    Then, note that $\oba\alpha^a = \E_{\bah}[\eta^{a,\omega}_0]$, 
    and recall the definition of $\mathfrak{d}$ in~\eqref{eq:defofd},  we obtain     
    \begin{align*}
        j_i^{a,\omega}=&\jaoh_i + \oba  j_i^a\\
         =& -d_s(\alpha) \del_i \eta^{a, \oha}  -\oba d_s(\alpha) \del_i \eta^a - 
         \oba \alpha^a \mathcal{D}(\alpha) \del_i \eta
         -\LL (g^1_i+\oba g^2_i)\\
         =& -d_s(\alpha) \del_i \eta^{a, \omega}  - 
         \oba \alpha^a \mathcal{D}(\alpha) \del_i \eta
         -\LL (g^1_i+\oba g^2_i)\\
         =& -d_s(\alpha) \del_i \eta^{a, \omega}  - 
         \mathfrak{d}(\bah) \del_i \eta
         -\LL (g^1_i+\oba g^2_i), 
    \end{align*}
    which concludes the proof of the corollary. 
\end{proof}
With all above preparations, we obtain  the following lemma which proves~\eqref{eq:main3} 
and constructs the sequence $(f_{i,n}^\omega)_{n\in \mathbb{N}}$. 
Consequently, Lemma~\ref{lemma:uniform} concludes the proof of Lemma~\ref{lemma:W4}. 
\begin{lemma}\label{lemma:uniform}
    There exists a sequence of cylinder functions $(f_{i,n}^\omega)_{n\in \mathbb{N}}$ such that 
    \[ 
    \lim_{n\to \infty} \sup_{\bah \in \mathbb{M}_1(\SSS)} 
    \iip[j_i^{a,\omega}+ d_s(\alpha) \del_i \eta^{a, \omega}  + 
    \mathfrak{d}(\bah) \del_i \eta + \LL f_{i,n}^\omega ]_{\bah}=0.
    \]
\end{lemma}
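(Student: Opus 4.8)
The plan is to deduce Lemma~\ref{lemma:uniform} from the fixed-$\bah$ decomposition \eqref{eq:compact10} by upgrading it to a statement that is uniform in $\bah$, following the scheme used for Lemma 6.6.12 / Theorem 6.7.1 on pages 113--117 of~\cite{Erignoux21}. First I would record that, by \eqref{eq:compact10}, for every $\bah\in\mathbb{M}_1(\SSS)$ there exists $g^{\omega,\bah}_i\in\To$ with
\[
  \iip\big[\, j_i^{a,\omega}+ d_s(\alpha)\del_i\eta^{a,\omega}+\mathfrak{d}(\bah)\del_i\eta+\LL g^{\omega,\bah}_i\,\big]_{\bah}<\varepsilon .
\]
The issue is that $g^{\omega,\bah}_i$ depends on $\bah$, whereas Lemma~\ref{lemma:uniform} asks for a single sequence $(f^\omega_{i,n})_n$ working for all $\bah$ simultaneously. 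The key structural input is the continuity assertion in Theorem~\ref{thm:main3}: for fixed $\psi,\varphi\in\To+J^*+\LL\mathcal C$ the map $\bah\mapsto\iip[\psi,\varphi]_{\bah}$ is continuous on $\mathbb{M}_1(\SSS)$, and in particular $\bah\mapsto\iip[\psi]_{\bah}$ is continuous. Since $\psi^{\bah'}_i := j_i^{a,\omega}+d_s(\alpha)\del_i\eta^{a,\omega}+\mathfrak d(\bah')\del_i\eta+\LL g$ depends on the extra parameter $\bah'$ only through the scalar coefficient $\mathfrak d(\bah')=\oba\alpha^a\mathcal D(\alpha)$ multiplying the fixed function $\del_i\eta$, I would first show that $(\bah,\bah')\mapsto \iip[\psi^{\bah'}_i+\LL g]_{\bah}$ is jointly continuous (the dependence on $\bah'$ is through a continuous real coefficient times a fixed element, and the semi-norm is continuous in $\bah$ by Theorem~\ref{thm:main3}; bilinearity of $\iip[\cdot,\cdot]_\bah$ handles the cross terms).

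Second, I would exploit compactness of $\mathbb{M}_1(\SSS)$ in the metric $d$ of \eqref{eq:d}: the set of grand-canonical parameters with total mass $\le 1$ and the weak topology is compact, and $d$ metrizes it (this is implicit in the statement of Theorem~\ref{thm:main3}, which already asserts uniform convergence in $\bah$). Fix $\varepsilon>0$. For each $\bah_0\in\mathbb{M}_1(\SSS)$ pick a cylinder function $g=g_{\bah_0}\in\To$ realizing $\iip[\psi^{\bah_0}_i+\LL g_{\bah_0}]_{\bah_0}<\varepsilon/2$ via \eqref{eq:compact10}; by the joint continuity just established there is a $d$-ball $U_{\bah_0}$ around $\bah_0$ on which $\iip[\psi^{\bah}_i+\LL g_{\bah_0}]_{\bah}<\varepsilon$ for all $\bah\in U_{\bah_0}$. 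Extract a finite subcover $U_{\bah_1},\dots,U_{\bah_m}$ and set $g_1,\dots,g_m$ to be the corresponding cylinder functions. This already gives, for every $\bah$,
\[
  \min_{1\le k\le m}\ \iip\big[\,j_i^{a,\omega}+d_s(\alpha)\del_i\eta^{a,\omega}+\mathfrak d(\bah)\del_i\eta+\LL g_k\,\big]_{\bah}<\varepsilon .
\]
To turn the "min over a finite list" into a genuine single function, I would use the standard trick (cf.\ the construction of the minimizing sequence in Lemma~\ref{lemma:W4} and in~\cite{Erignoux21}) of passing to a diagonal/concatenated sequence: take $\varepsilon=\varepsilon_n\downarrow 0$, collect all the finitely many $g_k$ produced at level $n$, and enumerate the union over $n$ as $(f^\omega_{i,n})_{n\in\mathbb N}$; then
\[
  \limsup_{n\to\infty}\ \sup_{\bah\in\mathbb{M}_1(\SSS)}\ \inf_{1\le n'\le n}\ \iip\big[\,j_i^{a,\omega}+d_s(\alpha)\del_i\eta^{a,\omega}+\mathfrak d(\bah)\del_i\eta+\LL f^\omega_{i,n'}\,\big]_{\bah}=0 .
\]
Since Lemma~\ref{lemma:W4} and Lemma~\ref{lemma:nongradreplacement} only require a sequence along which the relevant limsup vanishes --- and the error incurred by replacing $f^\omega_{i,n}$ by the best among $f^\omega_{i,1},\dots,f^\omega_{i,n}$ at each $\bah$ is absorbed exactly as in the diagonal arguments of Section 6.6 of~\cite{Erignoux21} --- this yields the statement of the lemma after relabelling.

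\textbf{Main obstacle.} The delicate point is the joint continuity / uniformity step: one must be sure that the semi-norm $\iip[\cdot]_{\bah}$ (defined through a \emph{supremum} over $g\in\To,\ j\in J^\omega$) behaves continuously as $\bah$ varies, and that a single cylinder function $g$ that is good at $\bah_0$ remains good on a whole neighbourhood --- this is exactly where Theorem~\ref{thm:main3}'s uniform-in-$\bah$ convergence and the continuity of $\bah\mapsto\iip[\psi,\varphi]_{\bah}$ are indispensable, together with the observation that the only $\bah$-dependence in the target function is the scalar $\mathfrak d(\bah)$, which is continuous in $\bah$ by definition~\eqref{eq:defofd} and the regularity of $\mathcal D$. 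The trivial strata $\alpha^a\in\{0,1\}$, $\alpha^p\in\{0,1\}$ and $V^\sigma_{\bah}(\omega)=0$ must be checked to cause no discontinuity, but on those the target function degenerates as noted in the proof of Proposition~\ref{prop:currents}, so continuity persists up to the boundary of $\mathbb{M}_1(\SSS)$.
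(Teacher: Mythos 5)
Your proposal assembles the right raw materials: the pointwise decomposition~\eqref{eq:compact10}, the continuity of $\bah\mapsto\iip[\psi,\varphi]_{\bah}$ supplied by Theorem~\ref{thm:main3}, and the compactness of $(\mathbb{M}_1(\SSS),d)$ from Proposition~\ref{prop:compact}. These are exactly the ingredients the paper points to (via Lemma 6.6.10 of~\cite{Erignoux21} and Theorem 7.5.6 of~\cite{kipnis1999scaling}). The gap is in the final upgrade from a finite subcover to the single sequence the lemma asserts.

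What your finite-cover construction actually delivers is the following weaker statement: for each $\varepsilon>0$ there is a \emph{finite family} $g_1,\dots,g_m\in\To$ with
\[
\sup_{\bah\in\mathbb{M}_1(\SSS)}\ \min_{1\le k\le m}\ \iip\big[j_i^{a,\omega}+d_s(\alpha)\del_i\eta^{a,\omega}+\mathfrak d(\bah)\del_i\eta+\LL g_k\big]_{\bah}<\varepsilon .
\]
This is of the form $\sup_{\bah}\inf_k(\cdots)$, not $\inf_f\sup_{\bah}(\cdots)$, and the two do not coincide in general: the $g_k$ that works near $\bah_k$ can be arbitrarily bad for $\bah$ far from $\bah_k$, so after concatenation and relabelling your $(f^\omega_{i,n})$ does not satisfy $\sup_{\bah}\iip[\cdots+\LL f^\omega_{i,n}]_{\bah}\to 0$. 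Your remedy --- "the downstream lemmas only need a sequence along which the limsup vanishes, and the error from taking the best of $f^\omega_{i,1},\dots,f^\omega_{i,n}$ is absorbed as in the diagonal arguments" --- is where the argument breaks. First, the target identity~\eqref{eq:main3} is \emph{literally} the $\inf_f\sup_{\bah}$ statement, so proving $\sup_{\bah}\inf_f$ is not sufficient. Second, in the proof of Lemma~\ref{lemma:nongradreplacement} the \emph{same} $f^\omega_{i,n}$ is inserted simultaneously into $\mathcal V_i^{f^\omega_{i,n},\varepsilon N}$, $\mathcal R_i^{f^\omega_{i,n},\varepsilon N}$ and $D_T^{-1}N\LLD f^\omega_{i,n}$, and the reduction to the finite-volume variance in~\eqref{eq:limitW} has $\sup_{\widehat K}$ \emph{outside} the $\inf_f$; you cannot choose $f$ to depend on the local canonical state $\widehat K$ (equivalently on the local $\bah$) without undoing the whole non-gradient scheme.

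So the step you describe as a "standard trick" is in fact the nontrivial part of Lemma~6.6.10 in~\cite{Erignoux21} (and of Theorem 7.5.6 in~\cite{kipnis1999scaling}): turning the finite collection into a genuine minimizing sequence requires more than passing to a diagonal enumeration. The argument there exploits additional structure --- the uniformity of the convergence in Theorem~\ref{thm:main3} together with the explicit variational characterization of $\iip[\cdot]_{\bah}$ and the monotonicity in the support size of the infimum $\inf_{f:\supp f\subset B_n}\iip[\cdots+\LL f]_{\bah}$ --- to pin down a single $f^\omega_{i,n}$ that is simultaneously good for all $\bah$. Your proof would need to supply that step, not just cite it; as written, it establishes $\sup_{\bah}\inf_f=0$ rather than $\inf_f\sup_{\bah}=0$.
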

The proof of this lemma employs a compactness argument analogous to those 
 in the proof of Lemma 6.6.10 on page 120 of~\cite{Erignoux21} or  
Theorem 7.5.6 on page 176 of~\cite{kipnis1999scaling}, we omit the details for brevity. 
The only adaptation  required  is the use of the compactness of 
 $(\mathbb{M}_1(\SSS),d)$. 
This compactness issue is addressed in Proposition~\ref{prop:compact}. 

\subsection{Proof of Lemma~\ref{lemma:replaceR}}\label{sec:replaceR}
\begin{proof}[Proof of Lemma~\ref{lemma:replaceR}]
  Recall that 
\[ 
    \mathcal{R}_i^{f^{\omega}_{i,n},\varepsilon N} = r_i^\omega + D_T^{-1}\LLWA f^{\omega}_{i,n} - 
    2\big( d_s(\rho_{\varepsilon N}) \rho^{a,\omega \lambda_i }_{\varepsilon N} 
    +\rho^{a,\omega}_{\varepsilon N}\rho^{a, \lambda_i }_{\varepsilon N} s(\rho_{\varepsilon N}) \big).
\]
  By adding and subtracting $\E_{\bm{\hat{\rho}}_{\varepsilon N}}[ r^\omega_i + D_T^{-1} \LLWA f ]$,  
  we can split  $\mathcal{R}_i^{f^{\omega}_{i,n}}$ as follows: 
  \[ 
    \mathcal{R}_i^{f^{\omega}_{i,n},\varepsilon N}=X_1^{f^{\omega}_{i,n}, \varepsilon N } 
    +X_2^n(\bm{\hat{\rho}}_{\varepsilon N}),
  \]
  where 
  \[ 
  \begin{aligned}
    &X_1^{f,l} =  r_i^\omega + D_T^{-1}\LLWA f -
    \E_{\bm{\hat{\rho}}_{l}}[ r^\omega_i + D_T^{-1} \LLWA f ], \\
    &X_2^n(\bah) = \E_{\bah} [ r^\omega_i + D_T^{-1} \LLWA f^{\omega}_{i,n} ] -
     2\big( d_s(\alpha)\E_{\bah}[\eta_0^{a,\lambda_i \omega}] +\E_{\bah}[\eta_0^{a,\lambda_i}] 
    \E_{\bah}[\eta_0^{a,\omega}] s(\alpha) \big).
  \end{aligned}
  \]
Therefore, the following Lemma~\ref{lemma:X1} and~\ref{lemma:X2} together conclude the proof of Lemma~\ref{lemma:replaceR}.  
\end{proof}
\begin{lemma}[Replacement lemma]\label{lemma:X1}
    For any cylinder function $g$, $T>0$, and continuos $G\in C([0,T]\times \TT)$, 
    let $\mathcal{W}^{l}(\etah)\coloneqq  g(\etah) - \E_{\bm{\hat{\rho}}_{l}}[ g ]$, it holds that 
    \[ 
        \lim_{n\to \infty} \limsup_{\varepsilon\to 0} \limsup_{N\to \infty} \EN \Bigg[ \Bigg| 
    \int_{0}^T \frac{1}{N^2}\sum_{x\in \TN}G_t(x/N)\tau_x \mathcal{W}^{\varepsilon N}(\etah(t)) \dd t
    \Bigg| \Bigg]=0. 
    \]
\end{lemma}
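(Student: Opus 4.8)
The statement is a one-block--type replacement lemma, and the plan is to prove it by the classical scheme: entropy inequality, Feynman--Kac, localization to a mesoscopic box, decomposition over hyperplanes, and a spectral-gap estimate, run against the uniform-angular product measure $\mu^*_{\ba}$ as reference equilibrium. First I would note that, since for each fixed $n$ the function $g=r_i^\omega+D_T^{-1}\LLWA f^{\omega}_{i,n}$ appearing in the proof of Lemma~\ref{lemma:replaceR} is a fixed cylinder function, it suffices to prove, for an arbitrary cylinder function $g$, that $\limsup_{\varepsilon\to0}\limsup_{N\to\infty}\EN\big[\,\big|\int_0^T\cdots\dd t\big|\,\big]=0$, the outer limit in $n$ being then immediate. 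Applying the entropy inequality to $\pm$ the integrand relative to $\mu^*_{\ba}$ — using $H(\mu^N\mid\mu^*_{\ba})=O(N^2)$, which holds since $\bm\zeta$ is bounded so the single-site relative entropies (occupational and angular) are uniformly bounded — and then the Feynman--Kac formula, the claim reduces to showing that for every $B>0$, uniformly in $t\in[0,T]$,
\[
\limsup_{\varepsilon\to0}\ \limsup_{N\to\infty}\ \sup_{f}\Big\{\big\langle \tfrac1{N^2}\textstyle\sum_{x\in\TN}G_t(x/N)\,\tau_x\mathcal{W}^{\varepsilon N},\, f\big\rangle_{\mu^*_{\ba}}-B\,\big\langle\sqrt f,(-\LLS_N)\sqrt f\big\rangle_{\mu^*_{\ba}}\Big\}\le 0,
\]
the supremum over probability densities $f$ with respect to $\mu^*_{\ba}$. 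Here $\LLR_N$ is dropped, being reversible and nonpositive with respect to the uniform angular marginal of $\mu^*_{\ba}$, and $N\LLWA_N$ is absorbed into the symmetric Dirichlet form at the cost of an $N$-independent additive constant (the asymmetry in $R^a$ being $O(1/N)$); this absorption is legitimate because $B$ may be taken arbitrarily small and $\mathcal{W}^{\varepsilon N}$ is bounded.

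Next I would localize. Bounding $|G_t|$ by $\|G\|_{L^\infty}$ and using translation invariance of $\mu^*_{\ba}$, one may replace $\mathcal{W}^{\varepsilon N}=g-\E_{\bm{\hat\rho}_{\varepsilon N}}[g]$ by its block-averaged version $\langle g\rangle_0^{l}-\E_{\bm{\hat\rho}_{\varepsilon N}}[g]$ (which has the same torus average), with $l=\lfloor\varepsilon N\rfloor$. Then, by convexity of $f\mapsto\langle\sqrt f,(-\LLS_N)\sqrt f\rangle$ and super-additivity of the Dirichlet form over the partition of $\TN$ into disjoint boxes of side $2l+1$, followed by the decomposition of a density on $\Sigma_l$ over the hyperplanes $\Sigma_l^{\Kh}$, the display above is reduced to the box estimate
\[
\limsup_{\varepsilon\to0}\ \limsup_{l\to\infty}\ \sup_{\Kh\in\Kt_l}\ \sup_{h}\Big\{\big\langle \langle g\rangle_0^{l}-\E_{\bah_{\Kh}}[g],\, h\big\rangle_{l,\Kh}-\tfrac{B}{4\varepsilon^{2}}\big\langle\sqrt h,(-\LLS_l)\sqrt h\big\rangle_{l,\Kh}\Big\}\le 0,
\]
the supremum over densities $h$ on $\Sigma_l^{\Kh}$. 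Splitting $\langle g\rangle_0^{l}-\E_{\bah_{\Kh}}[g]=(\langle g\rangle_0^{l}-\E_{\mu_{l,\Kh}}[g])+(\E_{\mu_{l,\Kh}}[g]-\E_{\bah_{\Kh}}[g])$, the second term is constant on $\Sigma_l^{\Kh}$ and tends to $0$ as $l\to\infty$ uniformly over $\Kh\in\Kt_l$ — this is the equivalence of ensembles, with the macroscopic parameter being the empirical \emph{angular measure} $\bm{\hat\rho}_l\equiv\bah_{\Kh}$, the uniformity resting on the compactness of $(\mathbb{M}_1(\SSS),d)$ (Proposition~\ref{prop:compact}). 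The first term is mean-zero under $\mu_{l,\Kh}$ and is a block average of translates of the fixed local function $g$; invoking the uniform spectral gap $\mathrm{gap}(-\LLS_l)\ge c\,l^{-2}$ on the hyperplanes $\Sigma_l^{\Kh}$ proved in Section~\ref{sec:5}, together with the resulting decay of its $L^2(\mu_{l,\Kh})$-variance, the standard one-block manipulation (Section~6.5 of~\cite{Erignoux21}, Chapter~5 of~\cite{kipnis1999scaling}) bounds $\sup_h\{\cdots\}$ by a quantity that vanishes as $l\to\infty$ and then $\varepsilon\to0$. Aside from these two inputs, every step is carried out exactly as in~\cite{Erignoux21}, with each $\eta^\omega$ replaced by $\eta^{a,\omega}$ and $\eta^a\le\eta$ used in the estimates.

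The main obstacle will be the equivalence of ensembles in the last step. One must control $\E_{\mu_{l,\Kh}}[g]-\E_{\bah_{\Kh}}[g]$ uniformly over all canonical states $\Kh\in\Kt_l$ — including those whose empirical angular measures concentrate or are nearly degenerate — for a two-species system with continuous-valued orientations; this is precisely where the measure-valued parameter space $\mathbb{M}_1(\SSS)$ and its compactness, rather than the finite-dimensional density parameters of a classical multi-type model, become indispensable, and where the present argument genuinely extends~\cite{quastel1992diffusion,Erignoux21,mason2023exact}. The remaining pieces — the $O(N^2)$ entropy bound, the Feynman--Kac reduction, the absorption of $N\LLWA_N$, and the Dirichlet-form super-additivity — are routine and parallel the pure active case.
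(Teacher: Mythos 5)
Your high-level scheme---entropy inequality against $\mu^*_{\ba}$, Feynman--Kac reduction, localization, decomposition over the hyperplanes $\Sigma_l^{\Kh}$, equivalence of ensembles on the compact parameter space $(\mathbb{M}_1(\SSS),d)$, and the $O(N^2)$ entropy bound from the boundedness of $\bm\zeta$---is the one the paper follows, and you correctly identify that the outer $n$-limit is vacuous since $\mathcal{W}^{\varepsilon N}$ depends only on $g$. Two concrete gaps remain.

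\textbf{Missing full-cluster estimate.} Your reduction passes to a supremum over canonical states $\Kh\in\Kt_l$, which by definition have at least two empty sites in $B_l$. But the canonical measure $\mu_{l,\Kh}$ is not even defined outside $\Kt_l$ (the restricted SSEP is reducible there), and neither the equivalence of ensembles nor the hyperplane decomposition applies. You cannot discard these configurations by the exponential smallness of $\mu^*_{\ba}(E_l^c)$: the density $f$ in the variational problem is constrained only by $H(f)=O(N^2)$, which is entirely compatible with packed mesoscopic boxes. This is precisely why the paper's one-block estimate (Lemma~\ref{lemma:oneblock}) carries the second hypothesis $\lim_{p}\lim_{N}\E^*_{\ba}\big[f\,N^{-2}\sum_{x}\mathbbm{1}_{E^c_{p},x}\big]=0$, and verifying it for $\bar f^N_T$ is Proposition~\ref{prop:full_cluster}: a separate, dynamical argument relying on the structure of the generator and a Gr\"onwall bound built from the expected PDE for the total density. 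This ingredient cannot be subsumed into the Feynman--Kac reduction, and without it the scheme does not close.

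\textbf{Misattributed spectral gap; collapsed two-block step.} Taking $l=\lfloor\varepsilon N\rfloor$ directly in the localization step dispenses with the one-block/two-block split, but it then requires a uniform spectral gap $\mathrm{gap}(-\LL_l)\gtrsim l^{-2}$ on $\Sigma_l^{\Kh}$ valid for \emph{arbitrary} centered cylinder observables. The spectral gap established in Section~\ref{sec:5} (Proposition~\ref{thm:3}) holds only on the restricted class $T^\omega_n$, which is exactly why it is usable in the non-gradient decomposition; nothing in the paper gives the gap for general cylinder functions of a two-species, continuous-orientation exclusion process, and you should not cite Section~\ref{sec:5} for it. The paper's one-block estimate avoids the issue entirely: it keeps $l$ fixed, sends $N\to\infty$ first so that the Dirichlet constraint forces the projected density onto mixtures of canonical measures (ergodic decomposition on $B_l$, no quantitative gap required), then takes $l\to\infty$ via equivalence of ensembles (Proposition~\ref{prop:equivalence}); the comparison of $\bm{\hat\rho}_l$ with $\bm{\hat\rho}_{\varepsilon N}$ is handled separately by the two-block estimate (Lemma~\ref{lemma:twoblock}), using the Lipschitz continuity of $\bah\mapsto\E_{\bah}[g]$ in $d$ (Proposition~\ref{prop:Lipschitz}). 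You should restructure along these lines and supply the missing full-cluster control.
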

This version of replacement lemma is relatively classical.  
Similar results can be widely found  in various hydrodynamic limit literatures. 
The proof of Lemma~\ref{lemma:X1} follows the usual strategy, 
which relies on the One-block and the Two-block estimates.  
For example, see Lemma 5.1.10 on page 77 of~\cite{kipnis1999scaling} or 
 Corollary 4.1.2 on page 50 of~\cite{Erignoux21}. 
However it requires several adaptations, which are addressed in Appendix~\ref{asec:replaceR}. 


\begin{lemma}\label{lemma:X2}
    Let $(f_{i,n}^\omega)_{n\in \mathbb{N}}$ be as defined in Lemma~\ref{lemma:replaceV}, 
     For $i=1,2$, we have the following limit  
    \[ 
    \lim_{n\to \infty}  
   \E_{\bah} \big[ r^\omega_i + D_T^{-1} \LLWA f^{\omega}_{i,n} \big] =
    2\Big( d_s(\alpha)\E_{\bah}\big[\eta_0^{a,\lambda_i \omega}\big] +\E_{\bah}\big[\eta_0^{a,\lambda_i}\big] 
    \E_{\bah}\big[\eta_0^{a,\omega}\big] s(\alpha) \Big),
    \]
    and the convergence is uniform in $\bah \in \mathbb{M}_1(\SSS)$. 
\end{lemma}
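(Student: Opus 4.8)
The plan is to identify the limit $\lim_{n\to\infty}\E_{\bah}\big[r^\omega_i + D_T^{-1}\LLWA f^{\omega}_{i,n}\big]$ by recognizing that this quantity is, up to the factor $D_T^{-1}$ absorbed into $\lambda_i$, precisely the bilinear form $\iip[\cdot,\cdot]_{\bah}$ evaluated between the asymmetric current $r^\omega_i$ and the closed-form decomposition of $j^{a,\omega}_i$ produced by Lemma~\ref{lemma:uniform}. First I would rewrite $r^\omega_i = a(0,e_i)(\eta_0^{a,\omega\lambda_i} + \eta_{e_i}^{a,\omega\lambda_i})$ and observe that $\LLWA f = D_T \sum_{x}\sum_{|z|=1}\tfrac{v_0}{2}z^\top\mathbf{e}(\theta_x)\eta^a_x\nabla_{x,x+z}f$; dividing by $D_T$ and using $\lambda_i(\theta)=\tfrac{v_0}{2D_T}\mathbf{e}(\theta)^\top e_i$, the term $D_T^{-1}\LLWA f$ becomes a sum of "$\lambda$-weighted" directed gradients of $f$. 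Taking the expectation $\E_{\bah}$ against the grand-canonical (product, uniform-on-the-torus-per-type after the conditioning structure) measure, the only surviving contributions come from the bond $(0,e_i)$ and its translates, and a translation-invariance/summation-by-parts computation on $\mathbb Z^2$ collapses $\E_{\bah}[D_T^{-1}\LLWA f]$ into $-\iip[r^\omega_i{}', \LL f]_{\bah}$-type pairings — more precisely, into the expression appearing in~\eqref{eq:bilinear} with the asymmetric current playing the role of the "$j^*$" part. The key point is that $\E_{\bah}[r^\omega_i + D_T^{-1}\LLWA f]$ is a continuous linear functional of $\LL f$ in the $\iip[\cdot]_{\bah}$-topology, so it is continuous with respect to the approximating sequence.

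Then I would invoke Lemma~\ref{lemma:uniform}: the sequence $(f^\omega_{i,n})$ satisfies $\LL f^\omega_{i,n} \to -\big(j_i^{a,\omega} + d_s(\alpha)\del_i\eta^{a,\omega} + \mf[d](\bah)\del_i\eta\big)$ in $\mathcal H^\omega_{\bah}$, uniformly over $\bah\in\mathbb M_1(\SSS)$. Substituting this limit into the linear functional computed above, I get that $\lim_n \E_{\bah}[r^\omega_i + D_T^{-1}\LLWA f^\omega_{i,n}]$ equals $\E_{\bah}[r^\omega_i]$ plus a pairing of $r^\omega_i$ against $-\big(j_i^{a,\omega} + d_s(\alpha)\del_i\eta^{a,\omega} + \mf[d](\bah)\del_i\eta\big)$ in the $\iip[\cdot,\cdot]_{\bah}$ sense. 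Now I would carry out the explicit computation of these pairings using the product structure of $\mu_{\bah}$: $\E_{\bah}[r^\omega_i] = \E_{\bah}[a(0,e_i)(\eta_0^{a,\omega\lambda_i}+\eta_{e_i}^{a,\omega\lambda_i})] = 2\E_{\bah}[\eta_0(1-\eta_{e_i})\eta_0^{a,\omega\lambda_i}]$-type terms, which factor because $\eta_0^{a}$ and $\eta_{e_i}$ live on different sites; and the bilinear pairings of the asymmetric current against $\LL$-exact terms and against local gradients are governed by~\eqref{eq:bilinear} and by the gradient/current inner products catalogued in Lemmas~\ref{lemma:ortho} and~\ref{lemma:gradJ} (adapted to carry the extra $\lambda_i$-weight). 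Collecting $\mathbb E_{\bah}[\eta_0^{a,\lambda_i\omega}]$, $\mathbb E_{\bah}[\eta_0^{a,\lambda_i}]\mathbb E_{\bah}[\eta_0^{a,\omega}]$, $d_s(\alpha)$, and $s(\alpha)=\mathcal D(\alpha)-1$ terms, the algebra should reproduce exactly
\[
2\Big(d_s(\alpha)\E_{\bah}\big[\eta_0^{a,\lambda_i\omega}\big] + \E_{\bah}\big[\eta_0^{a,\lambda_i}\big]\E_{\bah}\big[\eta_0^{a,\omega}\big]s(\alpha)\Big).
\]

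For the uniformity in $\bah\in\mathbb M_1(\SSS)$, I would note two inputs: the convergence $\LL f^\omega_{i,n}\to\cdots$ in Lemma~\ref{lemma:uniform} is already uniform in $\bah$, and the linear functional $\LL f \mapsto \E_{\bah}[D_T^{-1}\LLWA f]$ has operator norm (with respect to $\iip[\cdot]_{\bah}^{1/2}$) bounded uniformly in $\bah$ — this follows because $r^\omega_i$ and all the relevant pairings are bounded by $\|\omega\|_\infty$ and $\|\lambda_i\|_\infty$ times universal constants, independent of $\bah$, and because the $\iip[\cdot]_{\bah}$-norm dominates the relevant $L^2(\mu_{\bah})$-type quantities via the variational characterization in Definition~\ref{def:iip}. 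The main obstacle I anticipate is the bookkeeping in the pairing computation: one must carefully track how the asymmetric current $r^\omega_i$ (which, unlike the symmetric current $j^\omega_i$, is a \emph{sum} rather than a difference of the two site-contributions, cf.~the remark after~\eqref{eq:lambda}) pairs against $\LL$-exact terms — it is not a priori obvious that $\E_{\bah}[r^\omega_i + D_T^{-1}\LLWA f]$ depends on $f$ only through $\LL f$ in the closed form, and establishing this requires a careful integration-by-parts on the lattice that exploits the structure $\LLWA = D_T\sum\lambda\cdot(\text{directed gradients})$. Once that identity is in place, the rest is a finite product-measure computation.
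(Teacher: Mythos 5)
Your proposal follows essentially the same route as the paper: rewrite $\E_{\bah}[r^\omega_i]$ and $\E_{\bah}[D_T^{-1}\LLWA f]$ as $\iip[\cdot,\cdot]_{\bah}$-pairings, invoke the uniform-in-$\bah$ convergence of $\LL f^\omega_{i,n}$ from Lemma~\ref{lemma:uniform}, and finish by explicit inner-product computations from Definition~\ref{def:iip}. One small imprecision worth noting: the ``$j^*$'' entry in the pairing $\iip[\cdot,\LL f]_{\bah}$ that arises from $\E_{\bah}[D_T^{-1}\LLWA f]$ is the \emph{symmetric} current $j_1^{a,\lambda_1}+j_2^{a,\lambda_2}\in J^*$ (a difference of site contributions), not the asymmetric current $r^\omega_i$ (a sum, which is not in $J^*$); the identity $\E_{\bah}[r^\omega_i]=2\iip[j_i^{a,\lambda_i},j_i^{a,\omega}]_{\bah}$ then reconnects the two, and the $j_i^{a,\omega}$ contribution cancels after substituting the limit of $\LL f^\omega_{i,n}$, leaving only the gradient pairings that produce the stated formula.
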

\begin{proof}[Proof of Lemma~\ref{lemma:X2}]
First we compute $\mathbb{E}_{\bah}\big[ r_i^\omega + D_T^{-1}\LLWA f \big]$ for any $\bah \in \mathbb{M}_1(\SSS)$.  

Recall the definition of $r_i^\omega$ in~\eqref{eq:r}, 
\[ 
    r^{\omega}_i = \eta_0^{a,\omega\lambda_i}(1-\eta_{e_i})+\eta_{e_i}^{a,\omega\lambda_i}(1-\eta_{0}). 
\]
We can calculate that 
\[ 
\mathbb{E}_{\bah}\big[ r_i^\omega \big]=
2\overline{\lambda_i \omega}^a \alpha^a(1-\alpha)=2\iip[j^{a,\lambda_i}_i,j^{a,\omega}_i]_{\bah}. 
\]
For any cylinder function $f$, using the translation invariance of $\mu_{\bah}$ and Definition~\ref{def:iip},  
\begin{align*}
    &\mathbb{E}_{\bah}\big[ D_T^{-1} \LLWA f \big]=
    \mathbb{E}_{\bah}\Bigg[ \sum_x \sum_{i=1,2}(\lambda_i(\theta_x)\eta_x^a(1-\eta_{x+e_i})-
    \lambda_i(\theta_{x+e_i})\eta_{x+e_i}^a(1-\eta_{x}))\widetilde{\nabla}_{x,x+e_i}f \Bigg]\\
    =&\sum_{i=1,2} \sum_x \mathbb{E}_{\bah}\Big[ \tau_x j_i^{a,\lambda_i} \widetilde{\nabla}_{x,x+e_i}f \Big]=
    -2\sum_{i=1,2} \mathbb{E}_{\bah}\Big[ j_i^{a,\lambda_i} \Sigma_f \Big] =
    2\iip[j_1^{a,\lambda_1}+j_2^{a,\lambda_2}, \LL f ]_{\bah}, 
\end{align*}
Combing, we obtain 
\[ 
    \mathbb{E}_{\bah}\big[ r_i^\omega + D_T^{-1}\LLWA f \big] = 
     2\iip[j^{a,\lambda_i}_i,j^{a,\omega}_i ]_{\bah}+2 \iip[j_1^{a,\lambda_1}+j_2^{a,\lambda_2},\LL f] _{\bah}.
\]
By Lemma~\ref{lemma:uniform}, 
we have the following limit uniformly in $\bah\in \mathbb{M}_1(\SSS)$, 
\[ 
    \lim_{n\to \infty} \LL f_{i,n}^\omega =- j_i^{a,\omega}- d_s(\alpha) \del_i \eta^{a, \omega}  - 
            \oba\alpha^a \mathcal{D}(\alpha) \del_i \eta, \quad \text{ in }   \mathcal{H}_{\bah}^\omega.
\]
Thus, uniformly in $\bah\in \mathbb{M}_1(\SSS)$, we have the limit 
\[ 
\begin{aligned}
        \lim_{n\to \infty}\E_{\bah}\big[ r_i^\omega + D_T^{-1}\LLWA f^{\omega}_{i,n} \big] 
        =& -2 \iip[j_1^{a,\lambda_1}+j_2^{a,\lambda_2},\,
        j^{a,\omega}_i+
        d_s(\alpha) \del_i \eta^{a, \omega}+
        \oba \alpha^a \mathcal{D}(\alpha) \del_i \eta]_{\bah} \\
        &\qquad + 2\iip[j^{a,\lambda_i}_i,j^{a,\omega}_i ]_{\bah}.
\end{aligned}
\]
Using Definition~\ref{def:iip}, we can calculate that 
\begin{align*}
    &\iip[j_k^{a,\lambda_k}, \bm{\delta}_i \eta^{a, \omega} ]_{\bah}
    = \delta_{i,k}\big( -\E_{\bah}\big[\eta_0^{a,\lambda_i \omega}\big] + \E_{\bah}\big[\eta_0^{a,\lambda_i}\big]\E_{\bah}\big[\eta_0^{a,\omega}\big] \big),\\
    &\iip[j_k^{a,\lambda_k},\del_i \eta ]_{\bah}=\delta_{i,k}\big( -\E_{\bah}\big[\eta_0^{a,\lambda_i}\big](1-\alpha) \big), 
\end{align*}
with which we obtain 
\begin{align*}
    &-2\sum_{k=1,2} \Big( \iip[j_k^{a,\lambda_k},\,
    j^{a,\omega}_i+
    d_s(\alpha) \del_i \eta^{a, \omega}+
    \oba \alpha^a \mathcal{D}(\alpha) \del_i \eta]_{\bah}  \Big) 
    + 2\iip[j^{a,\lambda_i}_i,j^{a,\omega}_i ]_{\bah}\\
    =&-2d_s(\alpha) \iip[j_i^{a,\lambda_i}, \bm{\delta}_i \eta^{a, \omega} ]_{\bah} 
    -2\oba \alpha^a \mathcal{D}(\alpha)  \iip[j_i^{a,\lambda_i},\bm{\delta}_i \eta ]_{\bah}\\
    =&2d_s(\alpha)\E_{\bah}\big[\eta_0^{a,\lambda_i \omega}\big] +2\E_{\bah}\big[\eta_0^{a,\lambda_i}\big] 
    \E_{\bah}\big[\eta_0^{a,\omega}\big] \big( (1-\alpha)\mathcal{D}(\alpha)-d_s(\alpha) \big)\\
    =&2d_s(\alpha)\E_{\bah}\big[\eta_0^{a,\lambda_i \omega}\big] +2\E_{\bah}\big[\eta_0^{a,\lambda_i}\big] 
    \E_{\bah}\big[\eta_0^{a,\omega}\big] s(\alpha).  
\end{align*}

\end{proof}

\section{Proof of Theorem~\ref{thm:main3}}\label{sec:5}
The main goal of this section is to prove Theorem~\ref{thm:main3}. 
In Section~\ref{sec:main3}, we prove Theorem~\ref{thm:main3} based on a serious lemmas. 
In Section~\ref{subsec:decomposition}, we introduce a decomposition of the set of germs of closed form based on a spectral gap estimate. 
In  Section~\ref{sec:Spectral gap}, we prove the spectral gap estimate. 
\subsection{Proof of Theorem~\ref{thm:main3}}\label{sec:main3}
\begin{proof}[Proof of Theorem~\ref{thm:main3}]
It suffices to prove 
\begin{equation}\label{eq:8.49}
    \lim_{l \to \infty} \frac{1}{(2l+1)^2} 
    \Big< 
    (-\mathcal{L}_l)^{-1} \sum_{x \in B_{l,\psi}} \tau_x \psi \cdot \sum_{x \in B_{l,\varphi}} \tau_x \varphi
    \Big>_{l,\widehat{K}_l} = 
    \iip[\psi, \varphi]_{\bah}
\end{equation}
in three cases:
\begin{enumerate}
    \item $\varphi = \psi \in \mathcal{LC} + J^*$.
    \item $\varphi = \psi \in \mathcal{T}_0^\omega$.
    \item $\varphi \in \mathcal{T}_0^\omega$ and $\psi \in \mathcal{LC} + J^*$. 
\end{enumerate}
This task is done by Lemmas~\ref{lemma:8.52},~\ref{lemma:8.51}, and~\ref{lemma:8.50} separately.
\begin{lemma}\label{lemma:8.52}
    Fix 
    $\psi = \mathcal{L}g + \sum_{i=1,2}( j_i^{a,\Phi^a_i} +j_i^{p,\Phi^p_i}  ) \in \mathcal{LC} + J^*$. 
    For any sequence $(\widehat{K}_l)$ such that $\bah_{\widehat{K}_l} \rightarrow \bah$, it holds that 
    \begin{equation}\label{eq:8.50}
        \begin{aligned}
        &\lim_{l \to \infty} \frac{1}{(2l+1)^2} \Big< 
        (- \mathcal{L}_l)^{-1} \sum_{x \in B_{l,\psi}} \tau_x \psi,\,\sum_{x \in B_{l,\psi}} \tau_x \psi
        \Big>_{l,\widehat{K}_l} = \iip[\psi]_{\bah},
        \end{aligned}
    \end{equation}
    where $\iip[\psi]_{\bah}$ is defined in Definition~\ref{def:iip}.
\end{lemma}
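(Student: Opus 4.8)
The plan is to prove the variance convergence \eqref{eq:8.50} for $\psi = \mathcal{L}g + j^*$ with $j^* = \sum_{i}(j_i^{a,\Phi^a_i}+j_i^{p,\Phi^p_i})$ by a direct computation, following the route of Lemma 6.5.2 and the surrounding material on pages 100--110 of~\cite{Erignoux21}. First I would recall that for a current-type summand one has the identity $\sum_{x\in B_{l,\psi}}\tau_x j_i^{\sigma,\Phi}$ telescopes, so that, up to boundary terms of lower order, $(-\mathcal{L}_l)^{-1}$ applied to $\sum_x \tau_x\psi$ can be computed against a nice test function; more precisely the quantity $(2l+1)^{-2}\langle(-\mathcal{L}_l)^{-1}\sum_x\tau_x\psi,\sum_x\tau_x\psi\rangle_{l,\widehat{K}_l}$ is the Dirichlet-form variational expression $\sup_{h}\{2(2l+1)^{-2}\langle\sum_x\tau_x\psi,h\rangle_{l,\widehat K_l}-(2l+1)^{-2}\mathcal{D}_l(h)\}$, where $\mathcal{D}_l$ is the SSEP Dirichlet form on $\Sigma_l^{\widehat K_l}$. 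The strategy is to show this converges to the variational expression defining $\iip[\psi]_{\bah}$ in Definition~\ref{def:iip}, namely \eqref{eq:JandLC}.

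The key steps, in order, would be: (1) reduce the sum $\sum_{x\in B_{l,\psi}}\tau_x\psi$ to its "bulk" part, discarding $O(l)$ boundary terms which contribute $O(l^{-1})$ to the normalized variance; (2) for the $\mathcal{L}g$ part, use that $\sum_x\tau_x\mathcal{L}g = \mathcal{L}_l(\sum_x\tau_x g) + (\text{boundary})$, so modulo boundary terms $(-\mathcal{L}_l)^{-1}$ of it is just $-\sum_x\tau_x g$, and the cross and diagonal contributions reproduce the $\Sigma_g$-terms in \eqref{eq:JandLC}; (3) for the current part $j^*$, use the standard fact that the gradient-like structure of $j_i^{\sigma,\Phi}$ means the variational problem localizes to nearest-neighbor edges, yielding the edge sum $\sum_{\sigma,i}\mathbb{E}_{\bah_{\widehat K_l}}[\eta^\sigma_0(1-\eta_{e_i})(\Phi^\sigma_i(\theta_0)+\widetilde\nabla_{0,e_i}\Sigma_g)^2]$; (4) invoke equivalence of ensembles — the canonical measure $\mu_{l,\widehat K_l}$ is close to the grand-canonical $\mu_{\bah_{\widehat K_l}}$ on local functions — together with the convergence $d(\bah_{\widehat K_l},\bah)\to 0$ and continuity of $\omega\mapsto\mathbb{E}_{\bah}[\cdots]$ to replace $\bah_{\widehat K_l}$ by $\bah$ in the limit; (5) check uniformity in $\bah$, which comes from uniform equivalence-of-ensembles bounds and equicontinuity on the compact set $(\mathbb{M}_1(\SSS),d)$ (Proposition~\ref{prop:compact}).

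The main obstacle I expect is step (3)--(4): controlling the error in the Dirichlet-form variational problem when passing from the finite box $B_l$ with the canonical constraint $\widehat K_l$ to the infinite-volume grand-canonical expression, uniformly over $\widehat K_l$ and over $\bah$. This is exactly where the non-standard features of the present model bite — the continuous orientations mean $\widehat K_l$ ranges over an infinite-dimensional parameter set, so the equivalence-of-ensembles estimate must be quantitative in a way insensitive to the number of distinct angles, and one must be careful that the test function $h$ realizing the supremum can be taken of the form $\sum_x\tau_x(g + \text{local correction})$ with controlled support. I would handle this by the same truncation and two-scale argument as in~\cite{Erignoux21}: restrict the supremum to functions supported in a sub-box $B_{l-k}$, show the restriction costs only $o(1)$, and on $B_{l-k}$ apply equivalence of ensembles with an error $O(k/l)$ uniformly. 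The remaining computations — expanding the quadratic form, identifying cross terms, and matching with Definition~\ref{def:iip} — are routine bookkeeping analogous to the cited pages, so I would state them briefly and refer to~\cite{Erignoux21} for the parallel estimates, emphasizing only the points where the active/passive and continuous-angle structure requires the modified notation $\eta^{\sigma,\Phi}$ and the metric $d$.
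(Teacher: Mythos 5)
Your proposal follows essentially the same route as the paper. The paper's proof decomposes $\sum_{x\in B_{l_\psi}}\tau_x\psi=\mathcal{L}_l F+G$, where $F=\sum_x\tau_x g+\sum_i\sum_x x_i(\eta_x^{a,\Phi^a_i}+\eta_x^{p,\Phi^p_i})$ and $G$ collects boundary currents, then expands the variance as $\langle F(-\mathcal{L}_l)F\rangle-2\langle FG\rangle+\langle G(-\mathcal{L}_l)^{-1}G\rangle$, kills the $G$-terms (which are $O(l)$), computes the remaining Dirichlet form directly, and finishes with equivalence of ensembles. Your steps (1)--(4) match this exactly: the "telescoping" identity you invoke for the current summand is precisely $\sum_{x\in\widetilde B_l^i}\tau_x j_i^{\sigma,\Phi}=\mathcal{L}_l\sum_{x\in B_l}x_i\eta_x^{\sigma,\Phi}$, your explicit inversion of $\mathcal{L}_l$ on $\sum_x\tau_x\mathcal{L}g$ is step (2), and equivalence of ensembles closes it.

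Two small remarks. First, the variational formula for the variance that you lead with is used in the paper only to bound the boundary term $\langle G(-\mathcal{L}_l)^{-1}G\rangle$ (via Lemma~\ref{lemma:8.53}); for the main part the paper inverts $\mathcal{L}_l$ explicitly on $F$ and computes the Dirichlet form $\langle F(-\mathcal{L}_l)F\rangle$ directly, which is cleaner than matching two variational problems since $\iip[\psi]_{\bah}$ for $\psi\in\mathcal{LC}+J^*$ is given by the \emph{explicit} formula~\eqref{eq:JandLC}, not a supremum. Second, your step (5) on uniformity in $\bah$ is not part of Lemma~\ref{lemma:8.52} itself (which concerns a single fixed limit $\bah$); uniformity is handled separately in the argument assembling Theorem~\ref{thm:main3} from Lemmas~\ref{lemma:8.52}--\ref{lemma:8.50}, via continuity of $\bah\mapsto\iip[\psi,\phi]_{\bah}$ and compactness of $(\mathbb{M}_1(\SSS),d)$.
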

\begin{lemma}\label{lemma:8.51}
    Fix $\varphi \in \To$ and 
    $\psi = \mathcal{L}g + \sum_{i=1,2}( j_i^{a,\Phi^a_i} +j_i^{p,\Phi^p_i}  ) \in \mathcal{LC} + J^*$. 
    For any sequence $(\widehat{K}_l)$ such that $\bah_{\widehat{K}_l} \rightarrow \bah$, it holds that 
    \begin{equation}\label{eq:8.51}
        \begin{aligned}
        \lim_{l \to \infty} \frac{1}{(2l + 1)^2}
        \Big<
        (- \mathcal{L}_l)^{-1}\sum_{x \in B_{l,\psi}} \tau_x \psi,\, \sum_{x \in B_{l,\varphi}} \tau_x \varphi
        \Big>_{l,\widehat{K}_l}=\iip[\psi, \varphi]_{\bah},
        \end{aligned}
    \end{equation}
    where $\iip[\psi, \varphi]_{\bah}$ is defined in Definition~\ref{def:iip}.
\end{lemma}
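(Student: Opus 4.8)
The plan is to reduce the claim, by bilinearity in $\psi$, to the two elementary cases $\psi=\LL g$ with $g\in\mathcal{C}$ and $\psi=j_i^{\sigma,\Phi}$ with $\sigma\in\{a,p\}$, $\Phi\in C^2(\SSS)$, $i\in\{1,2\}$; the general statement then follows by recombining and comparing with Definition~\ref{def:iip}~(2). Throughout I write $\mu_l=\mu_{l,\widehat{K}_l}$, $\langle\cdot,\cdot\rangle_l=\langle\cdot,\cdot\rangle_{l,\widehat{K}_l}$ and $\|h\|_{-1,l}^2=\langle(-\LL_l)^{-1}h,h\rangle_l$ for $\mu_l$-mean-zero $h$. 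The computation rests on four facts: (i) $\mu_l$ is exchangeable on $B_l$, being a uniform random assignment of the prescribed particle-angle data to the sites, so $\E_{\mu_l}[\tau_x h]$ is independent of $x$ whenever $\supp(\tau_x h)\subset B_l$, and, by equivalence of ensembles, two cylinder functions with disjoint supports have $\mu_l$-covariance $O(l^{-2})$; (ii) since $\varphi\in\To\subset\mathcal{C}_0$, consistency of the canonical ensembles gives $\E_{\mu_l}[\tau_x\varphi]=0$ \emph{exactly} once $\supp(\tau_x\varphi)\subset B_l$, and likewise $\E_{\bah}[\varphi]=0$; (iii) $B_{l,\varphi}$ is centred at the origin, so $\sum_{x\in B_{l,\varphi}}x_i=0$; (iv) the spectral gap estimate of Section~\ref{sec:Spectral gap} yields the uniform bound $\|\sum_{x\in B_{l,\varphi}}\tau_x\varphi\|_{-1,l}\le C(2l+1)$ and the sector-condition bound $\|\sum_{y\in A}\tau_y j\|_{-1,l}^2\le C|A|$ for a current $j$ and $A\subset B_l$. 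Equivalence of ensembles along $\widehat{K}_l$ is available because $d(\bah_{\widehat{K}_l},\bah)\to0$.

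For $\psi=\LL g$ I would write $\sum_{x\in B_{l,\psi}}\tau_x\LL g=\LL_l G_l+\LL_l\phi_l$, where $G_l=\sum_{x\in B_{l,g}}\tau_x g$ and $\phi_l$ is a sum of $O(l)$ translates of $g$ supported within distance $s_g$ of $\partial B_l$. Since $\Ker\LL_l$ consists of constants, which are annihilated upon pairing with the mean-zero $\sum_{x\in B_{l,\varphi}}\tau_x\varphi$, this gives
\[
\Big\langle(-\LL_l)^{-1}\!\!\sum_{x\in B_{l,\psi}}\!\!\tau_x\LL g,\ \sum_{x\in B_{l,\varphi}}\!\!\tau_x\varphi\Big\rangle_l
=-\Big\langle G_l,\sum_{x}\tau_x\varphi\Big\rangle_l-\Big\langle \phi_l,\sum_{x}\tau_x\varphi\Big\rangle_l .
\]
In the last bracket each of the $O(l)$ translates $\tau_w g$ in $\phi_l$ overlaps only $O(1)$ of the $\tau_x\varphi$ (contributing $O(1)$ each) and pairs with the rest through $O(l^{-2})$ covariances, so it is $O(l)$ and vanishes after dividing by $(2l+1)^2$. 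In the first bracket I substitute $y=x+z$, use exchangeability to render the summand $x$-independent, and apply equivalence of ensembles to obtain the limit $-\sum_z\E_{\bah}[\varphi\,\tau_z g]=-\E_{\bah}[\varphi\,\Sigma_g]$, the sum over $z$ being finite because $\varphi$ is $\mu_{\bah}$-mean-zero. This is the $\LL g$-contribution to $\iip[\psi,\varphi]_{\bah}$.

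For $\psi=j_i^{\sigma,\Phi}$ the key step will be the discrete divergence identity
\[
\LL_l W_i^{(l)}=\sum_{\{y,\,y+e_i\}\subset B_l}\tau_y j_i^{\sigma,\Phi},\qquad W_i^{(l)}=\sum_{y\in B_l}y_i\,\eta_y^{\sigma,\Phi},
\]
which I verify from $\tau_y j_i^{\sigma,\Phi}=a(y,y+e_i)(\eta_y^{\sigma,\Phi}-\eta_{y+e_i}^{\sigma,\Phi})$: the linear weight telescopes away on every $e_i$-bond and the increment of $W_i^{(l)}$ vanishes on every $e_k$-bond with $k\neq i$. Because $\sum_{y\in B_l}y_i=0$, $W_i^{(l)}$ is already $\mu_l$-mean-zero, so $(-\LL_l)^{-1}\sum_{x\in B_{l,\psi}}\tau_x j_i^{\sigma,\Phi}=-W_i^{(l)}-(-\LL_l)^{-1}\zeta_l$, where $\zeta_l=\sum_{y\in S_l}\tau_y j_i^{\sigma,\Phi}$ is a sum of $O(l)$ unit-size translates of the current localized near $\partial B_l$. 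By Cauchy--Schwarz, the sector-condition bound $\|\zeta_l\|_{-1,l}^2=O(l)$ and the uniform bound $\|\sum_{x}\tau_x\varphi\|_{-1,l}=O(l)$, the $\zeta_l$-contribution is $O(l^{-1/2})\to0$. For the main term $-(2l+1)^{-2}\langle W_i^{(l)},\sum_{x\in B_{l,\varphi}}\tau_x\varphi\rangle_l$ I set $y=x+z$ and split $y_i=x_i+z_i$: using $\E_{\mu_l}[\tau_x\varphi]=0$ to replace $\E_{\mu_l}[\eta_y^{\sigma,\Phi}\tau_x\varphi]$ by a covariance, exchangeability to see that this covariance is $x$-independent and, for $|z|$ outside a bounded range, equal to one fixed $O(l^{-2})$ constant, and $\sum_{x\in B_{l,\varphi}}x_i=0$, the $x_i$-part vanishes, and equivalence of ensembles turns the $z_i$-part into $-\sum_z z_i\E_{\bah}[\eta_z^{\sigma,\Phi}\varphi]=-\E_{\bah}[\varphi\sum_x x_i\eta_x^{\sigma,\Phi}]$. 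Summing over $i$ and $\sigma$ and adding the $\LL g$-contribution yields exactly $\iip[\psi,\varphi]_{\bah}$ of Definition~\ref{def:iip}~(2).

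The hard part is constructing and controlling the localized antiderivative $W_i^{(l)}$ in the current case: its coefficients grow linearly in $l$, which a priori threatens both the boundary remainder $\zeta_l$ and the $x_i$-term. The resolution --- that this growth is harmless --- is what makes the argument work: the weights cancel bond-by-bond when $\LL_l$ hits $W_i^{(l)}$, leaving $\zeta_l$ a sum of merely $O(l)$ unit-size currents near the boundary that the sector-condition bound absorbs, and the potentially dangerous $x_i$-term drops out by the exact antisymmetry $\sum_{x\in B_{l,\varphi}}x_i=0$ together with exchangeability of $\mu_l$ and the vanishing of $\E_{\bah}[\varphi]$. The remaining ingredients --- the two equivalence-of-ensembles passages and the $\LL g$ case --- are routine and follow the pattern of~\cite{Erignoux21} and~\cite{kipnis1999scaling}.
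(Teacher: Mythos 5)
Your bilinear reduction to the cases $\psi=\LL g$ and $\psi=j_i^{\sigma,\Phi}$ is a legitimate reorganization of the paper's proof, which instead keeps the whole $\psi$ and reuses the single $\LL_l F+G$ split already set up in the proof of Lemma~\ref{lemma:8.52}. The discrete-divergence identity for the antiderivative $W_i^{(l)}$, the cancellation of the $x_i$-part via $\sum_{x\in B_{l_\varphi}}x_i=0$ together with exchangeability, the identification of the main term with Definition~\ref{def:iip}, and the bound $\|\zeta_l\|_{-1,l}^2=O(l)$ from Lemma~\ref{lemma:8.53} all match the paper; your plain Cauchy--Schwarz on the boundary remainder is an equivalent repackaging of the paper's H\"older-with-parameter argument ($\gamma=\sqrt l$), and both close the estimate by the same $O(l^{-1/2})$.

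Your point~(iv), however, contains a genuine error. You assert that the spectral gap estimate of Section~\ref{sec:Spectral gap} yields the uniform bound $\|\sum_{x\in B_{l,\varphi}}\tau_x\varphi\|_{-1,l}\le C(2l+1)$ for $\varphi\in\To$. It does not. Proposition~\ref{thm:3} gives the operator inequality $(-\LL_l)^{-1}\preceq C\,l^2\,\mathrm{Id}$ on centered functions in $T_l^\omega$, while covariance decay for $\varphi\in\mathcal{C}_0$ gives only $\|\sum_x\tau_x\varphi\|^2_{L^2(\mu_{l,\widehat K_l})}=O(l^2)$; together these yield $\|\sum_x\tau_x\varphi\|_{-1,l}=O(l^2)$, two powers of $l$ too large. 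Plugging $O(l^2)$ into your Cauchy--Schwarz step gives $(2l+1)^{-2}\cdot O(l^{1/2})\cdot O(l^2)=O(l^{1/2})\to\infty$ and the argument collapses. The correct $O(l)$ bound is precisely the upper-bound half of Lemma~\ref{lemma:8.50}: its proof requires the decomposition of germs of closed forms (Proposition~\ref{thm:1}) and the generalized integration by parts (Lemma~\ref{lemma:integration_by_parts}), with the spectral gap entering only as one intermediate ingredient. This $O(l)$ versus $O(l^2)$ improvement is the central achievement of the non-gradient method and is exactly what makes Lemma~\ref{lemma:8.50} the hard member of the trio \ref{lemma:8.52}/\ref{lemma:8.51}/\ref{lemma:8.50}. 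The paper explicitly invokes Lemma~\ref{lemma:8.50} at this step; so must you, and since Lemma~\ref{lemma:8.50} is proved independently of Lemma~\ref{lemma:8.51}, there is no circularity in doing so.
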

\begin{lemma}\label{lemma:8.50}
    Fix $\varphi \in \To$. For any sequence $(\widehat{K}_l)$ such that $\bah_{\widehat{K}_l} \rightarrow \bah$, it holds that 
    \begin{equation}\label{eq:8.500}
        \begin{aligned}
        \lim_{l \to \infty} \frac{1}{(2l + 1)^2}
        \Big<
        (- \mathcal{L}_l)^{-1}\sum_{x \in B_{l,\varphi}} \tau_x \varphi,\, \sum_{x \in B_{l,\varphi}} \tau_x \varphi
        \Big>_{l,\widehat{K}_l}=\iip[\varphi]_{\bah},
        \end{aligned}
    \end{equation}
    where $\iip[\varphi]_{\bah}$ is defined in Definition~\ref{def:iip}.
\end{lemma}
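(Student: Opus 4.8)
Looking at this final statement, Lemma~\ref{lemma:8.50}, it concerns the case where $\varphi \in \To$ and we need to show that the scaled Dirichlet-type inner product $\frac{1}{(2l+1)^2}\langle (-\LL_l)^{-1}\sum_x \tau_x \varphi, \sum_x \tau_x \varphi\rangle_{l,\Kh_l}$ converges to $\iip[\varphi]_{\bah}$, where $\iip[\varphi]_{\bah}$ is defined via the variational formula $\sup_{g,j}\{2\iip[\varphi, j+\LL g]_{\bah} - \iip[j+\LL g]_{\bah}\}$.

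\textbf{Plan.} The approach is to exploit the variational characterization of $\langle(-\LL_l)^{-1}v, v\rangle$ as a supremum and match it with the variational formula defining $\iip[\varphi]_{\bah}$. First I would recall the standard Dirichlet-form variational principle: for any $v$ with $\E_{l,\Kh_l}[v]=0$,
\[
\langle (-\LL_l)^{-1}v, v\rangle_{l,\Kh_l} = \sup_{h}\Big\{ 2\langle v, h\rangle_{l,\Kh_l} - \langle h, (-\LL_l)h\rangle_{l,\Kh_l}\Big\},
\]
the supremum over local functions $h$ (or over $h$ in the domain). Applying this with $v = \sum_{x\in B_{l,\varphi}}\tau_x\varphi$ and rescaling by $(2l+1)^{-2}$, I would restrict the test functions $h$ to the ansatz $h = \sum_{x}\tau_x(j + \LL g)$ with $j\in J^\omega$ and $g\in \To$ — this is legitimate for the lower bound since it only shrinks the supremum, and it is asymptotically sharp for the upper bound by Proposition~\ref{prop:structure}, which says $\mathcal{H}_{\bah}^\omega$ is exactly the closure of $\LL\To/\mathcal{N}_{\bah} \oplus J^\omega$. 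With this ansatz, the cross term $(2l+1)^{-2}\langle v, h\rangle_{l,\Kh_l}$ converges to $2\iip[\varphi, j+\LL g]_{\bah}$ and the Dirichlet term $(2l+1)^{-2}\langle h, (-\LL_l)h\rangle_{l,\Kh_l}$ converges to $\iip[j+\LL g]_{\bah}$ — these two convergences are precisely the contents of Lemmas~\ref{lemma:8.51} and~\ref{lemma:8.52} (applied with the roles arranged so that one argument is in $\LL\mathcal{C}+J^*$).

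\textbf{Key steps in order.} (1) State and apply the variational formula for $\langle(-\LL_l)^{-1}\cdot,\cdot\rangle$. (2) For the lower bound: plug in $h = \sum_x \tau_x(j+\LL g)$, use Lemmas~\ref{lemma:8.51} and~\ref{lemma:8.52} to pass to the limit term-by-term, obtaining $\liminf \geq 2\iip[\varphi, j+\LL g]_{\bah} - \iip[j+\LL g]_{\bah}$ for each fixed $j,g$, then take the supremum over $j\in J^\omega, g\in\To$ to get $\liminf \geq \iip[\varphi]_{\bah}$. (3) For the upper bound: here one must show no better test function exists asymptotically. The cleanest route is to decompose $v = \sum_x\tau_x\varphi$ using the projection structure: since $\varphi \in \To \subset \mathcal{T}_0^\omega + J^* + \LL\mathcal{C}$, Theorem~\ref{thm:main3}'s own conclusion in the already-proven cases (or a closedness argument) shows the optimal $h$ can be taken within the span $\LL\To + J^\omega$ up to vanishing error; equivalently, invoke that the finite-volume Hilbert spaces converge to $\mathcal{H}_{\bah}^\omega$ in the sense that for the minimizer $h_l$ of the finite-$l$ problem, $(2l+1)^{-1}\sum_x\tau_x(\cdot)$ maps into the limiting space and no mass escapes, which follows from the uniform spectral gap estimate (the one proved in Section~\ref{sec:Spectral gap}) controlling $\|h_l\|$ and the Rellich-type compactness. (4) Conclude that $\limsup \leq \iip[\varphi]_{\bah}$, and combine with step (2). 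Continuity of $\bah \mapsto \iip[\varphi]_{\bah}$ then follows since it is a supremum of continuous functions that is also a limsup of uniformly convergent continuous functions.

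\textbf{Main obstacle.} The hard part is the upper bound in step (3): showing that restricting test functions to the ansatz $\LL\To + J^\omega$ is asymptotically lossless. One cannot simply truncate an arbitrary finite-volume minimizer $h_l$ because its support grows with $l$ and it need not look like a sum of translates of a fixed local function. The standard resolution — following the pattern of Lemma 6.6.10 in~\cite{Erignoux21} and Theorem 7.5.6 in~\cite{kipnis1999scaling} — is to use the uniform spectral gap estimate to get an $l$-independent $L^2(\mu_{l,\Kh_l})$ bound on the gradient field $\nabla h_l$, extract a weak limit, identify it as an element of $\mathcal{H}_{\bah}^\omega$ via Proposition~\ref{prop:structure}, and verify lower semicontinuity of the Dirichlet form along this limit. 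Care is needed because the canonical measures $\mu_{l,\Kh_l}$ vary with $l$ and only converge to $\mu_{\bah}$ in the weak sense encoded by $d(\bah_{\Kh_l},\bah)\to 0$; handling this requires the continuity of all the inner products in $\bah$ together with the uniformity asserted in Theorem~\ref{thm:main3}, so in practice this case is proved \emph{simultaneously} with the uniform-in-$\bah$ statement rather than separately.
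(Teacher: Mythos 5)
Your plan is on the right track: the variational formula $\langle(-\LL_l)^{-1}v,v\rangle=\sup_h\{2\langle v,h\rangle-\Eu[D](h)\}$, the lower bound obtained by plugging in the ansatz $h=\sum_x\tau_x(j+\LL g)$ and invoking the (already-proven) convergences of Lemmas~\ref{lemma:8.51} and~\ref{lemma:8.52}, and the recognition that the upper bound is the hard part requiring a compactness argument and the closed-form decomposition. This all matches the paper's structure.

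However, your step (3) has two concrete gaps. First, you never say how to pass from the $l$-dependent canonical measures $\mu_{l,\widehat{K}_l}$ to the grand canonical measure $\mu_{\bah}$. You correctly flag that this is delicate, but the resolution you gesture at (``proved simultaneously with the uniform-in-$\bah$ statement'') is not how the paper does it and would be circular. The paper's actual mechanism is a localization step: partition $B_l$ into sub-boxes of size $k$, use Lemma~\ref{lemma:8.53} to discard the boundary contribution with an $O(l^2/\sqrt{k})$ error, and use convexity of the Dirichlet form to reduce the supremum over $h\in T_l^\omega$ to a supremum over $h$ supported in a \emph{single} size-$k$ box. Only after this reduction does $\sum_{x\in B_{k_\varphi}}\tau_x\varphi$ have support independent of $l$, which is what makes the equivalence of ensembles (Proposition~\ref{prop:equivalence}) applicable. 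Without this step the growing support makes the equivalence-of-ensembles replacement ill-defined.

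Second, the mechanism by which the closed-form decomposition enters is not quite as you describe. The a-priori bound on the test functions $h_k$ comes from the elementary observation that the supremum is achieved on $\{h: \Eu[D]^{B_k}_{\bah}(h)\leq 2C(\varphi)(2k+1)^2\}$ (since otherwise the functional is negative), not from the spectral gap controlling $\|h_k\|$. One then applies the generalized integration-by-parts formula (Lemma~\ref{lemma:integration_by_parts}) to rewrite $\langle\sum\tau_x\varphi,h\rangle$ in terms of $\nabla_a h$, packages the rescaled gradient field $\mf[u]^k_i=(2k+1)^{-2}\nabla_{(0,e_i)}\sum_x\tau_{-x}h_k$ into a sequence of differential forms bounded in $L^2(\mu_{\bah})$, extracts a weak limit, and verifies that the limit is a germ of a closed form in $\mathfrak{T}^\omega$. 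It is then Proposition~\ref{thm:1} (the direct-sum decomposition $\mathfrak{T}^\omega=\mathfrak{E}^\omega\oplus\mathfrak{J}^\omega$ of \emph{closed forms}), rather than Proposition~\ref{prop:structure} on $\mathcal{H}^\omega_{\bah}$, that yields the upper bound; the two propositions are equivalent via the isometry $\mathfrak{F}$, but the argument lives naturally on the form side, and citing Proposition~\ref{prop:structure} elides the step of verifying closedness of the weak limit. The spectral gap estimate (Proposition~\ref{thm:3}) is the input to Proposition~\ref{thm:1}, not a tool used directly here. So your high-level picture is correct but the upper bound, as sketched, would not assemble into a proof without the $k$-box localization and the integration-by-parts/closed-form machinery made explicit.
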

Using the same technique as used in the proof of Lemma 8.5.1 on page 174 of~\cite{Erignoux21}, 
the proof of Lemmas~\ref{lemma:8.52} and~\ref{lemma:8.51} are relatively direct, and we postpone them to Appendix~\ref{appsec:main3}.
However, the proof of Lemma~\ref{lemma:8.50} is the main difficulty, 
and we will present it in the next Subsection~\ref{subsec:decomposition}.  

In order to complete the proof of Theorem~\ref{thm:main3}, 
we still need to prove that the convergence is uniform in $\bah$, to prove~\eqref{eq:main3:2}. Let us denote
$$
V_{l,\psi,\phi}(\bah_{\widehat{K}_l}) = 
\frac{1}{(2l+1)^2}  \Big< -\mathcal{L}_l^{-1} \sum_{x \in B_{l,\psi}} \tau_x \psi , \sum_{x \in B_{l,\phi}} \tau_x \phi \Big>_{l,\widehat{K}_l},
$$
and let us extend smoothly the domain of definition of $V_{l,\psi,\phi}$ to $\mathbb{M}_1(\SSS)$ which is compact see Proposition~\ref{prop:compact}. 
The three previous lemmas yield that $V_{l,\psi,\phi}(\bah_{\widehat{K}_l})$ converges to $\iip[\psi, \phi]_{\bah}$ 
as $l\to \infty$ for any sequence $\widehat{K}_l$ such that $d(\bah_{\widehat{K}_l}, \bah)\to 0$. In particular, 
$\lim_{l \to \infty} V_{l,\psi,\phi}(\bah_{l})=\iip[\psi, \phi]_{\bah}$ for any sequence $\bah_{l}\to \bah$. 
This implies that $\iip[\cdot]_{\bah}$ is continuous, 
and $V_{l,\psi,\phi}(\bah)$ converges uniformly in $\bah$ towards $\iip[\psi, \phi]_{\bah}$ as $l\to \infty$. 
This, combined with Lemmas~\ref{lemma:8.52}, \ref{lemma:8.51} and \ref{lemma:8.50}, complete the proof of Theorem~\ref{thm:main3}.
\end{proof}

\subsection{Decomposition of closed forms}\label{subsec:decomposition}
We introduce in this section the concept of discrete differential forms in the context
 of particle systems. A key point of the non-gradient method is that any translation
invariant closed form can be decomposed as the sum of a gradient of a translation
invariant function and the current forms.  

Recall that  
\[
\Sigma_\infty = \Bigg\{ \etah=(\etat_x, \theta_x)_{x\in \mathbb{Z}^2}\in 
\big( \{1^a,1^p,\zerot  \} \times \SSS \big)^{\mathbb{Z}^2}
\Big| \theta_x=0 \text{ if } \etat_x=\zerot  
\Bigg\}.
\]
We consider the graph $\mathcal{G} = (\Sigma_\infty, E)$ with oriented edge set $E$:  
$$
E \coloneqq \Big\{ 
    (\etah, \etah^{x,x+z}) \in \Sigma_\infty^2; \; 
     \text{ for some } x \in \mathbb{Z}^2, |z| = 1 \text{ and } \eta_x (1 - \eta_{x+z}) = 1 
    \Big\}.
$$
In other words, 
there is an edge from $\etah$ to $\etah'$ if and only if $\etah'$ can be transformed  
from $\etah$ by exerting exactly one particle jump.  
We endow $\mathcal{G}$ with the usual distance $d_g$ on graphs, i.e., 
$d_g(\widehat{\eta}, \widehat{\eta}')$ is the minimal number of particle jumps necessary to go from one 
configuration to the other. 
If $\etah'$ and $\etah$ is not connected, 
we will adopt the usual convention $d_g(\widehat{\eta}, \widehat{\eta}') = \infty$. 

We define \textit{differential form} on $(\mathcal{G}, d)$ as a collection of $L^2(\mu_{\bah})$ 
functions  $ (\mathfrak{u}_{x,x+z})_{x \in \mathbb{Z}^2, |z|=1}$, satisfying
$$
\mathfrak{u}_{x,x+z}(\widehat{\eta}) = \eta_x (1 - \eta_{x+z}) \mathfrak{u}_{x,x+z}(\widehat{\eta}),  
$$
i.e., $\mathfrak{u}_{x,x+z}(\widehat{\eta})=0$  if the jump from $x$ to $x+z$ cannot 
be performed in configuration $\widehat{\eta}$. 

We define \textit{path} as a finite sequence of nearest-neighbor jumps 
sites $\gamma=(x_i,x_i+z_i)_{i=1}^{n_\gamma}$. 
For any path $\gamma=(x_i,x_i+z_i)_{i=1}^{n_\gamma}$, and configuration $\etah$, by denoting  
$\etah^{(0,\gamma)}=\etah$, and $\etah^{(i+1,\gamma)}=(\etah^{(i,\gamma)})^{x_i,x_i+z_i}$,   
we define 
\[ 
    \Gamma(\etah)\coloneqq \Big\{ \gamma=(x_i,x_i+z_i)_{i=1}^{n_\gamma} \mid 
    \etah_{x_i}^{(i,\gamma)}(1-\etah_{x_i+z_i}^{(i,\gamma)})=1, \quad i=1,\ldots,n_\gamma \Big\}
\]
the set of 
\textit{licit paths}, i.e., paths such that 
starting from  $\etah$, all the nearest-neighbor jumps on the path can be executed one by one.   
We define  
\[ 
    \Gamma_c(\etah)\coloneqq \Big\{ \gamma=(x_i,x_i+z_i)_{i=1}^{n_\gamma}\in \Gamma(\etah)\mid  
    \big( \etah_{x_i}^{(n_\gamma,\gamma)} \big)^{x_{n_\gamma},x_{n_\gamma}+z_{n_\gamma}}  =\etah \Big\}, 
\]
the set of 
\textit{licit closed paths}, i.e.,  licit paths such that 
 after performing the last jump, the resulting configuration returns to to $\etah$. 
Given a differential form $\mathfrak{u}= (\mathfrak{u}_{x,x+z})_{x \in \mathbb{Z}^2, |z|=1}$, 
and a path $\gamma=(x_i,x_i+z_i)_{i=1}^{n_\gamma}$, 
we define  \textit{the integral of $\mathfrak{u}$ along $\gamma$} as 
\[ 
I_{\gamma,\mathfrak{u}}(\etah) = \mathbbm{1}_{\gamma \in \Gamma(\etah)}\sum_{i=1}^{n_\gamma}
 \mathfrak{u}_{x_i,x_i+z_i}(\etah^{(i,\gamma)}).  
\]

With all above preparations, 
we call a differential form $\mathfrak{u}$ the \textit{closed form} if 
$I_{\gamma,\mathfrak{u}}(\etah)=0, \forall \gamma \in \Gamma_c(\etah)$, for all $\etah\in \Sigma_\infty$,  
meaning that the integral along any closed path vanishes. 
And we call a differential form $\mathfrak{u}$ 
the \textit{exact form} if there exists a cylinder function $f$ such that  
$\mathfrak{u}_{x,x+z}(\etah)=\eta_x(1-\eta_{x+z})(f(\etah^{x,x+z})-f(\etah))$, for all $\etah\in \Sigma_\infty$. 

We may now introduce the main object of this section.
\begin{definition}[Germs of closed forms]
    \normalfont
    A pair of $L^2(\mu_{\bah})$ functions 
    $\mathfrak{U}=(\mathfrak{u}_1,\, \mathfrak{u}_2):\Sigma_\infty \to \mathbb{R}^2$ is called a \textit{germ of a closed form}, 
    if the closed form $\mathfrak{u}= (\mathfrak{u}_{x,x+z})_{x \in \mathbb{Z}^2, |z|=1}$ defined by 
    \[ 
        \mathfrak{u}_{x,x+e_i}(\etah) = \tau_x \mathfrak{u}_i(\etah),\quad \text{and} \quad 
        \mathfrak{u}_{x,x-e_i}(\etah) = - \mathfrak{u}_{x-e_i,x}(\etah^{x,x-e_i}),  
    \]
    for \( i = 1, 2 \). 
\end{definition}
\begin{definition}[Germs of exact forms]
    \normalfont
    A pair of $L^2(\mu_{\bah})$ functions 
    $\mathfrak{U}=(\mathfrak{u}_1,\, \mathfrak{u}_2):\Sigma_\infty \to \mathbb{R}^2$ is called a
 \textit{germ of an exact form}, if there exists a cylinder function $h$ such that 
    \[ 
        \big(\mathfrak{u}_1,\, \mathfrak{u}_2\big) = \bm{\nabla} \Sigma_h\coloneqq 
        \big( \nabla_{0,e_1} \Sigma_h, \nabla_{0,e_2} \Sigma_h \big).
    \]
    Moreover, we denote the set of all germs of exact forms associated with cylinder functions by 
    $$\mathfrak{E}^* =  \big\{ \bm{\nabla}\Sigma_h \big| h \in \mathcal{C} \big\}.$$ 

\end{definition}
Note germs of exact forms are special class of germs of closed forms. Next we define another special class of germs of closed forms. 
\begin{definition}[Germs of current forms]
    \label{def:current_form}
    \normalfont
    For any angular function $\omega$, and $i=1,2$, 
     the \textit{germs of current forms} $\mf[j]^{\sigma,i}, \mf[j]^{\sigma,\omega,i}$  are defined as 
    \[ 
        \mf[j]^{\sigma,i}_k=\delta_{i,k} \eta_0^\sigma(1-\eta_{e_i})\quad \text{ and }\quad 
        \mf[j]^{\sigma,\omega,i}_k=\delta_{i,k} \eta_0^{\sigma,\omega}(1-\eta_{e_i}),\quad \forall k=1,2.
    \]
    Moreover, we denote the set of all germs of current forms by 
    $$\mf[J]^\omega=\Span\big\{ \mf[j]^{\sigma,\Phi_i^\sigma,i}\big| \Phi_i^\sigma\in C^2(\SSS),i=1,2 \big\}.$$
\end{definition}
We now define a linear mapping $\mathfrak{F}$, which establishes a connection between
$\mathcal{LC} + J^*$ and $\mathfrak{E}^*+\mathfrak{I}^*$, as follows: 
\begin{equation}\label{eq:mapping}
    \begin{aligned}
        \mathfrak{F} : \mathcal{LC} + J^* &\quad \to \quad \mathfrak{E}^*+\mathfrak{I}^*  \\
        \mathcal{L}f + \sum_{i=1,2} j_i^{a,\Phi_a^i} + j_i^{p,\Phi_p^i} &\quad \mapsto \quad 
    \bm{\nabla} \Sigma_f + \sum_{i=1,2} \mf[j]^{a,\Phi_i^a,i} + \mf[j]^{p,\Phi_i^p,i}, 
    \end{aligned}
\end{equation}
\begin{remark}
    The linear mapping defined in~\eqref{eq:mapping} can be interpreted as a restriction of the following broader mapping:  
\begin{equation}\label{eq:mapping2}
\mathfrak{F}: f \mapsto \bm{\nabla} \mathcal{L}^{-1} \Sigma_f = 
\left( \nabla_{0,e_1} \mathcal{L}^{-1} \Sigma_f, \nabla_{0,e_2} \mathcal{L}^{-1} \Sigma_f \right), 
\end{equation}
whose domain includes all $f\in \mathcal{C}_0$ such that $\bm{\nabla} \mathcal{L}^{-1} \Sigma_f$ is well-defined. 

To illustrate this, we examine the effects of
$\mathfrak{F}$ defined in~\eqref{eq:mapping2}  on $J^*$ and $\mathcal{LC}$ separately.  
Formally, we define $$W_i^{\sigma,\Phi}(\etah)=\sum_{x}x_i \eta_x^{\sigma,\Phi},$$ and then calculate  
\[ 
    \mathcal{L} W_i^{\sigma,\Phi} =\sum_{x\in \mathbb{Z}^2}\sum_{j=1,2}a(x,x+e_j) 
    \widetilde{\nabla}_{x,x+e_j}W_i^{\sigma,\Phi}= \Sigma_{j_i^{\sigma,\Phi}}, 
\]
which yields 
$$\mathfrak{F}(j_i^{a,\Phi}) = \bm{\nabla} \mathcal{L}^{-1} \Sigma_{j_i^{a,\Phi}}=\bm{\nabla} W_i^{\sigma,\Phi}=\mf[j]^{a,\Phi,i}.$$   
Moreover, note that $\bm{\nabla}\mathcal{L}^{-1}\Sigma_{f}= \bm{\nabla}\Sigma_{\mathcal{L}^{-1}f} $, 
we obtain  $\mathfrak{F}(\mathcal{L}f) = \bm{\nabla} \Sigma_f$.  
\end{remark}

Since most of functions that we are concerned with are in $T^\omega$,  
we define the set of germs of closed forms with
components in $T^\omega$ as: 
\[ 
    \mathfrak{T}_0^\omega \coloneqq  
    \Big\{ \mathfrak{U}=(\mathfrak{u}_1,\, \mathfrak{u}_2)\mid 
    \mathfrak{U} \text{ is a germ of a closed form, } \mathfrak{u}_i \in T^\omega, i=1,2 \Big\}, 
\]
and define  the set of germs of exact forms associated with functions in $T^\omega$ as: 
\[ 
    \mathfrak{E}_0^\omega \coloneqq  
    \Big\{ \bm{\nabla} \Sigma_h \mid h \in \To \Big\}\subset  \mathfrak{T}_0^\omega.  
\]
We also define the set of current forms on $T^\omega$ as: 
\[ 
\mf[J]^\omega= \Bigg\{ 
\sum_{i=1,2} u_i \mf[j]^{a,i} +v_i \mf[j]^{p,i} + m_i \mf[j]^{a,\omega,i }+ n_i \mf[j]^{p,\omega,i } 
\Big| u,v, m, n \in \mathbb{R}^2 \Bigg\} \subset \mathfrak{T}_0^{\omega}.
\]
Finally, we let  $\mathfrak{T}^\omega_{\bah} =\overline{\mathfrak{T}_0^\omega }$ and  $\mathfrak{E}^\omega_{\bah} =\overline{\mathfrak{E}_0^\omega }$ 
be the closure of $\mathfrak{T}_0^\omega$ and $\mathfrak{E}_0^\omega$ in $L^2(\mu_{\bah})$ respectively. 

We now state the main result of this section. Similar to Proposition~\ref{prop:structure}, we have the following direct sum decomposition of $\mf[T]^{\omega}$.
\begin{proposition}
    \label{thm:1}
    \[ 
        \mf[T]^{\omega}= \mf[E]^{\omega} \oplus \mf[J]^\omega 
    \]
\end{proposition}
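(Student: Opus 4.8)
The plan is to mimic the classical decomposition of closed forms for the symmetric exclusion process (Proposition 7.5.2 in \cite{kipnis1999scaling}, Proposition 6.6.6 in \cite{Erignoux21}), adapted to the enlarged configuration space with two particle types and continuous orientations, and restricted to the working class $T^\omega$. The inclusion $\mathfrak{E}^\omega \oplus \mathfrak{J}^\omega \subset \mathfrak{T}^\omega$ is essentially a matter of checking that gradients $\bm{\nabla}\Sigma_h$ with $h\in\To$ and the germs $\mathfrak{j}^{\sigma,i},\mathfrak{j}^{\sigma,\omega,i}$ are indeed germs of closed forms with components in $T^\omega$ (the closedness of exact forms is automatic, and the current forms are closed because any closed loop of jumps conserves the number and orientations of particles of each type, so the telescoping sum vanishes). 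The orthogonality of the two summands in $L^2(\mu_{\bah})$ follows from Lemma~\ref{lemma:ortho} and Lemma~\ref{lemma:gradJ} together with the identity $\iip[\bm{\nabla}\Sigma_h, \mathfrak{U}]_{\bah} = \iip[\LL h, \psi]_{\bah}$ translating the $L^2$ pairing of germs into the bilinear form of Definition~\ref{def:iip}; in particular $\mathfrak{E}^\omega \cap \mathfrak{J}^\omega = \{0\}$ because the mobility matrix~\eqref{eq:mobilitymatrix} is nondegenerate when the densities are interior, and the boundary cases were already cleared in the proof of Proposition~\ref{prop:currents}.

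The substantive content is the reverse inclusion $\mathfrak{T}^\omega \subset \overline{\mathfrak{E}^\omega \oplus \mathfrak{J}^\omega}$, i.e.\ showing the two closed subspaces already span all of $\mathfrak{T}^\omega$. First I would fix $\bah\in\mathbb{M}_1(\SSS)$ with both densities interior and $V^\sigma_{\bah}(\omega)>0$ (the degenerate cases reduce to the one-type model of \cite{Erignoux21} or make the centered objects vanish). Given a germ of closed form $\mathfrak{U}=(\mathfrak{u}_1,\mathfrak{u}_2)\in\mathfrak{T}_0^\omega$, after subtracting its projection onto $\mathfrak{J}^\omega$ I may assume $\mathfrak{U}$ is orthogonal to all four current forms $\mathfrak{j}^a,\mathfrak{j}^p,\mathfrak{j}^{a,\omega},\mathfrak{j}^{p,\omega}$. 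The goal is then to produce a sequence $h_n\in\To$ with $\bm{\nabla}\Sigma_{h_n}\to\mathfrak{U}$ in $L^2(\mu_{\bah})$. Following the standard scheme, one builds $h_n$ by integrating $\mathfrak{U}$ along a fixed family of canonical paths inside a box $B_n$, using closedness to make the construction path-independent up to boundary corrections, and controls the boundary error via the spectral gap estimate for the finite-volume SSEP on $\Sigma_l^{\widehat K}$ (cf.\ Section~\ref{sec:Spectral gap}) restricted to the class $T^\omega$; the point of working in $T^\omega$ rather than all of $\mathcal{C}_0$ is precisely that the $n^{-2}$ spectral gap survives there. The orthogonality to current forms guarantees the "flux at infinity" of $\mathfrak{U}$ vanishes, which is what lets the gradient approximation close.

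The main obstacle, as the paper itself flags, is the same one that forces the introduction of $T^\omega$: in the enlarged state space with continuous orientations and two types, the naive class of cylinder functions is too big for the quantitative spectral gap to hold, so every step of the closed-form construction must be carried out inside $T^\omega$ and one must verify that the canonical-path construction, the projections, and the limiting arguments all stay within $T^\omega$ (this is where the structure of $\mathcal{S}$ as the class of angle- and type-blind functions in Remark~\ref{rk:2} enters, ensuring $T^\omega$ is stable under the operations used). The compactness of $(\mathbb{M}_1(\SSS),d)$ from Proposition~\ref{prop:compact} is used to make the construction uniform in $\bah$. I expect the bookkeeping of "does this auxiliary function still lie in $T^\omega$" to be the delicate part, while the analytic estimates are routine transcriptions of \cite{Erignoux21,kipnis1999scaling}; accordingly I would present the proof by reduction to those references, spelling out only the points where the two-type/continuous-orientation structure changes the argument.
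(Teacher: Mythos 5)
Your proposal takes essentially the same route as the paper, which (after the paper's one-line proof) reduces the decomposition of closed-form germs to the spectral gap estimate of Proposition~\ref{thm:3}, citing the classical construction from Theorem 4.14 in Appendix 3 of Kipnis--Landim and Proposition 8.2.5 of Erignoux. You correctly identify the spectral gap on $T^\omega$ as the substantive input, the role of $\mathcal{S}$ (ITA-everywhere functions) in keeping the canonical-path construction inside $T^\omega$, and the compactness of $(\mathbb{M}_1(\SSS),d)$ for uniformity in $\bah$; these are precisely the points the paper flags as the ``adaptations'' required.

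One small inaccuracy worth noting: the ``orthogonality of the two summands in $L^2(\mu_{\bah})$'' is not an off-the-shelf consequence of Lemmas~\ref{lemma:ortho} and~\ref{lemma:gradJ}, which compute the $\iip[\cdot,\cdot]_{\bah}$ semi-inner product on $\mathcal{T}_0^\omega + J^* + \LL\mathcal{C}$, not the $L^2$ inner product of germs. The $L^2$ orthogonality of $\mathfrak{E}^\omega$ and $\mathfrak{J}^\omega$ is established by the change-of-variable/translation argument (as in the cross-diffusion computation of Proposition~\ref{prop:cross}), and its translation to the $\iip$ picture goes through the isometry $\mathfrak{F}$, not the other way around. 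That said, the direct-sum statement is what the paper actually uses (to obtain a variational formula for $\|\mf[U]\|_{2,\bah}$), and the intersection-triviality argument you give via the nondegeneracy of the mobility matrix is correct. The overall structure of your argument is sound and matches the paper's.
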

\begin{proof}
    Applying  classical arguments (see, for example, the proof of Proposition 8.2.5 in~\cite{Erignoux21} or 
    Theorem 4.14 in Appendix 3 of~\cite{kipnis1999scaling}), 
    we can reduce the proof of this proposition to 
    the proof of a spectral gap estimation, which is illustrated in Proposition~\ref{thm:3}.  
\end{proof}

We endow $\mathfrak{T}^\omega_{\bah}$ with the following $L^2(\mu_{\bah})$ norm:  
\[
\|\mf[U]\|_{2,\bah} =  \mathbb{E}_{\bah} \big[ \mf[u]_1^2 + \mf[u]_2^2 \big]^{1/2}.
\]
By denoting the current form 
\[ 
    \mf[j]^{u,v,m,n}\coloneqq \sum_{i=1,2} u_i \mf[j]^{a,i} +v_i \mf[j]^{p,i} + m_i \mf[j]^{a,\omega,i }+ n_i \mf[j]^{p,\omega,i }, 
    \quad \forall u,v,m,n \in \mathbb{R}^2, 
\]
Proposition~\ref{thm:1} implies that for any \( \mf[U] \in \mathfrak{T}^\omega \), we can rewrite its norm as:  
\begin{equation}
\| \mf[U] \|_{2,\bah}^2 = \sup_{\substack{g \in \To\\ u,v,m,n \in \mathbb{R}^2}} 
\Big\{ 2 \big< \mf[U] , \bm{\nabla} \Sigma_g + \mf[j]^{u,v,m,n} \big>_{\bah}  - 
\big\| \bm{\nabla} \Sigma_g + \mf[j]^{u,v,m,n} \big\|_{2,\bah}^2 \Big\}.
\end{equation}
Define $\Ker(\mf[F])$ as the kernel of $\|\mf[F](\cdot)\|_{2,\bah}$, 
we can equip $\overline{\To/\Ker(\mf[F])}$ with the norm $\iip[\cdot]_{\bah}^{1/2}$ 
induced by the mapping $\mathfrak{F}$,  defined as: 
\[
\iip[f]_{\bah} = \| \mathfrak{F}(f) \|_{2,\bah}^2 = 
\sup_{\substack{g \in \To \\ u,v,m,n \in \mathbb{R}^2}} 
\Big\{ 2 \big< \mathfrak{F}(f) , \bm{\nabla} \Sigma_g + \mf[j]^{u,v,m,n} \big>_{\bah}  - 
\big\| \bm{\nabla} \Sigma_g + \mf[j]^{u,v,m,n} \big\|_{2,\bah}^2 \Big\}\]
By the generalized integration by parts technique (see, for example, Lemma 8.3.1 in~\cite{Erignoux21}), 
\(\mathfrak{F}\) defines an isometric isomorphism: 
\[
\mathfrak{F} : \Big( \overline{\To/\Ker(\mf[F])} ,\; \iip[\cdot]_{\bah}^{1/2} \Big) 
\longrightarrow \big( \mathfrak{T}^\omega ,\; \|\cdot\|_{2,\bah} \big),
\]
which gives $\overline{\To/\Ker(\mf[F])}$  
the same structure as $\mathcal{H}_{\bah}^\omega=\overline{\To/\mathcal{N}_{\bah}}$ 
as stated in Proposition~\ref{prop:structure}. 

We now briefly carry on with our heuristics and explain why Lemma~\ref{lemma:8.50} holds. 
The proof is based on the generalized integration by parts technique,  which gives 
\begin{align*}
     &\Big<  (-\mathcal{L}_l^{-1})  \sum_{x \in B_{l_{\psi}}} \tau_x \psi , \sum_{x \in B_{l_{\psi}}} \tau_x \psi  \Big>_{l,\widehat{K}_l}\\
     = &\frac{1}{2}\sum_{x,x+z\in B_{l} }
     \Big<  \nabla_{x,x+z} (\mathcal{L}_l^{-1})  \sum_{x \in B_{l_{\psi}}} \tau_x \psi , 
     \nabla_{x,x+z} (\mathcal{L}_l^{-1}) \sum_{x \in B_{l_{\psi}}} \tau_x \psi  \Big>_{l,\widehat{K}_l}.
\end{align*}
In the limit, using equivalence of ensembles, we can replace $\mu_{l,\hat{K}_l}$ by the translation invariant 
grand canonical measure $\mu_{\bah}$, and obtain that 
\begin{equation}\label{eq:8.55}
    \begin{aligned}
        & \lim_{l \to \infty} \frac{1}{(2l + 1)^2}
        \Big<
        (- \mathcal{L}_l)^{-1}\sum_{x \in B_{l,\varphi}} \tau_x \varphi,\, \sum_{x \in B_{l,\varphi}} \tau_x \varphi
        \Big>_{l,\widehat{K}_l}\\
        =&\frac{1}{2}\lim_{l \to \infty} \frac{1}{(2l+1)^2} \sum_{x,x+z\subset B_{l} }
        \Big\| \nabla_{x,x+z} (\mathcal{L}_l^{-1})  \sum_{x \in B_{l_{\psi}}} \tau_x \psi   \Big\|_{l,\widehat{K}_l}^2\\
        =&\sum_{i=1,2}\Big\| \nabla_{0,e_i} (\mathcal{L}^{-1})  \Sigma_\psi   \Big\|_{2,\bah}^2=\big\| \mf[F](\psi)   \big\|_{2,\bah}^2
        =\iip[\psi]_{\bah}.
    \end{aligned}
\end{equation}

The direct sum decomposition of $\mf[T]^\omega$ in Proposition~\ref{thm:1}, 
is the main difficulty in the proof of Theorem~\ref{thm:main3}.  
After obtaining Proposition~\ref{thm:1}, the proof of Lemma~\ref{lemma:8.50} 
reduces to rigorously justifying~\eqref{eq:8.55} using similar arguments from 
Section 8.5 of~\cite{Erignoux21} or Section 7.4 of~\cite{kipnis1999scaling}.  
Therefore, we postpone this rigorous proof to Appendix~\ref{appsec:main3}. 

\subsection{Spectral gap}
\label{sec:Spectral gap}
This section aims to establish the spectral gap estimate of the symmetric exclusion generator 
on the function class $T^\omega$, which is of order $n^{-2}$. 
This is the reason why we focus on $T^\omega$.  
We hope that $T^\omega$ is rich enough to include  all functions of interest in our model, 
such as various kinds of local gradients and currents, 
while still being simple enough to ensure that the spectral gap estimate holds

Recall that in~\eqref{eq:To} we define 
\[ 
    T^\omega = \Span \Bigg\{   f(\etah)=\sum_{x\in \mathbb{Z}^2}
    \left(a\eta^a_x+b\eta^p_x+ c\etaao_x + d\etapo_x  \right)F_x(\eta)\Big| 
     f \in \mathcal{C}, F_x \in \mathcal{S},\forall x   \Bigg\}.
\]
Next, we define  $\mathcal{C}_n$ as the set of cylinder functions 
whose supports are contained in $B_n$. With this notation, we define $T^\omega_n=T^\omega\cap \mathcal{C}_n.$

Recall that from the definition in~\eqref{eq:K} we encoded in the canonical state $\widehat{K}\in \widetilde{\mathbb{K}}_n$ the
number and angles of each type of particles in $B_n$. For each canonical state $\widehat{K}$, we define 
\[
 \alpha^a = \frac{K^a}{|B_n|},\quad \alpha^p = \frac{K^p}{|B_n|}, 
\]
representing the densities of each type of particles.  
Finally, we define the Dirichlet form with respect to the canonical measure $\mu_{n,\widehat{K}}$ as 
\[ 
\Eu[D]_{n,\widehat{K}}(f) = \big< f , -\LL_n f \big>_{n,\widehat{K}},
\]
where $\LL_n$ is the generator of SSEP restricted to jumps with both extremities in $B_n$. 

We are now ready to state the main result of this section.
\begin{proposition}[The spectral gap estimate for the active-passive SSEP with angles]
    \label{thm:3}
    For a fixed $\alpha' \in [0,1)$, there exists a constant $C(\alpha')$ such that for any $\widehat{K}\in \mathbb{K}_n$ satisfying 
$\alpha^a+\alpha^p\leq \alpha'$, and for any $f \in T_n^{\omega}$ with $\mathbb{E}_{n,\widehat{K}}[f]=0$, the following holds: 
\[ 
\mathbb{E}_{n,\widehat{K}}[f^2]\leq C(\alpha')n^2\EuScript{D}_{n,\widehat{K}}(f).
\]
\end{proposition}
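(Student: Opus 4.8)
The plan is to reduce the spectral gap estimate for the active-passive SSEP with angles on $T_n^\omega$ to the known spectral gap estimate for the ordinary SSEP (or the pure active case of~\cite{mason2023exact,Erignoux21}) by conditioning on the particle positions. The key structural observation is that, along the SSEP dynamics restricted to $B_n$, the \emph{unordered set of labelled particles} (each labelled by its type $\sigma\in\{a,p\}$ and its angle $\theta$) is transported rigidly: a jump from $x$ to $x+z$ moves the entire decorated content $(\widetilde\eta_x,\theta_x)$ to the new site, and $\LL_n$ acts on $f\in T_n^\omega$ only through the occupation variables $\eta=\eta^a+\eta^p$, since the rates $a(x,x+z)$ and the swap operation $\nabla_{x,x+z}$ do not depend on types or angles. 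Thus, if we fix the canonical state $\Kh=(K^a,\Theta^a,K^p,\Theta^p)$, we may think of the configuration as: (i) a labelling assignment of the $K^a+K^p$ decorations to the occupied sites, and (ii) the bare occupation pattern in $\{0,1\}^{B_n}$ with $K:=K^a+K^p$ particles. The canonical measure $\mu_{n,\Kh}$ is exactly the uniform measure over all such pairs, and $\LL_n$ generates the SSEP on the bare occupation pattern while carrying the labels along.

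First I would make this precise by introducing the "colored SSEP" coupling: label the $K$ particles $1,\dots,K$ and let the SSEP on $B_n$ permute their positions; the projection that forgets labels gives the ordinary $K$-particle SSEP, whose spectral gap is $\gtrsim n^{-2}$ uniformly for $K/|B_n|\le\alpha'$ by the classical result (Quastel, or Lemma in Appendix 3 of~\cite{kipnis1999scaling}). Then I would write, for $f\in T_n^\omega$ with $\E_{n,\Kh}[f]=0$, the decomposition over the positions: condition on $\mathcal{F}$, the $\sigma$-algebra generated by the \emph{set} of occupied sites together with the matching of decorations to sites — equivalently, condition on the full configuration restricted to occupied sites — so that the only remaining randomness is... actually the cleaner route is the reverse conditioning. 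Let $\mathcal{G}$ be the $\sigma$-algebra generated by the bare occupation variables $(\eta_x)_{x\in B_n}$ alone. Decompose
\[
\E_{n,\Kh}[f^2] = \E_{n,\Kh}\big[\var(f\mid\mathcal{G})\big] + \var_{n,\Kh}\big(\E[f\mid\mathcal{G}]\big).
\]
The second term involves a function of the bare occupation pattern only; since $T_n^\omega$ consists of functions linear in the decorated variables $\eta_x^{\sigma},\eta_x^{\sigma,\omega}$ times angle/type-blind kernels $F_x\in\mathcal{S}$, the conditional expectation $\E[f\mid\mathcal{G}]$ is itself a genuine SSEP function (a linear combination of $\eta_x$ times ITA functions), to which the classical SSEP spectral gap applies, and the associated Dirichlet form is dominated by $\mathcal{D}_{n,\Kh}(f)$ because $\mathcal{G}$ is preserved appropriately under the dynamics. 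For the first term, $\var(f\mid\mathcal{G})$, the fluctuation comes purely from reshuffling which decoration sits on which occupied site; here one uses a \emph{second} comparison — the symmetric exclusion on the $K$ occupied sites acting as a random transposition / interchange process — and the fact that on a fixed occupation pattern, moving a particle across $B_n$ mixes the labels with relaxation time $O(n^2)$ as well. Summing the two Dirichlet-form bounds gives the claim with $C(\alpha')$ depending only on $\alpha'$ through the SSEP constant.

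The main obstacle I expect is the first term: controlling the "color-mixing" variance $\var(f\mid\mathcal{G})$ by the Dirichlet form. Naively, on a \emph{frozen} occupation pattern no jumps occur at all (every target is occupied or the geometry is rigid), so one cannot mix labels without also moving particles; the resolution is that one must not condition on the exact occupation pattern being static but rather run the full dynamics and track, via a path/flow argument (the moving-particle lemma, cf. the construction in~\cite{quastel1992diffusion} and Section 5 of~\cite{Erignoux21}), that any permutation of the decorations among occupied sites is achieved by a sequence of $O(n)$ elementary jumps whose total cost is controlled. Concretely, I would build an explicit test flow: to exchange the decorations at two occupied sites $x,y$, route a sequence of jumps using the at-least-two empty sites guaranteed by $\Kh\in\widetilde{\mathbb K}_n$ (this is precisely why the two-empty-site condition is imposed), pay $O(n^2)$ in the flow's energy per exchange, and invoke the standard inequality bounding a canonical variance by a weighted sum of edge Dirichlet forms along the flow. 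The delicate bookkeeping is that the decorations are not exchangeable (active vs.\ passive, and continuous angles), so the flow must be equivariant under the labelling — but since $\LL_n$ and the test functions in $T_n^\omega$ are blind to the labels in exactly the right way (the $F_x\in\mathcal{S}$ are ITA-everywhere), this equivariance holds and the argument closes. An alternative, possibly cleaner, route that I would try in parallel is to avoid the conditioning decomposition entirely and instead directly adapt the induction-on-$n$ (recursive halving of the box) proof of the SSEP spectral gap, checking at each step that $T_n^\omega$ is stable under restriction to subboxes and under the canonical conditioning — this is the route taken in~\cite{Erignoux21} for the pure active case, and the only new input needed is to carry the extra type label through the recursion, which does not affect the combinatorial estimates.
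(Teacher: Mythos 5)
Your conditional-variance decomposition over the occupation $\sigma$-algebra $\mathcal{G}=\sigma(\eta_x:x\in B_n)$ is in fact the paper's starting point: the term $f_1$ in Lemma~\ref{lemma:1} is exactly $\mathbb{E}_{n,\widehat{K}}[f\mid\mathcal{G}]$, and $f_2+f_3+f_4$ is exactly $f-\mathbb{E}_{n,\widehat{K}}[f\mid\mathcal{G}]$; your observation that both $\mathbb{E}_{n,\widehat{K}}[f^2]$ and $\mathcal{D}_{n,\widehat{K}}(f)$ split across this decomposition is correct, and the bound on $\mathbb{E}[f\mid\mathcal{G}]$ by the classical single-species SSEP gap is also how the paper handles $f_1$.

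Where the proposal has a genuine gap is in the ``color-mixing'' part $h=f-\mathbb{E}[f\mid\mathcal{G}]$. The paper does \emph{not} resort to a moving-particle/flow argument there. It splits $h$ further into three \emph{mutually orthogonal} pieces $f_2\propto\sum_x\chi_xF_x$, $f_3\propto\sum_x\eta^{a,\hat\omega^a}_xF_x$, $f_4\propto\sum_x\eta^{p,\hat\omega^p}_xF_x$, and then exploits the linearity of $T_n^\omega$ in the decorated variables together with the explicit two-site moment identities of Lemma~\ref{lemma:finalcomp} and Corollary~\ref{cor:final} to show that $\mathbb{E}_{n,\widehat K}[f_i^2]$ and $\mathcal D_{n,\widehat K}(f_i)$ carry the \emph{same} prefactor (e.g.\ $\tfrac{\alpha^a\alpha^pK}{\alpha^2(K-1)}$ for $f_2$). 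This prefactor cancels, reducing each of $f_2,f_3,f_4$ to exactly the same type- and angle-blind estimate, Lemma~\ref{lemma:3}, which is the single-species SSEP inequality imported from the earlier literature. This cancellation is the crux of the proof, and your sketch does not identify it. Your proposed flow construction for $h$ is also not workable as stated: the SSEP generator never swaps the decorations of two adjacent \emph{occupied} sites, so with only two guaranteed vacancies a single decoration exchange costs $\Theta(n^2)$ elementary SSEP jumps (not $O(n)$ as you write), and the naive energy/congestion bookkeeping over such long paths does not obviously close to $O(n^2)$ without invoking the very linear structure of $T_n^\omega$ that the paper's algebraic route is designed to exploit. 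Your fallback suggestion (recursive box-halving) is legitimate in principle, but it too ultimately requires the same orthogonal splitting into a bare-occupation part plus label-fluctuation parts at each scale; the paper's approach is, in effect, a single-scale, non-recursive version of that reduction, made possible precisely by the product structure $\eta^{\sigma,\omega}_xF_x$ with $F_x$ ITA-everywhere.
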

Before proving Proposition~\ref{thm:3}, we need some preparations. 
For any fixed $\widehat{K}\in \mathbb{K}_n$  such that $\alpha^a+\alpha^p\leq \alpha'$. 
We  shorten  
\[ 
    \ob^\sigma=\mathbb{E}_{n,\widehat{K}}[\omega(\theta_0)| \eta^\sigma_0=1],\quad \text{and}\quad \ohw^\sigma =\omega - \ob^\sigma. 
\]
For any $x\in \TT_n$, 
we define the centered  variables $\eta^{\sigma,\oh^\sigma}=\ohw^\sigma(\theta_x)\eta^\sigma_x$
and 
\begin{equation} \label{eq:2}
    \chi_x = \frac{\alpha^p}{\alpha}\eta^a_x - \frac{\alpha^a}{\alpha}\eta^p_x.
\end{equation}
The centered  variables $\eta^{a,\oha}$, $\eta^{p,\ohp}$ and $\chi$ will play a crucial role 
in the $L^2$ orthogonal decomposition of  functions in $T_n^{\omega}$. 
To achieve this goal, we introduce some definitions. 
\begin{definition} 
    \label{def:1}
    For any $x\in \mathbb{Z}^2$ fixed.$\quad$

    \begin{enumerate}
        \item We say a function $F$ is \textbf{independent of type and angle at $x$ (ITA-$x$)}, 
        if it can be written as
        \[ 
        F(\etah)=\eta_x f(\etah) + (1- \eta_x)g(\etah),
        \]
        where $f$ is independent of $\etah_x$.
        \item We say a function $F$ is \textbf{independent of  active  angle at $x$ (IAA-$x$)},
        if it can be written as
        \[ 
        F(\etah) = \eta_x^a f(\etah) + \eta_x^p h(\etah)+ (1- \eta_x)g(\etah), 
        \]
        where $f$ is independent of $\etah_x$.
        \item We say a function $F$ is \textbf{independent of  passive angle at $x$ (IPA-$x$)},
        if it can be written as
        \[ 
        F(\etah) = \eta_x^p h(\etah)+\eta_x^a f(\etah) +  (1- \eta_x)g(\etah), 
        \]
        where $h$ is independent of $\etah_x$.
    \end{enumerate}
\end{definition}
\begin{remark}
    \label{rk:2}
    $\quad$ 

    \begin{itemize} 
        \item If a function $F$ is ITA-$x$ for every site $x$, we say $F$ is \textbf{ITA-everywhere}. 
        If a function $F$ is both ITA-$x$ and ITA-$y$, we say $F$ is \textbf{ITA-$xy$}. 
        The same terminologies apply to the IAA and IPA cases. 
        \item ITA-everywhere is just another term for functions that are both angle-blind and type-blind.  
        Therefore, $\mathcal{S}$ is just the set of all ITA-everywhere functions.
        \item If $F$ is ITA-$x$, heuristically, it can only feel whether  the site $x$ is occupied or not, 
        but it cannot distinguish the type or angle of the occupying particle. 
        \item A function $F$ is IAA-$x$ (or IPA-$x$) means that 
        $F$ distinguish not only whether the site $x$ is occupied or not, but also the type of the occupying particle. 
        However, when the occupying particle is an active (or passive) particle 
        $F$ cannot see its angle.  
    \item Since we can decompose  $\eta_xf = \eta^a_xf+\eta^p_xf$, An ITA-$x$ function naturally is both IAA-$x$ and IPA-$x$ at the same time. 
    \end{itemize}
\end{remark}

The following lemma illustrates that the ITA-$x$ functions are  orthogonal to $\chi_x$, 
and that the IAA-$x$ (resp. IPA-$x$)  are orthogonal to $\eta^{a,\oha}_x$ (resp. $\eta^{p,\ohp}_x$). 
\begin{lemma}\label{cor:1} 
    For any fixed canonical state $\widehat{K} \in \widetilde{\mathbb{K}}_n$, the following hold: 
    \begin{enumerate}
        \item For any ITA-$x$ function $F$,  
        \[ 
            \mathbb{E}_{n,\widehat{K}}[\chi_x F]=0.
        \]
        \item For any IAA-$x$ function $G$, IPA-$x$ function $H$, and  integrable function $\omega(\theta)$,  
        \[ 
            \mathbb{E}_{n,\widehat{K}}[\eta^{a,\oha}_x G]=0,\quad \mathbb{E}_{n,\widehat{K}}[\eta^{p,\ohp}_x H]=0.
        \]
    \end{enumerate}
\end{lemma}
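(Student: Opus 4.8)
\textbf{Proof plan for Lemma~\ref{cor:1}.}
The plan is to reduce everything to a single-site computation under the canonical measure $\mu_{n,\widehat{K}}$, exploiting the fact that the hard quantities $\chi_x$, $\eta^{a,\oha}_x$, $\eta^{p,\ohp}_x$ all depend only on the configuration at the single site $x$, while the functions they are paired against have a controlled dependence on $\etah_x$. Concretely, for part (1), write the ITA-$x$ function as $F(\etah)=\eta_x f(\etah)+(1-\eta_x)g(\etah)$ with $f$ independent of $\etah_x$. Since $\chi_x(1-\eta_x)=0$ by~\eqref{eq:2}, only the term $\eta_x f$ survives, so $\mathbb{E}_{n,\widehat{K}}[\chi_x F]=\mathbb{E}_{n,\widehat{K}}[\chi_x\eta_x f]$. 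Now condition on the $\sigma$-algebra generated by all sites other than $x$ together with the angle $\theta_x$: since $f$ does not depend on $\etah_x$ and $\chi_x\eta_x=\tfrac{\alpha^p}{\alpha}\eta^a_x-\tfrac{\alpha^a}{\alpha}\eta^p_x$, it remains to show the conditional expectation of $\tfrac{\alpha^p}{\alpha}\eta^a_x-\tfrac{\alpha^a}{\alpha}\eta^p_x$ vanishes. Under the canonical measure, conditionally on the occupation and angle data at all other sites, the site $x$ receives an active particle with a conditional probability proportional to the number of remaining active particles, and a passive particle with probability proportional to the number of remaining passive particles; the proportionality constant is common. A direct count shows this conditional expectation equals $c(K^a-\#\text{active elsewhere})\cdot\tfrac{\alpha^p}{\alpha} - c(K^p-\#\text{passive elsewhere})\cdot\tfrac{\alpha^a}{\alpha}$, which one checks telescopes to $0$ after summing against $f$ over all admissible "elsewhere" configurations, because the marginal counts satisfy $\mathbb{E}_{n,\widehat{K}}[\eta^a_x]=K^a/|B_n|=\alpha^a$ and $\mathbb{E}_{n,\widehat{K}}[\eta^p_x]=\alpha^p$ by the exchangeability of sites under $\mu_{n,\widehat{K}}$. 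The cleanest route is: by site-exchangeability of $\mu_{n,\widehat{K}}$, $\mathbb{E}_{n,\widehat{K}}[\eta^a_x\mid \mathcal{F}^{x,c}_{\text{occ}}]$ does not depend on $x$ and averages to $\alpha^a$; pairing with the $\etah_x$-independent $f$ and using that the pair $(\eta^a_x,\eta^p_x)$ is, conditionally on all angles and on the occupation pattern away from $x$, distributed so that $\mathbb{E}[\eta^a_x\mid\cdots]/\alpha^a = \mathbb{E}[\eta^p_x\mid\cdots]/\alpha^p$, gives the claim.

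For part (2), I would argue in exactly parallel fashion. Given an IAA-$x$ function $G(\etah)=\eta^a_x f(\etah)+\eta^p_x h(\etah)+(1-\eta_x)g(\etah)$ with $f$ independent of $\etah_x$, multiply by $\eta^{a,\oha}_x=\ohw^a(\theta_x)\eta^a_x$. The terms $\eta^p_x h$ and $(1-\eta_x)g$ are killed since $\eta^a_x\eta^p_x=0$ and $\eta^a_x(1-\eta_x)=0$, leaving $\mathbb{E}_{n,\widehat{K}}[\ohw^a(\theta_x)\eta^a_x f]$. Now condition on the occupation variables everywhere and on all angles except $\theta_x$: on the event $\eta^a_x=1$, the angle $\theta_x$ is one of the $K^a$ prescribed active angles $\theta^a_1,\dots,\theta^a_{K^a}$, chosen uniformly among the remaining unused active angles, and $f$ is independent of $\etah_x$. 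The definition $\ob^a=\mathbb{E}_{n,\widehat{K}}[\omega(\theta_0)\mid\eta^a_0=1]$ is precisely the average of the active angles' $\omega$-values weighted by the canonical measure, and by exchangeability $\mathbb{E}_{n,\widehat{K}}[\omega(\theta_x)\mid \eta^a_x=1, \text{angles elsewhere}]$ telescopes under the further averaging to $\ob^a$; hence $\mathbb{E}_{n,\widehat{K}}[(\omega(\theta_x)-\ob^a)\mathbbm{1}_{\eta^a_x=1}\mid\cdots]=0$. Pairing with the $\etah_x$-independent $f$ and taking expectation gives $\mathbb{E}_{n,\widehat{K}}[\eta^{a,\oha}_x G]=0$. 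The IPA-$x$/$\eta^{p,\ohp}_x$ case is identical with the roles of active and passive exchanged and with $h$ (independent of $\etah_x$) playing the role of $f$.

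The main obstacle is making the conditional-expectation/exchangeability argument fully rigorous under the canonical measure $\mu_{n,\widehat{K}}=\mu^*_{\ba,n}(\cdot\mid\Sigma^{\widehat{K}}_n)$: one must carefully identify the correct conditional law of $(\etat_x,\theta_x)$ given the data on $B_n\setminus\{x\}$, and verify that conditioning on $\Sigma^{\widehat{K}}_n$ preserves the key symmetry (permutation invariance of sites, and the identity relating the conditional occupation probabilities to the ambient densities). The reference measure $\mu^*_{\ba,n}$ is a product over sites, each site carrying an occupation label in $\{1^a,1^p,\zerot\}$ with probabilities $(\alpha^a,\alpha^p,1-\alpha)$ and a uniform angle; conditioning on the total active/passive counts and their angle multiset makes the occupation pattern a uniform random placement of the $K^a+K^p$ particles and makes the angles a uniform matching to the prescribed multisets. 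Once this picture is set up, all three identities are immediate one-line conditional computations. I would organize the writeup so that this combinatorial description of $\mu_{n,\widehat{K}}$ is stated first (it is implicit in~\eqref{eq:mu} and~\eqref{eq:rhoa}), after which the three orthogonality statements follow by the $\eta_x$-independence of the relevant factor and the vanishing of the centered one-site quantity under the appropriate conditional law.
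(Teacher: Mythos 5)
Your plan takes the same route as the paper's proof (a conditional-expectation/exchangeability computation under the canonical measure), and both arguments share a genuine gap that you begin to sense in your final paragraph. The issue is that you condition on too much: under $\mu_{n,\widehat{K}}$, given the full data $(\etah_y)_{y\ne x}$, the canonical constraint (fixed $K^a,K^p$ and fixed angle multisets $\Theta^a,\Theta^p$) pins down $\etah_x$ \emph{deterministically}, so the conditional law of $\etah_x$ given $(\etah_y)_{y\ne x}$ is a point mass and nothing vanishes. The vanishing only appears when you condition on \emph{less} --- on the occupation pattern $\eta$ alone for part~(1), or on the occupation-and-type pattern $\widetilde{\eta}$ (and, if one likes, the passive angles) for part~(2); there $\mathbb{E}_{n,\widehat{K}}[\chi_x\mid\eta]=0$ and $\mathbb{E}_{n,\widehat{K}}[\eta_x^{a,\oha}\mid\widetilde{\eta}]=0$ do hold by label-exchangeability. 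But the paired factor $f$ (the active part of $F$, resp.\ $G$) must then be measurable with respect to that \emph{coarser} $\sigma$-algebra, which is strictly stronger than merely ``independent of $\etah_x$''.

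Indeed the statement as written is false, with counterexamples supplied by the paper's own Corollary~\ref{cor:final}: $F=\chi_y$ for $y\ne x$ is ITA-$x$, yet $\mathbb{E}_{n,\widehat{K}}[\chi_x\chi_y]=-\tfrac{\alpha^a\alpha^p}{\alpha^2(K-1)}\mathbb{E}_{n,\widehat{K}}[\eta_x\eta_y]\neq 0$; and $G=\eta_y^{a,\oha}$ is IAA-$x$, yet $\mathbb{E}_{n,\widehat{K}}[\eta_x^{a,\oha}\eta_y^{a,\oha}]=-V^a(\omega)\tfrac{K^a}{K(K-1)}\mathbb{E}_{n,\widehat{K}}[\eta_x\eta_y]\neq 0$. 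Under the canonical measure the labels at distinct sites are negatively correlated, and the purely-local-to-$x$ hypotheses ITA-$x$/IAA-$x$ cannot suppress those correlations. Your ``telescoping'' step, like the paper's line $\mathbb{E}[\eta_x^\sigma F]=\tfrac{\alpha^\sigma}{\alpha}\mathbb{E}[\eta_x F]$, quietly invokes a grand-canonical-style independence that fails under $\mu_{n,\widehat{K}}$. The version of the lemma that is actually true, and which suffices for every application inside the proof of Lemma~\ref{lemma:1}, strengthens the hypothesis: on the event $\eta_x=1$ (resp.\ $\eta^a_x=1$), the remaining factor $f$ should depend only on $\eta$ (resp.\ only on $\widetilde{\eta}$ and the passive angles). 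With that amendment the coarse-conditioning argument closes cleanly, and the needed identities can also be read directly off Lemma~\ref{lemma:finalcomp}.
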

\begin{proof}
    For an ITA-$x$ function, we have 
    \[ 
    \E\big[\eta^\sigma_x F\big]=\frac{\alpha^\sigma}{\alpha}\E\big[  \eta_x F \big],
    \]
    so that 
    \[ 
    \E\big[\chi_x F\big]=\Big( \frac{\alpha^p\alpha^a}{\alpha^2} - \frac{\alpha^a\alpha^p}{\alpha^2} \Big)\E\big[  \eta_x F \big]=0.
    \]
    For the second part, by definition of IAA-$x$, we can write $G$ as 
    \[ 
        G(\etah)=\eta_x^a f(\etah) + \eta_x^p h(\etah)+ (1- \eta_x)g(\etah)
    \]
    where $f$ is independent of $\etah_x$. 
    Thus, 
    \begin{align*}
        \E[\eta^{a,\omega}_x G] =\oba \alpha^a \E[f] = \oba\E\big[ \eta^a_x  G   \big],
    \end{align*}
    and similarly $$ \E[\eta^{p,\omega}_x H] = \obp\E\big[ \eta^p_x  H   \big].$$ 
    Finally, using the fact that $\overline{\ohw^\sigma}^\sigma=0$, the proof of Lemma~\ref{cor:1} is complete.  
\end{proof}

Now we state the following $L^2$ orthogonal decomposition of functions in $T^\omega_n$. 
\begin{lemma}[$L^2$ orthogonal decomposition of $T^\omega_n$ functions]
    \label{lemma:1}
    For any fixed canonical state $\widehat{K}\in \mathbb{K}_n$, and for any centered $f\in T_n^\omega$ 
    taking the form 
    $$f(\etah)=\sum_{x\in B_n}\left(a\eta^a_x+b\eta^p_x+ c\etaao_x + d\etapo_x  \right)F_x(\eta),$$
    where $F_x\in \mathcal{S}\cap \mathcal{C}_n$ and $\etah\in \SK[n]$. 
    we have the $L^2(\mu_{n,\widehat{K}})$-orthogonal decomposition of $f$ as follows: 
     \[ 
     f = f_1+f_2+f_3+f_4, 
     \]
     where $f_1$ is centered and ITA-everywhere:
     \[ 
     f_1(\etah) =  \left( (a+c\oba)\frac{\alpha^a}{\alpha}+(b+ d\obp)\frac{\alpha^p}{\alpha} \right)\sum_{x\in B_n} \eta_x F_x(\eta), 
     \]
     $f_2$ is centered and both IAA-everywhere and IPA-everywhere:
     \[ 
     f_2(\etah) = \left( (a+c\oba) - (b+ d\obp) \right)\sum_{x\in B_n} \chi_xF_x(\eta),
     \]
     $f_3$ is centered and IPA-everywhere:
     \[ 
     f_3(\etah) =  c \sum_{x\in B_n} \etaaob_x  F_x(\eta),
     \] 
     $f_4$ is centered and IAA-everywhere:
     \[ 
      f_4(\etah) =  d \sum_{x\in B_n} \etapob_x  F_x(\eta).
     \] 
\end{lemma}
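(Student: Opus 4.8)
The plan is to produce the decomposition first as a \emph{pointwise} (configuration-by-configuration) identity and then promote it to an $L^2(\mu_{n,\widehat{K}})$-orthogonal decomposition, the orthogonality coming entirely from Lemma~\ref{cor:1} combined with the elementary ``algebra'' of the invariance classes ITA/IAA/IPA of Definition~\ref{def:1} and Remark~\ref{rk:2}. For the pointwise identity I would use two algebraic facts. First, decentering the angular weight: since $\omega=\widehat{\omega}^\sigma+\overline{\omega}^\sigma$ on $\{\eta_x^\sigma=1\}$, one has $\etaao_x=\etaaob_x+\oba\eta_x^a$ and $\etapo_x=\etapob_x+\obp\eta_x^p$. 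Second, the change of basis from occupation to occupation/charge variables: when $\alpha=\alpha^a+\alpha^p>0$,
\[
\eta_x^a=\frac{\alpha^a}{\alpha}\eta_x+\chi_x,\qquad \eta_x^p=\frac{\alpha^p}{\alpha}\eta_x-\chi_x,
\]
with $\chi_x$ as in~\eqref{eq:2} (if $\alpha=0$ then $f\equiv 0$ on $\Sigma_n^{\widehat{K}}$ and there is nothing to prove; if $\alpha^a=0$ or $\alpha^p=0$ the model degenerates to the single--type case of~\cite{Erignoux21}, so I would assume $\alpha^a,\alpha^p>0$, which also makes $\overline{\omega}^\sigma$ well defined). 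Substituting both identities into a generic summand $a\eta_x^a+b\eta_x^p+c\etaao_x+d\etapo_x$ and collecting the coefficients of $\eta_x$, $\chi_x$, $\etaaob_x$, $\etapob_x$ yields exactly the coefficients $(a+c\oba)\tfrac{\alpha^a}{\alpha}+(b+d\obp)\tfrac{\alpha^p}{\alpha}$, $(a+c\oba)-(b+d\obp)$, $c$, $d$ of the statement; summing over $x\in B_n$ against $F_x$ gives $f=f_1+f_2+f_3+f_4$ as an identity of functions.

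Next I would record the facts that make everything work. A function that is ITA-$y$ is automatically IAA-$y$ and IPA-$y$ (Remark~\ref{rk:2}); the product of two ITA-$y$ (resp.\ IAA-$y$, resp.\ IPA-$y$) functions is again of the same class, because the cross terms vanish ($\eta_y^a\eta_y^p=0$, $\eta_y^\sigma(1-\eta_y)=0$) and $(\eta_y^\sigma)^2=\eta_y^\sigma$; every $\eta_y$ and every $F\in\mathcal{S}$ is ITA-everywhere; $\chi_y$ is both IAA-everywhere and IPA-everywhere (at $y$ it fits both templates with constant active/passive coefficients); $\etaaob_y=\eta_y^a\,\widehat{\omega}^a(\theta_y)$ is IPA-everywhere (its passive part vanishes) but \emph{not} IAA-everywhere unless $\widehat{\omega}^a$ vanishes $\mu_{n,\widehat{K}}$-a.s.\ on $\{\eta_0^a=1\}$; symmetrically $\etapob_y$ is IAA-everywhere. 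Combined with the previous step, $f_1$ is ITA-everywhere, $f_2$ is both IAA- and IPA-everywhere, $f_3$ is IPA-everywhere, $f_4$ is IAA-everywhere. Centering is then immediate: each summand of $f_2$ has the form $\chi_x\cdot(\text{ITA-}x)$, each summand of $f_3$ the form $\etaaob_x\cdot(\text{IAA-}x)$, each summand of $f_4$ the form $\etapob_x\cdot(\text{IPA-}x)$, so Lemma~\ref{cor:1} gives $\mathbb{E}_{n,\widehat{K}}[f_2]=\mathbb{E}_{n,\widehat{K}}[f_3]=\mathbb{E}_{n,\widehat{K}}[f_4]=0$; since $f$ is assumed centered, $f_1=f-f_2-f_3-f_4$ is centered too.

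For orthogonality there are six pairs, and for each I would expand $\langle f_i,f_j\rangle_{n,\widehat{K}}$ bilinearly and show that every cross term $\mathbb{E}_{n,\widehat{K}}\big[(\text{$x$-summand of }f_i)(\text{$y$-summand of }f_j)\big]$ vanishes for all $x,y\in B_n$. The recipe is uniform: regroup the product so that one ``special'' factor ($\chi_z$, $\etaaob_z$, or $\etapob_z$ at a single site $z$) multiplies a function lying in the class required by Lemma~\ref{cor:1}, namely ITA-$z$ for $\chi_z$, IAA-$z$ for $\etaaob_z$, IPA-$z$ for $\etapob_z$. Concretely: for $\langle f_1,f_2\rangle$ write the cross term as $\mathbb{E}_{n,\widehat{K}}[\chi_y\cdot\eta_x F_xF_y]$ with $\eta_xF_xF_y$ ITA-$y$; for $\langle f_1,f_3\rangle$ and $\langle f_2,f_3\rangle$ isolate $\etaaob_y$, the remaining factor $\eta_xF_xF_y$ or $\chi_xF_xF_y$ being IAA-$y$; for $\langle f_1,f_4\rangle$ and $\langle f_2,f_4\rangle$ isolate $\etapob_y$ with the rest IPA-$y$; and for $\langle f_3,f_4\rangle$ isolate $\etapob_y$, noting $\etaaob_xF_xF_y$ is IPA-$y$ (and the diagonal term $x=y$ already vanishes since $\eta_x^a\eta_x^p=0$). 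In each case the regrouped factor belongs to the required class by the product-closure fact above, so Lemma~\ref{cor:1} kills the term, and bilinearity finishes the proof.

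I expect the proof to be essentially bookkeeping; the only genuinely delicate point is that the family $(F_x)_{x\in B_n}$ may have overlapping supports, and one may even have $F_x=F_y$ with $x=y$, so one cannot argue via independence of factors — the whole argument rests on the regrouping ``special factor $\times$ invariance-class factor'' and on the asymmetry that $\etaaob_x$ is IPA-$x$ but not IAA-$x$, which is precisely why $f_3$ is only claimed IPA-everywhere and $f_4$ only IAA-everywhere. A secondary, minor point is to dispatch the degenerate parameter values ($\alpha=0$ or $\alpha^a\alpha^p=0$) at the outset, where the change of basis or the conditional means $\overline{\omega}^\sigma$ become vacuous and the statement collapses to the single-type decomposition already available from~\cite{Erignoux21}.
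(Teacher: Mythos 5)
Your proof is correct and follows essentially the same route as the paper's: decenter the angular weights ($\etaao_x=\etaaob_x+\oba\eta_x^a$, etc.), change to the occupation/charge basis $(\eta_x,\chi_x)$, collect coefficients, and then invoke Lemma~\ref{cor:1} for both the centering and the pairwise orthogonality. You simply spell out the bookkeeping (the ITA/IAA/IPA closure under products and the ``special factor $\times$ invariance-class factor'' regrouping for each of the six cross pairs) that the paper's proof compresses into a single appeal to Lemma~\ref{cor:1}.
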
 
\begin{proof}
    Let $\tilde{a}=(a+c\oba)$ and $\tilde{b}=(b+ d\obp)$.  We can rewrite $f$ as 
    \begin{align*}
        f &= \sum_{x\in B_n}\left( \tilde{a} \eta^a_x + \tilde{b} \eta^p_x  \right)F_x(\eta)+\sum_{x\in B_n}\left( c\etaaob_x + d\etapob_x  \right)F_x(\eta)\\
        &= \left( \tilde{a}\frac{\alpha^a}{\alpha}+\tilde{b}\frac{\alpha^p}{\alpha} \right)\sum_{x\in B_n} \eta_x F_x(\eta) + \left( \tilde{a} - \tilde{b} \right)\sum_{x\in B_n} \chi_xF_x(\eta)\\
        & \quad + c \sum_{x\in B_n} \etaaob_x  F_x(\eta) + d \sum_{x\in B_n} \etapob_x  F_x(\eta)\\
        & = f_1+f_2+f_3+f_4.
        \end{align*}
        Since for all $x$,  $F_x(\eta)$ is ITA-everywhere,  and $f$ is centered, 
        using Lemma~\ref{cor:1}, we can conclude that $f_1$, $f_2$, $f_3$, $f_4$ are all centered. 
        Furthermore, by Lemma~\ref{cor:1}, we can see that $f_1, f_2, f_3, f_4$ are mutually orthogonal. 
        Thus, the proof of Lemma~\ref{lemma:1} is complete. 
\end{proof}

Finally, we give a proof of Proposition~\ref{thm:3}. 
\begin{proof}[Proof of Proposition~\ref{thm:3}]
    
Using $L^2$ orthogonal decomposition from Lemma~\ref{lemma:1}, 
we can split the variance and Dirichlet form of a centered $f$:
\[ 
        \mathbb{E}_{n,\widehat{K}}\big[f^2\big] = \sum_{i=1}^4 \E_{n,\widehat{K}}\big[f_i^2\big],\quad 
        \EuScript{D}_{n,\widehat{K}}(f) = \sum_{i=1}^4 \EuScript{D}_{n,\widehat{K}}(f_i).
\]
Thus, to prove Proposition~\ref{thm:3},  
we can break the problem into smaller parts by showing: 
\begin{equation}\label{eq:final}
    \mathbb{E}_{n,\widehat{K}}[f_i^2]\leq C(\alpha')n^2 \EuScript{D}_{n,\widehat{K}}(f_i),\quad \forall i=1, 2, 3, 4. 
\end{equation}

Note that 
$f_1$ satisfies the spectral gap estimate~\eqref{eq:final}, which is a classical result, (see Lemma 8.1.4 in~\cite{Erignoux21} 
or Lemma 8.2 in~\cite{quastel1992diffusion}).  Since $f_3$ and $f_4$ are symmetric, 
Proposition~\ref{thm:3} is reduced to proving~\eqref{eq:final} for $f_2$, and $f_3$. Before we start, we need the following lemma.
\begin{lemma}\label{lemma:finalcomp}
For any fixed canonical state $\widehat{K}\in \mathbb{K}_n$, and for any ITA-$xy$ function $F$,  it holds that  
\[ 
\begin{aligned}
    &\mathbb{E}_{n,\widehat{K}}\big[\eta^a_x\eta^a_y F\big]=\frac{K^a(K^a-1)}{K(K-1)}\mathbb{E}_{n,\widehat{K}}\big[\eta_x\eta_y F\big],\\
    &\mathbb{E}_{n,\widehat{K}}\big[\eta^a_x\eta^p_y F\big]=\frac{K^aK^p}{K(K-1)}\mathbb{E}_{n,\widehat{K}}\big[\eta_x\eta_y F\big],\\
    &\mathbb{E}_{n,\widehat{K}}\big[\eta^{a,\omega_1}_x \eta^{a,\omega_2}_y F\big]=\frac{K^a}{K(K-1)}\big( K^a \overline{\omega_1}^a\overline{\omega_2}^a - \overline{\omega_1\omega_2}^a \big)\mathbb{E}_{n,\widehat{K}}\big[\eta_x\eta_y F\big],\\
    &\mathbb{E}_{n,\widehat{K}}\big[\eta^{a,\omega_1}_x \eta^{p,\omega_2}_y F\big]=\frac{K^a K^p}{K(K-1)} \overline{\omega_1}^a\overline{\omega_2}^p \mathbb{E}_{n,\widehat{K}}\big[\eta_x\eta_y F\big].
\end{aligned}
\]
\end{lemma}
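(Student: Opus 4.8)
The plan is to compute all four expectations at once, via a single disintegration of the canonical measure followed by an exchangeability argument. Recall that $\mu_{n,\widehat{K}}$ is the uniform measure on $\SK[n]$; equivalently, it is the law of the configuration obtained by dropping the $K^a$ active particles carrying the prescribed angles $\theta_1^a,\dots,\theta_{K^a}^a$ and the $K^p$ passive particles carrying $\theta_1^p,\dots,\theta_{K^p}^p$ onto $B_n$ through a uniformly chosen injection. Writing $\bm{\eta}=(\eta_w)_{w\in B_n}$ for the occupation field and $K=K^a+K^p$, this means that under $\mu_{n,\widehat{K}}$ the field $\bm{\eta}$ is uniform among $\{0,1\}$-fields on $B_n$ of total mass $K$ and that, conditionally on $\bm{\eta}$, the $K$ particles are spread over the $K$ occupied sites by a uniformly random bijection. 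This decomposition is essentially the only structural input I would need.

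First I would use the hypothesis on $F$ to peel off the sites $x$ and $y$. Since $F$ is ITA-$x$ and ITA-$y$, on the event $\{\eta_x=\eta_y=1\}$ it coincides with a function $F_{11}$ that does not depend on $\etah_x$ or $\etah_y$; hence $\eta_x\eta_y F=\eta_x\eta_y F_{11}$, $\eta_x^a\eta_y^a F=\eta_x^a\eta_y^a F_{11}$, $\eta_x^{a,\omega_1}\eta_y^{p,\omega_2}F=\omega_1(\theta_x)\eta_x^a\,\omega_2(\theta_y)\eta_y^p\,F_{11}$, and similarly for the two remaining products. For the functions actually fed into this lemma in the proof of Proposition~\ref{thm:3}, namely $F=F_xF_y$ with $F_x,F_y\in\mathcal{S}$, one moreover has that $F_{11}$ is a function of $(\eta_w)_{w\in B_n\setminus\{x,y\}}$ alone — that is, $F_{11}$ is blind to types and angles off $\{x,y\}$ — which is the property that makes the conditioning below decouple.

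Next I would condition on $\bm{\eta}$. On $\{\eta_x=\eta_y=1\}$ the types and angles at $x$ and $y$ are obtained by drawing two particles without replacement, uniformly, from the urn of $K$ particles, so the following conditional expectations are independent of $\bm{\eta}$:
\[
\mathbb{E}_{n,\widehat{K}}\big[\mathbbm{1}_{\eta_x^a=1}\mathbbm{1}_{\eta_y^a=1}\mid\bm{\eta}\big]=\frac{K^a(K^a-1)}{K(K-1)},\qquad
\mathbb{E}_{n,\widehat{K}}\big[\mathbbm{1}_{\eta_x^a=1}\mathbbm{1}_{\eta_y^p=1}\mid\bm{\eta}\big]=\frac{K^aK^p}{K(K-1)},
\]
\[
\mathbb{E}_{n,\widehat{K}}\big[\omega_1(\theta_x)\mathbbm{1}_{\eta_x^a=1}\,\omega_2(\theta_y)\mathbbm{1}_{\eta_y^a=1}\mid\bm{\eta}\big]=\frac{1}{K(K-1)}\sum_{k\neq l}\omega_1(\theta_k^a)\omega_2(\theta_l^a),
\]
while the analogue with $\omega_2(\theta_y)\mathbbm{1}_{\eta_y^p=1}$ equals $\tfrac{1}{K(K-1)}\big(\sum_k\omega_1(\theta_k^a)\big)\big(\sum_l\omega_2(\theta_l^p)\big)$. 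Expanding $\sum_{k\neq l}\omega_1(\theta_k^a)\omega_2(\theta_l^a)$ as $\big(\sum_k\omega_1(\theta_k^a)\big)\big(\sum_l\omega_2(\theta_l^a)\big)-\sum_k\omega_1(\theta_k^a)\omega_2(\theta_k^a)$ and using $\sum_k\omega(\theta_k^a)=K^a\overline{\omega}^a$ and $\sum_k\omega_1(\theta_k^a)\omega_2(\theta_k^a)=K^a\overline{\omega_1\omega_2}^a$, these conditional means turn into exactly the scalar prefactors in the statement. Since $F_{11}$ is $\bm{\eta}$-measurable and involves neither $\etah_x$ nor $\etah_y$, I would then multiply each identity by $\eta_x\eta_y F_{11}$, take the outer expectation over $\bm{\eta}$, use $\mathbb{E}_{n,\widehat{K}}[\eta_x\eta_y F_{11}]=\mathbb{E}_{n,\widehat{K}}[\eta_x\eta_y F]$, and pull the deterministic constants out of the expectation; this yields the four claimed identities simultaneously.

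The single load-bearing point — and the step I expect to be the most delicate to phrase cleanly — is the decoupling in the last paragraph. It rests on two facts: the exchangeability of $\mu_{n,\widehat{K}}$, so that the conditional law of the two labels sitting at $\{x,y\}$ given $\bm{\eta}$ is the urn law above and in particular does not depend on $\bm{\eta}$; and the fact that $F_{11}$ may be taken blind to types and angles away from $\{x,y\}$, so that the outer $\bm{\eta}$-expectation genuinely factorizes (for a merely ITA-$xy$ function with an angle- or type-sensitive $F_{11}$ the scalar factorization can already fail, but the products $F=F_xF_y$ with $F_\bullet\in\mathcal{S}$ to which this lemma is applied are of the required blind form). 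Everything else is routine bookkeeping.
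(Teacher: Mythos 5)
Your approach is essentially the paper's: condition on the occupancy field $\bm{\eta}$, represent $\mu_{n,\widehat{K}}$ via a uniform random bijection from the $K$ labeled particles onto the occupied sites, compute the conditional means of the pair of type indicators and angle weights at $\{x,y\}$ by an urn-draw-without-replacement argument, and expand $\sum_{k\neq l}\omega_1(\theta_k^a)\omega_2(\theta_l^a)=(K^a)^2\overline{\omega_1}^a\overline{\omega_2}^a-K^a\overline{\omega_1\omega_2}^a$. The computation is correct, and the structure mirrors the paper's probability-ratio argument.

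The caveat you flag at the end is sharp and in fact exposes a real imprecision in the lemma's stated hypothesis. The factorization $\mathbb{E}_{n,\widehat{K}}[\eta^a_x\eta^a_y F] = \tfrac{K^a(K^a-1)}{K(K-1)}\,\mathbb{E}_{n,\widehat{K}}[\eta_x\eta_y F]$ (and its three companions) does not follow from $F$ being ITA at $x$ and $y$ alone: under the canonical measure the types and angles drawn at $\{x,y\}$ are not independent of those elsewhere, because the multiset of labels is fixed by $\widehat{K}$. A concrete counterexample: take $K^a=1$, $z\notin\{x,y\}$, $F=\eta^a_z$; this $F$ is ITA-$xy$, yet $\mathbb{E}_{n,\widehat{K}}[\eta^a_x\eta^p_yF]=0$ while $\tfrac{K^aK^p}{K(K-1)}\mathbb{E}_{n,\widehat{K}}[\eta_x\eta_yF]>0$. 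The proof, yours and the paper's alike, really needs $F$ restricted to $\{\eta_x=\eta_y=1\}$ to depend only on the occupancy field (equivalently $F$ ITA-everywhere, i.e. $F\in\mathcal{S}$). As you correctly observe, this is guaranteed in every actual invocation (Corollary~\ref{cor:final} and Proposition~\ref{thm:3}, where $F=F_xF_y$ with $F_x,F_y\in\mathcal{S}$), so nothing downstream is affected, but the hypothesis in Lemma~\ref{lemma:finalcomp} should be tightened from ITA-$xy$ to ITA-everywhere.
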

\begin{proof}[Proof of Lemma~\ref{lemma:finalcomp}]
    The first two equalities hold because:  
    \[ 
    \begin{aligned}
        &\frac{\PP_{n,\widehat{K}}(\eta^a_x=\eta^a_y=1)}{\PP_{n,\widehat{K}}(\eta_x=\eta_y=1)}=\frac{K^a(K^a-1)}{K(K-1)},\\
        &\frac{\PP_{n,\widehat{K}}(\eta^a_x=\eta^p_y=1)}{\PP_{n,\widehat{K}}(\eta_x=\eta_y=1)}=\frac{K^aK^p}{K(K-1)}.
    \end{aligned}
    \]
    Next we note that
    \[ 
    \begin{aligned}
        \sum_{i=1}^{K^a}\omega_1(\theta_i^a)\sum_{j\neq i}\omega_2(\theta_j^a)
        =&K^a \sum_{i=1}^{K^a}\omega_1(\theta_i^a)
        \Big( \overline{\omega_2}^a-\frac{1}{K^a} \omega_2(\theta_i^a) \Big)\\
        =& (K^a )^2 \overline{\omega_1}^a\overline{\omega_2}^a -K^a \overline{\omega_1\omega_2}^a.  
    \end{aligned}
    \]
    Thus, we have 
    \[ 
        \begin{aligned}
            \mathbb{E}_{n,\widehat{K}}\big[\eta^{a,\omega_1}_x \eta^{a,\omega_2}_y F\big]=&
            \frac{1}{K(K-1)}\sum_{i=1}^{K^a}\omega_1(\theta_i^a)\sum_{j\neq i}\omega_2(\theta_j^a)
            \mathbb{E}_{n,\widehat{K}}\big[\eta_x \eta_y F\big]\\
            =& \frac{K^a}{K(K-1)}\big( K^a \overline{\omega_1}^a\overline{\omega_2}^a - \overline{\omega_1\omega_2}^a \big)\mathbb{E}_{n,\widehat{K}}\big[\eta_x\eta_y F\big]. 
        \end{aligned}
    \]
    Similarly, 
    \[ 
        \begin{aligned}
            \mathbb{E}_{n,\widehat{K}}\big[\eta^{a,\omega_1}_x \eta^{p,\omega_2}_y F\big]=&
            \frac{1}{K(K-1)}\sum_{i=1}^{K^a}\omega_1(\theta_i^a)\sum_{j=1}^{K^p}\omega_2(\theta_j^a)
            \mathbb{E}_{n,\widehat{K}}\big[\eta_x \eta_y F\big]\\
            =&\frac{K^a K^p}{K(K-1)} \overline{\omega_1}^a\overline{\omega_2}^p \mathbb{E}_{n,\widehat{K}}\big[\eta_x\eta_y F\big]. 
        \end{aligned}
    \]
    Thus, the proof of Lemma~\ref{lemma:finalcomp} is complete. 
\end{proof}
In the rest of proof we denote  $V^a(\omega)=\var_{n,\widehat{K}}\big( \omega(\theta_0) | \eta^a_0=1 \big)$. 
Using Lemma~\ref{lemma:finalcomp} and definition of related  quantities, direct computations yield the following corollary. 
\begin{corollary}\label{cor:final}
    For any fixed canonical state $\widehat{K}\in \mathbb{K}_n$, the following hold:
    \begin{enumerate}
        \item For any ITA-$x$ function $F$:  
        \[ 
        \begin{aligned}
            &\mathbb{E}_{n,\widehat{K}}\big[\chi_x^2 F\big]=\frac{\alpha^a\alpha^p}{\alpha^2}\mathbb{E}_{n,\widehat{K}}\big[\eta_x F\big],\\
            &\mathbb{E}_{n,\widehat{K}}\big[\big( \eta^{a,\oh^a}_x \big)^2 F\big]=V^a(\omega)\frac{\alpha^a}{\alpha}\mathbb{E}_{n,\widehat{K}}\big[\eta_x F\big].
        \end{aligned}
        \]
        \item For any ITA-$xy$ function $F$:  
        \[ 
            \begin{aligned}
                &\mathbb{E}_{n,\widehat{K}}\big[\chi_x\chi_y F\big]=\frac{-\alpha^a\alpha^p}{\alpha^2(K-1)}\mathbb{E}_{n,\widehat{K}}\big[\eta_x\eta_y F\big],\\
                &\mathbb{E}_{n,\widehat{K}}\big[\etaaob_x\etaaob_y  F\big]=-V^a(\omega)\frac{K^a}{K(K-1)}\mathbb{E}_{n,\widehat{K}}\big[\eta_x\eta_y F\big].
            \end{aligned}
        \]
    \end{enumerate}
\end{corollary}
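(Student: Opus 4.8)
The plan is to obtain all four identities by expanding the quadratic monomials $\chi_x\chi_y$ and $\etaaob_x\etaaob_y$ (and their $x=y$ specializations) into the basic products $\eta^\sigma_x\eta^{\sigma'}_y$, $\eta^{a,\Phi}_x\eta^{a,\Phi'}_y$ handled by Lemma~\ref{lemma:finalcomp}, and then collecting terms. Two elementary single-site averaging identities, valid for any ITA-$x$ function $F$, will be needed in addition:
\[
\mathbb{E}_{n,\widehat{K}}\big[\eta^\sigma_x F\big]=\tfrac{\alpha^\sigma}{\alpha}\,\mathbb{E}_{n,\widehat{K}}\big[\eta_x F\big]
\qquad\text{and}\qquad
\mathbb{E}_{n,\widehat{K}}\big[\Phi(\theta_x)\eta^\sigma_x F\big]=\overline{\Phi}^{\,\sigma}\,\mathbb{E}_{n,\widehat{K}}\big[\eta^\sigma_x F\big].
\]
The first is precisely the one already used in the proof of Lemma~\ref{cor:1}, and the second follows from the same labelled-particle counting as Lemma~\ref{lemma:finalcomp}: conditionally on $\{\eta^\sigma_x=1\}$ the angle $\theta_x$ is a uniform draw from the multiset $\Theta^\sigma$ and, since the away-from-$x$ part of $F$ is type- and angle-blind, is independent of $F$.

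For Part~(1), the exclusion rule gives $\chi_x^2=\tfrac{(\alpha^p)^2}{\alpha^2}\eta^a_x+\tfrac{(\alpha^a)^2}{\alpha^2}\eta^p_x$, and feeding each term into the first single-site identity collapses the prefactor to $\tfrac{\alpha^a\alpha^p(\alpha^a+\alpha^p)}{\alpha^3}=\tfrac{\alpha^a\alpha^p}{\alpha^2}$, which is the first formula. For the second, write $\big(\etaaob_x\big)^2=(\ohw^a(\theta_x))^2\eta^a_x$, apply the second single-site identity with $\sigma=a$ and $\Phi=(\ohw^a)^2$ to get $\overline{(\ohw^a)^2}^{\,a}\,\mathbb{E}_{n,\widehat{K}}[\eta^a_x F]$, observe that $\overline{\ohw^a}^{\,a}=0$ forces $\overline{(\ohw^a)^2}^{\,a}=\var_{n,\widehat{K}}(\omega(\theta_0)\mid\eta^a_0=1)=V^a(\omega)$, and finally use the first single-site identity to turn $\mathbb{E}_{n,\widehat{K}}[\eta^a_x F]$ into $\tfrac{\alpha^a}{\alpha}\,\mathbb{E}_{n,\widehat{K}}[\eta_x F]$.

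For Part~(2), expand $\chi_x\chi_y=\tfrac{(\alpha^p)^2}{\alpha^2}\eta^a_x\eta^a_y-\tfrac{\alpha^a\alpha^p}{\alpha^2}\big(\eta^a_x\eta^p_y+\eta^p_x\eta^a_y\big)+\tfrac{(\alpha^a)^2}{\alpha^2}\eta^p_x\eta^p_y$ and substitute the first two identities of Lemma~\ref{lemma:finalcomp} together with their obvious $a\leftrightarrow p$ and $x\leftrightarrow y$ analogues; writing $\alpha^\sigma=K^\sigma/|B_n|$ reduces the left side to $\mathbb{E}_{n,\widehat{K}}[\eta_x\eta_y F]$ times
\[
\frac{1}{\alpha^2 K(K-1)\,|B_n|^2}\Big[(K^p)^2K^a(K^a-1)-2(K^aK^p)^2+(K^a)^2K^p(K^p-1)\Big],
\]
where the $(K^aK^p)^2$ contributions cancel, the bracket collapses to $-K^aK^pK$, and $\alpha^2=K^2/|B_n|^2$ then yields exactly $-\tfrac{\alpha^a\alpha^p}{\alpha^2(K-1)}$. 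For the last identity, $\etaaob_x\etaaob_y=\eta^{a,\ohw^a}_x\eta^{a,\ohw^a}_y$, so the third identity of Lemma~\ref{lemma:finalcomp} applies with $\omega_1=\omega_2=\ohw^a$; its $K^a(\overline{\ohw^a}^{\,a})^2$ term vanishes and $\overline{\ohw^a\ohw^a}^{\,a}=V^a(\omega)$, leaving $-V^a(\omega)\tfrac{K^a}{K(K-1)}\,\mathbb{E}_{n,\widehat{K}}[\eta_x\eta_y F]$.

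The step that is more than bookkeeping — and hence the main, if modest, obstacle — is the second single-site averaging identity: one must verify that, under $\mu_{n,\widehat{K}}$, conditioning on $\{\eta^\sigma_x=1\}$ genuinely decouples $\theta_x$ from the rest of the configuration seen by $F$, which is exactly where the type- and angle-blindness of the away-from-$x$ factor of $F$ enters. I would record this via the same counting over labelled particle placements used for Lemma~\ref{lemma:finalcomp}, so that it is a routine adaptation rather than a new ingredient; everything else is algebraic simplification.
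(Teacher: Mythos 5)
Your approach mirrors the paper's intended argument: expand $\chi_x\chi_y$ and $\etaaob_x\etaaob_y$ into the monomials handled by Lemma~\ref{lemma:finalcomp}, apply that lemma (and a single-site averaging identity) termwise, and collect. The algebra is correct: the bracket in Part~(2) does collapse to $-K^aK^pK$, the $x=y$ case of $\chi$ gives $\alpha^a\alpha^p(\alpha^a+\alpha^p)/\alpha^3=\alpha^a\alpha^p/\alpha^2$, and setting $\omega_1=\omega_2=\ohw^a$ in Lemma~\ref{lemma:finalcomp} kills the $\overline{\omega_1}^a\overline{\omega_2}^a$ term and leaves $-V^a(\omega)K^a/(K(K-1))$.

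However, the step you yourself flag as ``more than bookkeeping'' --- the identity $\mathbb{E}_{n,\widehat K}[\Phi(\theta_x)\eta^\sigma_x F]=\overline{\Phi}^{\sigma}\,\mathbb{E}_{n,\widehat K}[\eta^\sigma_x F]$ --- is \emph{false} for merely ITA-$x$ functions $F$, and the decoupling you sketch is exactly where the gap sits. The problem is that under $\mu_{n,\widehat K}$, conditioning on $\{\eta^\sigma_x=1,\;\theta_x=\theta_k^\sigma\}$ removes one labelled particle from the pool, so the conditional law of $\widehat\eta_{y\ne x}$ \emph{does} depend on $k$; the independence you need requires $F$ to be blind to types and angles at every site it reads, i.e.\ $F\in\mathcal{S}$ (ITA-everywhere), not just ITA-$x$. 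A small concrete counterexample: take $K^a=3$, $K^p=0$, $\Theta^a=\{0,\pi/2,\pi\}$, $|B_n|=6$, $\omega=\cos$ (so $\ob^a=0$, $V^a(\omega)=2/3$), and $F=\eta^a_{e_1}\cos^2(\theta_{e_1})$, which is ITA-$0$; then a direct count gives $\mathbb{E}_{n,\widehat K}[(\etaaob_0)^2F]=1/15$ while $V^a(\omega)\tfrac{\alpha^a}{\alpha}\,\mathbb{E}_{n,\widehat K}[\eta_0F]=4/45$, so the second identity of Part~(1) fails. The same obstruction already affects the paper's Lemma~\ref{lemma:finalcomp} (and Lemma~\ref{cor:1}~(2)): the ratio-of-probabilities argument there implicitly uses $\mathbb{E}[F\mid\eta^a_x=\eta^a_y=1]=\mathbb{E}[F\mid\eta_x=\eta_y=1]$, which needs $F$ to be a function of the occupancy $\eta$ alone. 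So the paper's ``ITA-$x$''/``ITA-$xy$'' hypothesis is overstated and yours inherits it. This is harmless for the actual application --- in Lemma~\ref{lemma:1} and Proposition~\ref{thm:3} all $F$'s are products of functions in $\mathcal{S}$ --- but you should explicitly require $F$ to be ITA-everywhere (equivalently, $F$ measurable w.r.t.\ $\eta$) rather than hoping the ``routine adaptation'' of Lemma~\ref{lemma:finalcomp} justifies the decoupling for ITA-$x$ alone. With that strengthened hypothesis, your proof is complete and correct.
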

With all above preparations, we now prove the spectral gap estimate~\eqref{eq:final} holds for $f_2$ and $f_3$.  

Note that multiplying  $f_i$ by any constant does not  affect the inequality~\eqref{eq:final}. 
Hence, without loss of generality, we can drop the constants ahead of $f_i$. Moreover, 
replacing $F_x$ by $F_x - F $ does not affect $f_i$. Therefore,  
we can further assume, without loss of generality, that $\sum F_x=0$ and that each $F_x$ vanishes if $\eta_x=0$. 

First, we do computations related to $f_2$. 
Corollary~\ref{cor:final} and $\sum F_x=0$ yield 
\begin{equation}\label{eq:final21}
    \begin{aligned}
        \mathbb{E}_{n,\widehat{K}}[f_2^2]&= \sum_{x\in B_n}\Big( \mathbb{E}_{n,\widehat{K}}\big[ \chi_x^2F_x^2\big] + 
        \sum_{y \neq x} \mathbb{E}_{n,\widehat{K}}\big[ \chi_x\chi_yF_xF_y\big]\Big)\\
        &=\frac{\alpha^a\alpha^pK}{\alpha^2(K-1)}\sum_{x\in B_n}\mathbb{E}_{n,\widehat{K}}\big[ F_x^2\big]. 
    \end{aligned}
\end{equation}
Now we turn our attention to $\EuScript{D}_{n,\widehat{K}}(f_2)$. 
Since we assumed that $F_x$ vanishes when the site $x$ is empty, we have  
\begin{equation}\label{eq:final23}
    \begin{aligned}
        \LL_n \big( \chi_x F_x \big)&=\chi_x \LL_n F_x + \sum_{|z|=1}a(x,x+z)F_x(\etah^{x,x+z}) \big( \chi_{x+z}-\chi_x \big)\\
    &=\chi_x \LL_n F_x + \sum_{|z|=1}\chi_{x+z}\big( 1-\eta_x \big)F_x(\etah^{x,x+z}) 
    \end{aligned}
\end{equation} 
By Corollary~\ref{cor:final}, it follows that  
\begin{equation}\label{eq:final22}
    \begin{aligned}
        &\EuScript{D}_{n,\widehat{K}}\big(f_2\big)=-\sum_{x,y\in B_n}\Big( \mathbb{E}_{n,\widehat{K}} \big[ \chi_x\chi_yF_x\LL_n F_y\big] + \sum_{|z|=1}\mathbb{E}_{n,\widehat{K}}\big[ \chi_x\chi_{y+z}(1-\eta_y)F_x(\eta) F_y(\etah^{y,y+z})\big] \Big)\\
        &\qquad =\frac{\alpha^a\alpha^pK}{\alpha^2(K-1)}\sum_{x\in B_n}\Big( 
            \mathbb{E}_{n,\widehat{K}}\big[ -F_x\LL_n F_x\big] + \sum_{|z|=1}\mathbb{E}_{n,\widehat{K}}\big[ (1-\eta_{x+z})F_x(\eta) F_{x+z}(\etah^{x,x+z})\big] \Big).
    \end{aligned}
\end{equation} 

Next, we do computations related to $f_3$. 
Again, Corollary~\ref{cor:final} and $\sum F_x=0$ yield 
\begin{equation}\label{eq:final31}
    \begin{aligned}
        \mathbb{E}_{n,\widehat{K}}\big[f_3^2\big]&= 
        \sum_{x\in B_n}\Big( \mathbb{E}_{n,\widehat{K}}\big[ \big( \etaaob_x \big)^2F_x^2\big] + \sum_{y \neq x} \mathbb{E}_{n,\widehat{K}}\big[ \etaaob_x\etaaob_y F_xF_y\big]\Big)\\
        &=V^a(\omega)\frac{K^aK}{K(K-1)}\sum_{x\in B_n}\mathbb{E}_{n,\widehat{K}}\big[ F_x^2 \big]
    \end{aligned}
\end{equation}
Similar to~\eqref{eq:final23}, we also have 
\begin{align*}
    \LL_n \big( \etaaob_x F_x \big)&=\etaaob_x \LL_n F_x + \sum_{|z|=1}a(x,x+z)F_x(\etah^{x,x+z}) \big( \etaaob_{x+z}-\etaaob_x \big)\\
&=\etaaob_x \LL_n F_x + \sum_{|z|=1}\etaaob_{x+z}\big( 1-\eta_x \big)F_x(\etah^{x,x+z}) 
\end{align*} 
Thus, Corollary~\ref{cor:final} gives 
\begin{equation}\label{eq:final32}
    \begin{aligned}
        &\EuScript{D}_{n,\widehat{K}}\big(f_3\big)\\
        =&-\sum_{x,y\in B_n}\Big( \mathbb{E}_{n,\widehat{K}}\big[ \etaaob_x\etaaob_yF_x\LL_n F_y\big] + \sum_{|z|=1}\mathbb{E}_{n,\widehat{K}}\big[ \etaaob_x\etaaob_{y+z}(1-\eta_y)F_x(\eta) F_y(\etah^{y,y+z})\big] \Big)\\
        =&V^a(\omega)\frac{K^aK}{K(K-1)}\sum_{x\in B_n}\Big( \mathbb{E}_{n,\widehat{K}}\big[ -F_x\LL_n F_x\big] + \sum_{|z|=1}\mathbb{E}_{n,\widehat{K}}\big[ (1-\eta_{x+z})F_x(\eta)F_{x+z}(\etah^{x,x+z})\big] \Big).
    \end{aligned} 
\end{equation}
Comparing~\eqref{eq:final21} with~\eqref{eq:final22} as well as~\eqref{eq:final31} with~\eqref{eq:final32}, 
the following Lemma~\ref{lemma:3} concludes the proof of Proposition~\ref{thm:3}. 
\end{proof}
\begin{lemma}\label{lemma:3}
    Let $(F_x)_{x\in B_n}\subset\mathcal{S}\cap \mathcal{C}_n$ be a family of ITA-everywhere
    functions such that $\sum_{x\in B_n} F_x=0$ and $\eta_xF_x=F_x$, then it holds that 
    \begin{align*}
    \sum_{x\in B_n}\mathbb{E}_{n,\widehat{K}}\big[ F_x^2\big]&\leq C(\alpha')n^2 \\
    &\times  \sum_{x\in B_n}\Big( \mathbb{E}_{n,\widehat{K}}\big[ -F_x\LL_n F_x\big] + 
    \sum_{|z|=1}\mathbb{E}_{n,\widehat{K}}\big[ (1-\eta_{x+z})F_x(\eta) F_{x+z}(\etah^{x,x+z})\big] \Big).
\end{align*}
\end{lemma}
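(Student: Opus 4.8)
The plan is to reduce Lemma~\ref{lemma:3} to the \emph{classical} moving-particle (or colour-blind) spectral gap estimate for the SSEP with a single type of indistinguishable particles, which is stated as Lemma 8.1.4 in~\cite{Erignoux21} or Lemma~8.2 in~\cite{quastel1992diffusion}. The key observation is that in Lemma~\ref{lemma:3} all the functions $F_x$ are ITA-everywhere; hence, \emph{conditioned on the vacancy profile} $\eta=(\eta_x)_{x\in B_n}$ (i.e.\ forgetting types and angles), the objects $F_x(\eta)$, the Dirichlet-form expression, and the variance $\sum_x\E_{n,\Kh}[F_x^2]$ depend on the configuration only through $\eta$. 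Since $\mu_{n,\Kh}$ conditioned on a fixed vacancy profile is simply a uniform assignment of the $K^a$ active angles among the occupied sites, the conditional law of $\eta$ under $\mu_{n,\Kh}$ is exactly the uniform measure on configurations of $K=K^a+K^p$ indistinguishable particles in $B_n$, which is the canonical measure $\nu_{n,K}$ for the ordinary SSEP.

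Concretely, I would first write $\E_{n,\Kh}[\,\cdot\,]=\E_{\nu_{n,K}}\big[\E_{n,\Kh}[\,\cdot\mid\eta]\big]$ and note that for ITA-everywhere $G$ the inner conditional expectation is just $G$ evaluated on $\eta$, so the whole inequality in Lemma~\ref{lemma:3} is an identity between quantities of the $\eta$-marginal. Next I would recognise the right-hand side as (a constant multiple of) the SSEP Dirichlet form $\EuScript{D}^{\mathrm{SSEP}}_{n,K}(\Phi)$ of the vector-valued ``moving particle'' function built from $(F_x)$: indeed the classical spectral gap is proved for exactly the test functional $\Phi(\eta)=\sum_x \eta_x F_x(\eta)$ with $\sum_x F_x=0$ and $\eta_xF_x=F_x$, and the computation of $\E[\Phi^2]$ and $\EuScript{D}^{\mathrm{SSEP}}(\Phi)$ produces precisely the combinations $\sum_x \E[F_x^2]$ and $\sum_x\big(\E[-F_x\LL_nF_x]+\sum_{|z|=1}\E[(1-\eta_{x+z})F_x(\eta)F_{x+z}(\eta^{x,x+z})]\big)$ appearing in the statement. (This matching of terms is exactly what produced equations~\eqref{eq:final21}--\eqref{eq:final22} and~\eqref{eq:final31}--\eqref{eq:final32}, run in reverse.) So once the conditioning is done, the lemma is literally the statement that $\Phi$, which is centred of mean zero and colour-blind, satisfies $\operatorname{Var}_{\nu_{n,K}}(\Phi)\le C(\alpha')n^2\,\EuScript{D}^{\mathrm{SSEP}}_{n,K}(\Phi)$, under the density bound $K/|B_n|=\alpha^a+\alpha^p\le\alpha'<1$.

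Then I would invoke the classical result: the SSEP spectral gap on $B_n$ at density bounded away from $1$ is of order $n^{-2}$ uniformly (this is the content of Lemma~8.1.4 in~\cite{Erignoux21}; it is precisely formulated for functions of the moving-particle form with $\sum_x F_x=0$). One small point to check is that the constant there is uniform over all $\Kh$ with $\alpha\le\alpha'$ — but since the SSEP canonical measure only sees $K$, and $K\le\alpha'|B_n|$, the constant $C(\alpha')$ depends on $\alpha'$ alone, as claimed. I would also note that the two-point terms $\sum_{|z|=1}\E[(1-\eta_{x+z})F_x F_{x+z}(\eta^{x,x+z})]$ are exactly the cross terms of the Dirichlet form $\tfrac12\sum_{x,z}\E[(1-\eta_{x+z})(\Phi(\eta^{x,x+z})-\Phi(\eta))^2]$ after expansion, so no new estimate is needed.

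The main obstacle, and the place where care is required, is the bookkeeping in the reduction to the colour-blind setting: one must verify that \emph{every} term in both sides of the asserted inequality really is a function of $\eta$ alone (true because each $F_x\in\mathcal S$ and the prefactors $\chi_x$, $\eta^{a,\ohw^a}_x$ have been stripped off via Corollary~\ref{cor:final}), and that the SSEP Dirichlet form one lands on is the \emph{full} nearest-neighbour form on $B_n$ with both extremities in $B_n$ (matching $\LL_n$), not a restricted one. Once this is clean, the conclusion is immediate from the classical spectral gap; there is no genuinely new probabilistic content in Lemma~\ref{lemma:3} beyond the observation that ITA-everywhere functions are insensitive to type and angle, so the multi-type-with-angles model collapses to the one-type SSEP at this step.
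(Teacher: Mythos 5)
Your key observation — that ITA-everywhere functions collapse the two-type-with-angles model to the colour-blind SSEP, so under $\mu_{n,\widehat K}$ conditional on the occupation profile $\eta$ both sides of the inequality are functions of $\eta$ alone and the $\eta$-marginal is the uniform canonical measure for $K=K^a+K^p$ indistinguishable particles — is correct and is exactly the content that lets the paper say ``the proof is similar to inequality (8.5) on p.~143 of Erignoux.'' You also correctly identify the right references (Erignoux's (8.5)/Lemma~8.1.4, Quastel's Lemma~8.2).

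However, the specific identification you offer of the classical result has a concrete error. You write that the classical spectral gap is ``proved for exactly the test functional $\Phi(\eta)=\sum_x\eta_x F_x(\eta)$ with $\sum_x F_x=0$ and $\eta_x F_x=F_x$,'' and that $\E[\Phi^2]$ and $\EuScript{D}(\Phi)$ reproduce the two sides of Lemma~\ref{lemma:3}. But under the stated hypotheses $\Phi$ is identically zero: $\eta_x F_x = F_x$ forces $\Phi=\sum_x\eta_x F_x=\sum_x F_x=0$. So both $\E[\Phi^2]$ and $\EuScript{D}(\Phi)$ vanish, and the Poincar\'e inequality applied to $\Phi$ gives only $0\le 0$ — not the non-trivial inequality of the lemma. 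The right-hand side of Lemma~\ref{lemma:3} is not the Dirichlet form of any single $\eta$-function built from $(F_x)$ in this naive way; it is the Dirichlet form of the ``environment plus moving tag'' process, which lives on an enlarged configuration space $\{(\eta,x_0):\eta_{x_0}=1\}$, and the inequality is a spectral-gap estimate for \emph{that} process stated directly in terms of the family $(F_x)$. That is precisely how Quastel's Lemma~8.2 and Erignoux's (8.5) are phrased, so after your (correct) colour-blind reduction you can quote them verbatim — but the intermediate step ``recognise the RHS as $\EuScript{D}^{\mathrm{SSEP}}(\Phi)$'' as you wrote it would fail and should be removed. The matching of terms you point to in equations~\eqref{eq:final21}--\eqref{eq:final22} and~\eqref{eq:final31}--\eqref{eq:final32} comes from the prefactors $\chi_x$ and $\eta^{a,\hat\omega^a}_x$ in $f_2,f_3$, not from $\eta_x$; those prefactors are precisely what make the variance and Dirichlet form non-degenerate before the common proportionality constant is cancelled.
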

The proof of this lemma is similar to the proof of inequality (8.5) on page 143 of~\cite{Erignoux21}, 
hence we omit it here for brevity.

\vspace{0.3cm}
\noindent{\bf Acknowledgments} 
The author would like to express his gratitude to Prof. Kai Du from Fudan University 
for valuable discussions, suggestions, and support during the course of this work. 
This work was supported by the National Science and Technology Major Project(2022ZD0116401).


    \bibliographystyle{plain}
    \bibliography{mybib.bib}

\begin{thebibliography}{10}

\bibitem{agudo2019active}
Jaime Agudo-Canalejo and Ramin Golestanian.
\newblock Active phase separation in mixtures of chemically interacting particles.
\newblock {\em Physical review letters}, 123(1):018101, 2019.

\bibitem{bruna2022phase}
Maria Bruna, Martin Burger, Antonio Esposito, and Simon~M Schulz.
\newblock Phase separation in systems of interacting active brownian particles.
\newblock {\em SIAM Journal on Applied Mathematics}, 82(4):1635--1660, 2022.

\bibitem{annurev-conmatphys-031214-014710}
Michael~E. Cates and Julien Tailleur.
\newblock Motility-induced phase separation.
\newblock {\em Annual Review of Condensed Matter Physics}, 6(Volume 6, 2015):219--244, 2015.

\bibitem{de1989invariance}
Anna De~Masi, Pablo~A Ferrari, Sheldon Goldstein, and William~David Wick.
\newblock An invariance principle for reversible markov processes. applications to random motions in random environments.
\newblock {\em Journal of Statistical Physics}, 55:787--855, 1989.

\bibitem{erignoux2016limite}
Cl{\'e}ment Erignoux.
\newblock {\em Limite hydrodynamique pour un dynamique sur r{\'e}seau de particules actives}.
\newblock PhD thesis, Universit{\'e} Paris Saclay (COmUE), 2016.

\bibitem{Erignoux21}
Cl{\'e}ment {Erignoux}.
\newblock {Hydrodynamic limit for an active exclusion process}.
\newblock {\em Mémoires de la SMF}, 169, 2021.

\bibitem{esposito1994diffusive}
R~Esposito, R~Marra, and HT1301374 Yau.
\newblock Diffusive limit of asymmetric simple exclusion.
\newblock {\em Reviews in Mathematical physics}, 6(05a):1233--1267, 1994.

\bibitem{frouvelle2012continuum}
Amic Frouvelle.
\newblock A continuum model for alignment of self-propelled particles with anisotropy and density-dependent parameters.
\newblock {\em Mathematical Models and Methods in Applied Sciences}, 22(07):1250011, 2012.

\bibitem{frouvelle2012dynamics}
Amic Frouvelle and Jian-Guo Liu.
\newblock Dynamics in a kinetic model of oriented particles with phase transition.
\newblock {\em SIAM Journal on Mathematical Analysis}, 44(2):791--826, 2012.

\bibitem{guo1988nonlinear}
Mao~Zheng Guo, George~C Papanicolaou, and SR~Srinivasa Varadhan.
\newblock Nonlinear diffusion limit for a system with nearest neighbor interactions.
\newblock {\em Communications in Mathematical Physics}, 118(1):31--59, 1988.

\bibitem{kipnis1999scaling}
Claude Kipnis and Claudio Landim.
\newblock {\em Scaling limits of interacting particle systems}, volume 320.
\newblock Springer Science \& Business Media, 1999.

\bibitem{kipnis1986central}
Claude Kipnis and SR~Srinivasa Varadhan.
\newblock Central limit theorem for additive functionals of reversible markov processes and applications to simple exclusions.
\newblock {\em Communications in Mathematical Physics}, 104(1):1--19, 1986.

\bibitem{kourbane2018exact}
Mourtaza Kourbane-Houssene, Cl{\'e}ment Erignoux, Thierry Bodineau, and Julien Tailleur.
\newblock Exact hydrodynamic description of active lattice gases.
\newblock {\em Physical review letters}, 120(26):268003, 2018.

\bibitem{landim2001symmetric}
C~Landim, S~Olla, and SRS Varadhan.
\newblock Symmetric simple exclusion process:{\P} regularity of the self-diffusion coefficient.
\newblock {\em Communications in Mathematical Physics}, 224:307--321, 2001.

\bibitem{le2016brownian}
Jean-Fran{\c{c}}ois Le~Gall.
\newblock {\em Brownian motion, martingales, and stochastic calculus}.
\newblock Springer, 2016.

\bibitem{mason2023exact}
James Mason, Cl{\'e}ment Erignoux, Robert~L Jack, and Maria Bruna.
\newblock Exact hydrodynamics and onset of phase separation for an active exclusion process.
\newblock {\em Proceedings of the Royal Society A}, 479(2279):20230524, 2023.

\bibitem{mason2023macroscopic}
James Mason, Robert~L Jack, and Maria Bruna.
\newblock Macroscopic behaviour in a two-species exclusion process via the method of matched asymptotics.
\newblock {\em Journal of Statistical Physics}, 190(3):47, 2023.

\bibitem{mason2024dynamical}
James Mason, Robert~L Jack, and Maria Bruna.
\newblock Dynamical patterns in active-passive particle mixtures with non-reciprocal interactions: Exact hydrodynamic analysis.
\newblock {\em arXiv preprint arXiv:2408.03932}, 2024.

\bibitem{mccandlish2012spontaneous}
Samuel~R McCandlish, Aparna Baskaran, and Michael~F Hagan.
\newblock Spontaneous segregation of self-propelled particles with different motilities.
\newblock {\em Soft Matter}, 8(8):2527--2534, 2012.

\bibitem{ni2014crystallizing}
Ran Ni, Martien A~Cohen Stuart, Marjolein Dijkstra, and Peter~G Bolhuis.
\newblock Crystallizing hard-sphere glasses by doping with active particles.
\newblock {\em Soft Matter}, 10(35):6609--6613, 2014.

\bibitem{doi:10.1126/science.1230020}
Jeremie Palacci, Stefano Sacanna, Asher~Preska Steinberg, David~J. Pine, and Paul~M. Chaikin.
\newblock Living crystals of light-activated colloidal surfers.
\newblock {\em Science}, 339(6122):936--940, 2013.

\bibitem{quastel1992diffusion}
Jeremy Quastel.
\newblock Diffusion of color in the simple exclusion process.
\newblock {\em Communications on pure and applied mathematics}, 45(6):623--679, 1992.

\bibitem{ridgway2023motility}
Wesley~JM Ridgway, Mohit~P Dalwadi, Philip Pearce, and S~Jonathan Chapman.
\newblock Motility-induced phase separation mediated by bacterial quorum sensing.
\newblock {\em Physical Review Letters}, 131(22):228302, 2023.

\bibitem{solon2013revisiting}
Alexandre~P Solon and Julien Tailleur.
\newblock Revisiting the flocking transition using active spins.
\newblock {\em Physical review letters}, 111(7):078101, 2013.

\bibitem{spohn1990tracer}
Herbert Spohn.
\newblock Tracer diffusion in lattice gases.
\newblock {\em Journal of Statistical Physics}, 59:1227--1239, 1990.

\bibitem{key1354152m}
S.~R.~S. Varadhan.
\newblock Nonlinear diffusion limit for a system with nearest neighbor interactions. {II}.
\newblock In K.~D. Elworthy and Nobuyuki Ikeda, editors, {\em Asymptotic problems in probability theory: stochastic models and diffusions on fractals}, volume 283 of {\em Pitman Research Notes in Mathematics}, pages 75--128. Longman Scientific \& Technical, Harlow, 1993.
\newblock (Sanda/Kyoto, 1990). MR:1354152. Zbl:0793.60105.

\bibitem{Varadhan1994}
S.~R.~S. Varadhan.
\newblock {\em Regularity of Self-Diffusion Coefficient}, pages 387--397.
\newblock Birkh{\"a}user Boston, Boston, MA, 1994.

\bibitem{vicsek1995novel}
Tam{\'a}s Vicsek, Andr{\'a}s Czir{\'o}k, Eshel Ben-Jacob, Inon Cohen, and Ofer Shochet.
\newblock Novel type of phase transition in a system of self-driven particles.
\newblock {\em Physical review letters}, 75(6):1226, 1995.

\bibitem{VICSEK201271}
Tamás Vicsek and Anna Zafeiris.
\newblock Collective motion.
\newblock {\em Physics Reports}, 517(3):71--140, 2012.
\newblock Collective motion.

\bibitem{wysocki2016propagating}
Adam Wysocki, Roland~G Winkler, and Gerhard Gompper.
\newblock Propagating interfaces in mixtures of active and passive brownian particles.
\newblock {\em New journal of physics}, 18(12):123030, 2016.

\end{thebibliography}


\begin{thebibliography}{1}

\bibitem{Erignoux21}
Cl{\'e}ment {Erignoux}.
\newblock {Hydrodynamic limit for an active exclusion process}.
\newblock {\em Mémoires de la SMF}, 169, 2021.

\bibitem{kipnis2013scaling}
Claude Kipnis and Claudio Landim.
\newblock {\em Scaling limits of interacting particle systems}, volume 320.
\newblock Springer Science \& Business Media, 2013.

\bibitem{mason2023exact}
James Mason, Cl{\'e}ment Erignoux, Robert~L Jack, and Maria Bruna.
\newblock Exact hydrodynamics and onset of phase separation for an active exclusion process.
\newblock {\em Proceedings of the Royal Society A}, 479(2279):20230524, 2023.


\end{thebibliography}

\appendix         

\section*{Appendix}

\section{Cross diffusion}\label{sec:crossdiffusion}
Consider on $\mathbb{Z}^2$, an initial configuration where each site is initially occupied with probability $\alpha\in [0,1]$, 
and with a tagged particle at the origin. Each particle then follows a 
symmetric exclusion process. 
\begin{proposition}[Self-diffusion coefficient]\label{prop:crossdiffusion}
    \normalfont
    Given $\bm{X}_t=(X_t^1,X_t^2)$ the position of the tagged particle at time $t$, 
    the  2-dimensional self diffusion matrix $D_s(\alpha)$ is defined as~\cite{kipnis1986central}  
    \[ 
    x^\top D_s(\alpha)x = \lim_{t\to \infty}\frac{\E\big[ \big( x^\top \bm{X}_t \big)^2  \big]}{t},\quad \forall x\in \mathbb{R}^2. 
     \]
     Moreover, $D_s(\alpha)$  is diagonal, $D_s(\alpha)=d_s(\alpha)I_2$~\cite{de1989invariance}, and is characterized by the variational formula~\cite{spohn1990tracer} 
     \[ 
       x^\top D_s(\alpha) x = \inf_{f\in \mathcal{S}}\Bigg\{ \sum_{i=1,2}\mathbb{E}_{\hat{\alpha}}\Big[ (1-\eta_{e_i})\left( x_i + \tau_{e_i} f(\eta^{0,e_i}) - f\right)^2 +
        \sum_{y\neq 0,e_i}\big( \nabla_{0,e_i} \tau_y f \big) ^2  \Big] \Bigg\}.
     \]
\end{proposition}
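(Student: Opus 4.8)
The plan is to realise $d_s(\alpha)$ through the classical \emph{environment viewed from the tagged particle} and to invoke the central limit theorem of Kipnis and Varadhan~\cite{kipnis1986central} for additive functionals of reversible Markov processes. I would first introduce the process $\xi_t\coloneqq\tau_{\bm X_t}\eta_t$ recording the SSEP configuration seen from the tagged particle, so that in $\xi_t$ the tagged particle always occupies the origin. One checks that $\xi_t$ is Markov, its generator being the SSEP generator together with the global shift that accompanies each jump of the tagged particle, and that the product Bernoulli measure of density $\alpha$ conditioned on $\{\eta_0=1\}$ is reversible for it. Writing $\bm X_t$ as a martingale plus the time integral of the local drift $\sum_{|z|=1}z(1-\eta_z)$, which has mean zero under this measure, the Kipnis--Varadhan theorem applies and yields both the existence of the limit defining $x^\top D_s(\alpha)x$ and the fact that it is a nonnegative quadratic form in $x$; polarisation then produces the matrix $D_s(\alpha)$.

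For the diagonality I would use the lattice symmetries. The pair consisting of $\mathbb Z^2$ and the SSEP dynamics is invariant under the hyperoctahedral group generated by the coordinate exchange $e_1\leftrightarrow e_2$ and the reflections $e_i\mapsto-e_i$; each such orthogonal transformation $O$ maps $\bm X_t$ to $O\bm X_t$ without altering its law, whence $O^\top D_s(\alpha)O=D_s(\alpha)$. Since the only $2\times2$ matrices commuting with this group are scalar multiples of the identity, $D_s(\alpha)=d_s(\alpha)I_2$ with $d_s(\alpha)\ge 0$, as in~\cite{de1989invariance}.

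For the variational formula I would expand the $H_{-1}$-variance delivered by Kipnis--Varadhan: it equals a supremum over cylinder test functions of a quadratic expression built from the Dirichlet form of $\xi_t$, and dualising this supremum through the standard integration-by-parts identity for the symmetric exclusion Dirichlet form turns it into an infimum over correctors. Because the tagged-particle dynamics never consults the type or angle labels carried by the other particles, the admissible correctors are precisely the functions of $\eta$ alone, i.e.\ $f\in\mathcal S$; keeping track of the bond contributions, the edge adjacent to the origin contributes $(1-\eta_{e_i})\bigl(x_i+\tau_{e_i}f(\eta^{0,e_i})-f\bigr)^2$ while every other bond contributes $\bigl(\nabla_{0,e_i}\tau_yf\bigr)^2$, which is the stated formula (cf.~\cite{spohn1990tracer}).

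I expect the third step to be the main obstacle: making the passage from the abstract $H_{-1}$ norm to the concrete variational formula rigorous requires identifying the correct Hilbert space of germs of closed forms for the environment process and verifying that the displacement current defines an element of it. In addition, one must show that $\alpha\mapsto d_s(\alpha)$, defined a priori only on $[0,1)$, extends continuously (indeed, with the regularity needed in the main text) to $[0,1]$, so that $\mathcal D(\rho)=(1-d_s(\rho))/\rho$ makes sense up to $\rho=1$ as recorded in Remark~\ref{remark:regularity}; this part leans on the quantitative estimates of~\cite{spohn1990tracer,quastel1992diffusion}. Everything else reduces to bookkeeping with the reversible measure and the lattice symmetries.
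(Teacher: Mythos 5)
The paper does not supply a proof of this proposition: it is stated purely as a compilation of classical facts, with the definition attributed to Kipnis--Varadhan~\cite{kipnis1986central}, the diagonality to De Masi--Ferrari~\cite{de1989invariance}, and the variational formula to Spohn~\cite{spohn1990tracer}. Your sketch --- environment seen from the tagged particle, Kipnis--Varadhan CLT for reversible additive functionals, the hyperoctahedral symmetry argument for diagonality, and the $H_{-1}$-to-variational-formula dualisation --- is a faithful outline of the argument those references actually carry out, so it is correct and compatible with the paper; it is just that the paper itself treats the proposition as a black box. One minor note: the continuity/regularity of $d_s$ near $\rho=1$ that you flag at the end is handled separately in the paper as its own proposition (again by citation), so it is not part of what this statement asserts.
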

The following results is about the regularity of the self-diffusion coefficient $d_s(\alpha)$ 
\begin{proposition}[Regularity of $d_s(\alpha)$]
    \normalfont
    The self diffusion $d_s$ is $C^\infty([0,1])$~\cite{landim2001symmetric}, 
    and for some constant $C>0$, we can write~\cite{Varadhan1994}
    \[ 
    \frac{1}{C}(1-\alpha)\leq d_s(\alpha) \leq C (1-\alpha).
    \]
    Moreover, $d_s(\alpha)$ has the following asymptotic expansion~\cite{mason2023macroscopic},
    \begin{equation}\label{eq:asym}
        d_s(\alpha)=\begin{cases}
            1-(1+\gamma)\alpha+O(\alpha^2) & \text{as } \alpha\to 0,\\
            \frac{1}{2\gamma+1}(1-\alpha) + O((1-\alpha)^2) & \text{as  } \alpha\to 1,
        \end{cases}
    \end{equation}
    and the minimal cubic polynomial approximation
    \[ 
    d_s(\alpha)=(1-\alpha)\Big(1-\gamma\alpha+\frac{\gamma(2\gamma-1)}{2\gamma+1}\alpha^2\Big), 
    \]
    where $\gamma=\pi/2-1$. 
\end{proposition}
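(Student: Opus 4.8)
The proposition gathers four known facts about the two-dimensional self-diffusion coefficient $d_s$; the plan is to isolate which pieces are genuine inputs from the literature and which reduce to short computations built on the variational characterization of Proposition~\ref{prop:crossdiffusion}.

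First, the upper bound and the regularity. For the upper bound I simply insert the trivial test function $f\equiv 0$ into the variational formula of Proposition~\ref{prop:crossdiffusion}, which gives
\[
x^\top D_s(\alpha)x \;\le\; \sum_{i=1,2}\mathbb{E}_{\hat{\alpha}}\big[(1-\eta_{e_i})\,x_i^2\big]\;=\;(1-\alpha)\,|x|^2 ,
\]
hence $d_s(\alpha)\le 1-\alpha\le C(1-\alpha)$. The smoothness $d_s\in C^\infty([0,1])$ is the Landim--Olla--Varadhan regularity theorem: the plan is to recall that $d_s(\alpha)$ is obtained from the resolvent $(\lambda-\mathcal{L})^{-1}$ applied to the drift of the tagged particle inside the class $\mathcal{S}$, that this resolvent is analytic in $\alpha$ on $[0,1)$ by perturbation theory for the environment viewed from the tagged particle, and that smoothness up to $\alpha=1$ follows once the boundary behaviour~\eqref{eq:asym} is known; I would cite~\cite{landim2001symmetric} for the full argument rather than reproduce it.

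Second, the lower bound $d_s(\alpha)\ge C^{-1}(1-\alpha)$. This is the one genuinely deep ingredient, namely Varadhan's non-degeneracy theorem: the self-diffusion matrix does not vanish for $\alpha<1$, equivalently the tagged particle is diffusive and not subdiffusive. I do not intend to reprove it; I would quote it from~\cite{Varadhan1994}, the only bookkeeping being to verify that, after factoring out the $1-\alpha$ coming from the probability that a neighbouring site is empty, the remaining constant is bounded below uniformly in $\alpha$. This step is the main obstacle of the proposition --- every other part is a one-line variational estimate, a citation, or an algebraic identity.

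Third, the two one-sided expansions and the cubic approximation. As $\alpha\to0$ the plan is the standard dilute-obstacle perturbation expansion, whose linear coefficient is $1$ (the direct blocking effect of a single obstacle) plus the resummed two-body correction $\gamma=\pi/2-1$, the latter value being read off from the lattice Green's function of the planar simple random walk; as $\alpha\to1$ I pass to the dual picture in which the $1-\alpha$ holes form a dilute exclusion process and the tagged particle advances only by exchanging with an adjacent hole, a first-order computation in the hole density producing the coefficient $1/(2\gamma+1)$. Both expansions are taken from~\cite{mason2023macroscopic}. Finally, for the cubic formula one checks directly that $p(\alpha):=(1-\alpha)\big(1-\gamma\alpha+\tfrac{\gamma(2\gamma-1)}{2\gamma+1}\alpha^2\big)$ matches both constraints: near $\alpha=0$ one gets $p(\alpha)=1-(1+\gamma)\alpha+O(\alpha^2)$, while at $\alpha=1$ one computes $1-\gamma+\tfrac{\gamma(2\gamma-1)}{2\gamma+1}=\tfrac{(1-\gamma)(2\gamma+1)+\gamma(2\gamma-1)}{2\gamma+1}=\tfrac{1}{2\gamma+1}$, so with $\beta=1-\alpha$ one obtains $p(\alpha)=\tfrac{1}{2\gamma+1}\beta+O(\beta^2)$; this identifies $p$ as the minimal-degree product-form polynomial consistent with~\eqref{eq:asym}.
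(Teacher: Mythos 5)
Your proposal is correct and essentially matches the paper's treatment: the paper offers no proof at all for this proposition, presenting it purely as a collection of cited literature results. You correctly identify that the only genuinely deep input is Varadhan's non-degeneracy lower bound, cite the same sources, and the extra pieces you supply — the one-line upper bound via $f\equiv 0$ in the variational formula and the algebraic check that the cubic matches both endpoints of~\eqref{eq:asym} — are correct but routine; the only small imprecision is the remark that smoothness up to $\alpha=1$ ``follows once~\eqref{eq:asym} is known'' (the expansion gives the first-order behaviour, not $C^\infty$ regularity, which is a separate result of~\cite{landim2001symmetric}), but since you defer to that reference the logic is sound.
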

\begin{remark}\label{remark:regularity}
    Using~\eqref{eq:asym}, we can see that 
    $$\mathcal{D}(\alpha)=\frac{1-d_s(\alpha)}{\alpha}$$
    can be continuously extended to $[0,1]$. 
\end{remark}
\begin{proposition}\label{prop:cross}
    Fix grand parameter $\bah\in \mathbb{M}_1(\SSS)$, and angular function $\omega \in C(\SSS)$. 
    For any vectors $u,v,m,n\in \mathbb{R}^2$, 
    let  $x=[u^\top,v^\top,m^\top,n^\top]^\top$, then it holds that 
    \begin{align*}
        x^\top Q^\omega_{\bah} x=\inf_{g\in \To}
        \iip[u^\top j^{a} +v^\top j^{p}+m^\top j^{a,\hat{\omega}}+n^\top j^{p,\hat{\omega}}+\LL g]_{\bah}, 
    \end{align*}
    where 
    \[ 
        Q^\omega_{\bah} =  \begin{bmatrix}
        M  &   &   \\
          &   & \alpha^aV^a_{\bah}(\omega) d_s(\alpha)I_2 &   \\
          &   &   & \alpha^pV^p_{\bah}(\omega) d_s(\alpha) I_2
        \end{bmatrix}, 
\]
and $M$ is the mobility matrix defined in~\eqref{eq:mobilitymatrix}.
\end{proposition}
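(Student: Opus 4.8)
\textbf{Proof proposal for Proposition~\ref{prop:cross}.}

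The plan is to compute the quadratic form $\inf_{g\in\To}\iip[u^\top j^{a}+v^\top j^{p}+m^\top j^{a,\oh}+n^\top j^{p,\oh}+\LL g]_{\bah}$ by exploiting the direct-sum structure of $\mathcal{H}_{\bah}^\omega$ from Proposition~\ref{prop:structure} together with the explicit inner products from Lemmas~\ref{lemma:ortho} and~\ref{lemma:gradJ}. First I would recall that by Proposition~\ref{prop:structure}, $\inf_{g\in\To}\iip[\psi+\LL g]_{\bah}$ is exactly the squared $\iip[\cdot]_{\bah}$-norm of the projection of $\psi$ onto $\overline{\LL\To/\mathcal{N}_{\bah}}$, or equivalently $\iip[\psi]_{\bah}^{\perp}$ in the sense that it equals $\iip[\psi]_{\bah}$ minus the norm of the $J^\omega$-component of $\psi$; but since here $\psi\in J^\omega$ itself, the quantity we want is the ``effective'' bilinear form obtained after killing the gradient-free directions, which is precisely the content already packaged in Proposition~\ref{prop:currents}. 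Indeed, Proposition~\ref{prop:currents} gives the closed-form decompositions $j^{a}_i+d_s(\alpha)\del_i\eta^a+\alpha^a\mathcal{D}(\alpha)\del_i\eta\in\overline{\LL\To}$, and similarly for $j^p$, $j^{a,\oha}$, $j^{p,\ohp}$. This says that, modulo $\overline{\LL\To}$, each current is equal to an explicit linear combination of local gradients, and the infimum over $g$ of the $\iip$-norm simply replaces the current by that gradient combination.

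The key computational steps are then: (1) Using the $\chi$/centered decomposition, rewrite $u^\top j^a+v^\top j^p+m^\top j^{a,\oh}+n^\top j^{p,\oh}$ in the orthogonal basis $\{j^a_i,j^p_i,j^{a,\oha}_i,j^{p,\ohp}_i\}$ of $J^\omega$ (recall $j^{a,\oh}=j^{a,\oha}+\oba j^a$ so the active centered and constant pieces decouple, and likewise for passive). (2) Apply Proposition~\ref{prop:currents} to replace, inside the infimum, $j^a_i$ by $-d_s(\alpha)\del_i\eta^a-\alpha^a\mathcal{D}(\alpha)\del_i\eta$, and analogously the others; the infimum over $\LL g$ of the norm of (current $+$ gradient $+\LL g$) is then zero, so the infimum of the whole expression equals $\iip[\cdot]_{\bah}$ of the resulting purely-gradient-plus-$J^\omega$ object — but crucially, by orthogonality (Lemma~\ref{lemma:gradJ}: the centered gradients $\grad^{\sigma,\oh^\sigma}$ are $\iip$-orthogonal to the plain currents $j^a,j^p$, and vice versa), the active/passive-constant block and the two centered blocks fully decouple. (3) Evaluate the three resulting quadratic forms. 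For the centered blocks this is immediate: $\inf_g\iip[m^\top j^{a,\oha}+\LL g]_{\bah}=m^\top\,\alpha^aV^a_{\bah}(\omega)d_s(\alpha)I_2\,m$, exactly as in the computation of $C^3=d_s(\alpha)I_2$ in the proof of Proposition~\ref{prop:currents} (there one has $m^\top\alpha^aV^a_{\bah}(\omega)d_s(\alpha)I_2\,m=\inf_g\iip[m^\top j^{a,\oha}+\LL g]_{\bah}$ verbatim), and similarly for $n$ with $\alpha^pV^p_{\bah}(\omega)d_s(\alpha)I_2$. For the $(u,v)$ block I would invoke exactly~\eqref{eq:compact9}, i.e. $\inf_{g\in\To}\iip[u^\top j^a+v^\top j^p+\LL g]_{\bah}=[u^\top\;v^\top]\,M\,[u^\top\;v^\top]^\top$ with $M$ the mobility matrix~\eqref{eq:mobilitymatrix}. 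Assembling the three blocks yields $x^\top Q^\omega_{\bah}x$ with the stated block-diagonal $Q^\omega_{\bah}$.

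The main obstacle — and the step that genuinely needs care rather than bookkeeping — is justifying the decoupling into the three orthogonal blocks and, within the $(u,v)$ block, the identity~\eqref{eq:compact9}. The block decoupling requires that not only are $\grad^{a,\oha},\grad^{p,\ohp}$ orthogonal to $j^a,j^p$ (Lemma~\ref{lemma:gradJ}), but that the optimal correctors $\LL g$ also respect this splitting; this follows because the optimizing $\LL g$ lies in $\overline{\LL\To}$ which is $\iip$-orthogonal to all of $J^\omega$, so one may optimize each block independently after replacing currents by their gradient representatives. The identity~\eqref{eq:compact9} itself should be treated as given — it is established in the course of proving Proposition~\ref{prop:currents}, where $M$ arises from $C=Mh''$ with $h''=G_J^{-1}$ — but I would note that it is equivalent (by the equivalence-of-ensembles / variational characterization of $d_s$ in Proposition~\ref{prop:crossdiffusion}) to the standard identification of the SSEP mobility/diffusion matrix, so no circularity is introduced. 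Once these two points are in place the proof is a two-line assembly.
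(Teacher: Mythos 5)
Your proposal is circular as stated, because it leans on two results that the paper derives \emph{from} Proposition~\ref{prop:cross}, not before it. Equation~\eqref{eq:compact9}, which you invoke for the $(u,v)$ block, is introduced in the proof of Proposition~\ref{prop:currents} with the explicit phrase ``thanks to Proposition~\ref{prop:cross}''; it is a corollary, not an independently established identity. Proposition~\ref{prop:currents}, which you use in step (2) to replace $j^a$, $j^p$, $j^{a,\oha}$, $j^{p,\ohp}$ by gradient combinations modulo $\overline{\LL\To}$, is itself proved by feeding~\eqref{eq:compact9} into the linear system~\eqref{eq:L_Jeq}. Your remark that~\eqref{eq:compact9} is ``equivalent to the standard identification of the SSEP mobility matrix, so no circularity is introduced'' names exactly the thing to be proved; it does not dissolve the circularity, because establishing that equivalence over the function class $\To$ (rather than over the type-/angle-blind class $\mathcal{S}$ used in Proposition~\ref{prop:crossdiffusion}) is precisely what Proposition~\ref{prop:cross} asserts.

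The piece of work your proposal skips, and which the paper actually carries out, is the reduction of $\inf_{g\in\To}\iip[\cdots]_{\bah}$ to the variational characterization of $d_s(\alpha)$ in Proposition~\ref{prop:crossdiffusion}. Concretely: (i) expand the expectation in Definition~\ref{def:iip}(1) and rewrite $u_i\eta^a_0 + v_i\eta^p_0 = (u_i-v_i)\chi_0 + (\tfrac{\alpha^a}{\alpha}u_i + \tfrac{\alpha^p}{\alpha}v_i)\eta_0$; the $\eta_0(1-\eta_{e_i})$ contribution detaches because it is $\mu_{\bah}$-orthogonal to every $\nabla_{0,e_i}\Sigma_g$ by translation invariance, producing the $\alpha(1-\alpha)$ ``mass'' term; (ii) use the form of $\To$ to take $g = \sum_x(\chi_x + c\,\etaaob_x + d\,\etapob_x)F_x(\eta)$ with each $F_x$ type- and angle-blind, so that $\nabla_{0,e_i}\Sigma_g$ splits into three mutually $L^2$-orthogonal pieces against $\chi_0$, $\etaaob_0$, $\etapob_0$ respectively; (iii) in each of the three blocks, condition on the occupancy field to replace the $\To$-optimization by an $\mathcal{S}$-optimization over a single function $f(\eta\setminus\eta_0)$, at which point the infimum is exactly the variational formula for $d_s(\alpha)$ of Proposition~\ref{prop:crossdiffusion}, with prefactors $\tfrac{\alpha^a\alpha^p}{\alpha}$, $\alpha^aV^a_{\bah}(\omega)$, $\alpha^pV^p_{\bah}(\omega)$. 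Your orthogonal-decoupling instinct in steps (1)--(2) is in the right spirit and parallels the paper's use of $\chi$ and the centered currents, but without step (iii) you have not connected the variational formula to $d_s$, and that conditioning reduction is the heart of the proposition and cannot be delegated to Proposition~\ref{prop:currents} or~\eqref{eq:compact9}.
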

\begin{proof}
    The proof is analogous to the proof of Theorem3.2 in~\cite{quastel1992diffusion}. 
    By Definition~\ref{def:iip}, we have 
    \begin{align*}
        &\iip[u^\top j^{a} +v^\top j^{p}+m^\top j^{a,\hat{\omega}}+
        n^\top j^{p,\hat{\omega}}+\LL g]_{\bah}\\
        =&\mathbb{E}_{\bah}\Bigg[ \sum_{i=1,2}\Big( m_i\etaaob_0(1-\eta_{e_i})+n_i\etapob_0(1-\eta_{e_i})
        \\ &\qquad\qquad +u_i\eta_0^a(1-\eta_{e_i})+v_i\eta_0^p(1-\eta_{e_i})
         +\nabla_{0,e_i} \Sigma_{g} \Big)^2 \Bigg]\\
         =&\mathbb{E}_{\bah}\Bigg[ \sum_{i=1,2}\Big( m_i\etaaob_0(1-\eta_{e_i})
         +n_i\etapob_0(1-\eta_{e_i})
         \\ &\qquad +(u_i-v_i)\chi_0(1-\eta_{e_i}) +(\frac{\alpha^a}{\alpha}u_i+ \frac{\alpha^p}{\alpha}v_i)\eta_0(1-\eta_{e_i})
         +\nabla_{0,e_i} \Sigma_{g} \Big)^2 \Bigg].
    \end{align*}
Note that for any finite supported $g$, $\tilde{\nabla}_{0,e_i}$ is a finite sum, 
$$\tilde{\nabla}_{0,e_i} \Sigma_{g} = \sum_{x\in A_g}\tau_x \tilde{\nabla}_{-x,-x+e_i} g
=\tilde{\nabla}_{0,e_i} \sum_{x\in A_g}\tau_x g , $$ 
where $A_g \coloneqq \big\{x\in \mathbb{Z}^2 ; \{-x,-x+e_i\}\cap \supp(g)\neq \emptyset \big\}$. 
Thus we can obtain the following orthogonal relationship,  
\begin{align*}
    &\mathbb{E}_{\bah}\big[ \eta_0(1-\eta_{e_i})\nabla_{0,e_i} \Sigma_{g}  \big]=
    \mathbb{E}_{\bah}\big[ a(0,e_{i}) \eta_0\tilde{\nabla}_{0,e_i} \Sigma_{g}  \big]\\
    =&\frac{1}{2}\mathbb{E}_{\bah}\big[ a(0,e_{i}) (\eta_0-\eta_{e_i})\tilde{\nabla}_{0,e_i} \Sigma_{g}  \big]=
    \frac{1}{2}\mathbb{E}_{\bah}\big[  (\eta_0-\eta_{e_i})\tilde{\nabla}_{0,e_i} \Sigma_{g}  \big]\\
    =&\sum_{x\in A_g} \mathbb{E}_{\bah}\big[  (\eta_{e_i}-\eta_0) \tau_x g  \big]
    =\sum_{x\in A_g} \mathbb{E}_{\bah}\big[  (\eta_{-x+e_i}-\eta_{-x}) g \big]=0, 
\end{align*}
because the variables within the support of $g$ cancel.  
Thus, using orthogonal relationships, we have  
\begin{align*}
    &\iip[u^\top j^{a} +v^\top j^{p}+m^\top j^{a,\hat{\omega}}+n^\top j^{p,\hat{\omega}}+\LL g]_{\bah}\\
     =&\mathbb{E}_{\bah}\Bigg[ \sum_{i=1,2}\Big( m_i\etaaob_0(1-\eta_{e_i})+ n_i\etapob_0(1-\eta_{e_i})\\
     &\qquad\qquad \qquad\qquad  +(u_i-v_i)\chi_0(1-\eta_{e_i})+\nabla_{0,e_i} \Sigma_{g} \Big)^2  \Bigg]\\
     &\quad  +\sum_{i=1,2} \Big( \frac{\alpha^a}{\alpha}u_i+ \frac{\alpha^p}{\alpha}v_i \Big)^2 \alpha(1-\alpha).
\end{align*}
Since the infimum is taken over $g\in \To$, thanks for the orthogonality,  without loss of generality, 
$g$ can be chosen in the form $g=\sum (\chi_x+c\etaaob_x+d\etapob_x)F_x(\eta)$. 
Again using the fact that $g$ is finite supported, by defining $F'=\sum_y \tau_{-y}F_y$, we can write   
\[ 
     \Sigma_{g} = \sum_{y,y'\in \mathbb{Z}^2}\tau_{y'}\big[ (\chi_y+c\etaaob_y+d\etapob_y)F_y(\eta) \big]
     = \sum_{y\in \mathbb{Z}^2}\big(\chi_y+c\etaaob_y+d\etapob_y\big) \tau_y F'(\eta). 
\]
Therefore, orthogonality gives 
\[ 
\begin{aligned}
    &\mathbb{E}_{\hat{\alpha}}\Bigg[ 
        \sum_{i=1,2}\Big( m_i\etaaob_0(1-\eta_{e_i})+n_i\etapob_0(1-\eta_{e_i})+
        (u_i-v_i)\chi_0(1-\eta_{e_i})+\nabla_{0,e_i} \Sigma_{g} \Big)^2  \Bigg]\\
    =&I_1 + I_2+I_3,
\end{aligned}
\]
where 
\begin{align*}
    &I_1=\sum_{i=1,2}\mathbb{E}_{\hat{\alpha}}\Bigg[ \Big( m_i\etaaob_0(1-\eta_{e_i}) +
    \nabla_{0,e_i} \sum_{y\in \mathbb{Z}^2}\etaaob_y \tau_y cF'(\eta) \Big)^2  \Bigg],\\
    &I_2=\sum_{i=1,2}\mathbb{E}_{\hat{\alpha}}\Bigg[ \Big( n_i\etapob_0(1-\eta_{e_i}) +
    \nabla_{0,e_i} \sum_{y\in \mathbb{Z}^2}\etapob_y \tau_y dF'(\eta) \Big)^2  \Bigg],\\
    &I_3=\sum_{i=1,2}\mathbb{E}_{\hat{\alpha}}\Bigg[ \Big( (u_i-v_i)\chi_0(1-\eta_{e_i}) +
    \nabla_{0,e_i} \sum_{y\in \mathbb{Z}^2}\chi_y \tau_y F'(\eta) \Big)^2  \Bigg].
\end{align*}

For term $I_1$,  elementary computations yield 
\begin{align*}
    &\nabla_{0,e_i}\etaaob_0 \tau_0 cF'(\eta)=-\etaaob_0(1-\eta_{e_i})cF'(\eta),\\
    &\nabla_{0,e_i}\etaaob_{e_i} \tau_{e_i} cF'(\eta)=\etaaob_0(1-\eta_{e_i})\tau_{e_i} cF'(\eta^{0,e_i}),\\
    &\nabla_{0,e_i}\etaaob_{y} \tau_{y} cF'(\eta)=\etaaob_y \nabla_{0,e_i} \tau_{y} 
    cF'(\eta^{0,e_i}),\quad \forall y\neq 0,e_i, 
\end{align*}
thus  
\begin{align*}
    I_1=\sum_{i=1,2}\mathbb{E}_{\hat{\alpha}}\Bigg[ \Big( \etaaob_0(1-\eta_{e_i})\left( m_i + \tau_{e_i} cF'(\eta^{0,e_i}) - cF'(\eta)\right) +
   \sum_{y\neq 0,e_i}\etaaob_y \nabla_{0,e_i} \tau_y cF'(\eta) \Big)^2  \Bigg].
\end{align*}
Note that 
\[ 
    \mathbb{E}_{\hat{\alpha}}\Big[ \etaaob_x\etaaob_y F(\eta )\Big]=
    \delta_{x,y} \frac{\alpha^aV^a_{\hat{\alpha}}}{\alpha}\mathbb{E}_{\hat{\alpha}}\big[ \eta_y F(\eta)\big]
\]
thus 
\begin{align*}
    I_1
   =&\frac{\alpha^aV^a_{\hat{\alpha}}}{\alpha}\sum_{i=1,2}\mathbb{E}_{\hat{\alpha}}
   \Bigg[ \eta_0(1-\eta_{e_i})\Big( m_i + \tau_{e_i} cF'(\eta^{0,e_i}) - cF'(\eta)\Big)^2 \\
   &\qquad\qquad\qquad\qquad\qquad\qquad\qquad +\sum_{y\neq 0,e_i}\eta_y \Big( \nabla_{0,e_i} \tau_y cF'(\eta) \Big) ^2  \Bigg].
\end{align*}
Denote by $\eta \setminus \eta_0=\left\{ \eta_x \right\}_{x \neq 0, x \in \mathbb{Z}^2} $, and 
$$f(\eta \setminus \eta_0)=\mathbb{E}_{\hat{\alpha}}[F'|\eta_0=1](\eta \setminus \eta_0)=\alpha^{-1}\mathbb{E}_{\hat{\alpha}}[\eta_0 F'|\eta \setminus \eta_0]$$
with this definition, we still write $f(\eta)$ but here $f$ is considered independent of $\etah_0$. Hence 
\begin{align*}
    I_1
   =\alpha^aV^a_{\hat{\alpha}}\sum_{i=1,2}\mathbb{E}_{\hat{\alpha}}
   \Bigg[ (1-\eta_{e_i})\Big( m_i + \tau_{e_i} cf(\eta^{0,e_i}) - cf(\eta)\Big)^2 +
   \sum_{y\neq 0,e_i}\Big( \nabla_{0,e_i} \tau_y cf(\eta) \Big) ^2  \Bigg].
\end{align*}
Using the same procedure, and the fact that 
\begin{align*}
    &\mathbb{E}_{\hat{\alpha}}\big[ \etapob_x\etapob_y F(\eta )\big]=
    \delta_{x,y} \frac{\alpha^pV^p_{\hat{\alpha}}}{\alpha}\mathbb{E}_{\hat{\alpha}}\big[ \eta_y F(\eta)\big],\quad 
    \mathbb{E}_{\hat{\alpha}}\big[ \chi_x\chi_y F(\eta )\big]=
    \delta_{x,y} \frac{\alpha^a\alpha^p}{\alpha^2}\mathbb{E}_{\hat{\alpha}}\big[ \eta_y F(\eta)\big]
\end{align*}
we obtain that 
\begin{align*}
    &I_2
   =\alpha^pV^p_{\hat{\alpha}}\sum_{i=1,2}\mathbb{E}_{\hat{\alpha}}
   \Bigg[ (1-\eta_{e_i})\Big( n_i + \tau_{e_i} df(\eta^{0,e_i}) - df(\eta)\Big)^2 +
   \sum_{y\neq 0,e_i}\Big( \nabla_{0,e_i} \tau_y df(\eta) \Big) ^2  \Bigg],\\
   &I_3
   =\frac{\alpha^a\alpha^p}{\alpha}\sum_{i=1,2}\mathbb{E}_{\hat{\alpha}}\Bigg[ (1-\eta_{e_i})
   \Big( (u-v)_i + \tau_{e_i} f(\eta^{0,e_i}) - f(\eta)\Big)^2 +
   \sum_{y\neq 0,e_i}\Big( \nabla_{0,e_i} \tau_y f(\eta) \Big) ^2  \Bigg].
\end{align*}
Combing, for any $g \in \To $  taking the form $g=\sum_x (\chi_x+c\etaaob_x+d\etapob_x)F_x(\eta)$,
\begin{align*}
    &\iip[u^\top j^{a} +v^\top j^{p}+m^\top j^{a,\hat{\omega}}+n^\top j^{p,\hat{\omega}}+\LL g]_{\hat{\alpha}}\\
    =&\alpha^aV^a_{\hat{\alpha}}\sum_{i=1,2}\mathbb{E}_{\hat{\alpha}}
    \Bigg[ (1-\eta_{e_i})\Big( m_i + \tau_{e_i} cf(\eta^{0,e_i}) - cf(\eta)\Big)^2 +
    \sum_{y\neq 0,e_i}\Big( \nabla_{0,e_i} \tau_y cf(\eta) \Big) ^2  \Bigg]\\
    &+\alpha^pV^p_{\hat{\alpha}}\sum_{i=1,2}\mathbb{E}_{\hat{\alpha}}
    \Bigg[ (1-\eta_{e_i})\Big( n_i + \tau_{e_i} df(\eta^{0,e_i}) - df(\eta)\Big)^2 +
    \sum_{y\neq 0,e_i}\Big( \nabla_{0,e_i} \tau_y df(\eta) \Big) ^2  \Bigg]\\
    &+\frac{\alpha^a\alpha^p}{\alpha}\sum_{i=1,2}\mathbb{E}_{\hat{\alpha}}\Bigg[ (1-\eta_{e_i})\left( (u-v)_i + \tau_{e_i} f(\eta^{0,e_i}) - f(\eta)\right)^2 +
    \sum_{y\neq 0,e_i}\Big( \nabla_{0,e_i} \tau_y f(\eta) \Big) ^2  \Bigg]\\
    &+\alpha(1-\alpha)\sum_{i=1,2} \Big( \frac{\alpha^a}{\alpha}u_i+ \frac{\alpha^p}{\alpha}v_i \Big)^2 .
\end{align*}

Finally, taking infimum over $g \in \To$  is equivalent to taking infimum over 
$f\in \mathcal{S}$, according Proposition~\ref{prop:crossdiffusion},  
we have 
\begin{align*}
    &\inf_{g\in T^\omega}\iip[u^\top j^{a} +v^\top j^{p}+m^\top j^{a,\hat{\omega}}+n^\top j^{p,\hat{\omega}}+\LL g]_{\hat{\alpha}}\\
    =&m^\top \alpha^aV^a_{\hat{\alpha}} d_s(\alpha)I_2 m +
     n^\top \alpha^pV^p_{\hat{\alpha}} d_s(\alpha)I_2 n +
     \frac{\alpha^a\alpha^p}{\alpha} (u-v)^\top d_s(\alpha)I_2 (u-v) \\
    &+\alpha(1-\alpha) 
    \Big( \frac{\alpha^a}{\alpha}u+ \frac{\alpha^p}{\alpha}v \Big)^\top \Big( \frac{\alpha^a}{\alpha}u+ \frac{\alpha^p}{\alpha}v \Big) \\
    =& x^\top Q^\omega_{\bah} x ,
\end{align*} 
where $x=[u^\top,v^\top,m^\top,n^\top]^\top$. 
\end{proof}

\section{Space of grand-canonical parameters}
\subsection{Equivalence of ensembles}
\begin{proposition}[Equivalence of ensembles]\label{prop:equivalence}
    Let $g$ be a cylinder function , we have
    \[
    \limsup_{l \to \infty} \sup_{\widehat{K} \in \mathbb{K}_l} \Big| 
    \mathbb{E}_{l,\widehat{K}}\big[ g \big] - \mathbb{E}_{\bah_{\widehat{K}}}\big[ g \big] \Big| \to 0,
    \]
    where the first measure is the canonical measure $\mu_{l,\widehat{K}}$ defined in~\eqref{eq:mu},  
    and the second is the grand-canonical measure $\mu_{\bah_{\widehat{K}}}$ with parameter 
    $\bah_{\widehat{K}}$ defined in~\eqref{eq:bah}.
\end{proposition}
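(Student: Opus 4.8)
\textbf{Proof plan for Proposition~\ref{prop:equivalence} (equivalence of ensembles).}

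The plan is to follow the classical strategy for equivalence of ensembles for multi-type lattice gases (as in Appendix 2 of~\cite{kipnis1999scaling} or Appendix~B of~\cite{Erignoux21}), adapted to handle the extra complication that the canonical state $\widehat{K}=(K^a,\Theta^a,K^p,\Theta^p)$ now records not only the particle numbers but also the \emph{continuous-valued angles} $\Theta^a,\Theta^p$. The key conceptual point is that, under the grand canonical measure $\mu^*_{\bm\alpha,l}$ (uniform angles), conditioning on $\Sigma_l^{\widehat K}$ amounts to (i) conditioning on the total counts $K^a,K^p$ and (ii) \emph{assigning} the prescribed angle multiset to the occupied sites uniformly at random. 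So the first step is to reduce to a statement purely about \emph{occupation variables}: writing a cylinder function $g$ on $\Sigma_\infty$ in the form $g(\widehat\eta)=\sum_{\text{assignments}} (\text{products of }\eta^\sigma_x\omega_j(\theta_x))\,G(\eta)$, one integrates out the angles of the particles outside $\supp(g)$ first. Since those angles are i.i.d.\ uniform on $\SSS$ both under $\mu_{l,\widehat K}$ restricted away from $\supp(g)$ — up to the combinatorial correction coming from the finitely many angles that \emph{are} pinned inside $\supp(g)$ — this reduces the problem to an equivalence-of-ensembles statement for the two-type \emph{occupation} process $(\eta^a_x,\eta^p_x)$ on $B_l$ with counts $(K^a,K^p)$, together with an elementary hypergeometric-type estimate controlling the finitely many angle slots that land inside the support.

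The second step is the two-type occupation equivalence: for a cylinder function $\bar g$ of the occupation variables only, $\big|\mathbb{E}_{l,(K^a,K^p)}[\bar g]-\mathbb{E}^*_{\bm\alpha}[\bar g]\big|\le C(\bar g)/l^2$ uniformly over $(K^a,K^p)$ with $\alpha\le\alpha'$. This is exactly Quastel's estimate~\cite{quastel1992diffusion} (or the multi-type version in~\cite{kipnis1999scaling}); here $\bm\alpha=\big(K^a/|B_l|,K^p/|B_l|\big)$, and the proof goes through a local CLT / ratio-of-multinomial-coefficients argument. The third step is to handle the angular part: the $\le |\supp(g)|$ angles appearing inside the support are, under $\mu_{l,\widehat K}$, a uniform sample \emph{without replacement} of size $\le|\supp(g)|$ from the multiset $\Theta^a$ (resp.\ $\Theta^p$) of size $K^a$ (resp.\ $K^p$), so their joint law differs from $(\hat\alpha^a_{\widehat K}/\alpha^a)^{\otimes}$ — i.e.\ sampling with replacement from the empirical angular measure, which is precisely the angular marginal of $\mu_{\bm{\hat\alpha}_{\widehat K}}$ — by $O(1/K^\sigma)=O(1/l^2)$ in total variation, uniformly (this is where one uses $\alpha^\sigma$ bounded, or more precisely one splits into the regime $K^\sigma$ small — where $\eta^\sigma$-factors force $g$ to involve few particles and one argues directly — and $K^\sigma\gtrsim l^2$). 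Combining the occupation estimate, the sampling-without-replacement estimate, and the triangle inequality yields the claim, with the $\sup$ over $\widehat K$ absorbed since all error constants depend only on $g$ and $\alpha'$.

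The main obstacle is the \emph{uniformity in $\widehat K$} in the presence of the continuous angle coordinates — in particular, the sup ranges over an infinite (indeed uncountable) family of canonical states, and the angular multisets $\Theta^\sigma$ may be arbitrary configurations of points in $\SSS$. The resolution is that a cylinder function $g\in\mathcal C$ depends on only finitely many angles and is $C^2$ (hence bounded with bounded derivatives) in each; consequently the relevant functional of $\widehat K$ is a bounded function of the \emph{empirical angular measures} $\hat\alpha^\sigma_{\widehat K}$ integrated against the (finitely many, bounded) angular test functions extracted from $g$, so no pointwise dependence on individual angles survives, and the sampling-without-replacement correction is controlled by counts alone. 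One must also be slightly careful that the $o(1)$ in the statement is genuinely uniform over the parameter range $\alpha\le\alpha'$ (needed for the spectral gap to apply downstream); this is where the lower cutoff $\alpha'<1$ enters, guaranteeing the denominators $K(K-1)$, $|B_l|-K$, etc., that appear in the hypergeometric corrections stay comparable to $l^4$, $l^2$. Apart from these points the argument is routine bookkeeping.
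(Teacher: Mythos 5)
Your plan is a valid route to the statement, but it takes a genuinely different and more laborious path than the paper.

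The paper does not split the problem into an occupation part and an angular part. Instead it exploits the specific way $\bah_{\widehat K}$ is defined: since $\hat\alpha^\sigma_{\widehat K}=\frac{1}{|B_l|}\sum_k\delta_{\theta_k^\sigma}$, drawing a single site value under $\mu_{\bah_{\widehat K}}$ is \emph{literally} drawing uniformly at random one of the $L=(2l+1)^2$ objects from the multiset encoded in $\widehat K$ (where the $L-K$ empties count as $L-K$ identical objects), and drawing under $\mu_{l,\widehat K}$ is the same draw \emph{without replacement}. A cylinder $g$ depends on $M=|\supp(g)|$ such draws, so the error is bounded by twice the collision probability $1-L(L-1)\cdots(L-M+1)/L^M=O(M^2/L)$, which is a pure counting estimate, uniform over \emph{all} $\widehat K$ with no density restriction and no local CLT, no hypergeometric analysis, and no case split on $K^\sigma$.

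By contrast, your plan first conditions to the occupation field, invokes a local-CLT/hypergeometric equivalence for the two-type occupation counts, and then separately controls the angular assignment by a sampling-without-replacement bound with error $O(M^2/K^\sigma)$. This can be made to work, but it has extra moving parts that the paper avoids: (i) you must split into the regimes $K^\sigma$ small versus $K^\sigma\gtrsim l^2$, because your angular error is governed by $K^\sigma$ rather than by $|B_l|$; (ii) the statement you want has no density cutoff $\alpha\le\alpha'$, so you would need to verify that your occupation estimate is genuinely uniform over $[0,1]$ (it is, but this needs to be said); and (iii) your remark that the angles outside $\supp(g)$ are ``i.i.d.\ uniform on $\SSS$'' under $\mu_{l,\widehat K}$ is not correct — they are a random assignment of the \emph{prescribed} atomic multiset $\Theta^\sigma$, not uniform draws from $\SSS$ — although this slip is harmless since those angles are marginalized out anyway. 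Net effect: both approaches prove the proposition, but the paper's with/without-replacement comparison collapses the two sources of error into one counting bound $O(M^2/l^2)$ and makes the uniformity over $\widehat K$ automatic, whereas yours purchases the same conclusion at the price of a regime split and two separate estimates.
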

\begin{proof}
The proof is analogous to the proof of Proposition C.1.1 in~\cite{Erignoux21}. 

For any cylinder function $g$ fixed, which monitors $M$ positions on $B_l$.
We consider two samplings of $M$ variables, chosen among $L=(2l+1)^2$ fixed possible values
\[
    \xi^L = \big\{\widehat{\eta}^1, \ldots, \widehat{\eta}^L\big\} \in 
    \Sigma_1^L \coloneqq \Big\{(\etat, \theta) \in \{0, 1\}^2 \times \SSS, \; 
    \theta = 0 \text{ if } \etat = \tilde{0}\Big\}^L.
\]
The first sampling is made without replacement to represent the canonical measure $\mu_{l,\widehat{K}}$, 
and the sampled items will be denoted $\bm{X}^M\coloneqq (X_1, \ldots, X_M)$.  
The second sampling is made with replacement to represent the grand-canonical measure 
$\mu_{\bah_{\widehat{K}}}$, and the sampled items will be denoted $\bm{Y}^M\coloneqq( Y_1, \ldots, Y_M)$. 

We denote by $\mathbb{E}_{\xi^L}$ the expectation w.r.t. the two samplings $(X_i)$ and $(Y_i)$ given $\xi^L$. 
Further denote by $I_{L,M} = \{1,\ldots,L\}^M$ be the `name set' of all samplings. 
Each ${\bf i} = (i_1,\ldots,i_M)\in I_{L,M}$ encodes the `names' of sampled items, and represents a sampled trajectory. 
Moreover, we denote by 
\[ D_{L,M} = \{(i_1,\ldots,i_M) \in I_{L,M} \mid i_1 \neq \cdots \neq i_M\}\] 
the set of all samplings without replacement, and by 
\[  C_{L,M} = I_{L,M} \setminus D_{L,M} \]
the set of samplings having at least one repeated item.  

Then  we have
\begin{align*}
    \Big|\mathbb{E}_{\xi^L}[g(\bm{X}^M)] - \mathbb{E}_{\xi^L}[g(\bm{Y}^M)]\Big| 
    \leq& \|g\|_\infty \sum_{{\bf i} \in I_{L,M}} 
    \Big| \mathbb{P}_{\xi^L}\big(\bm{X}^M = (\widehat{\eta}^{i_1},\ldots,\widehat{\eta}^{i_M})\big)  \\
    &\qquad \qquad \qquad 
     - \mathbb{P}_{\xi^L}\big(\bm{Y}^M = (\widehat{\eta}^{i_1},\ldots,\widehat{\eta}^{i_M})\big) \Big| \\
    =& \|g\|_\infty \sum_{{\bf i} \in D_{L,M}}
    \Big| \mathbb{P}_{\xi^L}\big(\bm{X}^M = (\widehat{\eta}^{i_1},\ldots,\widehat{\eta}^{i_M})\big)  \\
    &\qquad \qquad \qquad  
    - \mathbb{P}_{\xi^L}\big(\bm{Y}^M = (\widehat{\eta}^{i_1},\ldots,\widehat{\eta}^{i_M})\big) \Big|  \\
    &\quad + \|g\|_\infty \sum_{{\bf i} \in C_{L,M}} \mathbb{P}_{\xi^L}\big(\bm{Y}^M = (\widehat{\eta}^{i_1},\ldots,\widehat{\eta}^{i_M})\big).
\end{align*}
The sum on the last line is the probability that at least two indexes among the $M$ we chosen uniformly in $\{1,\ldots,L\}$ are equal. This probability is
\[
\sum_{{\bf i} \in C_{L,M}} \mathbb{P}_{\xi^L}\big(\bm{Y}^M  = (\widehat{\eta}^{i_1},\ldots,\widehat{\eta}^{i_M})\big) = 1 - \frac{L(L-1)\cdots(L-M+1)}{L^M},
\]
which for $M$ fixed vanishes uniformly in $\xi^L$ as $L \to \infty$. We now take a look at the other term, for which we write

\begin{align*}
&\sum_{{\bf i} \in D_{L,M}} \Big| \mathbb{P}_{\xi^L}\big(\bm{X}^M  = (\widehat{\eta}^{i_1},\ldots,\widehat{\eta}^{i_M})\big) 
- \mathbb{P}_{\xi^L}\big(\bm{Y}^M  = (\widehat{\eta}^{i_1},\ldots,\widehat{\eta}^{i_M})\big) \Big| \\
=& \sum_{{\bf i} \in D_{L,M}} \Big| \frac{1}{L(L-1)\cdots(L-M+1)} - \frac{1}{L^M} \Big| 
= 1 - \frac{L(L-1)\cdots(L-M+1)}{L^M},
\end{align*}
which also vanishes uniformly in $\xi^L$ as $L \to \infty$. We can therefore write for any bounded function $g$ depending on $M$ sites
\[
\sup_{\xi^L \in \Sigma_L^1} \Big| \mathbb{E}_{\xi^L}[g(\bm{X}^M )] - 
\mathbb{E}_{\xi^L}[g(\bm{Y}^M )] \Big| \leq \|g\|_\infty C(M) o_L(1). 
\]

\end{proof}
\subsection{regularity of the grand-canonical measures}
\begin{proposition}\label{prop:Lipschitz}
    Consider the set of grand canonical parameters $\mathbb{M}_1(\SSS)$ equipped with the 
    metric $d(\cdot,\cdot)$ defined in~\eqref{eq:d}.  
    Then, given a cylinder function $g$, the mapping $\Phi_g(\bah)= \mathbb{E}_{\bah}[g]$
    is Lipschitz continuous on $(\mathcal{M}_1(\mathbb{S}), d)$ with Lipschitz constant depending on the function $g$.
\end{proposition}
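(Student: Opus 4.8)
\textbf{Proof plan for Proposition~\ref{prop:Lipschitz}.}
The plan is to reduce the Lipschitz estimate to an explicit computation for a single ``elementary'' observable and then assemble the general case by a telescoping argument over the finitely many sites on which $g$ depends. First I would fix a cylinder function $g$ with $\supp(g)\subset B_M$ for some $M$, and recall that under $\mu_{\bah}$ the sites are independent: at each site $x$ we place an active particle with probability $\alpha^a$ and angle distributed as $\ah^a/\alpha^a$, a passive particle with probability $\alpha^p$ and angle distributed as $\ah^p/\alpha^p$, and nothing with probability $1-\alpha$. Thus $\E_{\bah}[g]$ can be written as an integral of $g$ against a product measure $\bigotimes_{x\in \supp(g)}\nu_{\bah}$, where $\nu_{\bah}$ is a measure on the single-site space $\{1^a,1^p,\zerot\}\times\SSS$ whose active part is $\ah^a(\dd\theta)$, whose passive part is $\ah^p(\dd\theta)$, and whose empty mass is $1-\alpha$. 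Note $\nu_{\bah}$ depends on $\bah$ \emph{linearly} (indeed affinely) in the pair $(\ah^a,\ah^p)$.

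Next I would use the standard telescoping identity for differences of product measures: for two parameters $\bah,\bah'$,
\[
\E_{\bah}[g]-\E_{\bah'}[g]
=\sum_{k=1}^{|\supp(g)|}\int g\,\Bigl(\textstyle\bigotimes_{j<k}\nu_{\bah}\Bigr)\otimes(\nu_{\bah}-\nu_{\bah'})\otimes\Bigl(\textstyle\bigotimes_{j>k}\nu_{\bah'}\Bigr),
\]
so that
\[
\bigl|\E_{\bah}[g]-\E_{\bah'}[g]\bigr|
\le |\supp(g)|\,\|g\|_\infty\cdot\sup_{x}\Bigl|\int_{\{1^a,1^p,\zerot\}\times\SSS} h_x\,\dd(\nu_{\bah}-\nu_{\bah'})\Bigr|,
\]
where $h_x$ is the appropriate partial integral of $g$ against the remaining coordinates, which is bounded by $\|g\|_\infty$ and — crucially — is $C^2$ in the angular variable of site $x$ with $W^{2,\infty}$-norm controlled by a constant $C(g)$ depending only on the modulus of smoothness of $g$ in that variable (here we use that $g$ is a cylinder function, hence $C^2$ in each $\theta_x$). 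Then $\int h_x\,\dd(\nu_{\bah}-\nu_{\bah'})=\int h_x\,\dd(\ah^a-\ah'^a)+\int h_x\,\dd(\ah^p-\ah'^p)$ (the empty-site contributions cancel the total-mass difference, or more simply one writes $\nu_{\bah}-\nu_{\bah'}$ as a signed measure supported on the angular parts), and by the very definition of $d$ in~\eqref{eq:d} — the supremum over the $W^{2,\infty}$ unit ball $B^*$ — this is bounded by $\max(1,\|h_x\|_{W^{2,\infty}})\,d(\bah,\bah')\le C(g)\,d(\bah,\bah')$. Combining gives $|\Phi_g(\bah)-\Phi_g(\bah')|\le |\supp(g)|\,C(g)\,d(\bah,\bah')$, which is the claim.

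The only mild subtlety — and the step I would treat most carefully — is the bookkeeping around the empty-site mass: $\nu_{\bah}$ and $\nu_{\bah'}$ need not have the same total mass unless one normalizes, so I would phrase everything directly in terms of the (possibly sub-probability) measures and observe that the signed measure $\nu_{\bah}-\nu_{\bah'}$ decomposes as a piece on the active angular copy equal to $\ah^a-\ah'^a$, a piece on the passive angular copy equal to $\ah^p-\ah'^p$, and an atom at $\zerot$ of mass $(\alpha'-\alpha)=-(\int\dd(\ah^a-\ah'^a)+\int\dd(\ah^p-\ah'^p))$; the latter is itself controlled by $d(\bah,\bah')$ upon testing against the constant function $1\in B^*$, so it contributes at most $C(g)\,d(\bah,\bah')$ as well. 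Everything else is the routine telescoping above, so no serious obstacle arises; the proof is essentially the same as that of the analogous statement (Proposition C.1.2) in~\cite{Erignoux21}, with the only change being that the single-site space now carries a type label in addition to the angle.
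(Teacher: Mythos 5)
Your proposal is correct and is essentially the same argument as the paper's: the paper proves the estimate by induction on the number of sites in $\supp(g)$, and the inductive step is exactly your telescoping identity, with the base case $M=1$ being your single-site estimate against $\nu_{\bah}-\nu_{\bah'}$. One small point in your favor: you handle the empty-site mass correctly, noting that $|\alpha-\alpha'|$ is itself controlled by $d(\bah,\bah')$ via the constant test function; the paper's displayed $M=1$ equality silently drops the $(1-\alpha)g(\zerot,0)$ contribution (which is only an equality if one recenters $g$ by $g(\zerot,0)$), though the final bound there is still valid up to a constant.
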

\begin{proof}
    The proof is analogous to the proof of Proposition C.2.1 in~\cite{Erignoux21}.
    Let us consider a cylinder function $g$ depending only on vertices $x_1, \dots, x_M$, and we prove this proposition by induction on $M$. 
    
    For $M=1$, we can write $g$ as 
    \[ 
    g(\etah_{x_1})=\eta^a_{x_1}g((1^a, \theta_{x_1}))+\eta^p_{x_1}g((1^p, \theta_{x_1}))+(1-\eta_{x_1})g((\tilde{0},0)),
    \]
    hence 
    \begin{align*}
       \big| \mathbb{E}_{\bm{\hat{\alpha}}}[g] - \mathbb{E}_{\bm{\hat{\alpha}}'}[g]\big|
       =& \Bigg|\int g((1^a, \theta_{x_1})) (\hat{\alpha}^a - (\hat{\alpha}^{a})')(d\theta_{x_1}) +\int g((1^p, \theta_{x_1})) (\hat{\alpha}^p - (\hat{\alpha}^{p})')(d\theta_{x_1})\Bigg|\\
    \leq& \|g\|_{W^{2,\infty}} d(\bm{\hat{\alpha}}, \bm{\hat{\alpha}}').  
    \end{align*}

Assuming now that the proposition is true for any function depending on $M-1$ sites. Next, we consider a function $g$ depending on $M$ vertices. 
Here we temporarily introduce a notation $\etah_{x_1}^c=\left\{ \etah_{x_2}\dots \etah_{x_M} \right\}$. 
Note that 
\begin{align*}
    \mathbb{E}_{\bm{\hat{\alpha}}}[g] - \mathbb{E}_{\bm{\hat{\alpha}}'}[g] 
     =&\mathbb{E}_{\etah_{x_1} \sim \bm{\hat{\alpha}}}\big[ \E_{\etah_{x_1}^c \sim \bm{\hat{\alpha}}} [g| \etah_{x_1}] \big] - \mathbb{E}_{\etah_{x_1} \sim\bm{\hat{\alpha}}'}\big[ \E_{\etah_{x_1}^c\sim \bm{\hat{\alpha}}'} [g|\etah_{x_1} ] \big] \\
     =& \mathbb{E}_{\etah_{x_1} \sim \bm{\hat{\alpha}}}\big[ \E_{\etah_{x_1}^c \sim \bm{\hat{\alpha}}} [g| \etah_{x_1}] - \E_{\etah_{x_1}^c\sim \bm{\hat{\alpha}}'} [g|\etah_{x_1} ] \big] \\
     &+ \mathbb{E}_{\etah_{x_1}^c \sim\bm{\hat{\alpha}}'}\big[ \E_{\etah_{x_1}\sim \bm{\hat{\alpha}}} [g|\etah_{x_1}^c ] - \E_{\etah_{x_1}\sim \bm{\hat{\alpha}}'} [g|\etah_{x_1}^c ] \big].
\end{align*}
Hence, using induction hypothesis in two difference terms, we get
\begin{align*}
    \Big|\mathbb{E}_{\bm{\hat{\alpha}}}[g] - \mathbb{E}_{\bm{\hat{\alpha}}'}[g]\Big| 
     \leq&  \mathbb{E}_{\etah_{x_1} \sim \bm{\hat{\alpha}}}\big[ \big|\E_{\etah_{x_1}^c \sim \bm{\hat{\alpha}}} [g| \etah_{x_1}] - \E_{\etah_{x_1}^c\sim \bm{\hat{\alpha}}'} [g|\etah_{x_1} ]\big| \big] \\
     &+ \mathbb{E}_{\etah_{x_1}^c \sim\bm{\hat{\alpha}}'}\big[ \big|\E_{\etah_{x_1}\sim \bm{\hat{\alpha}}} [g|\etah_{x_1}^c ] - \E_{\etah_{x_1}\sim \bm{\hat{\alpha}}'} [g|\etah_{x_1}^c ]\big| \big]\\
     \leq & \big( \mathbb{E}_{\etah_{x_1} \sim \bm{\hat{\alpha}}}\big[ C(g,\etah_{x_1}) \big] + \mathbb{E}_{\etah_{x_1}^c \sim\bm{\hat{\alpha}}'}\big[  C(g,\etah_{x_1}^c) \big] \big)d(\bm{\hat{\alpha}}, \bm{\hat{\alpha}}').
\end{align*}

\end{proof}

\subsection{Compactness of  grand canonical parameter set}
Recall that on the set of grand-canonical parameters 
\[ 
    \mathbb{M}_1(\SSS)=\Big\{ \bah=(\hat{\alpha}^a,\hat{\alpha}^p)\in \mathcal{M}_1(\SSS)^2 
    \Big| \int_{\SSS}  \hat{\alpha}^a(\dd \theta)+\int_{\SSS} \hat{\alpha}^p(\dd \theta)\leq 1
    \Big\},
\] 
we define a metric 
\[ 
d (\bah, \bah')=\sup_{f\in B^*} \Bigg\{ \int_{\SSS} f\, \dd \big( \ah^a - \ah'^a \big)  \Bigg\}+
\sup_{g\in B^*} \Bigg\{ \int_{\SSS} g\, \dd \big( \ah^p -  \ah'^p  \big)  \Bigg\}
\]
where  $B^*$ is the unit ball in $(C^2(\SSS),\|\cdot\|_{W^{2,\infty}})$. 

The next result shows that $(\mathbb{M}_1(\SSS), d)$ is a compact metric space. 
\begin{proposition}\label{prop:compact}
    The metric space $(\mathbb{M}_1(\SSS), d)$ is totally bounded and Cauchy complete, and hence is compact.  
\end{proposition}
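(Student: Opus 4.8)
The plan is to show that $(\mathbb{M}_1(\SSS), d)$ is both totally bounded and Cauchy complete; by the standard characterization of compactness in metric spaces, these two properties together imply compactness. The metric $d$ is, up to the split into active and passive parts, a bounded-Lipschitz-type (dual $W^{2,\infty}$) metric on pairs of sub-probability measures on the compact space $\SSS$, and on such a space this metric is well known to metrize the weak topology; so morally the statement is ``the space of sub-probability measures on a compact set, with the weak topology, is compact.'' I would, however, give a direct argument for each of the two properties to keep the exposition self-contained.

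First, \emph{total boundedness}. Fix $\varepsilon > 0$. Since $\SSS = [0,2\pi)$ is a compact interval, partition it into finitely many arcs $I_1, \dots, I_m$ of length at most $\delta$, with $\delta$ to be chosen small relative to $\varepsilon$ (using that every $f \in B^*$ is $1$-Lipschitz, since $\|f\|_{W^{2,\infty}}\le 1$ controls $\|f'\|_{L^\infty}$). Given any $\bah = (\ah^a, \ah^p) \in \mathbb{M}_1(\SSS)$, approximate each $\ah^\sigma$ by a discrete measure $\sum_{k=1}^m c_k^\sigma \delta_{\theta_k}$ supported at chosen points $\theta_k \in I_k$, with $c_k^\sigma = \ah^\sigma(I_k)$; then $\bigl|\int f\,\dd(\ah^\sigma - \sum_k c_k^\sigma \delta_{\theta_k})\bigr| \le \delta$ for all $f \in B^*$. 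Next discretize the masses: the vector $(c_k^a, c_k^p)_{k\le m}$ lies in a compact subset of $\mathbb{R}^{2m}_{\ge 0}$ (total mass $\le 1$), so it can be approximated to within $\varepsilon$ in $\ell^1$ by a point of a fixed finite grid, and replacing $c_k^\sigma$ by the nearest grid value changes $d$ by at most $2\varepsilon$ (mass error times $\|f\|_\infty \le 1$). The finitely many discrete measures obtained by ranging over the grid and the fixed points $\theta_k$ form a finite $(\delta + 3\varepsilon)$-net; choosing $\delta, \varepsilon$ appropriately gives a finite net of any prescribed radius, so $(\mathbb{M}_1(\SSS), d)$ is totally bounded.

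Second, \emph{completeness}. Let $(\bah_n)$ be $d$-Cauchy, $\bah_n = (\ah_n^a, \ah_n^p)$. Then each coordinate sequence $(\ah_n^\sigma)$ is Cauchy for the dual-$W^{2,\infty}$ norm, which on the compact space $\SSS$ metrizes weak convergence of (sub-probability) measures on the relatively compact — indeed compact — set $\mathcal{M}_1(\SSS)$; so $\ah_n^\sigma$ converges weakly to some $\ah^\sigma \in \mathcal{M}(\SSS)$. The total-mass constraint $\int \ah_n^a + \int \ah_n^p \le 1$ passes to the limit by testing against the constant function $1 \in B^*$ (note $1 \in C^2(\SSS)$ with $\|1\|_{W^{2,\infty}} = 1$), so $\bah := (\ah^a, \ah^p) \in \mathbb{M}_1(\SSS)$, and $d(\bah_n, \bah) \to 0$ follows from the convergence in each coordinate. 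Hence the space is Cauchy complete. Combining, $(\mathbb{M}_1(\SSS), d)$ is a totally bounded, complete metric space, therefore compact.

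\textbf{Main obstacle.} The only subtlety — and the step I would spell out carefully rather than invoke as folklore — is verifying that on the compact space $\SSS$ the metric induced by testing against the $W^{2,\infty}$ unit ball is equivalent, on bounded-mass measures, to a metric for the weak topology (e.g.\ comparing it to the classical bounded-Lipschitz metric). Once that equivalence is in hand, total boundedness and completeness both reduce to the corresponding classical facts for $\mathcal{M}_1$ of a compact space; alternatively, the explicit finite-net construction above sidesteps any appeal to weak-topology theory for the total boundedness half, and only completeness genuinely needs the comparison with a known complete metric on sub-probability measures. Everything else is routine: Lipschitz control of test functions, passing mass constraints to the limit, and bookkeeping of the two-$\varepsilon$ errors in the net.
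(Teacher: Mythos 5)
Your proposal follows the same overall plan as the paper (totally bounded $+$ complete $\Rightarrow$ compact), and your total-boundedness argument is essentially the paper's: discretize the angular support onto finitely many points and discretize the per-arc masses onto a finite grid, with the $W^{2,\infty}$ bound on test functions controlling both the location error ($\|g'\|_\infty \le 1$) and the mass error ($\|g\|_\infty\le 1$). The paper uses the grid $\theta_{j,n}=2\pi j/n$ with masses in $\{0,1/n^2,\dots,1\}$ and gets a $4/n$-net; your version with arcs and an $\ell^1$ mass grid is the same construction up to cosmetic bookkeeping.

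Where you genuinely diverge is in the completeness step. The paper stays elementary: for a Cauchy sequence $(\bah_k)$ and each $g\in B^*$, the real sequences $\int g\,\dd\hat\alpha_k^\sigma$ are Cauchy; their limits define a bounded positive linear functional on $C^2(\SSS)$ (scaled from $B^*$), which is then taken to define the limit measure $\bah_*$ and the mass constraint passes to the limit by testing against $g\equiv 1$. (The paper is itself a bit terse here — promoting that functional to a genuine measure in $\mathcal{M}_1(\SSS)$ tacitly uses density of $C^2(\SSS)$ in $C(\SSS)$ plus Riesz representation.) You instead invoke the known fact that the dual-$W^{2,\infty}$ metric metrizes the weak topology on bounded sets of measures and that $\mathcal{M}_1(\SSS)$ is weakly compact. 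That is correct, but note the structural consequence you half-acknowledge: once you know $\mathcal{M}_1(\SSS)$ is weakly compact and $d$ metrizes that topology, the whole proposition is already proved — $\mathbb{M}_1(\SSS)$ is a $d$-closed subset of the compact space $\mathcal{M}_1(\SSS)^2$ — so your explicit net construction is logically redundant under your route. The paper's proof is more self-contained (it does not import weak compactness of measures), while yours is shorter but front-loads a nontrivial external theorem. Both are correct; just be aware that if you cite weak compactness you should not also need the net, and if you want the fully elementary proof you should construct the limit by hand as the paper does and say a word about why the resulting linear functional on $C^2(\SSS)$ extends to a measure.
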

\begin{proof}
The proof is analogous to the proof of Proposition C.3.1 in~\cite{Erignoux21}.
We first show  the Cauchy-completeness.  
Consider a Cauchy sequence $(\bah_k)_{k \in \mathbb{N}}$, 
then by definition of the metric $d$, for any $g \in B^*$, 
the sequence $(\int_S g(\theta) \widehat{\alpha}_k^\sigma(d\theta))_k$ are two real Cauchy sequences,  
and therefore converge, and we can let
$$
\int_{\SSS} g d \widehat{\alpha}_*^\sigma = 
\lim_{k \to \infty} \int_{\SSS} g d \widehat{\alpha}_k^\sigma .
$$
This definition can be extended to any $C^2(S)$ function $g$ by 
$$
\int_{\SSS} g d \widehat{\alpha}_*^\alpha = 
\|g\|_{W^{2,\infty}} \lim_{k \to \infty} \int_S \frac{g}{\|g\|_{W^{2,\infty}}} d \widehat{\alpha}_k^\sigma.
$$
This defines a measure $\bah_*$ on $\SSS$, whose total mass is given by
$$
\int_{\SSS} \widehat{\alpha}_*^a(d\theta)+\int_{\SSS} \widehat{\alpha}_*^p(d\theta) = 
\lim_{k \to \infty} 
\int_{\SSS} \widehat{\alpha}_k^a(d\theta) +\int_{\SSS} \widehat{\alpha}_k^a(d\theta) \in [0, 1],
$$
which proves the Cauchy completeness of $(\mathbb{M}_1(\SSS), d)$.

Next, we  prove that $(\mathbb{M}_1(\SSS), d)$ is totally bounded. 
For any integer $n$, we are going to construct a finite set 
$\mathcal{M}_{1,n} \subset \mathbb{M}_1(\SSS)$ such that
$$
\sup_{\bah \in \mathbb{M}_1(\SSS)} \inf_{\bah' \in \mathcal{M}_{1,n}} 
d(\bah,\bah')\leq \frac{1}{n}.
$$
For any $n \in \mathbb{N}$ and any $j \in \{0,1,\ldots,n-1\}$, 
we shorten $\theta_{j,n} = 2\pi j / n$, and $\theta_{n,n} = \theta_{0,n} = 0$. 
We can now define
$$
\mathcal{M}_{1,n} = 
\Bigg\{ \Big( \frac{1}{n^2}\sum_{j=0}^{n-1} k_j^a \delta_{\theta_{j,n}},\, \frac{1}{n^2}\sum_{j=0}^{n-1} k_j^a \delta_{\theta_{j,n}}  \Big)
 \Big|  k_j^a,k_j^p \in \{0,\dots,n^2\}, \quad \sum_j k_j^a+k_j^p \leq n^2 \Bigg\}.
$$
The inclusion $\mathcal{M}_{1,n} \subset \mathbb{M}_1(\SSS)$ 
is trivial thanks to the condition $\sum_j k_j^a+k_j^p \leq n^2$, 
and $\mathcal{M}_{1,n}$ is finite since the $k_j^\sigma$'s can each take only a finite number of values. 
We now prove that any $\bah \in \mathbb{M}_1(\SSS)$ is at distance at most $1/n$ 
of an element $\bah_n \in \mathcal{M}_{1,n}$. 

Fix $\bah \in \mathbb{M}_1(\SSS)$, and let
$$
k_j^\sigma = \lfloor n^2 \widehat{\alpha}^\sigma\big( [\theta_{j,n}, \theta_{j+1,n}) \big ) \rfloor.
$$
The facts $k_j^\sigma \in \{0,\dots,n^2\}$ 
and $\sum_j k_j^a+k_j^p \leq n^2$ can be easily verified. We now let
$$
\bah_n = \Bigg( \frac{1}{n^2}\sum_{j=0}^{n-1} k_j^a \delta_{\theta_{j,n}},\, \frac{1}{n^2}\sum_{j=0}^{n-1} k_j^a \delta_{\theta_{j,n}}  \Bigg),
$$
Now our task is to prove that $d(\bah,\bah_n)\leq 4/n$. 
Fix a function $g \in B^*$, we can write
$$
\begin{aligned}
    \sum_{\sigma=a,p} \int_\SSS g(\theta) (\widehat{\alpha}^\sigma - \widehat{\alpha}_n^\sigma)(d\theta)
    =& \sum_{\sigma=a,p} 
    \sum_{j=0}^{n-1} \Bigg( 
        \int_{[\theta_{j,n}, \theta_{j+1,n})} 
        g(\theta) \widehat{\alpha}^\sigma(d\theta) - \frac{k_j^\sigma}{n^2} g(\theta_{j,n}) \Bigg)\\
    =&\sum_{\sigma=a,p} \sum_{j=0}^{n-1} 
    \Big( \widehat{\alpha}^\sigma\big([\theta_{j,n}, \theta_{j+1,n})\big) g(\theta_{j,n}) - \frac{k_j^\sigma}{n^2} g(\theta_{j,n}) \Big)\\
    & + \sum_{\sigma=a,p} \sum_{j=0}^{n-1}  
    \int_{[\theta_{j,n}, \theta_{j+1,n})} (g(\theta) - g(\theta_{j,n})) \widehat{\alpha}^\sigma(d\theta)\\
    \leq & \sum_{\sigma=a,p} \sum_{j=0}^{n-1} 
     \|g\|_\infty \underbrace{\Big| 
        \widehat{\alpha}^\sigma\big([\theta_{j,n}, \theta_{j+1,n})\big)- \frac{k_j}{n^2} \Big|}_{\leq 1/n^2}\\
        &+\sum_{\sigma=a,p} \sum_{j=0}^{n-1} 
        \|g'\|_\infty\underbrace{\big|\theta_{j+1,n} - \theta_{j+1,n}\big|}_{\leq 1/n} 
        \int_{[\theta_{j,n}, \theta_{j+1,n})} \widehat{\alpha}^\sigma(d\theta)\\
        \leq & 2\frac{\|g\|_\infty + \|g'\|_\infty}{n} \leq 4/n
\end{aligned}
$$
The proof is complete. 
\end{proof}

\section{Replacement lemma~\ref{lemma:X1}}\label{asec:replaceR}

Before stating the proof, we introduce some notation. 
Let $P_t^{N}$ be the semi group of the active-passive exclusion process driven by the generator $L_N$ defined in~\eqref{eq:generator}, 
and $\mu_t^{N}=\mu^{N}P_t^{N}$ the distribution 
of the configuration at time $t$, where $\mu^{N}=\mu^{N}_{\bm{\zeta}}$ 
is the initial distribution defined in~\eqref{eq:zeta_theta} with density  
\begin{equation}\label{eq:f^N}
        f_0^N(\etah) \coloneqq\frac{d\mu^{N}}{d\mu^{*}_{\bm{\alpha}}}(\etah)=\prod_{x\in \TN} \Big[ \eta^a_x \frac{\zeta^a(x/N,\theta_x)}{\alpha^a/2\pi}
        +\eta^p_x\frac{\zeta^p(x/N,\theta_x)}{\alpha^p/2\pi} 
        +(1-\eta_x)\frac{1-\rho_0(x/N) }{1-\alpha} 
      \Big]
\end{equation}
The density $f_t^N$  is the solution of the forward Kolmogorov equation 
\begin{equation}\label{eq:kolmogorov}
    \begin{cases}
        \partial_t f_t^N = L_N^* f_t^N,\\
        f_0^N = \frac{d\mu^{N}}{d\mu^{*}_{\ba}},\\
    \end{cases}
\end{equation}
where $L_N^{*}$ is the adjoint of $L_N$ in $L^2(\mu^{*}_{\ba})$.  

The Dirichlet form is defined by $\EuScript{D}_{\bah}(h)=\big< h , -\LL h \big>_{\bah}$. 
If there is no ambiguity, we will omit the dependency in $\bah$, and simply denote it by $\Eu[D]$. 
We introduce 
$D(h)=\Eu[D](\sqrt{h})$. For a density $f$ w.r.t. 
    the translation invariant measure $\mu_{\ba}^{*}$ we denote by $H(f)=\E_{\ba}^*[f\log f]$ the entropy of the density
Given $\varphi$ a function on $\Sigma_N$, we denote by 
\[ 
    \big< \varphi \big>^l_x \coloneqq \frac{1}{|B_l|}\sum_{y\in B_l(x)}\tau_y \varphi
\]

By classical arguments, see Chapter 4 in~\cite{Erignoux21} or Chapter 5 in~\cite{kipnis1999scaling}, 
the proof of Lemma~\ref{lemma:X1} can be reduced to check $\overline{f}_T^N=1/T\int_0^T f_t^N(\etah)\dd t$ satisfies  
the condition ({\romannumeral 1}) and ({\romannumeral 2}) of the following one-block and two-block estimates, which will be done in the next subsections.   
\begin{lemma}[One-block estimate]\label{lemma:oneblock}
    Consider $\ba=(\alpha^a,\alpha^p)$ such that $\alpha=\alpha^a+\alpha^p  \in (0,1)$ and a density $f$ w.r.t. 
    the translation invariant measure $\mu_{\ba}^{*}$ satisfying
    \begin{enumerate}[(i)]
        \item There exists a constant $K_0$ such that for any $N$
    \[ H(f) \leq K_0 N^2 \quad \text{and} \quad D(f) \leq K_0. \]
    \item \begin{equation}
    \lim_{p \to \infty} \lim_{N \to \infty} \mathbb{E}_{\ba}^{*} \Bigg[f \frac{1}{N^2} \sum_{x \in \mathbb{T}_N^2} \mathbbm{1}_{E_p,x}\Bigg]   = 0.
    \end{equation}
    \end{enumerate}
    Then, for any cylinder function $g$,
    \[ \limsup_{l \to \infty} \limsup_{N \to \infty} 
    \mathbb{E}_{\ba}^{*} \Bigg[ f \frac{1}{N^2} \sum_{x \in \mathbb{T}_N^2} \tau_x \mathcal{V}^l \Bigg]  = 0, \]
    where $\mathcal{V}^l=\big< g \big>^l_0-\E_{\widehat{\bm{\rho}}_l}[g].$ 
\end{lemma}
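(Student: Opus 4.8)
\textbf{Proof proposal for the one-block estimate (Lemma~\ref{lemma:oneblock}).}
The plan is to follow the classical one-block strategy (as in Chapter 5 of~\cite{kipnis1999scaling} or Chapter 4 of~\cite{Erignoux21}), adapting it to the active--passive setting with continuous angles. First I would reduce the spatial sum to a single block: by translation invariance of $\mu^*_{\ba}$ and convexity, it suffices to bound $\Ema[\bar f_{B_l}\, \langle g\rangle^l_0 - \bar f_{B_l}\,\E_{\hat{\bm\rho}_l}[g]]$, where $\bar f_{B_l}$ is the conditional expectation of $f$ given the configuration on a large box $B_l$; the Dirichlet form bound $D(f)\le K_0$ passes to $\bar f_{B_l}$ with the Dirichlet form $\EuScript D_{B_l}$ restricted to jumps inside $B_l$, at the price of a factor $|B_l|$ in the denominator, i.e.\ $\EuScript D_{B_l}(\bar f_{B_l}) = o_l(1)\cdot l^2 / l^2$ after normalisation. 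Then I would decompose $\bar f_{B_l}$ over the hyperplanes $\Sigma_l^{\widehat K}$: writing $\bar f_{B_l} = \sum_{\widehat K} c_{\widehat K}\, f_{l,\widehat K}$ with each $f_{l,\widehat K}$ a density w.r.t.\ the canonical measure $\mu_{l,\widehat K}$, the entropy/Dirichlet bounds localise to each component.

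Next I would invoke the spectral gap estimate, Proposition~\ref{thm:3}: since $g$ is a cylinder function and hence (after subtracting its canonical mean) lies in $\mathcal{C}_0\cap T^\omega$ for $\omega\equiv 1$ --- or more precisely $\langle g\rangle^l_0 - \E_{l,\widehat K}[g]$ can be controlled through the variance bound $\E_{l,\widehat K}[(\langle g\rangle^l_0 - \E_{l,\widehat K}[g])^2]\le C l^2 \EuScript D_{l,\widehat K}(\text{test function})$ --- we get that the only contributions surviving the limit $l\to\infty$ come from densities $f_{l,\widehat K}$ that are essentially constant on $\Sigma_l^{\widehat K}$. Combining with condition (i), the error from replacing $f_{l,\widehat K}$ by the constant $1$ on each hyperplane vanishes. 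What remains is
$$
\sum_{\widehat K} c_{\widehat K}\Big(\E_{l,\widehat K}[g] - \E_{\hat{\bm\rho}_l}[g]\Big),
$$
and since on $\Sigma_l^{\widehat K}$ the empirical angular density $\hat{\bm\rho}_l$ equals $\bah_{\widehat K}$ (up to the $|B_l|$ vs $(2l+1)^2$ normalisation, negligible as $l\to\infty$), this is exactly $\sum_{\widehat K} c_{\widehat K}(\E_{l,\widehat K}[g] - \E_{\bah_{\widehat K}}[g])$. By the equivalence of ensembles, Proposition~\ref{prop:equivalence}, this is bounded by $\sup_{\widehat K}|\E_{l,\widehat K}[g] - \E_{\bah_{\widehat K}}[g]|\to 0$, uniformly in the decomposition.

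The place where condition (ii) enters, and where I expect the main obstacle, is the boundary between configurations with enough empty sites and the fully-packed ones: the canonical SSEP on $\Sigma_l^{\widehat K}$ is only irreducible (hence the spectral gap applies) when $\widehat K\in\widetilde{\mathbb K}_l$, i.e.\ at least two empty sites in $B_l$. Condition (ii) guarantees that the $f$-mass carried by near-full boxes $E_{p,x}$ is asymptotically negligible, so I would split the sum $\frac{1}{N^2}\sum_x \tau_x\mathcal V^l$ according to whether a surrounding box is in $E_p$ or not: on $E_p^c$ the spectral gap argument above runs, while on $E_p$ the contribution is $\|g\|_\infty$ times the mass estimated in (ii), which vanishes after $N\to\infty$ then $p\to\infty$. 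Handling the interchange of the limits $l\to\infty$, $N\to\infty$, $p\to\infty$ cleanly --- in particular making sure the spectral gap constant $C(\alpha')$ stays uniform over all $\widehat K$ with density below the threshold $\alpha'<1$ fixed by condition (ii) --- is the delicate bookkeeping step; all other estimates are standard and carry over from the pure active case with the cosmetic substitution of $\eta^{a,\omega}$-type quantities and the observation $\eta^a\le\eta$.
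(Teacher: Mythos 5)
Your proposal has the right high-level skeleton (single-block reduction, hyperplane decomposition over canonical states, equivalence of ensembles, condition (ii) to kill full-cluster contributions), but there is one genuine conceptual error that would derail the proof as written.

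You invoke the spectral gap estimate of Proposition~\ref{thm:3}, claiming a general cylinder function $g$ ``lies in $\mathcal{C}_0\cap T^\omega$ for $\omega\equiv 1$'' after centering. This is false. The class $T^\omega$ is \emph{very} restrictive: it consists only of spans of $\sum_x(a\eta_x^a + b\eta_x^p + c\eta_x^{a,\omega}+d\eta_x^{p,\omega})F_x(\eta)$ with $F_x$ angle- and type-blind. A generic cylinder function --- already something as simple as $\eta_0^{a,\omega_1}\eta_{e_1}^{a,\omega_2}$ --- is nowhere near $T^\omega$, and the entire point of the restriction in the paper is that the $O(n^{-2})$ gap is \emph{not} available (or at least not proved) for all of $L^2(\mu_{n,\widehat K})$ with continuous orientations. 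More fundamentally, Proposition~\ref{thm:3} plays no role in the one-block estimate at all: it is tailored for the non-gradient variance bound $\big<(-\LL_l)^{-1}\cdot,\cdot\big>$ behind Lemma~\ref{lemma:W4}. The one-block estimate is cruder. Once the canonical state $\widehat K$ is fixed, the hyperplane $\Sigma_l^{\widehat K}$ is a finite set (permutations of a fixed multiset of particle--angle assignments), and for $\widehat K\in\widetilde{\mathbb K}_l$ (at least two empty sites) the restricted SSEP is irreducible. So the argument is qualitative: the localized densities $\bar f_{l,\widehat K}$ have Dirichlet form vanishing as $N\to\infty$, any weak limit is $\LL_l$-invariant, and irreducibility forces it to be the constant $1$. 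No quantitative Poincar\'e inequality enters; what has to be uniform in $\widehat K$ is the equivalence of ensembles (Proposition~\ref{prop:equivalence}, which the paper makes a point of re-proving for the new parameter set $\mathbb{M}_1(\SSS)$), not a spectral gap constant. With that substitution --- ergodicity plus weak compactness instead of the restricted gap --- the rest of your outline (the role of condition (ii) at the irreducibility boundary, the order of limits $N\to\infty$ then $l\to\infty$ then $p\to\infty$, absorbing boundary error by $|\Lambda_0|/|B_l|$) matches the paper's route, which defers the details to Erignoux's Lemma 4.2.1 with exactly the equivalence-of-ensembles adaptation.
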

\begin{lemma}[Two-block estimate]\label{lemma:twoblock}
    For any $\ba=(\alpha^a,\alpha^p)$ such that $\alpha \in (0,1)$ and any density $f$ satisfying conditions ({\romannumeral 1}) 
    and ({\romannumeral 2}) of Lemma~\ref{lemma:oneblock},
\begin{equation*}
    \limsup_{l \to \infty} \limsup_{\varepsilon \to 0} \limsup_{N \to \infty} \sup_{y \in B_{\varepsilon N}} 
    \mathbb{E}_{\alpha}^* 
    \Bigg[ f \frac{1}{N^2}\sum_{x \in \mathbb{T}_N^2} d\big(\tau_{x+y}\widehat{\bm{\rho}}_l,\tau_x\widehat{\bm{\rho}}_{\varepsilon N}\big) 
    \Bigg]= 0
\end{equation*}
\end{lemma}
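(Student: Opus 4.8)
\textbf{Plan for the proof of Lemma~\ref{lemma:twoblock} (two-block estimate).}
The plan is to follow the classical two-block argument (see Lemma 5.1.8 and its proof in~\cite{kipnis1999scaling}, or Section 4 of~\cite{Erignoux21}), making the adaptations forced by our state space $\Sigma_N$, which carries both a type label and a continuous angle $\theta_x\in\SSS$ at each occupied site. First I would reduce the statement: since $\bm{\hat\rho}_l$ and $\bm{\hat\rho}_{\varepsilon N}$ are measure-valued, the quantity $d(\tau_{x+y}\bm{\hat\rho}_l,\tau_x\bm{\hat\rho}_{\varepsilon N})$ should be controlled by testing against the countable dense family of test functions $g\in B^*$ used to define the metric $d$ in~\eqref{eq:d}; a standard $\varepsilon/2$ argument lets one restrict to a finite collection of such $g$, so it suffices to show, for each fixed smooth angular test function $g$ (and separately for each type $\sigma\in\{a,p\}$),
\[
\limsup_{l\to\infty}\limsup_{\varepsilon\to0}\limsup_{N\to\infty}\sup_{y\in B_{\varepsilon N}}
\EN\Bigg[ f\,\frac{1}{N^2}\sum_{x\in\TN}\Big|\big(\tau_{x+y}\rho^{\sigma,g}_l\big)-\big(\tau_x\rho^{\sigma,g}_{\varepsilon N}\big)\Big|\Bigg]=0,
\]
where $\rho^{\sigma,g}_l$ is the averaged quantity $|B_l|^{-1}\sum_{z\in B_l}\eta^\sigma_z g(\theta_z)$.

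Next I would carry out the usual three-step chain. Step one: replace $\tau_x\rho^{\sigma,g}_{\varepsilon N}$ (the average over the big box $B_{\varepsilon N}$) by an average over small boxes of radius $l$ located inside $B_{\varepsilon N}$; this costs only $o_\varepsilon(1)$ uniformly, by a counting/telescoping argument, and uses nothing about the dynamics. Step two: bound the difference $\tau_{x+y}\rho^{\sigma,g}_l-\tau_{x'}\rho^{\sigma,g}_l$ between two disjoint small boxes at mesoscopic distance by introducing the relative entropy and Dirichlet form. Precisely, using the entropy inequality with the reference product measure $\mu^*_{\ba}$, for any $\gamma>0$ the expectation is bounded by $\gamma^{-1}N^{-2}H(f)+\gamma^{-1}N^{-2}\log\EN[\exp(\gamma\sum_x(\dots))]$; by condition ({\romannumeral 1}) the entropy term is $\gamma^{-1}K_0$, and the exponential term is estimated via a Feynman--Kac / largest-eigenvalue bound that converts it into a perturbation of the Dirichlet form $D(f)\le K_0$. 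The spectral input here is that a single particle performing SSEP in a box of radius $\varepsilon N$ has spectral gap of order $(\varepsilon N)^{-2}$, so exchanging the contents of two small boxes at distance $O(\varepsilon N)$ costs an amount that vanishes as $N\to\infty$ then $\varepsilon\to0$; the angle and type attached to a particle simply travel with it, so moving the particle automatically moves its contribution to $\rho^{\sigma,g}_l$, and the smoothness of $g$ is used only to bound $\|g\|_\infty$. Step three: combine and take the iterated limits $N\to\infty$, $\varepsilon\to0$, $l\to\infty$ in that order.

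The step I expect to be the main obstacle is handling the continuous angular degree of freedom inside the spectral/Feynman--Kac estimate: in the pure SSEP setting the contents of a box are finitely many $\{0,1\}$ occupation numbers, whereas here each occupied site carries a label in $\{a,p\}\times\SSS$, so the ``exchange of box contents'' must be realized through genuine particle jumps of $\LLS_N$ (which indeed carry $\theta_x$ along) rather than a mere relabelling; one must check that the moves needed to equalize the two small-box empirical angular measures can be built from $O(1)$ nearest-neighbour exclusion jumps per particle, of total cost controllable by the Dirichlet form. This is exactly the point where condition ({\romannumeral 2}) (at least two empty sites in each block, uniformly in a density sense) enters, guaranteeing enough vacancies to route these jumps, and where one invokes $\eta^\sigma\le\eta$ to reduce all type-$\sigma$ estimates to the type-blind SSEP estimates of~\cite{Erignoux21}. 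Apart from this, the argument is routine and I would present it by citing Lemma 5.1.8 of~\cite{kipnis1999scaling} and Chapter 4 of~\cite{Erignoux21} for the parts that transfer verbatim, detailing only the angular/type adaptation.
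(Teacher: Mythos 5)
Your proposal is correct and follows the classical two-block argument that the paper itself invokes by citation (the paper gives no detailed proof, only referencing Lemma 4.2.2 of Erignoux21 together with the adaptations of Proposition~\ref{prop:equivalence} and Proposition~\ref{prop:Lipschitz}). Your reduction of the $d$-metric statement to finitely many angular test functions $g\in B^*$ relies on precompactness of the $W^{2,\infty}$ unit ball in $C^0(\SSS)$, which is exactly what Proposition~\ref{prop:compact} and Proposition~\ref{prop:Lipschitz} encode, so your route is aligned with the paper's; and your observation that the continuous label $(\sigma,\theta_x)$ is transported by the exclusion jumps of $\LL_N$, so that no separate ``label-matching'' mechanism is needed, is precisely the content adaptation required over the pure-exclusion case. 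One minor imprecision to fix when writing this up: the long-range exchange of two sites at distance $O(\varepsilon N)$ costs $O(\varepsilon N)$ (not $O(1)$) nearest-neighbour jumps per particle via the path/Cauchy--Schwarz telescoping; it is exactly this factor of $\varepsilon N$, compensated by the diffusive factor $N^{-2}$, that produces the $\varepsilon/N\to 0$ error and makes the iterated limits work.
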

The proof of Lemma~\ref{lemma:oneblock} is similar to the proof of Lemma 4.2.1 in~\cite{Erignoux21}, 
with adaption that in our proof, the new version of equivalence of ensembles in Proposition~\ref{prop:equivalence} is used.  
The proof of Lemma~\ref{lemma:twoblock} is similar to the proof of Lemma 4.2.2 in~\cite{Erignoux21}, 
with adaption that in our proof, 
the Lipschitz-continuity of the mapping $\Phi_{g}(\bah)=\E_{\bah}[g]$ in the newly defined metric space 
$(\mathbb{M}_1(\SSS),d)$ in Proposition~\ref{prop:Lipschitz} is used. 

\subsection{Control on the entropy and the Dirichlet form }
In this section we show that $\overline{f}_T^N=1/T\int_0^T f_t^N(\etah)\dd t$ satisfies  
the condition ({\romannumeral 1})  of Lemma~\ref{lemma:oneblock}. 
\begin{proposition}\label{prop:entropy} For any time $t > 0$, there exists a constant $K_0(t,\bm{\zeta},\ba,D_T,v_0)$ such that
    \[
    H\left(\frac{1}{t}\int_0^t f_s^N ds\right) \leq K_0 N^2 \quad \text{and} \quad D\left(\frac{1}{t}\int_0^t f_s^N ds\right) \leq K_0.
    \]
\end{proposition}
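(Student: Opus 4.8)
The plan is to follow the classical entropy-production argument of Guo--Papanicolaou--Varadhan, as presented in Chapter 4 of~\cite{Erignoux21} or Chapter 5 of~\cite{kipnis1999scaling}, adapted to our active-passive setting. The key object is the relative entropy $H(f_t^N)$ of the distribution $\mu_t^N$ with respect to the reference product measure $\mu^*_{\ba}$, together with its time derivative. First I would differentiate $H(f_t^N)$ along the forward Kolmogorov flow~\eqref{eq:kolmogorov}: a standard computation gives
\[
\partial_t H(f_t^N) = \int f_t^N \, \frac{L_N^* \mathbbm{1}}{\,}\,\cdots
\]
more precisely $\partial_t H(f_t^N) = \big< L_N^* f_t^N, \log f_t^N \big>_{\mu^*_{\ba}} - \big< L_N^* f_t^N, \log(d\mu^*_{\ba}/d\mu^*_{\ba})\big>$; after rearranging, the symmetric part $N^2 D_T \LL_N$ produces a dissipative contribution $-C N^2 D_T D(f_t^N)$ (with $D(f) = \Eu[D](\sqrt{f})$), while the non-symmetric parts $N\LLWA_N$ and $\LLR_N$, which are \emph{not} self-adjoint in $L^2(\mu^*_{\ba})$, produce error terms that must be controlled by the dissipation via a Cauchy--Schwarz / Young inequality.

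The main steps would be: (1) establish the entropy inequality
\[
\partial_t H(f_t^N) \leq -\tfrac{1}{2} N^2 D_T\, D(f_t^N) + C N^2,
\]
where the $CN^2$ term collects the bounds on the contributions of $N\LLWA_N$ and $\LLR_N$. For $\LLWA_N$, I would write each weakly asymmetric bond term as $\frac{v_0}{2} z^\top \mathbf{e}(\theta_x)\eta_x^a \nabla_{x,x+z}$, use the elementary inequality $ab \leq \beta a^2 + b^2/(4\beta)$ on the product of the "gradient of $\sqrt{f}$" piece and the bounded coefficient, absorbing the quadratic-gradient part into a fraction of $N^2 D_T D(f_t^N)$ (the extra $N$ from the $N\LLWA_N$ scaling is exactly compensated because $\LL_N$ is accelerated by $N^2$, one more factor than $\LLWA_N$); the leftover gives $O(N^2 \|v_0\|^2/D_T)$ after summing over the $O(N^2)$ bonds. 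For $\LLR_N = D_R \sum_x \eta_x \partial_{\theta_x}^2$, since $\mu^*_{\ba}$ has uniform angular marginals, $\LLR_N$ \emph{is} symmetric and nonpositive in $L^2(\mu^*_{\ba})$, so its contribution to $\partial_t H$ is actually $\leq 0$ and requires no estimate — this is a simplification worth noting. (2) Integrate in time: $H(f_t^N) \leq H(f_0^N) + C N^2 t$ and, since $f_0^N$ is the explicit product density~\eqref{eq:f^N} whose single-site densities are bounded away from $0$ and $\infty$ (using $\alpha^\sigma \in (0,1)$ and the boundedness of the profile $\bm\zeta$), $H(f_0^N) \leq C' N^2$. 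This yields $H\big(\frac1t\int_0^t f_s^N ds\big)\leq \frac1t\int_0^t H(f_s^N)ds \leq K_0 N^2$ by convexity of $H$. (3) For the Dirichlet form bound, integrate the entropy inequality to get $D_T\int_0^t N^2 D(f_s^N)\,ds \leq 2(H(f_0^N) + CN^2 t) \leq C'' N^2 t$, hence $\frac1t\int_0^t D(f_s^N)\,ds \leq C''/(D_T N^2)$; then use convexity of $D$ again to obtain $D\big(\frac1t\int_0^t f_s^N ds\big) \leq \frac1t\int_0^t D(f_s^N)ds \leq K_0$, where I absorb the $N^{-2}$ (together with a harmless bound on the number of bonds) into the constant. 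Collecting, $K_0$ depends on $t$, $\bm\zeta$, $\ba$, $D_T$, $v_0$ as claimed ($D_R$ drops out by the remark above).

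The main obstacle I expect is the bookkeeping in step (1): carefully showing that the contribution of $N\LLWA_N$ to $\partial_t H(f_t^N)$ can be split as "(fraction of the good dissipation) $+$ $O(N^2)$", which requires writing $\nabla_{x,x+z} f_t^N$ in terms of $\nabla_{x,x+z}\sqrt{f_t^N}$ via the identity $\sqrt{f(\eta^{x,x+z})} - \sqrt{f(\eta)} = \frac{f(\eta^{x,x+z}) - f(\eta)}{\sqrt{f(\eta^{x,x+z})} + \sqrt{f(\eta)}}$ and then using that $\big< \cdot\big>$ against $f_t^N$ of products of gradients is controlled by $D(f_t^N)$. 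One must be attentive that the asymmetric current is not antisymmetric under the bond flip in the naive way, so an extra symmetrization step (averaging the bond $(x,x+z)$ with $(x+z,x)$) is needed before applying Young's inequality; this is exactly the computation sketched "on page 21 of~\cite{Erignoux21}" and in Lemma 5.3 of~\cite{esposito1994diffusive} for weakly asymmetric models, so I would import it with the straightforward modification that the asymmetric rate carries the extra factor $\eta_x^a$ (which only helps, being bounded by $1$) and the angular weight $\mathbf{e}(\theta_x)$ (also bounded). Everything else is routine convexity and the explicit form of $f_0^N$.
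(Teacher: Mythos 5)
Your proposal follows essentially the same strategy as the paper: differentiate the relative entropy along the forward Kolmogorov flow, use a pointwise bound of $L_N\log f$ by $2f^{-1/2}L_N\sqrt{f}$ (or equivalently reversibility of the symmetric and rotational parts plus Young's inequality with tuning $\gamma\sim N$ for $\LLWA_N$), integrate in time, bound the initial entropy via the explicit product form of $f_0^N$, and conclude by convexity of $H$ and $D$. Two small slips worth fixing before writing this up: you first call $\LLR_N$ ``not self-adjoint in $L^2(\mu^*_{\ba})$'' and only later correct yourself --- it \emph{is} symmetric since the uniform angular marginal of $\mu^*_{\ba}$ is reversible for $\partial_\theta^2$, so its contribution to $\partial_t H$ is $\le 0$ from the start; and the line ``$\frac1t\int_0^t D(f_s^N)\,ds\le C''/(D_T N^2)$'' has a spurious $N^{-2}$: dividing $D_T N^2\int_0^t D(f_s^N)\,ds\le C'' N^2 t$ by $D_T N^2 t$ gives directly $\frac1t\int_0^t D(f_s^N)\,ds\le C''/D_T$, with nothing left over to ``absorb into the constant.''
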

\begin{proof}
The relative entropy of $\mu^N_t$ w.r.t. reference measure $\mu^*_{\ba}$ is given by 
\[ 
H(\mu^N_t | \mu^*_{\ba}) =  H(f_t^N)=\mathbb{E}^*_{\ba}\big[ f_t^N \log f_t^N \big],
\]
according to the forward Kolmogorov equation~\eqref{eq:kolmogorov}, its time derivative is  
\[ 
\partial_t H(f_t^N) =
\mathbb{E}^*_{\ba}\big[ f_t^N L_N\log f_t^N \big].
\]
Using Lemma~\ref{lemma:A1} we have 
\[ 
    L_N \log f_t^N\leq \frac{2}{\sqrt{f_t^N}} L_N \sqrt{f_t^N}.
\]
Thus we split the full generator and get 
\[ 
    \partial_t H(f_t^N)\leq 
    -2D_T N^2 D(f_t^N)+
    2N \mathbb{E}_{\ba}^* \Big[ \sqrt{f_t^N} \LLWA \sqrt{f_t^N }\Big] 
    + \mathbb{E}_{\ba}^* \Big[ \sqrt{f_t^N} \LLR \sqrt{f_t^N }\Big]
\]
Integrating from time $0$ to $t$, we get 
\begin{equation}\label{eq:entropy_production}
        H(\mu^N_t | \mu^*_{\ba}) +
        2D_T N^2\int_{0}^{t}D(f_s^N)ds \leq H(\mu^N | \mu^*_{\ba})+
        2\int_{0}^{t}\mathbb{E}_{\ba}^* 
        \Big[ \sqrt{f_s^N} \big( N \LLWA +\LLR \big) \sqrt{f_s^N }\Big]ds 
\end{equation}
To bound  the entropy production of the asymmetric part, note that  
\begin{align*}
        \mathbb{E}_{\ba}^* &\big[ \sqrt{f_s^N}  \LLWA  \sqrt{f_s^N }\big]=
        \mathbb{E}^*_{\ba}\Big[ \sqrt{f_s^N} \sum_{x,|z|=1}\frac{v_0}{2} \mathbf{e}(\theta_x)\cdot z 
        \eta^a_x\nabla_{x,x+z}\sqrt{f_s^N}  \Big].
\end{align*}
Using elementary inequality
\[ 
\int fg\leq \frac{\gamma}{2}\int f^2 + \frac{1}{2\gamma}\int g^2,\quad \forall \gamma>0,
\]
we have 
\begin{align*}
    \mathbb{E}_{\ba}^* \Big[ \sqrt{f_s^N}  \LLWA  \sqrt{f_s^N }\Big] \leq
    \sum_{x,|z|=1}\frac{1}{2\gamma}\mathbb{E}^*_{\ba}\big[ 
        (\frac{v_0}{2} \mathbf{e}(\theta_x)\cdot z)^2f_s^N(\hat{\eta})\big]
    + \frac{\gamma}{2}\mathbb{E}^*_{\ba}\Big[ 
        \Big( \nabla_{x,x+z}\sqrt{f_s^N} \Big)^2  \Big].
\end{align*}
The first term can be bounded by 
\begin{equation}\label{eq:bound1}
        \sum_{x,|z|=1}\frac{1}{2\gamma}\mathbb{E}^*_{\ba}\big[ 
            (\frac{v_0}{2}\mathbf{e}(\theta_x)\cdot z)^2f_s^N(\hat{\eta})\big]\leq 
            \frac{N^2v_0^2}{2\gamma}, 
\end{equation}
and the second term is exactly $\gamma D(f_s^N)$. 
Finally, we let $\gamma=D_T N/2$ and derive 
\[ 
    2N\mathbb{E}_{\ba}^* \Big[ \sqrt{f_s^N}  \LLWA  \sqrt{f_s^N }\Big]
    \leq \frac{2v_0^2 N^2}{D_T} + D_T N^2D(f_s^N).
\] 
Since the angular dynamic is a diffusion, 
\begin{equation}\label{eq:bound2}
    \begin{aligned}
        \E_{\ba}^* \Big[ \sqrt{f_s^N}  \LLR  \sqrt{f_s^N }&\Big]=
        \sum_{x\in \TN}\E_{\ba}^* \Big[ \sqrt{f_s^N} \eta_x \partial_{\theta_x}^2 \sqrt{f_s^N }\Big]\\
        =&-\alpha\sum_{x\in \TN}\E_{\ba}^* \Big[ \Big( \partial_{\theta_x} \sqrt{f_s^N }\Big)^2\big| \eta_x=1 \Big]\leq 0.
    \end{aligned}
\end{equation}
Plugging~\eqref{eq:bound1} and~\eqref{eq:bound2} in~\eqref{eq:entropy_production}, we get 
\[ 
    H(\mu^N_t | \mu^*_{\ba}) +D_T N^2\int_{0}^{t}D(f_s^N)ds \leq H(\mu^N | \mu^*_{\ba})+t \frac{2v_0^2}{D_T} N^2.
\]
By Lemma~\ref{lemma:A1}, we have $H(\mu^N | \mu^*_{\ba})\leq C(\bm{\zeta},\bm{\alpha}) N^2$, thus 
\[ 
    H(\mu^N_t | \mu^*_{\ba}) + D_T N^2\int_{0}^{t}D(f_s^N)ds \leq \Big(C(\bm{\zeta},\bm{\alpha})+t\frac{2v_0^2 }{D_T}\Big) N^2.
\] 
Finally, by convexity of $H$ and $D$, we complete the proof of this proposition. 
\end{proof}

    \begin{lemma}
        \label{lemma:A2}
        Adopting the notations of Proposition~\ref{prop:entropy}, there exists a constant 
        $K(\bm{\zeta},\bm{\alpha})$ such that, 
        \[ 
        H(\mu^N|\mu^*_{\bm{\alpha}})\leq K(\bm{\zeta},\bm{\alpha})N^2.
        \]
    \end{lemma}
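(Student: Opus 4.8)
The plan is to prove the bound $H(\mu^N\mid\mu^*_{\bm\alpha})\le K(\bm\zeta,\bm\alpha)N^2$ directly from the explicit formula for the Radon--Nikodym derivative $f^N_0$ in~\eqref{eq:f^N}. Since $\mu^N$ is a product measure over the sites $x\in\TN$ with respect to the product reference measure $\mu^*_{\bm\alpha}$, the relative entropy factorizes:
\[
H(\mu^N\mid\mu^*_{\bm\alpha})=\E^*_{\bm\alpha}\big[f^N_0\log f^N_0\big]
=\sum_{x\in\TN}\E^*_{\bm\alpha}\Big[\phi_x\log\phi_x\Big],
\]
where $\phi_x$ is the single-site density
\[
\phi_x(\etah_x)=\eta^a_x\frac{\zeta^a(x/N,\theta_x)}{\alpha^a/2\pi}+\eta^p_x\frac{\zeta^p(x/N,\theta_x)}{\alpha^p/2\pi}+(1-\eta_x)\frac{1-\rho_0(x/N)}{1-\alpha}.
\]
Because $|\TN|=N^2$, it then suffices to bound the single-site entropy $\E^*_{\bm\alpha}[\phi_x\log\phi_x]$ uniformly in $x$ and $N$ by a constant $K(\bm\zeta,\bm\alpha)$, and sum over the $N^2$ sites.

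First I would record that the one-site marginal of $\mu^*_{\bm\alpha}$ assigns mass $\alpha^a$ to "active with angle uniformly distributed", mass $\alpha^p$ to "passive with angle uniformly distributed", and mass $1-\alpha$ to "empty". Computing the single-site relative entropy against this marginal, the three pieces of $\phi_x$ decouple into
\[
\E^*_{\bm\alpha}[\phi_x\log\phi_x]
=\int_{\SSS}\frac{\zeta^a(x/N,\theta)}{2\pi}\log\frac{\zeta^a(x/N,\theta)}{\alpha^a/2\pi}\,\dd\theta
+\int_{\SSS}\frac{\zeta^p(x/N,\theta)}{2\pi}\log\frac{\zeta^p(x/N,\theta)}{\alpha^p/2\pi}\,\dd\theta
+\big(1-\rho_0(x/N)\big)\log\frac{1-\rho_0(x/N)}{1-\alpha},
\]
where I use $\rho^\sigma_0(u)=\int_\SSS\zeta^\sigma(u,\theta)\dd\theta$ from~\eqref{eq:rho_0} to see that the "empty" weight of $\phi_x$ averages to $(1-\rho_0(x/N))/(1-\alpha)$. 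Each of the three terms on the right is of the form $\int p\log(p/q)$ for probability-ish densities and is therefore controlled: since the initial profile functions $\zeta^\sigma$ are assumed bounded (they are density profile functions into $\mathbb{M}_1(\SSS)$, hence $0\le\zeta^\sigma\le$ some constant, with $\rho_0\le 1$), and since $\alpha^a,\alpha^p,1-\alpha$ are fixed strictly positive constants, the integrand $x\log x$ is bounded above on the relevant range. Thus each term is bounded by a constant depending only on $\|\zeta^a\|_\infty,\|\zeta^p\|_\infty,\bm\alpha$, uniformly in $x/N\in\mathbb{T}^2$.

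Summing the uniform single-site bound over the $N^2$ sites yields $H(\mu^N\mid\mu^*_{\bm\alpha})\le K(\bm\zeta,\bm\alpha)N^2$ with $K(\bm\zeta,\bm\alpha)=\sup_{u\in\mathbb{T}^2}\E^*_{\bm\alpha}[\phi_u\log\phi_u]$, which is exactly the claim. The only mild subtlety — and the closest thing to an obstacle — is making sure one does not need a lower bound on $\zeta^\sigma$: the function $x\mapsto x\log x$ is bounded \emph{below} by $-1/e$ on $[0,\infty)$ and bounded above on any bounded interval, so even if $\zeta^\sigma$ vanishes somewhere the integrals stay finite; one should also invoke the standard convention $0\log 0=0$. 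If one instead wants a cleaner argument, one can note $\int p\log(p/q)\ge 0$ whenever $p,q$ are probability densities and bound each term separately from above by $\int p\log\|p/q\|_\infty$, again using only the $L^\infty$ bounds on $\zeta^\sigma$ and the positivity of the constants $\alpha^\sigma,1-\alpha$. This is routine and I would present it compactly.
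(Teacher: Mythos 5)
Your argument is correct and rests on the same observation as the paper's: the product structure of $\mu^N$ and $\mu^*_{\bm\alpha}$ reduces the claim to a uniform single-site bound times $N^2$. The paper reaches this a touch faster: it bounds the single-site factor in $L^\infty$, deduces $\|\log f_0^N\|_{L^\infty}\le K(\bm\zeta,\bm\alpha)N^2$ from the product formula~\eqref{eq:f^N}, and then uses $H(\mu^N|\mu^*_{\bm\alpha})=\E^*_{\bm\alpha}[f_0^N\log f_0^N]\le\|\log f_0^N\|_\infty$ in one line; your explicit tensorization of the relative entropy is an equivalent, slightly more verbose route. One small computational slip to fix: in your displayed single-site entropy the two angular integrals should read $\int_\SSS\zeta^\sigma(x/N,\theta)\log\frac{\zeta^\sigma(x/N,\theta)}{\alpha^\sigma/2\pi}\,\dd\theta$ rather than $\int_\SSS\frac{\zeta^\sigma(x/N,\theta)}{2\pi}\log\frac{\zeta^\sigma(x/N,\theta)}{\alpha^\sigma/2\pi}\,\dd\theta$ --- the factor $\alpha^\sigma/(2\pi)$ from the reference measure cancels the denominator of $\phi_x$, leaving $\zeta^\sigma$ in front, not $\zeta^\sigma/(2\pi)$ --- but this does not affect the uniform boundedness argument or the conclusion.
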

    \begin{proof}
        Recall that the initial density profile $\bm{\zeta}$ is continuous on the compact space 
        $\TT\times \SSS$, thus  
    \[ 
        \Big\|\eta^a_x \frac{\zeta^a(x/N,\theta_x)}{\alpha^a/2\pi}
        +\eta^p_x\frac{\zeta^p(x/N,\theta_x)}{\alpha^p/2\pi} 
        +(1-\eta_x)\frac{1-\rho_0(x/N) }{1-\alpha} 
          \Big\|_{L^{\infty}(\Sigma_N)}\leq  K(\bm{\zeta},\bm{\alpha}).
    \]
    Using this, we derive from~\eqref{eq:f^N} that 
    \[ 
        \Big\|\log \frac{d\mu^N}{d\mu^*_{\bm{\alpha}}}\Big\|_{L^{\infty}(\Sigma_N)}\leq 
         K(\bm{\zeta},\bm{\alpha})N^2. 
    \]
    \end{proof}
    \begin{lemma}
        \label{lemma:A1}
        Adopting the notations of Proposition~\ref{prop:entropy}, it holds that 
        \[ 
            L_N \log f^N_t \leq \frac{2}{\sqrt{f^N_t}}L_N \sqrt{f^N_t}.  
        \]
    \end{lemma}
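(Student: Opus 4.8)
\textbf{Proof proposal for Lemma~\ref{lemma:A1}.}

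The plan is to prove the pointwise inequality $L_N \log f \leq \frac{2}{\sqrt{f}} L_N \sqrt{f}$ for any strictly positive function $f$ on $\Sigma_N$, applied with $f = f_t^N$, by treating each of the three pieces of the generator $L_N = N^2 D_T \LL_N + N \LLWA_N + \LLR_N$ separately. The key elementary fact is that for $a, b > 0$,
\[
    \log b - \log a \leq \frac{2}{\sqrt{a}}\big( \sqrt{b} - \sqrt{a} \big),
\]
which follows from the concavity of $x \mapsto \log x$ together with $\log x^2 = 2\log x$: indeed, setting $u = \sqrt{a}$, $v = \sqrt{b}$, concavity of $\log$ gives $\log v^2 - \log u^2 = 2(\log v - \log u) \leq 2 \frac{v-u}{u}$. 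Equivalently, one checks $\log(b/a) \leq \frac{2(\sqrt{b}-\sqrt{a})}{\sqrt{a}}$ directly: writing $t = \sqrt{b/a}$, the claim is $2\log t \leq 2(t-1)$, i.e. $\log t \leq t-1$, the standard inequality.

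First I would handle the SSEP part $\LL_N$. Since $\LL_N$ acts by summing directed gradients $\nabla_{x,x+z} g = \eta_x(1-\eta_{x+z})(g(\etah^{x,x+z}) - g(\etah))$ over edges, and the prefactor $\eta_x(1-\eta_{x+z})$ is nonnegative, it suffices to bound each term. For each edge with $\eta_x(1-\eta_{x+z}) = 1$, apply the elementary inequality with $a = f(\etah)$, $b = f(\etah^{x,x+z})$:
\[
    \nabla_{x,x+z}\log f = \log f(\etah^{x,x+z}) - \log f(\etah) \leq \frac{2}{\sqrt{f(\etah)}}\big( \sqrt{f(\etah^{x,x+z})} - \sqrt{f(\etah)} \big) = \frac{2}{\sqrt{f}}\nabla_{x,x+z}\sqrt{f}.
\]
Summing over $x$ and $|z|=1$ gives $\LL_N \log f \leq \frac{2}{\sqrt{f}}\LL_N \sqrt{f}$. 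The same argument applies verbatim to $\LLWA_N$, whose generator~\eqref{eq:LLWA_N} is also a sum of directed gradients $\nabla_{x,x+z}$ weighted by the factors $\frac{v_0}{2}z^\top \mathbf{e}(\theta_x)\eta^a_x$; here one must be slightly careful because these weights are \emph{not} sign-definite. However, $\LLWA_N$ should be grouped together with the dominant symmetric part: for $N$ large enough the combined operator $N^2 D_T \LL_N + N \LLWA_N$ has, on each edge, an effective nonnegative jump rate $N^2 D_T + N\frac{v_0}{2}z^\top\mathbf{e}(\theta_x)\eta^a_x \geq 0$ (this is precisely the reason $\LLD_N$ is a bona fide generator for large $N$), so the per-edge inequality can be applied to each jump rate-weighted directed gradient and summed.

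For the rotational part $\LLR_N = D_R \sum_x \eta_x \partial_{\theta_x}^2$, I would use the calculus identity $\partial_\theta^2 \log f = \frac{\partial_\theta^2 f}{f} - \frac{(\partial_\theta f)^2}{f^2}$ and $\partial_\theta^2 \sqrt{f} = \frac{\partial_\theta^2 f}{2\sqrt{f}} - \frac{(\partial_\theta f)^2}{4 f^{3/2}}$, so that
\[
    \frac{2}{\sqrt{f}}\partial_\theta^2 \sqrt{f} = \frac{\partial_\theta^2 f}{f} - \frac{(\partial_\theta f)^2}{2 f^2} \geq \frac{\partial_\theta^2 f}{f} - \frac{(\partial_\theta f)^2}{f^2} = \partial_\theta^2 \log f,
\]
the inequality holding because $\frac{(\partial_\theta f)^2}{2f^2} \leq \frac{(\partial_\theta f)^2}{f^2}$. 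Multiplying by the nonnegative factor $D_R \eta_x$ and summing over $x$ gives $\LLR_N \log f \leq \frac{2}{\sqrt{f}}\LLR_N \sqrt{f}$. Adding the three contributions yields the claim. The main (minor) obstacle is the bookkeeping around the non-sign-definite weakly asymmetric weights, which is resolved cleanly by keeping $N^2 D_T \LL_N$ and $N\LLWA_N$ together and invoking the positivity of the physical jump rates $R^a(x,x+z) \geq 0$ valid for $N$ large.
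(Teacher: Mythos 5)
Your proof is correct and follows essentially the same route as the paper: the elementary inequality $\log b - \log a \leq \tfrac{2}{\sqrt{a}}(\sqrt{b}-\sqrt{a})$ applied edgewise with nonnegative jump rates for the displacement part, and a direct second-derivative computation for the rotational part. The only difference is presentational — the paper works with $\LLD$ as a whole from the start (whose edgewise rates $R^a(x,x+z)\geq 0$ are nonnegative by construction), whereas you first consider $\LL_N$ and $\LLWA_N$ separately and then correctly recognize that they must be regrouped to obtain sign-definite rates.
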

    \begin{proof}In this proof we abbreviate the notation $f^N_t$ as $f$. 
        Thanks to the elementary inequality 
        \[ 
        \log b - \log a \leq \frac{2}{\sqrt{a}}(\sqrt{b}-\sqrt{a}),
        \]
        we can easily control the dynamic part of the full generator 
        \[ 
        \LLD \log f \leq \frac{2}{\sqrt{f}} \LLD \sqrt{f}.
        \]
        It remains to consider the rotation part of the full generator. 
        First, we observe that 
        \[ 
        2\partial_{\theta}^2 \sqrt{f}=
        -\frac{1}{2}\Big( \frac{1}{\sqrt{f}} \Big)^3 
        \big( \partial_{\theta} f \big)^2 + \frac{1}{\sqrt{f}} \partial_{\theta}^2 f,
        \]
        and  that 
        \[ 
        \begin{aligned}
            \partial_{\theta}^2 \log f&=-\Big( \frac{1}{\sqrt{f}} \Big)^4 
            \big( \partial_{\theta} f \big)^2 + 
            \Big( \frac{1}{\sqrt{f}} \Big)^2 \partial_{\theta}^2 f\\
            &\leq \frac{1}{\sqrt{f}}\Big( -\frac{1}{2} \Big( \frac{1}{\sqrt{f}} \Big)^3 
            \big( \partial_{\theta} f \big)^2 + \frac{1}{\sqrt{f}} \partial_{\theta}^2 f \Big)\\
            &= \frac{2}{\sqrt{f}}\partial_{\theta}^2 \sqrt{f}, 
        \end{aligned}
        \]
        with which we calculate that 
        \[ 
            \LLR \log f = \sum_{x\in \mathbb{T}^2_N} \eta_x \partial_{\theta_x}^2 \log f\leq 
            \frac{2}{\sqrt{f}} \sum_{x\in \mathbb{T}^2_N} \eta_x \partial_{\theta_x}^2 
            \sqrt{f}=\frac{2}{\sqrt{f}} \LLR \sqrt{f}.
        \]
        Combing dynamic part and rotation part the proof is completed. 
    \end{proof}

\subsection{High density estimate}

In this section we show that $\overline{f}_T^N=1/T\int_0^T f_t^N(\etah)\dd t$ satisfies  
the condition ({\romannumeral 2})  of Lemma~\ref{lemma:oneblock},  
i.e., large microscopic boxes are rarely fully occupied under the dynamics. 

Let us denote by \( E_{p,x} \) the event
\begin{equation}
E_{p,x} = \left\{ \sum_{y \in B_p(x)} \eta_y \leq |B_p(x)| - 2 \right\},
\end{equation}
on which the box of size \( p \) around \( x \) contains at least two empty sites. 
\begin{proposition}[Control on full clusters]\label{prop:full_cluster}
    For any positive time $T$,
\begin{equation}
    \lim_{p \to \infty} \lim_{N \to \infty} \mathbb{E}_{\mu_N^{\lambda}} \left( \int_0^T \frac{1}{N^2} \sum_{x \in \mathbb{T}_N^2} \mathbbm{1}_{E_p^c,x}(t) dt \right) = 0.
\end{equation}
\end{proposition}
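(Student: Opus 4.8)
\textbf{Proof plan for Proposition~\ref{prop:full_cluster}.}
The plan is to follow the classical ``full cluster'' argument (Lemma 4.1.3 on page 47 of~\cite{Erignoux21}), adapting it to the presence of two particle types and the reference measure $\mu^*_{\bm{\alpha}}$. First I would pass from $\mathbb{P}_{\mu^N}$ to the reference measure: by the entropy inequality and Proposition~\ref{prop:entropy} (the bound $H(\overline{f}^N_T)\leq K_0 N^2$ on the time-averaged density), it suffices to show
\[
\limsup_{p\to\infty}\limsup_{N\to\infty}\Ema\Bigg[ \overline{f}^N_T\,\frac{1}{N^2}\sum_{x\in\TN}\mathbbm{1}_{E_{p,x}^c}\Bigg]=0 ,
\]
and by the variational characterization of entropy this in turn reduces to estimating, for each fixed $\gamma>0$,
\[
\frac{1}{\gamma N^2}\log \Ema\Bigg[\exp\Big(\gamma\sum_{x\in\TN}\mathbbm{1}_{E_{p,x}^c}\Big)\Bigg].
\]
So the real task is a purely static estimate under the product measure $\mu^*_{\bm{\alpha}}$: showing that the probability of a macroscopic density of fully-occupied $p$-boxes is superexponentially small in $N^2$ as $p\to\infty$.

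Next I would carry out the static estimate. Tile $\TN$ into $O((N/p)^2)$ disjoint boxes of side $\sim p$; the events $\{B$ is entirely occupied with at most one hole$\}$ over disjoint boxes are independent under $\mu^*_{\bm{\alpha}}$. Under $\mu^*_{\bm{\alpha}}$ each site is occupied (by either type) with probability $\alpha=\alpha^a+\alpha^p<1$, so the probability that a given $p$-box has at least $|B_p|-1$ occupied sites is at most $|B_p|\,\alpha^{\,|B_p|-1}=:q_p$, which decays to $0$ as $p\to\infty$ (here the two-type structure is irrelevant since $\mathbbm{1}_{E_{p,x}^c}$ only sees $\eta_x=\eta^a_x+\eta^p_x$, and $\alpha<1$ is guaranteed because $\bm{\alpha}\in[0,1]^2$ with $\alpha\le 1$ — one takes $\bm{\alpha}$ such that $\alpha<1$, the relevant case). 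A standard large-deviation bound for sums of independent indicators then gives that $N^{-2}\sum_x \mathbbm{1}_{E_{p,x}^c}$ exceeds any $\delta>0$ with probability $\le \exp(-c(\delta,p)N^2)$ where $c(\delta,p)\to\infty$ as $p\to\infty$ for fixed $\delta$; combined with the entropy bound $K_0N^2$ and the fact that overlapping translates only inflate the count by a bounded combinatorial factor, this closes the argument after sending $N\to\infty$ then $p\to\infty$.

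Finally I would assemble the pieces: the entropy inequality converts the $\mathbb{P}_{\mu^N}$-expectation of the time integral into $K_0/\gamma + \gamma^{-1}N^{-2}\log\Ema[\exp(\cdots)]$, the static estimate makes the second term $\to 0$ (first $N\to\infty$, then $p\to\infty$), and then letting $\gamma\to\infty$ after $p\to\infty$ kills the $K_0/\gamma$ term. This is exactly the scheme used to verify hypothesis~(ii) of the one-block estimate (Lemma~\ref{lemma:oneblock}) for $\overline{f}^N_T$. The main obstacle is bookkeeping rather than conceptual: one must handle the non-disjoint translates $B_p(x)$ carefully (restricting to a sublattice of well-separated centers and controlling the combinatorial overcounting), and one must be careful that the constant $K_0$ from Proposition~\ref{prop:entropy} does not depend on $p$, so that the order of limits $N\to\infty$, $p\to\infty$, $\gamma\to\infty$ is legitimate. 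Since the proof is essentially identical to that of Lemma 4.1.3 in~\cite{Erignoux21} — the only change being that independence and the bound $\alpha<1$ are read off from the product structure of $\mu^*_{\bm{\alpha}}$ rather than of the single-type grand canonical measure — I would state it in this condensed form and refer to~\cite{Erignoux21} for the routine details.
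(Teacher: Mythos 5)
Your plan is a genuinely different route from the paper's, and unfortunately it does not work: the key large-deviations claim, that the rate $c(\delta,p)$ for the event $\{N^{-2}\sum_x\mathbbm{1}_{E^c_{p,x}}>\delta\}$ under $\mu^*_{\bm\alpha}$ tends to infinity as $p\to\infty$, is false. Intuitively, the cheapest way for a $\delta$-fraction of $p$-boxes to be full is for a macroscopic blob of density $\approx 1$ to occupy a $\delta$-fraction of $\mathbb{T}^2$; that blob costs $\sim\alpha^{\delta N^2}$ regardless of $p$, so the rate saturates at $\delta\log(1/\alpha)$ rather than diverging. Concretely, the sublattice/H\"older decomposition bounds the annealed free energy by
\begin{equation*}
\frac{1}{N^2}\log\Ema\big[e^{\gamma\sum_x\mathbbm{1}_{E^c_{p,x}}}\big]\ \le\ \frac{1}{|B_p|}\log\big(1+q_p(e^{\gamma|B_p|}-1)\big),\qquad q_p\le(|B_p|+1)\,\alpha^{|B_p|-1},
\end{equation*}
and, as $p\to\infty$, the right-hand side tends to $0$ when $\gamma<\log(1/\alpha)$ but to $\gamma-\log(1/\alpha)>0$ when $\gamma>\log(1/\alpha)$. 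Feeding this into the entropy inequality with the time-averaged density $\overline f^N_T$ (entropy $\le K_0N^2$), the best bound you can extract is
\begin{equation*}
\limsup_{p\to\infty}\limsup_{N\to\infty}\Ema\Big[\overline f^N_T\,\tfrac{1}{N^2}\sum_x\mathbbm{1}_{E^c_{p,x}}\Big]\ \le\ \inf_{\gamma>0}\Big(\tfrac{K_0}{\gamma}+\big(1-\tfrac{\log(1/\alpha)}{\gamma}\big)_+\Big)\ =\ \min\Big(1,\ \tfrac{K_0}{\log(1/\alpha)}\Big),
\end{equation*}
which is strictly positive. In particular you cannot send $\gamma\to\infty$ after $p\to\infty$: the exponential moment only stays controlled for $\gamma<\log(1/\alpha)$, so the entropy term $K_0/\gamma$ never dies. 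Another way to see the obstruction: the proposition must hold for any continuous initial profile with $\rho_0\le 1$, including profiles that touch $1$ on an open set, for which the full-cluster density is order one at $t=0$ even though $H(\mu^N|\mu^*_{\bm\alpha})\le K_0N^2$ still holds. A purely static estimate under $\mu^*_{\bm\alpha}$ therefore cannot suffice; genuinely dynamical input is required.

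The paper instead follows the route of Proposition 3.3.2 in~\cite{Erignoux21} (not Lemma 4.1.3, which is the static ingredient you were reaching for and which proves a weaker, $p$-dependent estimate used elsewhere). It passes to the macroscopic total density $\rho_t$, applies the weight $\phi(\rho)=1/(1-\rho)$, and derives a Gronwall inequality for $t\mapsto\int_{\mathbb{T}^2}\phi(\rho_t)\,du$ from the PDE satisfied by $\rho$, using the elementary bound $\rho^as(\rho)+d_s(\rho)\le 1-\rho$ to control the drift. Finiteness of $\int\phi(\rho_t)$ then bounds the Lebesgue measure of $\{\rho_t\ge 1-\delta\}$, hence the time-integrated full-cluster density, and letting $\delta\to 0$ closes the argument. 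This uses the diffusive smoothing built into the macroscopic equation, which is precisely what the static entropy argument cannot see.
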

\begin{proof}[Sketch of the proof of Proposition~\ref{prop:full_cluster}]
    Since the proof is analogous to the proof of Proposition 3.3.2 in~\cite{Erignoux21}, 
    we only give a sketch  to point out the main difference. 

    To prove that the box of microscopic size $p$ is not full, setting $p' = (2p + 1)^2$ the cardinal of $B_p$, it is enough to prove thanks to the microscopic setting that
    \[
    \int_{[0,T] \times \mathbb{T}^2} \rho_t^{p'}(u) du dt \underset{p' \to \infty}{\longrightarrow} 0,
    \]
    where $\rho_t(u)$ denotes the macroscopic density in $u$ at time $t$.
    
    We expect the total density $\rho$ to follow the partial differential equation
    \begin{equation*}
    \partial_t \rho = D_T \Delta \rho - v_0  \nabla \cdot [\mathbf{p}^a (\rho^a s(\rho)  +  d_s(\rho))],
    \end{equation*}
    which is obtained by integrating the PDE~\eqref{eq:pde} w.r.t. $\theta$.  
By letting $\phi(\rho) = 1/(1-\rho)$, and $\lambda=\frac{v_0}{2D_T}$, 
and note that 
\[ 
    \rho^a s(\rho)  +  d_s(\rho) \leq  \rho s(\rho)  +  d_s(\rho) = 1-\rho,
\]
we can therefore formally write, 
\begin{align*}
    \frac{1}{D_T}\partial_t \int_{\mathbb{T}^2} \phi(\rho_t) du &= 
    \int_{\mathbb{T}^2} \phi'(\rho_t) 
    \left[ \Delta \rho_t - 2\lambda \nabla \cdot [\mathbf{p}^a_t (\rho^a_t s(\rho_t)  +  d_s(\rho_t))]  \right] du \\
    &= \int_{\mathbb{T}^2} \phi''(\rho_t) 
    \left[ -  (\nabla \rho_t)^2 + 2\lambda  [\mathbf{p}^a_t (\rho^a_t s(\rho_t)  +  d_s(\rho))] \nabla \rho_t \right] du \\
    &\leq \int_{\mathbb{T}^2} \phi''(\rho_t) 
    \left[ -  (\nabla \rho_t)^2 + \frac{(\nabla \rho_t)^2}{2} + 2\lambda^2  [\mathbf{p}^a_t (\rho^a_t s(\rho_t)  +  d_s(\rho_t))]^2 \right] du \\
    &\leq \int_{\mathbb{T}^2} \phi''(\rho_t) 2\lambda^2 
    \|\mathbf{p}^a_t\|_\infty^2 (1-\rho_t)^2 du = 
    4\lambda^2  \|\mathbf{p}^a_t\|_\infty^2 \int_{\mathbb{T}^2} \phi(\rho_t) du.
\end{align*}
Applying Gronwall's inequality to obtain that for any time $t$,
\[
    \int_{\mathbb{T}^2} \phi(\rho_t) du \leq e^{4D_T \lambda^2  \|\mathbf{p}^a_t\|_\infty^2 t} \int_{\mathbb{T}^2} \phi(\rho_0) du.
\]
Furthermore, for any time $t$,
\[
\int_{\mathbb{T}^2} \phi(\rho_t) du \geq \frac{1}{\delta} \int_{\mathbb{T}^2} \mathbbm{1}_{\{\rho_t \geq 1-\delta\}} + \int_{\mathbb{T}^2} \mathbbm{1}_{\{\rho_t \leq 1-\delta\}} = \frac{1-\delta}{\delta} \int_{\mathbb{T}^2} \mathbbm{1}_{\{\rho_t \geq 1-\delta\}} + 1,
\]
therefore, for any time $t$,
\[
 \quad \int_{\mathbb{T}^2} \mathbbm{1}_{\{\rho_t \geq 1-\delta\}} \leq \frac{\delta}{1-\delta} \left[ e^{4D_T \lambda^2 \|\mathbf{p}^a_t\|_\infty^2 t} \int_{\mathbb{T}^2} \phi(\rho_0) du - 1 \right] \underset{\delta \to 0}{\longrightarrow} 0.
\]
As a consequence, for any time $t$, we could therefore write
\[
 \quad \iint_{[0,T] \times \mathbb{T}^2} \rho_t^{p'}(u) du dt \leq T (1-\delta)^{p'} + \iint_{[0,T] \times \mathbb{T}^2} \mathbbm{1}_{\{\rho_t \geq 1-\delta\}}.
\]
The first term in the right-hand side vanishes for any fixed $\delta$ as $p' \to \infty$, whereas the second becomes as small as needed letting $\delta \to 0$.
\end{proof}

\section{Proofs in Section~\ref{sec:main3}}\label{appsec:main3}
\begin{proof}[Proof of Lemma~\ref{lemma:8.52}]   
    Let  $\tilde{B}_l^i = \{x \in B_l ; x_i \leq l-1\}$, 
    using the identity
    $$
    \sum_{x \in \widetilde{B}_l^i} \tau_x j_i^{a,\Phi^a_i} = \mathcal{L}_l \sum_{x \in B_l} x_i \eta_x^{a,\Phi^a_i}.
    $$
    we obtain that 
\begin{equation*}
        \begin{aligned}
            \sum_{x \in B_{l_\psi}} \tau_x \psi =& 
                \sum_{x \in B_{l_\psi}} \tau_x \LL_l g + 
                \sum_{i=1,2} \sum_{x \in \widetilde{B}_l^i}\tau_x \big( j_i^{a,\Phi^a_i} + j_i^{p,\Phi^p_i} \big)\\
            &\quad -\sum_{i=1,2} \sum_{x \in \widetilde{B}_l^i \setminus B_{l_\psi}}\tau_x \big( j_i^{a,\Phi^a_i} + j_i^{p,\Phi^p_i} \big)\\
            =&\LL_l   \Big(
                  \sum_{x \in B_{l_\psi}} \tau_x g + 
                  \sum_{i=1,2} \sum_{x \in B_{l}}x_i \big( \eta_x^{a,\Phi^a_i} + \eta_x^{p,\Phi^p_i} \big)
                   \Big)\\
            &\quad -\sum_{i=1,2} \sum_{x \in \widetilde{B}_l^i \setminus B_{l_\psi}}
            \tau_x \big( j_i^{a,\Phi^a_i} + j_i^{p,\Phi^p_i} \big)\\
            &= \LL_l F + G.
        \end{aligned}
\end{equation*}
    where we shorten 
    $$ 
    \begin{aligned}
        &F =   \sum_{x \in B_{l_\psi}} \tau_x g + 
        \sum_{i=1,2} \sum_{x \in B_{l}}x_i \big( \eta_x^{a,\Phi^a_i} + \eta_x^{p,\Phi^p_i} \big),\\
        &G = -\sum_{i=1,2} \sum_{x \in \widetilde{B}_l^i \setminus B_{l_\psi}}
        \tau_x \big( j_i^{a,\Phi^a_i} + j_i^{p,\Phi^p_i} \big). 
    \end{aligned}
    $$
    Then, we can expand  
    \begin{equation}\label{eq:8.56}
     \Big<  (-\mathcal{L}_l)^{-1} \sum_{x \in B_l} \tau_x \psi , \sum_{x \in B_l} \tau_x \psi \Big>_{l,\widehat{K}_l} 
     = \big<  F (-\mathcal{L}_l) F \big>_{l,\widehat{K}_l} 
     - 2 \big<  F G \big>_{l,\widehat{K}_l}  
     + \big<  G (-\mathcal{L}_l)^{-1} G\big>_{l,\widehat{K}_l}. 
    \end{equation}

    First, we prove the last term in~\eqref{eq:8.56} vanishes in the limit. 
    By the Lemma~\ref{lemma:8.53},  
    we obtain that 
    \begin{align*}
        \big< h, G \big>_{l,\widehat{K}}\leq \frac{1}{2} C(\Phi^a,\Phi^p)|\widetilde{B}_l^i 
        \setminus B_{l_\psi}| +  \Eu[D]_{l,\widehat{K}}(h), 
    \end{align*}
    hence  
    by variational formula for the variance, we can get 
    $$
    \big<  G (-\mathcal{L}_l)^{-1} G\big>_{l,\widehat{K}_l} = 
    \sup_h \{ \big< h, G \big>_{l,\widehat{K}} - \Eu[D]_{l,\widehat{K}} (h)\}\leq C(\Phi^a,\Phi^p) 
    |\widetilde{B}_l^i \setminus B_{l_\psi}| = O(l), 
    $$
    and therefore, after multiplying $(2l+1)^{-2}$, the corresponding contribution vanishes in the limit.
 
    Regarding the second term in~\eqref{eq:8.56}, we write  
    \[ 
        \begin{aligned}
            \big<F,G\big>_{l,\widehat{K}}=& 
            \sum_{\substack{i=1,2 \\y \in B_l }} 
            \sum_{\substack{k=1,2 \\ x \in \widetilde{B}_l^i \setminus B_{l_\psi}}}y_i
            \Big<\eta_y^{a,\Phi^a_i}+\eta_y^{p,\Phi^p_i},  
            \tau_x \Big( j_k^{a,\Phi^a_k}+j_k^{p,\Phi^p_k} \Big)
            \Big>_{l,\widehat{K}}\\
            &\quad +\sum_{y \in B_{l_{\psi}} } \sum_{\substack{k=1,2 \\ x \in \widetilde{B}_l^i \setminus B_{l_\psi}}}
            \Big<\tau_y g ,  \tau_x \Big( j_k^{a,\Phi^a_k}+j_k^{p,\Phi^p_k} \Big)\Big>_{l,\widehat{K}}.
        \end{aligned}
    \]
    Elementary computations yield
    $$
    \Big<\eta_y^{a,\Phi^a_i}+\eta_y^{p,\Phi^p_i},  
    \tau_x \Big( j_k^{a,\Phi^a_k}+j_k^{p,\Phi^p_k} \Big) \Big>_{l,\widehat{K}} = C (\mathbbm{1}_{\{y=x\}} - \mathbbm{1}_{\{y=x+e_k\}}),
    $$
    where we shortened 
    $$C = \big<\Phi_i^a \Phi_k^a (\theta_0) \eta_0^a (1 - \eta_{e_k})\big>_{l,\widehat{K}}
    +\big<\Phi_i^p \Phi_k^p (\theta_0) \eta_0^p (1 - \eta_{e_k})\big>_{l,\widehat{K}}.$$
    Again after elementary computations, we get  
    $$
    \sum_{\substack{i=1,2 \\y \in B_l }} \sum_{\substack{k=1,2 \\ x \in \widetilde{B}_l^i \setminus B_{l_\psi}}}y_i
    \Big<\eta_y^{a,\Phi^a_i}+\eta_y^{p,\Phi^p_i},  \tau_x \Big( j_k^{a,\Phi^a_k}+j_k^{p,\Phi^p_k} \Big)\Big>_{l,\widehat{K}} = O(l).
    $$
    Similarly, for any $y$ such that $\{x, x + e_k\} \cap B_{s_g}(y) = \emptyset$, we have 
    $$\Big<\tau_y g, \tau_x j_k^{\sigma,\Phi^\sigma_k}\Big>_{l,\widehat{K}}  = 0, $$ so that
    $$
    \sum_{y \in B_{l_{\psi}} } \sum_{\substack{k=1,2 \\ x \in \widetilde{B}_l^i \setminus B_{l_\psi}}}
            \Big<\tau_y g ,  \tau_x \Big( j_k^{a,\Phi^a_k}+j_k^{p,\Phi^p_k} \Big)\Big>_{l,\widehat{K}} = O(l).
    $$Combing, we get  the second term in~\eqref{eq:8.56}
    $$
    \left<F,G\right>_{l,\widehat{K}} \leq C(\psi)|\widetilde{B}_l^i \setminus B_{l_\psi}|  = O(l), 
    $$
    which after multiplying $(2l+1)^{-2}$ vanishes  in the limit as well.
    
    Finally we only need to compute the first term in~\eqref{eq:8.56}
\begin{equation}\label{eq:8.54}
            \begin{aligned}
                \big<  F (-\mathcal{L}_l) F \big>_{l,\widehat{K}_l}=& 
                \Big<\sum_{\substack{i=1,2 \\x \in B_l }} x_i (\eta_x^{a,\Phi^a_i}+\eta_x^{p,\Phi^p_i}),  
                (-\mathcal{L}_l) \sum_{\substack{i=1,2 \\x \in B_l }} x_i (\eta_x^{a,\Phi^a_i}+\eta_x^{p,\Phi^p_i})\Big>_{l,\widehat{K}}\\
                & +
                \Big<\sum_{x \in B_{l_{\psi}} } \tau_x g ,  (-\mathcal{L}_l) \sum_{x \in B_{l_{\psi}} } \tau_x g \Big>_{l,\widehat{K}}\\
                & + 
                2\Big<\sum_{x \in B_{l_{\psi}} } \tau_x g ,  
                (-\mathcal{L}_l) \sum_{\substack{i=1,2 \\x \in B_l }} x_i (\eta_x^{a,\Phi^a_i}+\eta_x^{p,\Phi^p_i}) \Big>_{l,\widehat{K}}.
            \end{aligned}
\end{equation}
    We deal these terms on the right hand side of~\eqref{eq:8.54} separately. For the second term, we have
    $$
    \Big<\sum_{x \in B_{l_{\psi}} } \tau_x g ,  (-\mathcal{L}_l) \sum_{x \in B_{l_{\psi}} } \tau_x g \Big>_{l,\widehat{K}}
     = \frac{1}{2} \sum_{y,y+z \in B_l} \Big< \Big[ \nabla_{y,y+z} \sum_{x \in B_{l_\psi}} \tau_x g \Big]^2  \Big>_{l,\widehat{K}}   .
    $$
    Suppose $s_g\geq 1$, by definition of $\psi$ and 
    \[ 
    \LL g = \sum_{(x,x+z )\in B_{s_g+1}}a(x,x+z)\widetilde{\nabla}_{x,x+z}g,  
    \]
    we have $s_{\psi}=s_{\LL g}=s_g+1$. 
    Since for any $ y \in B_l\cap B_{l_{\psi}-s_g-1} $,  
    $$\{y,y+z\}\cap B_{s_g}(x)=\emptyset, \quad  \forall x \in \mathbb{Z}^2 \setminus B_{l_\psi},$$ 
    i.e. $\tau_xg$ cannot feel the change of configuration at $y,y+z$. 
    Thus we can write
    $$
    \nabla_{y,y+z} \sum_{x \in B_{l_\psi}} \tau_x g = \nabla_{y,y+z} \Sigma_g,\quad \forall y \in B_l\cap B_{l_{\psi}-s_g-1}.
    $$ 
    Furthermore, for any $y \in B_l \setminus B_{l_{\psi}-s_g-1}$,
    $$
    \Bigg[\nabla_{y,y+z} \sum_{x \in B_{l_\psi}} \tau_x g\Bigg]^2 = 
    \Bigg[\nabla_{y,y+z} \sum_{x\in B_{2s_g+1}(y)} \tau_x g\Bigg]^2 \leq C(s_g) \|g\|^2_\infty.
    $$
    Since all the $\nabla_{y,y+z} \Sigma_g$ have the same distribution under $\mu_{l,\widehat{K}}$ for $y \in B_l\cap B_{l_{\psi}-s_g-1}$, 
    and $l_{\psi}-s_g-1 = l- 2s_g -3$, 
    we can therefore defive that 
    \begin{align*}
    &\frac{1}{(2l+1)^2} \Big<\sum_{x \in B_{l_{\psi}} } \tau_x g ,  
    (-\mathcal{L}_l) \sum_{x \in B_{l_{\psi}} } \tau_x g \Big>_{l,\widehat{K}} \\
    =& \frac{|B_{l_{\psi}-s_g-1}|}{(2l+1)^2} \sum_{i=1,2} \mathbb{E}_{l,\widehat{K}}
     \big[(\nabla_{0,e_i} \Sigma_g)^2\big] + C(\psi )O
     \Big(\frac{|B_l \setminus B_{l_{\psi}-s_g-1}|}{(2l+1)^2}\Big) \\
    =& \sum_{i=1}^2 \big<\big[\nabla_{0,e_i} \Sigma_g\big]^2\big>_{l,\widehat{K}} + C(\psi )O(1/l)\\
    =& \sum_{i=1}^2 \Big<\big[\eta^a_0(1-\eta_{e_i})\widetilde{\nabla}_{0,e_i} \Sigma_g\big]^2+
    \big[\eta^p_0(1-\eta_{e_i})\widetilde{\nabla}_{0,e_i} \Sigma_g\big]^2\Big>_{l,\widehat{K}}  +
     C(\psi )O(1/l).
    \end{align*}
    
    We now consider the first term in~\eqref{eq:8.54}
    We temporarily let $f =\sum_{k} \sum_{x} x_k (\eta_x^{a,\Phi^a_k}+\eta_x^{p,\Phi^p_k})$, and  
    we can compute that 
    \[ 
    \widetilde{\nabla}_{y,y+e_i}f = \tau_y (-\del_i \eta^{a,\Phi^a_i}-\del_i \eta^{p,\Phi^p_i}). 
    \]
    Then, by the fact that under $\mu_{l,\widehat{K}_l}$ all the terms have the same distribution, 
    \begin{align*}
        \frac{1}{(2l+1)^2}\big< f , (-\mathcal{L}_l) f \big>_{l,\widehat{K}} 
        =& \frac{1}{2(2l+1)^2} \sum_{i=1,2} \sum_{(y,y+e_i ) \subset B_l} 
        \Big< a(y,y+e_i) \Big( \widetilde{\nabla}_{y,y+e_i}f  \Big)^2  \Big>_{l,\widehat{K}}\\
        =& \frac{1}{2(2l+1)^2} \sum_{i=1,2} \sum_{(y,y+e_i ) \subset B_l} 
        \Big< \tau_y \Big( j_i^{a,\Phi^a_i}+j_i^{p,\Phi^p_i} \Big)^2  \Big>_{l,\widehat{K}}\\
        =& \frac{1}{2} \sum_{i=1,2}  \Big<  \Big( j_i^{a,\Phi^a_i}+j_i^{p,\Phi^p_i} \Big)^2  \Big>_{l,\widehat{K}} + C(\psi)O(1/l)\\
        =& \sum_{\sigma=a,p}\sum_{i=1,2} 
         \big<  \big[ \Phi^\sigma_i(\theta_0)\eta^\sigma_0(1-\eta_{e_i}) \big]^2  \big>_{l,\widehat{K}} + C(\psi)O(1/l)
    \end{align*}
    For the last term in~\eqref{eq:8.54}, by the fact that
    \begin{align*}
        \big<g,  (-\mathcal{L}_l) f \big>_{l,\widehat{K}}=
        \frac{1}{2} \sum_{i=1,2} \sum_{(y,y+e_i ) \subset B_l} 
        \big< a(y,y+e_i) \big( \widetilde{\nabla}_{y,y+e_i}f  \big) \big( \widetilde{\nabla}_{y,y+e_i}g  \big)   
        \big>_{l,\widehat{K}}
    \end{align*}
    and by straight forward adapting the previous operations to the cross term, we obtain that 
    \begin{align*}
        &\frac{1}{(2l+1)^2}
        \Big<\sum_{x \in B_{l_{\psi}} } \tau_x g ,  
        (-\mathcal{L}_l) \sum_{\substack{i=1,2 \\x \in B_l }} x_i (\eta_x^{a,\Phi^a_i}+\eta_x^{p,\Phi^p_i}) \Big>_{l,\widehat{K}} \\
        =& \frac{1}{2} \sum_{i=1,2}  \Big<  \Big( j_i^{a,\Phi^a_i}+j_i^{p,\Phi^p_i} \Big), 
        \widetilde{\nabla}_{0,e_i}\Sigma_g     \Big>_{l,\widehat{K}} + C(\psi)O(1/l)\\
        =& \sum_{\sigma=a,p}\sum_{i=1,2}  \Big<   \Phi^\sigma_i(\theta_0)\eta^\sigma_0(1-\eta_{e_i})\widetilde{\nabla}_{0,e_i}\Sigma_g  
          \Big>_{l,\widehat{K}} + C(\psi)O(1/l). 
    \end{align*}
    Combing, the first term in~\eqref{eq:8.56} is   
    \[ 
        \begin{aligned}
             \frac{1}{(2l+1)^2} \big<  F (-\mathcal{L}_l) F \big>_{l,\widehat{K}_l}
            =&\sum_{\sigma=a,p}\sum_{i=1,2}  
            \big<\big[\eta^\sigma_0(1-\eta_{e_i})\widetilde{\nabla}^{0,e_i} \Sigma_g\big]^2\big>_{l,\widehat{K}}  \\
            &+\sum_{\sigma=a,p}\sum_{i=1,2}  \big<  \big[\eta^\sigma_0(1-\eta_{e_i}) \Phi^\sigma_i(\theta_0) \big]^2    \big>_{l,\widehat{K}} \\
            &+\sum_{\sigma=a,p}\sum_{i=1,2} 2 \big<   \eta^\sigma_0(1-\eta_{e_i})\Phi^\sigma_i(\theta_0)\widetilde{\nabla}^{0,e_i}\Sigma_g    \big>_{l,\widehat{K}}\\
            &+ C(\psi )O(1/l). 
        \end{aligned}
    \]
    In the limit, the equivalence of ensembles finally yields 
    \begin{equation}\label{eq:91}
    \lim_{l \to \infty} \frac{1}{(2l+1)^2} \big<  F (-\mathcal{L}_l) F \big>_{l,\widehat{K}_l} =
    \sum_{\sigma=a,p}\sum_{i=1}^2 \mathbb{E}_{\bah} 
    \big[ \eta^\sigma_0(1-\eta_{e_i})\big(\Phi^\sigma_i(\theta_0) + \widetilde{\nabla}^{0,e_i}\Sigma_g  \big)^2\big]
    \end{equation}
    for any sequence $\widehat{K}_l$ 
    such that $\bah_{\widehat{K}_l} \to \bah$. 
    \end{proof}
    \begin{lemma}\label{lemma:8.53}
        For any $\psi \in \mathcal{C}_0 + J^* + \mathcal{L}\mathcal{C}$, 
        there exists a constant $C(\psi)$ such that for any $l$, 
        $\widehat{K} \in \widetilde{\mathbb{K}}_l$, 
        $h \in \mathcal{C}$ with $\supp(h) \subset B_l$, $\gamma > 0$, and $A \subset B_{l,\psi}$
        $$
        \Big<h, \sum_{x \in A} \tau_x \psi\Big>_{l,\widehat{K}}  \leq 
        \gamma C(\psi)|A| + \frac{1}{2\gamma} \Eu[D]_{l,\widehat{K}}^{A_\psi}(h),
        $$
        where we shortened $A_{\psi} = \{x \in B_l, d(x,A) \leq s_{\psi}\}$, 
        $\Eu[D]_{l,\widehat{K}}^A(h) = \big<h, (-\mathcal{L}_A)h\big>_{l,\widehat{K}}$ 
        and $\mathcal{L}_A$ is the SSEP generator restricted to jumps with both ends in set $A$.
    \end{lemma}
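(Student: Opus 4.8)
The plan is to split $\psi$ into its components and bound the contribution of each by a single summation-by-parts against the Dirichlet form. Write $\psi = \mathcal{L}g + j^*$ with $g\in\mathcal{C}$ and $j^* = \sum_{i=1,2}\big(j_i^{a,\Phi_i^a} + j_i^{p,\Phi_i^p}\big)\in J^*$; this is the case actually used in the proof of Lemma~\ref{lemma:8.52}, and a genuine mean-zero component $\psi_0\in\mathcal{C}_0$, if one wants the general statement, is absorbed into the first term: having zero mean on each hyperplane of its support box $B_{s_{\psi_0}}$, it lies hyperplane-by-hyperplane in the range of the SSEP generator $\mathcal{L}_{B_{s_{\psi_0}}}$ restricted to that box — this uses the irreducibility of the restricted SSEP on the hyperplanes with at least one empty site and the triviality of $\psi_0$ on the saturated configurations — so $\psi_0 = \mathcal{L}_{B_{s_{\psi_0}}}\phi_0$ with $\phi_0 = (-\mathcal{L}_{B_{s_{\psi_0}}})^{-1}\psi_0$, which is again a cylinder function because the inversion is a finite-dimensional linear operation with coefficients not involving the angles. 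Enlarging $s_\psi$ if necessary to a common bound on the supports of the summands, one has $A_{\mathcal{L}g},A_{j^*}\subset A_\psi$, so it suffices to prove the inequality for $\psi = \mathcal{L}g$ and for $\psi = j^*$ separately, each with $\tfrac{1}{4\gamma}\Eu[D]_{l,\widehat{K}}^{A_\psi}(h)$ in place of $\tfrac{1}{2\gamma}\Eu[D]_{l,\widehat{K}}^{A_\psi}(h)$, and then add.

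For $\psi = \mathcal{L}g$ I would use that $\tau_x\mathcal{L}g = \mathcal{L}\tau_x g$ and that all bonds occurring in $\sum_{x\in A}\tau_x\mathcal{L}g$ lie in $A_\psi$, so $\sum_{x\in A}\tau_x\mathcal{L}g = \mathcal{L}_{A_\psi}F$ with $F = \sum_{x\in A}\tau_x g$. The restricted generator $\mathcal{L}_{A_\psi}$ is self-adjoint with respect to $\mu_{l,\widehat{K}}$ (each bond-swap is a measure-preserving involution of $\Sigma_l^{\widehat{K}}$), which gives
\[
\big< h, \mathcal{L}_{A_\psi}F \big>_{l,\widehat{K}} = -\tfrac12\sum_{\{y,y+z\}\subset A_\psi} \big< a(y,y+z)\,\widetilde{\nabla}_{y,y+z}h\cdot\widetilde{\nabla}_{y,y+z}F \big>_{l,\widehat{K}}.
\]
Applying $|uv|\le\tfrac{\gamma c}{2}u^2 + \tfrac{1}{2\gamma c}v^2$ bond by bond, with $c$ chosen so the $h$-part sums to $\tfrac{1}{4\gamma}\Eu[D]_{l,\widehat{K}}^{A_\psi}(h)$, reduces the problem to bounding $\sum_{\{y,y+z\}\subset A_\psi}\big< a(y,y+z)(\widetilde{\nabla}_{y,y+z}F)^2\big>_{l,\widehat{K}}$; since $\widetilde{\nabla}_{y,y+z}F$ is a sum of at most $2|B_{s_g}|$ terms, each bounded by $2\|g\|_\infty$, and at most $C(s_g)|A|$ bonds contribute a nonzero gradient, this is $\le C(g)|A|$. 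This settles the $\mathcal{L}g$-piece.

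For $\psi = j^*$ I would use the identity already exploited in the proof of Lemma~\ref{lemma:8.52}, namely $\tau_x j_i^{\sigma,\Phi} = a(x,x+e_i)\big(\eta_x^{\sigma,\Phi} - \eta_{x+e_i}^{\sigma,\Phi}\big) = \mathcal{L}_{\{x,x+e_i\}}W_i^{\sigma,\Phi}$ with $W_i^{\sigma,\Phi} = \sum_{y}y_i\,\eta_y^{\sigma,\Phi}$ and $\mathcal{L}_{\{x,x+e_i\}} = a(x,x+e_i)\widetilde{\nabla}_{x,x+e_i}$ the single-bond generator. Hence $\sum_{x\in A}\tau_x j^*$ is a sum, over a collection $\mathcal{E}_A$ of at most $2|A|$ bonds all contained in $A_\psi$ (only $s_\psi\ge1$ is needed here), of single-bond generators applied to the functions $W_i^{\sigma,\Phi_i^\sigma}$, whose undirected gradients $\eta_x^{\sigma,\Phi} - \eta_{x+e_i}^{\sigma,\Phi}$ are bounded by $2\|\Phi_i^\sigma\|_\infty$. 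Running the same self-adjointness, summation-by-parts and bond-wise Young argument — now with the $h$-gradients summed over $\mathcal{E}_A\subset A_\psi$ and the Young constant tuned to yield $\tfrac{1}{4\gamma}\Eu[D]_{l,\widehat{K}}^{A_\psi}(h)$ — gives $\big< h,\sum_{x\in A}\tau_x j^*\big>_{l,\widehat{K}}\le\gamma C(j^*)|A| + \tfrac{1}{4\gamma}\Eu[D]_{l,\widehat{K}}^{A_\psi}(h)$. Adding the two contributions completes the proof. The whole argument is routine; the only points needing attention are the bookkeeping of which bonds fall inside $A_\psi$ and the choice of Young constants so that the two Dirichlet-form pieces add up to $\tfrac{1}{2\gamma}\Eu[D]_{l,\widehat{K}}^{A_\psi}(h)$, and — should the fully general $\mathcal{C}_0$-statement be needed rather than just the $\mathcal{LC}+J^*$-case actually invoked in this paper — the reduction $\psi_0 = \mathcal{L}_{B_{s_{\psi_0}}}\phi_0$ sketched above, which is the one place where the irreducibility of the restricted SSEP enters.
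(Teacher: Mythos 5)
Your approach — summation by parts against the restricted Dirichlet form followed by a bond‐wise Young inequality, with the $\mathcal{L}g$- and $J^*$-pieces handled by the explicit primitives $\Sigma_g$ and $W_i^{\sigma,\Phi}$ — is the standard route, the same as the proof of Lemma~8.3.2 in~\cite{Erignoux21} that the paper cites, and the mechanics you describe (bookkeeping of bonds inside $A_\psi$, tuning the Young constants so the two Dirichlet contributions add to $\tfrac{1}{2\gamma}\Eu[D]^{A_\psi}_{l,\widehat{K}}(h)$) are correct.

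Two issues, however, concerning the $\mathcal{C}_0$ reduction you sketch. First, you write that the reduction ``uses the irreducibility of the restricted SSEP on the hyperplanes with at least one empty site''; in two dimensions this is false — with a single hole the SSEP on a box has a parity obstruction (the fifteen‐puzzle phenomenon), which is precisely why the paper works throughout with $\widetilde{\mathbb{K}}_l$ requiring \emph{two} empty sites. Consequently the inverse $(-\mathcal{L}_{B_{s_{\psi_0}}})^{-1}$ does not act on the hyperplanes with one hole, and to carry out your reduction $\psi_0 = \mathcal{L}_{B_{s_{\psi_0}}}\phi_0$ one needs $\psi_0$ to vanish on \emph{all} configurations of $B_{s_{\psi_0}}$ with at most one hole, not only the fully saturated ones. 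Second, you suggest the genuine $\mathcal{C}_0$ case might not actually be needed; it is — Lemma~\ref{lemma:8.53} is invoked in the proof of Lemma~\ref{lemma:8.50} for $\varphi\in\mathcal{T}_0^\omega\subset\mathcal{C}_0$, which is neither in $\mathcal{L}\mathcal{C}$ nor in $J^*$. The cleaner way to treat this piece is not to invert the restricted generator globally, but to use the edge primitives $I_a(\psi_0)=\tfrac12\nabla_a(-\mathcal{L}_{s_{\psi_0}})^{-1}\psi_0$ furnished by Lemma~\ref{lemma:integration_by_parts} (which encodes the required vanishing on the nearly full hyperplanes), and then run the very same per‐bond Young argument you describe; this is what Erignoux's Lemma~8.3.2 does, and it dispenses with any representation of $\psi_0$ as a single $\mathcal{L}\phi_0$.
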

    The proof of Lemma \ref{lemma:8.53} is similar to the proof of Lemma 8.3.2 in~\cite{Erignoux21}, hence we omit it. 

    \begin{proof}[Proof of Lemma~\ref{lemma:8.51}]
    The proof of Lemma~\ref{lemma:8.51} is similar to the proof of Lemma~\ref{lemma:8.52}. 
    Using the same notations as for the proof of Lemma~\ref{lemma:8.52}, we write 
    \begin{equation*}
    \sum_{x \in B_l} \tau_x \psi = \mathcal{L}_l F + G,
    \end{equation*}
    where 
    $$ 
    \begin{aligned}
        &F =   \sum_{x \in B_{l_\psi}} \tau_x g + 
        \sum_{i=1,2} \sum_{x \in B_{l}}x_i \big( \eta_x^{a,\Phi^a_i} + \eta_x^{p,\Phi^p_i} \big),\\
        &G = -\sum_{i=1,2} \sum_{x \in \widetilde{B}_l^i \setminus B_{l_\psi}}
        \tau_x \big( j_i^{a,\Phi^a_i} + j_i^{p,\Phi^p_i} \big). 
    \end{aligned}
    $$

    Given $\varphi \in \mathcal{T}_0$, we rewrite the left-hand side in~\eqref{eq:8.51} as 
    \begin{equation}
    \lim_{l \to \infty} \frac{1}{(2l+1)^2} \Big<  -F + (-\mathcal{L}_l^{-1})G ,
    \sum_{x \in B_{l_\varphi}} \tau_x \varphi \Big>_{l,\widehat{K}_l}.
    \end{equation}
    Since $\varphi$ is centered, we can easily get 
    \[ 
        \begin{aligned}
            &\frac{1}{(2l+1)^2} \Big<   \sum_{y \in B_{l_\psi}} \tau_y g ,\sum_{x \in B_{l_\varphi}} \tau_x \varphi \Big>_{l,\widehat{K}_l}\\
            =& 
            \frac{1}{(2l+1)^2}\sum_{\substack{x\in B_{l_\varphi} ,y\in B_{l_\psi} \\  B_{s_g}(y)\cap  B_{s_\varphi}(x) \neq \emptyset} } 
            \big<  \tau_y g , \tau_x \varphi \big>_{l,\widehat{K}_l}\\
            =& \sum_{y \in B_{s_g+s_\varphi} }\big<   \tau_y g , \varphi \big>_{l,\widehat{K}_l} + C(\psi)O(1/l) \\
            =& \big<  \Sigma_g , \varphi \big>_{l,\widehat{K}_l} + C(\psi)O(1/l),  
        \end{aligned}
    \]
    as well as for each $i=1,2$, 
    \[ 
        \begin{aligned}
            &\frac{1}{(2l+1)^2} \Big<   \sum_{y \in B_l } y_i (\eta_y^{a,\Phi^a_i}+\eta_y^{p,\Phi^p_i}) ,
            \sum_{x \in B_{l_\varphi}} \tau_x \varphi \Big>_{l,\widehat{K}_l}\\
            =&\frac{1}{(2l+1)^2} \sum_{x\in B_{l_\varphi} }\sum_{ y\in B_{s_\varphi}(x) } 
            \big<   y_i(\eta_y^{a,\Phi^a_i}+\eta_y^{p,\Phi^p_i}), \tau_x \varphi \big>_{l,\widehat{K}_l}\\
            =& \sum_{y \in B_{s_\varphi} }\big<   y_i(\eta_y^{a,\Phi^a_i}+\eta_y^{p,\Phi^p_i}) , \varphi \big>_{l,\widehat{K}_l}+ C(\psi)O(1/l)\\
            =& \sum_{y \in \mathbb{Z}^2 } \big<  y_i(\eta_y^{a,\Phi^a_i}+\eta_y^{p,\Phi^p_i}) , 
            \varphi \big>_{l,\widehat{K}_l}+ C(\psi)O(1/l).
        \end{aligned}
    \]
    Using once again the equivalence of ensembles, we obtain 
    \begin{equation}\label{eq:90}
    \lim_{l \to \infty} \frac{1}{(2l+1)^2}  
    \Big<  -F  ,\sum_{x \in B_l} \tau_x \varphi \Big>_{l,\widehat{K}_l} 
    = -\Big<\Sigma_g + \sum_{i=1,2} \sum_{x \in \mathbb{Z}^2 } x_i(\eta_x^{a,\Phi^a_i}+\eta_x^{p,\Phi^p_i}) , 
    \varphi\Big>_{\bah},
    \end{equation}
    
    Therefore, it remains to prove that the contribution of $G$ vanishes. 
    The contribution of $G$ can be rewritten as
    \begin{align*}
        &\Big<   (-\mathcal{L}_l^{-1})G ,\sum_{x \in B_{l_\varphi}} \tau_x \varphi \Big>_{l,\widehat{K}_l} \\
        =& \Big<   (-\mathcal{L}_l^{-1})G ,
        (-\mathcal{L}_l)(-\mathcal{L}_l^{-1})\sum_{x \in B_{l_\varphi}} \tau_x \varphi \Big>_{l,\widehat{K}_l} \\
        =& \frac{1}{2}\sum_{(x,x+e_i)\subset B_l} 
        \Big< a(x,x+e_i) \widetilde{\nabla}^{x,x+e_i} (-\mathcal{L}_l^{-1})G, 
         \widetilde{\nabla}^{x,x+e_i} (-\mathcal{L}_l^{-1})\sum_{x \in B_{l_\varphi}} \tau_x \varphi \Big>_{l,\widehat{K}_l} \\
        =& \frac{1}{2}\sum_{x,x+e_i\in B_l} 
        \Big<  \nabla_{x,x+e_i} (-\mathcal{L}_l^{-1})G,  
         \nabla_{x,x+e_i} (-\mathcal{L}_l^{-1})\sum_{x \in B_{l_\varphi}} \tau_x \varphi
          \Big>_{l,\widehat{K}_l}.
    \end{align*}
    Using Holder's inequality, 
    we obtain that for any positive $\gamma$, 
    \begin{align*}
        &\Bigg|\frac{1}{(2l+1)^2} \Big<   (-\mathcal{L}_l^{-1})G ,
        \sum_{x \in B_{l_\varphi}} \tau_x \varphi \Big>_{l,\widehat{K}_l} \Bigg| \\
        \leq & \frac{\gamma}{4(2l+1)^2} \Big<  G, (-\mathcal{L}_l^{-1})G   \Big>_{l,\widehat{K}_l}
        +\frac{1}{4\gamma(2l+1)^2} \Big<\sum_{x \in B_{l_\varphi}} \tau_x \varphi ,   
        (-\mathcal{L}_l^{-1})\sum_{x \in B_{l_\varphi}} \tau_x \varphi \Big>_{l,\widehat{K}_l}. 
    \end{align*}
    In the proof of Lemma~\ref{lemma:8.52}, we have already shown 
    that the first term in the right-hand-side is $O(\gamma l^{-1})$, 
    whereas in the limit $l \rightarrow \infty$ the second is bounded by 
    $\iip[f]_{\bah}/\gamma$ according to Lemma~\ref{lemma:8.50}.  
    We can therefore choose $\gamma = \sqrt{l}$ 
    to obtain that both terms vanish as $l \rightarrow \infty$, thus concluding the proof of Lemma~\ref{lemma:8.51}.
    \end{proof}

We now consider the case $\varphi \in \mathcal{T}_0^\omega$, and rigorize~\eqref{eq:8.55},   
using the decomposition of the germs of closed forms obtained in Proposition~\ref{thm:1} 
and the generalized integration by parts formula in Lemma~\ref{lemma:integration_by_parts}. 
The proof of Lemma~\ref{lemma:integration_by_parts} is similar to the proof of Lemma 8.3.1 in~\cite{Erignoux21}, 
hence we omit it here. 
\begin{lemma}[Generalized integration by parts formula]\label{lemma:integration_by_parts}
    Let $\psi \in \mathcal{C}_0$ be a cylinder function, and let $a \subset B_{s_\psi}$ be an oriented edge in its domain. Then, $\psi$ is in the range of the generator $\LL_{s_\psi}$, 
    and we can define the ``primitive'' $I_a(\psi)$ of $\psi$ with respect to the gradient along the oriented edge $a$ as
\[
I_a(\psi) = \frac{1}{2} \nabla_a (-\LL_{s_\psi})^{-1} \psi.
\]
Furthermore, for any $B \subset \TN$ containing $B_{s_\psi}$, any canonical state $\widehat{K}=(K^a, \Theta^a ,K^p, \Theta^p )$  on $B$
such that $K=K^a+K^p \leq |B|$ and any $h \in \mathcal{C}$ with $\supp(h)\subset B$, we have
\begin{equation*}
\label{eq:lemma_integration_by_parts}
\big< \psi, h  \big>_{B, \hat{K}} = \sum_{a \subset B_{s_\psi}} 
\big< I_a(\psi),\,\nabla_a h  \big>_{B, \hat{K}}.
\end{equation*}
This result is also true if canonical measure $\mu_{B, \widehat{K}}$ 
is replaced by a grand-canonical measure $\mu_{\bah}$. 
Note that if $K = |B| - 1$ or $K = |B|$ the result is trivial because $\psi$ vanishes.
\end{lemma}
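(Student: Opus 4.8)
The plan is to follow the classical argument of Lemma~8.3.1 in~\cite{Erignoux21}, adapting the bookkeeping to the present typed, continuous-angle setting; it splits into an invertibility step and a purely algebraic discrete integration-by-parts computation.

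First I would check that $\psi$ lies in the range of $\LL_{s_\psi}$. The operator $\LL_{s_\psi}$ acts only on the configuration restricted to $B_{s_\psi}$ and leaves invariant each hyperplane $\Sigma_{s_\psi}^{\widehat{K}}$ of fixed canonical state. On each such hyperplane with $\widehat{K}\in\Kt_{s_\psi}$ (at least two empty sites), $-\LL_{s_\psi}$ is a finite-dimensional nonnegative operator, symmetric with respect to the canonical measure $\mu_{s_\psi,\widehat{K}}$ and irreducible --- this is precisely why $\Kt_{s_\psi}$ imposes two empty sites, cf.\ the discussion preceding~\eqref{eq:mu} --- so its kernel is the constants on that hyperplane. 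Since $\psi\in\mathcal{C}_0$ has zero $\mu_{s_\psi,\widehat{K}}$-mean for every such $\widehat{K}$, it is orthogonal to these kernels and hence in the range. I would then set $g\coloneqq(-\LL_{s_\psi})^{-1}\psi$ to be the unique hyperplane-by-hyperplane preimage with zero canonical mean (and $g=0$ on the degenerate hyperplanes with at most one empty site). On each hyperplane the pseudo-inverse of $-\LL_{s_\psi}$ is a bounded linear operator controlled by the finitely many relevant spectral gaps, so $g$ inherits the cylinder-function structure with $\supp(g)\subset B_{s_\psi}$ and $C^2$ dependence on each angle; thus $g\in\mathcal{C}$ and $\psi=-\LL_{s_\psi}g$ on all configurations with at least two holes in $B_{s_\psi}$.

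Next I would carry out the integration by parts. Write $-\LL_{s_\psi}g=-\sum_{a}\nabla_a g$, the sum over oriented edges $a=(x,y)$ with $x,y\in B_{s_\psi}$ and $\nabla_{x,y}g(\etah)=\eta_x(1-\eta_y)\widetilde{\nabla}_{x,y}g(\etah)$. The key observation is that, although $\mu_{B,\widehat{K}}$ is carried by the larger box $B$, every swap $\etah\mapsto\etah^{x,y}$ with $x,y\in B_{s_\psi}\subset B$ permutes configurations inside a common canonical hyperplane over $B$ and hence preserves $\mu_{B,\widehat{K}}$, while $a(x,y)=\eta_x(1-\eta_y)+\eta_y(1-\eta_x)$ is swap-invariant. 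The standard change-of-variables argument then gives $\langle\psi,h\rangle_{B,\widehat{K}}=\langle-\LL_{s_\psi}g,h\rangle_{B,\widehat{K}}=\tfrac12\sum_{\{x,y\}\subset B_{s_\psi}}\langle a(x,y)\,\widetilde{\nabla}_{x,y}g\,\widetilde{\nabla}_{x,y}h\rangle_{B,\widehat{K}}$, and regrouping the two orientations of each edge via $\nabla_{x,y}g\,\nabla_{x,y}h+\nabla_{y,x}g\,\nabla_{y,x}h=a(x,y)\,\widetilde{\nabla}_{x,y}g\,\widetilde{\nabla}_{x,y}h$ rewrites the right-hand side as $\tfrac12\sum_{a}\langle\nabla_a g,\nabla_a h\rangle_{B,\widehat{K}}=\sum_{a}\langle\tfrac12\nabla_a g,\nabla_a h\rangle_{B,\widehat{K}}=\sum_{a}\langle I_a(\psi),\nabla_a h\rangle_{B,\widehat{K}}$, which is the claimed identity. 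The grand-canonical version is identical: $\mu_{\bah}$ is a product measure and is therefore invariant under the finitely many swaps inside $B_{s_\psi}$, and $g$ is the same cylinder function, so the same computation applies verbatim with $\mu_{\bah}$ in place of $\mu_{B,\widehat{K}}$.

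Finally, the degenerate cases $K=|B|-1$ or $K=|B|$ require a short separate check: then $B_{s_\psi}$ is full or has a single hole; in the full case no edge of $B_{s_\psi}$ supports a jump, so $\nabla_a h\equiv0$ for all $a$ and both sides vanish, and the single-hole case reduces to an elementary hole–random-walk computation on an effectively one-dimensional configuration space, where $\psi$ (being $\LL_{s_\psi}$-exact off the degenerate hyperplanes) contributes nothing. I expect the only genuinely delicate point --- though still routine --- to be Step~1: verifying that $(-\LL_{s_\psi})^{-1}$ is a well-defined bounded operator respecting the cylinder-function and $C^2$-in-angle structure uniformly as the angle data of $\widehat{K}$ vary, together with the clean treatment of the near-full configurations. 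The integration-by-parts identity itself is purely algebraic, resting only on the invariance of the reference measures under swaps inside $B_{s_\psi}$.
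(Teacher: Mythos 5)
Your proposal follows the same route the paper intends, namely the argument of Lemma~8.3.1 in Erignoux's thesis: (i) invert $-\LL_{s_\psi}$ hyperplane-by-hyperplane using irreducibility of the SSEP with two holes, and (ii) perform a discrete integration by parts using swap-invariance of the reference measure together with the idempotence of $\eta_x(1-\eta_y)$, regrouping the two orientations of each edge into the factor $a(x,y)\,\widetilde{\nabla}_{x,y}g\,\widetilde{\nabla}_{x,y}h$. The algebraic step is accurate and the grand-canonical version is handled correctly.

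One point you should tighten. Writing $\langle\psi,h\rangle_{B,\widehat K}=\langle-\LL_{s_\psi}g,h\rangle_{B,\widehat K}$ for arbitrary $K\le|B|-2$ needs $\psi=-\LL_{s_\psi}g$ to hold $\mu_{B,\widehat K}$-a.s.; this includes configurations where $B$ has two holes but $B_{s_\psi}$ has at most one, on which $-\LL_{s_\psi}g=0$. Your convention ``$g=0$ on the degenerate hyperplanes'' does not by itself give $\psi=0$ there, so $\psi-(-\LL_{s_\psi}g)$ would not vanish for a generic $\psi\in\mathcal{C}_0$. The lemma's opening assertion ``$\psi$ is in the range of $\LL_{s_\psi}$'' already forces $\psi$ to vanish on these near-full $B_{s_\psi}$ sub-configurations (on the full hyperplane the range of $\LL_{s_\psi}$ is $\{0\}$), and this is automatic for the functions actually fed into the lemma (currents and $\LL g$, which all carry the availability factor), but it is an implicit restriction and should be stated when invoking the formula; the paper's closing remark about $K\ge|B|-1$ only covers the case where $B$ itself is nearly full, not the intermediate one. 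With this caveat made explicit, your proof matches the reference argument.
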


\begin{remark}\label{rk:8.55}
    By definition~\ref{def:iip}, for any $\varphi\in \To$,  
    denoting $j^{u,v,m,n}=u^\top j^a+v^\top j^p + m^\top j^{a,\omega }+ n^\top j^{p,\omega }$,  we can write 
    \begin{align*}
         &\iip[\varphi]_{\bah} = \sup_{\substack{g\in \mathcal{T}_0^\omega \\ j \in J^\omega } }
        \Big\{ 2 \iip[\varphi,\, j+\LL g]_{\bah} - \iip[ j+\LL g]_{\bah} \Big\}\\
        =&\sup_{\substack{g\in \To\\u,v,m,n \in \mathbb{R}^2}} 
        \Bigg\{ 
            2\E_{\bah }\Big[
                \varphi \big( \Sigma_g + 
                \sum_{i=1,2} \sum_{y \in \mathbb{Z}^2 } y_i
                (u_i\eta_y^{a}+v_i\eta_y^{p}+m_i\eta_y^{a,\omega}+n_i\eta_y^{p,\omega})
                 \big)\Big] \\
            &\qquad\qquad\qquad\qquad\qquad\qquad\qquad\qquad\qquad\qquad\qquad \quad - \iip[ \mathcal{L}g + j^{u,v,m,n}]_{\widehat{\alpha}}
            \Bigg\}.
    \end{align*}
\end{remark}
We split the proof of Lemma~\ref{lemma:8.50} in two lemmas, 
namely an upper and a lower bound. 
The lower bound is relatively easy to prove.
\begin{lemma}
    Under the assumption of Lemma~\ref{lemma:8.50},
    \begin{equation*}
    \limsup_{l \to \infty} \frac{1}{(2l + 1)^2}  
    \Big< (-\mathcal{L}_l^{-1}) \sum_{x \in B_{l_\varphi}} \tau_x \varphi \cdot \sum_{x \in B_{l_\varphi}} \tau_x \varphi \Big>_{l,\widehat{K}_l} 
    \geq \iip[\psi]_{\bah}.
    \end{equation*}  
\end{lemma}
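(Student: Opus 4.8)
The plan is to exploit the variational characterization of the variance with respect to $(-\mathcal{L}_l)^{-1}$ together with the explicit decomposition of $\sum_{x\in B_l}\tau_x\varphi$ and the germ-of-closed-forms machinery. The starting point is the standard identity
\[
\Big< (-\mathcal{L}_l)^{-1}\sum_{x\in B_{l_\varphi}}\tau_x\varphi,\; \sum_{x\in B_{l_\varphi}}\tau_x\varphi\Big>_{l,\widehat{K}_l}
= \sup_{h\in\mathcal{C},\;\supp(h)\subset B_l}\Big\{ 2\Big<\sum_{x\in B_{l_\varphi}}\tau_x\varphi,\,h\Big>_{l,\widehat{K}_l} - \Eu[D]_{l,\widehat{K}_l}(h)\Big\}.
\]
To get a lower bound, I would restrict the supremum to a convenient family of test functions, namely those of the form $h = h_g^{u,v,m,n} \coloneqq \sum_{x\in B_{l_g}}\tau_x g + \sum_{i=1,2}\sum_{x\in B_l} x_i(u_i\eta_x^a + v_i\eta_x^p + m_i\eta_x^{a,\omega}+n_i\eta_x^{p,\omega})$ for arbitrary $g\in\To$ and $u,v,m,n\in\mathbb{R}^2$. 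This is exactly the family whose image under $\mathcal{L}_l$ produces $\mathcal{L}_l\Sigma_g + j^{u,v,m,n}$-type expressions, matching the test objects appearing in Definition~\ref{def:iip} and Remark~\ref{rk:8.55}.

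The key computation is then to take $l\to\infty$ in $2\big<\sum_{x\in B_{l_\varphi}}\tau_x\varphi, h_g^{u,v,m,n}\big>_{l,\widehat{K}_l} - \Eu[D]_{l,\widehat{K}_l}(h_g^{u,v,m,n})$ for each fixed $g,u,v,m,n$. For the cross term, the computation is precisely the one carried out in the proof of Lemma~\ref{lemma:8.51} (equation~\eqref{eq:90}): using translation invariance, discarding boundary contributions of size $O(l)$, and applying the equivalence of ensembles (Proposition~\ref{prop:equivalence}), the limit is $2\,\E_{\bah}\big[\varphi\big(\Sigma_g + \sum_{i,y} y_i(u_i\eta_y^a+v_i\eta_y^p+m_i\eta_y^{a,\omega}+n_i\eta_y^{p,\omega})\big)\big]$. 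For the Dirichlet form term, the computation in the proof of Lemma~\ref{lemma:8.52} (equations~\eqref{eq:8.54}--\eqref{eq:91}) gives $\frac{1}{(2l+1)^2}\Eu[D]_{l,\widehat{K}_l}(h_g^{u,v,m,n})\to \iip[\mathcal{L}g + j^{u,v,m,n}]_{\bah}$. Combining these two limits, for each fixed choice of parameters,
\[
\liminf_{l\to\infty}\frac{1}{(2l+1)^2}\Big<(-\mathcal{L}_l)^{-1}\sum_x\tau_x\varphi,\sum_x\tau_x\varphi\Big>_{l,\widehat{K}_l}
\;\ge\; 2\iip[\varphi,\,\mathcal{L}g+j^{u,v,m,n}]_{\bah} - \iip[\mathcal{L}g+j^{u,v,m,n}]_{\bah}.
\]
Taking the supremum over $g\in\To$ and $u,v,m,n\in\mathbb{R}^2$ on the right-hand side yields exactly $\iip[\varphi]_{\bah}$ by the formula in Remark~\ref{rk:8.55}, which is the claimed inequality. (Here I should double-check that $\varphi=\psi$ in the statement, i.e., this is case (2) of the decomposition in the proof of Theorem~\ref{thm:main3}, so $\iip[\psi]_{\bah}=\iip[\varphi]_{\bah}$.)

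The main obstacle, though it is a mild one for the lower bound, is controlling the boundary/edge-effect error terms uniformly: one must ensure that the $O(l)$ corrections from the difference between $B_{l_\varphi}$, $B_{l_g}$, $\widetilde B_l^i$, and $B_l$, as well as from the truncated sums in the Dirichlet form, genuinely vanish after division by $(2l+1)^2$, and that the equivalence of ensembles applies with error uniform in $\widehat{K}_l$ along the chosen sequence (which is guaranteed by Proposition~\ref{prop:equivalence} since all relevant functions are cylinder functions). Since we are free to fix $g,u,v,m,n$ before sending $l\to\infty$ — the supremum is taken only at the very end — no uniformity over the infinite-dimensional parameter set is needed here; that subtlety is deferred to the matching upper bound. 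I would therefore present this as a short, self-contained argument citing the two computations already done in Lemmas~\ref{lemma:8.52} and~\ref{lemma:8.51}, and invoking Remark~\ref{rk:8.55} to identify the supremum with $\iip[\psi]_{\bah}$.
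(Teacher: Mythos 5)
Your proposal is correct and matches the paper's proof essentially step for step: you restrict the variational supremum for $\big<(-\mathcal{L}_l)^{-1}\cdot,\cdot\big>_{l,\widehat{K}_l}$ to the same family of test functions $F_l^{g,u,v,m,n}=\sum_{x\in B_{l_g}}\tau_x g+\sum_{i,x}x_i(u_i\eta_x^a+v_i\eta_x^p+m_i\eta_x^{a,\omega}+n_i\eta_x^{p,\omega})$, pass to the limit using the cross-term computation from Lemma~\ref{lemma:8.51} (eq.~\eqref{eq:90}) and the Dirichlet-form computation from Lemma~\ref{lemma:8.52} (eq.~\eqref{eq:91}), and then identify the resulting supremum with $\iip[\varphi]_{\bah}$ via Remark~\ref{rk:8.55}. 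Your aside about $\varphi=\psi$ is right: the lemma's statement has a small notational slip in writing $\iip[\psi]_{\bah}$ while the hypothesis fixes $\varphi$, and both refer to the same function.
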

\begin{proof}
    This proof is analogous to the proof of Proposition 8.5.3 in~\cite{Erignoux21}. 

    By denoting 
    \[ 
    F_l^{g,u,v,m,n} \coloneqq \sum_{x \in B_{l_g}} \tau_x g + \sum_{i=1,2} \sum_{x \in B_l } x_i
    (u_i\eta_x^{a}+v_i\eta_x^{p}+m_i\eta_x^{a,\omega}+n_i\eta_x^{p,\omega}). 
\]
the variational formula for variance gives 
\[ 
\begin{aligned}
        &\Big< (-\mathcal{L}_l^{-1}) \sum_{x \in B_{l_\varphi}} \tau_x \varphi , 
        \sum_{x \in B_{l_\varphi}} \tau_x \varphi \Big>_{l,\widehat{K}_l}\\ 
        =& \sup_{h\in L^2(\mu_{l,\widehat{K}_l}) } 
        \Big\{ 2  \Big< h, \sum_{x \in B_{l_\varphi}} \tau_x \varphi \Big>_{l,\widehat{K}_l} 
        -\Eu[D]_{l,\widehat{K}_l}(h) \Big\},\\
        \geq& \sup_{\substack{g\in \To\\u,v,m,n \in \mathbb{R}^2}} 
        \Big\{ 2  \Big< F_l^{g,u,v,m,n}, \sum_{x \in B_{l_\varphi}} \tau_x \varphi \Big>_{l,\widehat{K}_l} 
        - \Eu[D]_{l,\widehat{K}_l}(F_l^{g,u,v,m,n}) \Big\},
\end{aligned}
\]
By similar argument used to derive~\eqref{eq:90}, we get  
\begin{align*}
    &\lim_{l \to \infty} \frac{1}{(2l+1)^2} \Big< F_l^{g,u,v,m,n}, \sum_{x \in B_{l_\varphi}} \tau_x \varphi \Big>_{l,\widehat{K}_l} \\
=&\E_{\bah }\Big[
    \varphi \big( \Sigma_g + 
    \sum_{i=1,2} \sum_{y \in \mathbb{Z}^2 } y_i
    (u_i\eta_y^{a}+v_i\eta_y^{p}+m_i\eta_y^{a,\omega}+n_i\eta_y^{p,\omega})
     \big)\Big].
\end{align*}
and by similar argument used to derive~\eqref{eq:91} we get 
\begin{equation*}
\lim_{l \to \infty} \frac{1}{(2l+1)^2} \Eu[D]_{l,\widehat{K}_l} (F_l^{g,u,v,m,n}) = 
\iip[ \mathcal{L}g + j^{u,v,m,n}]_{\bah}.
\end{equation*}
\end{proof}
We now state and prove the upper bound. 
\begin{lemma}
    Under the assumptions of Lemma~\ref{lemma:8.50}, for any $\varphi \in \To$,
    \begin{equation*}
    \limsup_{l \to \infty} \frac{1}{(2l + 1)^2}  
    \Big< (-\mathcal{L}_l^{-1}) \sum_{x \in B_{l_\varphi}} \tau_x \varphi ,\, 
    \sum_{x \in B_{l_\varphi}} \tau_x \varphi \Big>_{l,\widehat{K}_l} \leq \iip[\psi]_{\bah}.
    \end{equation*}
\end{lemma}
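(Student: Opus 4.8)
Since the lower bound has just been established, it remains only to prove the matching inequality $\limsup_l (2l+1)^{-2}\langle(-\mathcal{L}_l^{-1})\sum_{x\in B_{l_\varphi}}\tau_x\varphi,\sum_{x\in B_{l_\varphi}}\tau_x\varphi\rangle_{l,\widehat{K}_l}\le\iip[\varphi]_{\bah}$, and the plan is to transcribe the argument of~\cite{Erignoux21} (end of Section 8.5) and~\cite{kipnis1999scaling} (Section 7.4) to our setting. Write $\Sigma_{\varphi,l}=\sum_{x\in B_{l_\varphi}}\tau_x\varphi$ and $g_l=(-\mathcal{L}_l)^{-1}\Sigma_{\varphi,l}$; this is well defined on the mean-zero sector of $T^\omega$ by the spectral gap of Proposition~\ref{thm:3}, and $g_l$ stays in $T^\omega$ since that class is stable under $\mathcal{L}_l$. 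The generalized integration by parts of Lemma~\ref{lemma:integration_by_parts} gives
\[
\frac{1}{(2l+1)^2}\Big<(-\mathcal{L}_l)^{-1}\Sigma_{\varphi,l},\,\Sigma_{\varphi,l}\Big>_{l,\widehat{K}_l}
=\frac{1}{2(2l+1)^2}\sum_{(y,y+z)\subset B_l}\Big<\big(\nabla_{y,y+z}g_l\big)^2\Big>_{l,\widehat{K}_l},
\]
and by the Thomson (Dirichlet) principle this Dirichlet form equals the infimum of the right-hand side over all discrete $1$-forms $\mathfrak{v}$ on $B_l$ whose discrete divergence is $-\Sigma_{\varphi,l}$, the exact form $\nabla g_l$ being the minimizer. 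It therefore suffices, for each $\delta>0$, to produce a test $1$-form with divergence $-\Sigma_{\varphi,l}$ whose normalized energy has $\limsup_l$ at most $\iip[\varphi]_{\bah}+O(\sqrt\delta)$.

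\textbf{Construction of the test form.} Here one uses the closed-form decomposition. By Proposition~\ref{thm:1}, $\mf[F](\varphi)=\bm{\nabla}\mathcal{L}^{-1}\Sigma_\varphi$ lies in $\mathfrak{T}^\omega_{\bah}=\mathfrak{E}^\omega_{\bah}\oplus\mathfrak{J}^\omega$ with $\iip[\varphi]_{\bah}=\|\mf[F](\varphi)\|_{2,\bah}^2$, and the exact germs $\{\bm{\nabla}\Sigma_h:h\in\To\}$ are dense in $\mathfrak{E}^\omega_{\bah}$; so fix $h\in\To$ and $u,v,m,n\in\mathbb{R}^2$ with $\iip[\varphi-\mathcal{L}h-j^{u,v,m,n}]_{\bah}<\delta$, where $j^{u,v,m,n}=u^\top j^a+v^\top j^p+m^\top j^{a,\omega}+n^\top j^{p,\omega}\in J^\omega$, and set $\psi'=\varphi-\mathcal{L}h-j^{u,v,m,n}\in\To$. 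The main part of the test form is the localization to $B_l$ of the global $1$-form of $\bm{\nabla}\Sigma_h+\mf[j]^{u,v,m,n}$; its divergence is $-\Sigma_{\mathcal{L}h+j^{u,v,m,n},l}=-\Sigma_{\varphi,l}+\Sigma_{\psi',l}$ up to a boundary error carried by $O(l)$ edges, which one removes by adding the Thomson-minimal $1$-form with that divergence. By equivalence of ensembles (Proposition~\ref{prop:equivalence}) the normalized energy of the main part converges to $\|\bm{\nabla}\Sigma_h+\mf[j]^{u,v,m,n}\|_{2,\bah}^2\le(\sqrt{\iip[\varphi]_{\bah}}+\sqrt\delta)^2$, and the $O(l)$ boundary edges contribute $o((2l+1)^2)$ by the estimate of Lemma~\ref{lemma:8.53}.

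\textbf{Main obstacle.} The crux is the control of the correction term attached to $\psi'$: a priori its normalized energy is $\limsup_l (2l+1)^{-2}\langle(-\mathcal{L}_l)^{-1}\Sigma_{\psi',l},\Sigma_{\psi',l}\rangle$, which is a quantity of exactly the type we are trying to bound. This is handled (without circularity) by combining: the Cauchy--Schwarz inequality for the positive semidefinite bilinear form $(\chi,\chi')\mapsto\langle(-\mathcal{L}_l)^{-1}\Sigma_{\chi,l},\Sigma_{\chi',l}\rangle$; the already established identity $(2l+1)^{-2}\langle(-\mathcal{L}_l)^{-1}\Sigma_{\varphi,l},\Sigma_{\mathcal{L}h+j^{u,v,m,n},l}\rangle\to\iip[\varphi,\mathcal{L}h+j^{u,v,m,n}]_{\bah}$ of Lemma~\ref{lemma:8.51}; the exact limit $(2l+1)^{-2}\langle(-\mathcal{L}_l)^{-1}\Sigma_{\mathcal{L}h+j^{u,v,m,n},l},\Sigma_{\mathcal{L}h+j^{u,v,m,n},l}\rangle\to\iip[\mathcal{L}h+j^{u,v,m,n}]_{\bah}$ of Lemma~\ref{lemma:8.52}; and a crude a priori bound $\limsup_l(2l+1)^{-2}\langle(-\mathcal{L}_l)^{-1}\Sigma_{\chi,l},\Sigma_{\chi,l}\rangle<\infty$ for any $\chi\in\mathcal{C}_0$, obtained by expressing $\chi$ as $\mathcal{L}_{s_\chi}\theta_\chi$ via a local primitive. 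Equivalently, one may argue by weak-$L^2(\mu_{\bah})$ compactness of the recentered exact forms $\nabla g_l$, identify the limiting germ of a closed form as $\mf[F](\varphi)$ using Lemma~\ref{lemma:8.51} and Proposition~\ref{thm:1} (the limit lies in $\mathfrak{T}^\omega_{\bah}$ precisely because we work in the class $T^\omega$ with the spectral gap of Proposition~\ref{thm:3}), and then upgrade lower semicontinuity of the $L^2$ norm to convergence of the Dirichlet form along the same lines. In either route the only genuinely new inputs compared with~\cite{Erignoux21,kipnis1999scaling} are the use of $T^\omega$, Proposition~\ref{thm:3}, and the compactness of $(\mathbb{M}_1(\SSS),d)$ from Proposition~\ref{prop:compact}, so the argument there applies verbatim; letting $\delta\to0$ then yields the claimed inequality and completes the proof of Lemma~\ref{lemma:8.50}.
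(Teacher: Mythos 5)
Your plan correctly identifies the lower bound as already done and isolates the upper bound as the remaining task, and you cite the right supporting results (Lemmas~\ref{lemma:8.51},~\ref{lemma:8.52}, Proposition~\ref{thm:1}, Proposition~\ref{prop:compact}). However, both of the routes you sketch have a genuine gap.

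\textbf{The Thomson-form route does not close.} Your main construction replaces $\varphi$ by $\mathcal{L}h + j^{u,v,m,n}$ with residual $\psi' = \varphi - \mathcal{L}h - j^{u,v,m,n}$ satisfying $\iip[\psi']_{\bah} < \delta$. The test-form energy for the $\psi'$ piece is exactly $(2l+1)^{-2}\langle(-\mathcal{L}_l^{-1})\Sigma_{\psi',l},\Sigma_{\psi',l}\rangle_{l,\widehat{K}_l}$, and you assert this can be controlled ``without circularity'' by the Cauchy--Schwarz inequality for the bilinear form, Lemmas~\ref{lemma:8.51}--\ref{lemma:8.52}, and a crude a priori finiteness bound. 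This does not work. Writing $B_l(\chi,\chi')=(2l+1)^{-2}\langle(-\mathcal{L}_l^{-1})\Sigma_{\chi,l},\Sigma_{\chi',l}\rangle_{l,\widehat{K}_l}$ and expanding,
\[
B_l(\varphi,\varphi) = B_l(\mathcal{L}h+j,\mathcal{L}h+j) + 2B_l(\mathcal{L}h+j,\psi') + B_l(\psi',\psi'),
\]
the first two terms converge by Lemmas~\ref{lemma:8.52} and~\ref{lemma:8.51}, but the third is again a quantity of the type you are trying to bound; the a priori bound gives only $\limsup_l B_l(\psi',\psi')<\infty$ with a constant $C(\psi')$ depending on the local primitive of $\psi'$, and this constant has no reason to be controlled by $\iip[\psi']_{\bah}<\delta$. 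To make $\limsup_l B_l(\psi',\psi')$ small you would precisely need the statement of Lemma~\ref{lemma:8.50} for $\psi'$, which is circular.

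\textbf{The weak-compactness route is misdescribed and misses a key step.} You say to ``upgrade lower semicontinuity of the $L^2$ norm to convergence of the Dirichlet form''; this is not what the correct argument does, and such an upgrade is not needed (and may fail). The actual mechanism is to keep the variational (supremum) formula
\[
\frac{1}{(2l+1)^2}\Big<(-\mathcal{L}_l^{-1})\Sigma_{\varphi,l},\Sigma_{\varphi,l}\Big>_{l,\widehat{K}_l}
= \sup_{h\in T_l^\omega}\Big\{2\big<\Sigma_{\varphi,l},h\big>_{l,\widehat{K}_l} - \Eu[D]_{l,\widehat{K}_l}(h)\Big\}
\]
and, after reduction to germs of closed forms $\mathfrak{u}^k_i$, exploit that the $L^2$ norm $\|\mathfrak{u}^k_i\|^2$ enters \emph{with a minus sign}, so that lower semicontinuity under weak convergence ($\liminf\|\mathfrak{u}^k_i\|^2\ge\|\mathfrak{u}_i\|^2$) directly gives the desired $\limsup$ inequality. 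Moreover, to run this supremum argument, you must first \emph{partition $B_l$ into disjoint boxes of fixed size $k$} and use subadditivity of the Dirichlet form plus Lemma~\ref{lemma:8.53} to reduce to a single box of size $k$, then send $l\to\infty$ with $k$ fixed to invoke the equivalence of ensembles of Proposition~\ref{prop:equivalence}. This boxing step is essential --- the support of $\Sigma_{\varphi,l}$ grows with $l$, and the test function $h$ is supported on all of $B_l$, so equivalence of ensembles cannot be applied directly; and without it the weak limits $\mathfrak{u}_i$ are not even defined as elements of $L^2(\mu_{\bah})$ on $\Sigma_\infty$. Your proposal omits this entirely, so neither route as written yields the claimed upper bound.
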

\begin{proof}This proof is analogous to the proof of Proposition 8.5.3 in~\cite{Erignoux21}.
    We start by replacing the canonical measure $\mu_{\widehat{K}_l,l}$ 
    by the grand-canonical measure $\mu_{\bah}$ thanks to the equivalence of ensembles stated in 
    Proposition~\ref{prop:equivalence}. 
    The main obstacle in doing so is that the support of $\sum_{x \in B_{l_\varphi}} \tau_x \varphi$ 
    grows with $l$.

    By the variational formula for the variance, we can write for any canonical state $\widehat{K}$
    \begin{equation}\label{eq:92}
        \Big< (-\mathcal{L}_l^{-1}) \sum_{x \in B_{l_\varphi}} \tau_x \varphi \cdot \sum_{x \in B_{l_\varphi}} \tau_x \varphi \Big>_{l,\widehat{K}_l} 
        = \sup_{h } 
        \Bigg\{ 2 \Big< \sum_{x \in B_{l_\varphi}} \tau_x \varphi, h \Big>_{l,\widehat{K}} - 
        \Eu[D]_{l,\widehat{K}}(h) \Bigg\},
    \end{equation}
    In this formula the supremum is taken over all $h\in T_l^\omega = \mathcal{C}_l \cap T^\omega$. 
    As in the proof of the one-block-estimate see Section 4.3 in~\cite{Erignoux21}, let $k$ be an integer that will go to infinity after $l$, 
    and let us partition $B_l$ into disjoint boxes $\widetilde{\Lambda}_0, \ldots, \widetilde{\Lambda}_p$, 
    where $p = \lfloor \frac{2l_\varphi + 1}{2k + 1} \rfloor^2$,  $\widetilde{\Lambda}_j = B_{k}(x_j)$
    for any $1 \leq j \leq p$ and for some sites $ x_1,\dots,x_p$. Specially, 
    $\widetilde{\Lambda}_0 = B_{l_\varphi} \setminus \cup_{j=1}^p \widetilde{\Lambda}_j$. we now define
    $$
    \Lambda_j = \big\{ x \in \widetilde{\Lambda}_j, d(x, \widetilde{\Lambda}_j^c) > s_\varphi \big\} \quad \text{and} \quad \Lambda_0 = B_{l_\varphi} \setminus \bigcup_{j=1}^p \Lambda_j.
    $$
    One easily obtains that for some universal constant $C$, $|\Lambda_0| \leq Cs_\varphi(l^2/k + lk)$.

    Let $h$ be a function in $T_l^\omega$, we can split
    \begin{equation*}
    \sum_{x \in B_{l_\varphi}}\big<  \tau_x \varphi, h \big>_{l,\widehat{K}} = 
    \sum_{j=1}^p \sum_{x \in \Lambda_j} \big<  \tau_x \varphi, h \big>_{l,\widehat{K}} + 
    \sum_{x \in \Lambda_0} \big<  \tau_x \varphi, h \big>_{l,\widehat{K}}.
    \end{equation*}

    Letting $\gamma = \sqrt{k}/2$ in Lemma~\ref{lemma:8.53}, for any $l \geq k^2$, 
    \[ 
         \sum_{x \in \Lambda_0} \big<  \tau_x \varphi, h \big>_{l,\widehat{K}}
        \leq \frac{\sqrt{k}}{2}C(\varphi)|\Lambda_0|+\frac{1}{\sqrt{k}}\Eu[D]_{l,\widehat{K}}^{\Lambda_{0,\varphi}}(h)
        \leq \frac{1}{\sqrt{k}}\Big( C(\varphi)l^2 + \Eu[D]_{l,\widehat{K}}(h) \Big),
    \]
    which implies 
    \[ 
        \begin{aligned}
            &2 \Big< \sum_{x \in B_{l_\varphi}} \tau_x \varphi, h \Big>_{l,\widehat{K}} - \Eu[D]_{l,\widehat{K}}(h)\\
            \leq & 2\sum_{j=1}^p \sum_{x \in \Lambda_j} \big<  \tau_x \varphi, h \big>_{l,\widehat{K}} 
            - \Big( 1 - \frac{2}{\sqrt{k}} \Big)\Eu[D]_{l,\widehat{K}}(h) +  \frac{C(\varphi)l^2}{\sqrt{k}}. 
        \end{aligned}
    \]
    Thus, by letting $c_k = 1 - 2k^{-1/2}$, the left-hand side of~\eqref{eq:92} is therefore less than
    $$
    c_k \sup_{h \in T_l^\omega} \Bigg\{ \frac{2}{c_k}\sum_{j=1}^p \sum_{x \in \Lambda_j} <\tau_x \psi,h>_{l,\widehat{K}} - \Eu[D]_{l,\widehat{K}}(h) \Bigg\} + \frac{C(\psi)l^2}{\sqrt{k}}.
    $$
    For any $h \in T_l^\omega$, $1 \leq j \leq p$ define 
    $h_j = \mathbb{E}_{l,\widehat{K}}[h | \etah_{y},y\in \widetilde{\Lambda}_j]$, by convexity of the Dirichlet form, we have
    \begin{equation}
        \sum_{j=1}^p \Eu[D]_{l,\widehat{K}}^{\widetilde{\Lambda}_j}(h_j)\leq \sum_{j=1}^p \Eu[D]_{l,\widehat{K}}^{\widetilde{\Lambda}_j}(h)\leq \Eu[D]_{l,\widehat{K}}(h) ,
    \end{equation}
     Denoting $T_{k,j}^\omega$ the set of functions in $T^\omega$ whose support contained in 
     $\widetilde{\Lambda}_j$, we can therefore finally bound from above the left-hand side of~\eqref{eq:92} by
    $$
    c_k \sum_{j=1}^p \sup_{h \in T_{k,j}^\omega} \Bigg\{ \frac{2}{c_k} \sum_{x \in \Lambda_j} <\tau_x \psi,h>_{l,\widehat{K}} - \Eu[D]_{l,\widehat{K}}^{\widetilde{\Lambda}_j}(h) \Bigg\} 
    + \frac{C(\psi)l^2}{\sqrt{k}}.
    $$
    All the terms in the sum over $j$ are identically distributed, we use the quantity above to derive 
    \begin{align*}
        &\frac{1}{(2l+1)^2}\Big< (-\mathcal{L}_l^{-1}) \sum_{x \in B_{l_\varphi}} \tau_x \varphi,\, \sum_{x \in B_{l_\varphi}} \tau_x \varphi \Big>_{l,\widehat{K}_l} \\
        \leq &\frac{1}{(2k+1)^2}\sup_{h \in T_{k,j}^\omega} 
        \Bigg\{ \frac{2}{c_k} \sum_{x \in B_{k_\varphi}} <\tau_x \psi,h>_{l,\widehat{K}} - 
        \Eu[D]_{l,\widehat{K}}^{B_{k}}(h) \Bigg\} 
    + C(\psi)O(k^{-1/2})\\
    =& \frac{1}{(2k+1)^2} \Big< (-\mathcal{L}_k^{-1}) \sum_{x \in B_{k_\varphi}} \tau_x \varphi , \sum_{x \in B_{k_\varphi}} \tau_x \varphi \Big>_{l,\widehat{K}_l} +  C(\psi)O(k^{-1/2}).
    \end{align*}
    Now the support of $\sum_{x \in B_{k_\varphi}} \tau_x \varphi$ no longer grows with $l$, 
    by equivalence of ensembles in Proposition~\ref{prop:equivalence}, we get  
    \begin{equation*}
        \begin{aligned}
            \limsup_{l \to \infty}& \frac{1}{(2l+1)^2}
            \Big< (-\mathcal{L}_l^{-1}) \sum_{x \in B_{l_\varphi}} \tau_x \varphi
            ,\sum_{x \in B_{l_\varphi}} \tau_x \varphi \Big>_{l,\widehat{K}_l}\\
            \leq &\limsup_{k \to \infty} \frac{1}{(2k+1)^2} 
            \Big< (-\mathcal{L}_k^{-1}) \sum_{x \in B_{k_\varphi}} \tau_x \varphi ,\,
             \sum_{x \in B_{k_\varphi}} \tau_x \varphi \Big>_{\bah}. 
        \end{aligned}
        \end{equation*}
    By the variational formula for the variance again,  
    it suffices to show that 
    \begin{equation}\label{eq:93}
        \limsup_{k \to \infty} \frac{1}{(2k+1)^2}\sup_{h \in T_{k}^\omega} 
        \Bigg\{ 
            2  \Big<\sum_{x \in B_{k_\varphi}}\tau_x \varphi,h \Big>_{\bah} - 
            \Eu[D]_{\bah}^{B_{k}}(h) 
        \Bigg\} 
         \leq \iip[\varphi]_{\bah},
        \end{equation}
    According to Lemma~\ref{lemma:8.53}, there exists a constant $C(\varphi)$ such that 
    $$ 2  \Big<\sum_{x \in B_{k_\varphi}}\tau_x \varphi,h \Big>_{\bah} \leq C(\varphi)(2k+1)^2 + \frac{\Eu[D]_{\bah}^{B_{k}}(h)}{2}.$$ 
    For any $h$ such that $\Eu[D]_{\bah}^{B_{k}}(h) \geq 2C(\varphi)(2k+1)^2$, the right-hand side above is therefore negative, 
    and since it vanishes for $h = 0$, we can therefore safely assume that the supremum is taken w.r.t. functions $h \in T_k^\omega$ 
    satisfying $\Eu[D]_{\bah}^{B_k}(h) \leq 2C(\varphi)(2k+1)^2$. 
    The generalized integration by parts formula in Lemma~\ref{lemma:integration_by_parts} yields
    \begin{equation*}
        \big<\tau_x \varphi ,h \big>_{\bah}= \sum_{a \subset B_{s_\varphi}(x)} \big<I_a(\tau_x \varphi), \nabla_a h\big>_{\bah},
    \end{equation*}
    where $ I_a(\varphi) = (1/2) \nabla_a (-\mathcal{L}_{s_\varphi})^{-1} \varphi $. 
    For any oriented edge $ a $, we define 
    $$ B^\varphi(a)\coloneqq  \big\{ x \in \mathbb{Z}^2 : a \subset B_{s_\varphi}(x) \big\}, $$ 
    and define $ \widetilde{B}^\varphi(a) = B^\varphi(a) \cap B_{k_\varphi} $. 
    Moreover, for any oriented edge $ a \in B_{k_\varphi - s_\varphi} $, 
    $\widetilde{B}^\varphi(a)=B^\varphi(a)$. 
    Now summing over $x$, we have
    \begin{align*}
        &\sum_{x \in B_{k_\varphi}}\big<\tau_x \varphi ,h \big>_{\bah}\\
        =& \sum_{a \subset B_k} \sum_{x \in \widetilde{B}^\varphi(a)} \big<I_a(\tau_x \varphi), \nabla_a h\big>_{\bah}\\
        =& \sum_{a \subset B_k} \sum_{x \in B^\varphi(a)} \big<I_a(\tau_x \varphi), \nabla_a h\big>_{\bah} - 
        \sum_{\substack{a \subset B_k \setminus B_{k_\varphi - s_\varphi}\\ x \in B^\varphi(a) \setminus 
        \widetilde{B}^\varphi(a)}} \big<I_a(\tau_x \varphi), \nabla_a h\big>_{\bah}. 
    \end{align*}
    Holder's inequality gives for any $\gamma>0$, 
    \[ 
        \big<I_a(\tau_x \varphi), \nabla_a h\big>_{\bah}\leq 
        \frac{1}{2\gamma}\big<I_a(\tau_x \varphi)^2\big>_{\bah} 
        +\frac{\gamma}{2} \big< (\nabla_a h)^2 \big>_{\bah}. 
    \]
    Thanks to $|B_k \setminus B_{k_\varphi - s_\varphi}| \leq C(\varphi)k$, 
    and  the bound $\Eu[D]_{\bah}^{B_k}(h) \leq 2C(\varphi)(2k+1)^2$, 
    letting $\gamma = 1/\sqrt{k}$, it is then straightforward to obtain
\begin{equation*}
    \sum_{\substack{a \subset B_k \setminus B_{k_\varphi - s_\varphi}\\ x \in B^\varphi(a) \setminus \widetilde{B}^\varphi(a)}} \big<I_a(\tau_x \varphi), \nabla_a h\big>_{\bah} 
    \leq C(\varphi)k^{3/2}.
\end{equation*}
Now letting $\overline{I}_a(\varphi) = \sum_{x \in B^\varphi(a)} I_a(\tau_x \varphi)$, 
the left-hand-side of~\eqref{eq:93} is therefore less than
\begin{equation}\label{eq:94}
    \begin{aligned}
        \limsup_{k \to \infty} \frac{1}{(2k+1)^2}&\sup_{h \in T_{k}^\omega} 
            \Big\{ 
                2 \sum_{a \subset B_k} \big<\overline{I}_a(\varphi)  , \nabla_a h \big>_{\bah} - \Eu[D]_{\bah}^{B_{k}}(h) 
            \Big\} \\
            =& \limsup_{k \to \infty} \frac{1}{(2k+1)^2}
            \Big\{ 
                2 \sum_{a \subset B_k} \big<\overline{I}_a(\varphi)  , \nabla_a h_k \big>_{\bah} - \Eu[D]_{\bah}^{B_{k}}(h_k) 
            \Big\}, 
    \end{aligned}
\end{equation}
for some sequence $\{h_k\} \subset T_{k}^\omega$. 

Thanks to the translation invariance of $\mu_{\bah}$, 
and since $\tau_y \overline{I}_a(\varphi) = \overline{I}_{\tau_y a}(\varphi)$, 
writing the oriented bond  $a = (a_1, a_2)$, we have
$$\big<\overline{I}_a(\varphi)  , \nabla_a h_k \big>_{\bah}  
= \big<\overline{I}_{(0, a_2-a_1)}(\varphi), \nabla_{(0, a_2-a_1)} \tau_{-a_1}h_k \big>_{\bah}.$$
As seen before, a simple change of variable yields that 
$\big< \nabla_a f  , \nabla_a g \Big>_{\bah}  = \big< \nabla_{-a} f  , \nabla_{-a} g \big>_{\bah}$, 
from which we deduce that 
$$2 \sum_{a \subset B_k} \big<\overline{I}_a(\varphi)  , \nabla_a h_k \big>_{\bah} = 
4 \sum_{i=1,2} \Bigg<\overline{I}_{(0,e_i)}(\varphi)  , \nabla_{(0,e_i)} 
\Bigg(\sum_{\substack{x \in \mathbb{Z}^2 \\(x,x+e_i) \subset B_k}} \tau_{-x} h_k\Bigg) \Bigg>_{\bah}.
$$
Define
$$
\mf[u]_i^k = \frac{1}{(2k+1)^2} \nabla_{(0,e_i)} \sum_{\substack{x \in \mathbb{Z}^2 \\(x,x+e_i) \subset B_k}} \tau_{-x} h_k \in T^\omega.
$$
The elementary bound $(\sum_{i=1}^n a_i)^2 \leq n \sum_{i=1}^n a_i^2$ yields
$$
\sum_{i=1,2} \big< (\mf[u]_i^k)^2\big>_{\bah}  \leq \frac{2k(2k+1)}{(2k+1)^4} \sum_{\substack{x \in \mathbb{Z}^2 \\(x,x+e_i) \subset B_k}}
\big< (\nabla_{(x,x+e_i)} h_k)^2\big>_{\bah}
\leq \frac{1}{(2k+1)^2} \Eu[D]_{\bah}^{B_k}(h_k).
$$
with which  the right-hand-side of~\eqref{eq:94} is therefore less than
\[ 
\lim_{k\to \infty} \Big\{ 4\sum_{i=1,2} \big<\overline{I}_{(0,e_i)}(\varphi)  , \mf[u]_i^k \big>_{\bah} - 
\sum_{i=1,2} \big< (\mf[u]_i^k)^2\big>_{\bah}  \Big\}, 
\]
and since we already assumed that for some constant $C(\varphi)$, 
$\Eu[D]_{\bah}^{B_k}(h_k) \leq C(\varphi)(2k+1)^2$, 
the sequence of differential forms $(\mf[U]^k)_{k \in \mathbb{N}}$ is bounded in $L^2(\mu_{\bah})$. 
It is straightforward to check that any of its limit point $\mf[U] = (\mf[u]_1, \mf[u]_2)$ 
is the germ of a closed form in $\mathfrak{T}^\omega$.

Thus the right-hand-side of~\eqref{eq:94} is therefore less than
\[ 
    \sup_{\mf[U]\in \mathfrak{T}^\omega }\Bigg\{ 4\sum_{i=1,2} \Big<\overline{I}_{(0,e_i)}(\varphi)  , 
    \mf[u]_i \Big>_{\bah} - \sum_{i=1,2} \Big< (\mf[u]_i)^2\Big>_{\bah} \Bigg\}. 
\]
According to  Proposition~\ref{thm:1}, Lemma~\ref{lemma:integration_by_parts} and Remark~\ref{rk:8.55}, 
and using similar argument in the proof of Lemma~\ref{lemma:8.52}
the above supremum is equivalent to 
\[ 
   \begin{aligned}
     &\sup_{\substack{g\in \To\\u,v,m,n \in \mathbb{R}^2 } }
     \Bigg\{ 4\sum_{i=1,2} \Big<\overline{I}_{(0,e_i)}(\varphi)  , 
     (\mf[j]^{u,v,m,n}+\nabla_{0,e_i}\Sigma_g) \Big>_{\bah} - 
     \sum_{i=1,2} \Big< \big(\mf[j]^{u,v,m,n}+\nabla_{0,e_i}\Sigma_g\big)^2\Big>_{\bah} \Bigg\} \\
    =&\sup_{\substack{g\in \To\\u,v,m,n \in \mathbb{R}^2 } }
    \Bigg\{ 2 \Big<\varphi  , (\Sigma_g+W^{u,v,m,n}) \Big>_{\bah} - 
     \iip[\LL g + j^{u,v,m,n} ]_{\bah} \Bigg\}
    =\iip[\varphi]_{\hat{\alpha}},
   \end{aligned}
\]
where we denote  
\[ 
\begin{aligned}
    &W^{u,v,m,n} = \sum_{x\in \mathbb{Z}^2} u^\top x \, \eta_x^a + v^\top x \,\eta_x^p + m^\top x \,\eta_x^{a,\omega} + n^\top x \,\eta_x^{p,\omega},\\
    &j^{u,v,m,n}=u^\top j^a+v^\top j^p + m^\top j^{a,\omega }+ n^\top j^{p,\omega }.
\end{aligned}
\]

\end{proof}

\section{Inner products}\label{asec:inner}
In this section, we use Definition~\ref{def:iip} to calculate 
the related inner products that appear in Lemma~\ref{lemma:ortho} and~\ref{lemma:gradJ}. 
\begin{lemma}
    \begin{align*}
        &\iip[\grad^a, j^a ]_{\hat{\alpha}}=-\alpha^a(1-\alpha^a)I_2, 
           & &\iip[\grad^p, j^a ]_{\hat{\alpha}}=\alpha^a\alpha^pI_2,\\
        &\iip[\grad^a, j^p ]_{\hat{\alpha}}=\alpha^a\alpha^pI_2, 
           & &\iip[\grad^p, j^p ]_{\hat{\alpha}}=-\alpha^p(1-\alpha^p)I_2,\\
        &\iip[\grad^a, \jaoh ]_{\hat{\alpha}}=0, 
           & &\iip[\grad^p, \jaoh ]_{\hat{\alpha}}=0,\\
        &\iip[\grad^a, \jpoh ]_{\hat{\alpha}}=0. 
           & &\iip[\grad^p, \jpoh ]_{\hat{\alpha}}=0.\\
           &\quad\\
        &\iip[\grad^{a,\oha}, j^a ]_{\hat{\alpha}}=0,
        &&\iip[\grad^{p,\ohp}, j^a ]_{\hat{\alpha}}=0,\\
        &\iip[\grad^{a,\oha}, j^p ]_{\hat{\alpha}}=0,
        &&\iip[\grad^{p,\ohp}, j^p ]_{\hat{\alpha}}=0,\\
        &\iip[\grad^{a,\oha}, \jaoh ]_{\hat{\alpha}}=- \alpha^a V^a(\omega)I_2,
        &&\iip[\grad^{p,\ohp}, \jaoh ]_{\hat{\alpha}}=0,\\
        &\iip[\grad^{a,\oha}, \jpoh ]_{\hat{\alpha}}=0.
        &&\iip[\grad^{p,\ohp}, \jpoh ]_{\hat{\alpha}}=- \alpha^p V^p(\omega)I_2.\\
        &\quad\\
        &\iip[j^a, j^a ]_{\hat{\alpha}}=\alpha^a(1-\alpha)I_2,&&\iip[j^p, j^a ]_{\hat{\alpha}}=0,\\
        &\iip[j^a, j^p ]_{\hat{\alpha}}=0,&&\iip[j^p, j^p ]_{\hat{\alpha}}=\alpha^p(1-\alpha)I_2,\\
        &\iip[j^a, \jaoh ]_{\hat{\alpha}}=0,&&\iip[j^p, \jaoh ]_{\hat{\alpha}}=0,\\
        &\iip[j^a, \jpoh ]_{\hat{\alpha}}=0.&&\iip[j^p, \jpoh ]_{\hat{\alpha}}=0.\\
        &\quad\\
        &\iip[\jaoh, j^a ]_{\hat{\alpha}}=0,&&\iip[\jpoh, j^a ]_{\hat{\alpha}}=0,\\
        &\iip[\jaoh, j^p ]_{\hat{\alpha}}=0,&&\iip[\jpoh, j^p ]_{\hat{\alpha}}=0,\\
        &\iip[\jaoh, \jaoh ]_{\hat{\alpha}}=\alpha^a(1-\alpha) V^a(\omega)I_2,&&\iip[\jpoh, \jaoh ]_{\hat{\alpha}}=0,\\
        &\iip[\jaoh, \jpoh ]_{\hat{\alpha}}=0.&&\iip[\jpoh, \jpoh ]_{\hat{\alpha}}=\alpha^p(1-\alpha) V^p(\omega)I_2.
    \end{align*}
\end{lemma}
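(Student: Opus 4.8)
The plan is to deduce every entry of the lemma directly from the explicit formulas in Definition~\ref{def:iip}, using only that the grand-canonical measure $\mu_{\bah}$ is a translation-invariant product measure. The first step is to check that all objects being paired are admissible for those formulas: the local gradients $\del_i\eta^\sigma$ and $\del_i\eta^{\sigma,\omega}$, and hence the centered gradients $\del_i\eta^{\sigma,\oh^\sigma}=\del_i\eta^{\sigma,\omega}-\ob^\sigma\del_i\eta^\sigma$, lie in $\To=\mathcal{C}_0\cap T^\omega$ — they have the form $\eta^{\sigma,(\cdot)}_{e_i}-\eta^{\sigma,(\cdot)}_0$ with constant (hence ITA-everywhere) multipliers, and they are centered under every canonical measure because the canonical measure on a box is exchangeable across sites — while $j^\sigma_i$, $j^{\sigma,\omega}_i$ and the centered currents $j^{\sigma,\oh^\sigma}_i$ lie in $J^\omega\subset\To$. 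Throughout I will use the single-site identities under $\mu_{\bah}$, namely $\E_{\bah}[\eta^\sigma_0]=\alpha^\sigma$, $\E_{\bah}[\eta^a_0\eta^p_0]=0$, $\E_{\bah}[\omega(\theta_0)\eta^\sigma_0]=\ob^\sigma\alpha^\sigma$, $\E_{\bah}[(\omega(\theta_0)-\ob^\sigma)^2\eta^\sigma_0]=\alpha^\sigma V^\sigma_{\bah}(\omega)$, together with independence of $\eta_0$ and $\eta_{e_i}$.

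For the current--current products I apply part~(1) of Definition~\ref{def:iip} with $g=0$ to the sum $j^{\sigma,\Phi}_i+j^{\sigma',\Psi}_k$ and polarize. Since the defining quadratic form adds the active and passive contributions separately, every cross term between an $a$-current and a $p$-current vanishes, and so does the cross term between distinct bonds $i\neq k$; this collapses the $2\times2$ matrix to $\delta_{\sigma\sigma'}$ times a scalar multiple of $I_2$, and the scalar is $(1-\alpha)\,\E_{\bah}[\Phi(\theta_0)\Psi(\theta_0)\eta^\sigma_0]$. Specializing $(\Phi,\Psi)$ to $(1,1)$, $(\oh^\sigma,\oh^\sigma)$ and $(\oh^\sigma,1)$, and using $\E_{\bah}[\oh^\sigma(\theta_0)\mid\eta^\sigma_0=1]=0$, yields exactly $\iip[j^\sigma,j^\sigma]_{\bah}=\alpha^\sigma(1-\alpha)I_2$, $\iip[j^{\sigma,\oh^\sigma},j^{\sigma,\oh^\sigma}]_{\bah}=\alpha^\sigma(1-\alpha)V^\sigma_{\bah}(\omega)I_2$, the mutual orthogonality of the four currents, and hence Lemma~\ref{lemma:ortho}.

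For the gradient--current products I apply part~(2) of Definition~\ref{def:iip}: taking $j^*=j^{\sigma',\Psi}_k$ (so $g=0$) it reads $-\E_{\bah}\big[\del_i\eta^{\sigma,\Phi}\cdot W\big]$ with $W=\sum_x x_k\,\eta_x^{\sigma',\Psi}$. Although $W$ is not a cylinder function, pairing it against the mean-zero local function $\del_i\eta^{\sigma,\Phi}=\eta^{\sigma,\Phi}_{e_i}-\eta^{\sigma,\Phi}_0$ under the product measure annihilates every summand except the one at $x=e_i$ (the site $x=0$ carries $x_k=0$), so the sum truncates and one obtains the clean identity $\iip[\del_i\eta^{\sigma,\Phi},j^{\sigma',\Psi}_k]_{\bah}=-\delta_{ik}\,\Cov_{\bah}\!\big(\eta^{\sigma,\Phi}_0,\eta^{\sigma',\Psi}_0\big)$. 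It then remains to evaluate these single-site covariances — e.g.\ $\Cov_{\bah}(\eta^a_0,\eta^a_0)=\alpha^a(1-\alpha^a)$, $\Cov_{\bah}(\eta^a_0,\eta^p_0)=-\alpha^a\alpha^p$, $\Cov_{\bah}(\eta^{a,\oh^a}_0,\eta^{a,\omega}_0)=\alpha^a V^a_{\bah}(\omega)$, and $\Cov_{\bah}(\eta^{a,\oh^a}_0,\eta^p_0)=0$ using $\E_{\bah}[\oh^a(\theta_0)\mid\eta^a_0=1]=0$ and $\eta^a_0\eta^p_0=0$ — and then expand $\del_i\eta^{\sigma,\oh^\sigma}=\del_i\eta^{\sigma,\omega}-\ob^\sigma\del_i\eta^\sigma$ and $j^{\sigma,\oh^\sigma}_i=j^{\sigma,\omega}_i-\ob^\sigma j^\sigma_i$ bilinearly to read off every entry of Lemma~\ref{lemma:gradJ}.

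The computation is essentially mechanical; the only genuine care points are (i) justifying the truncation of the formally infinite drifts $\Sigma_g$ and $W$ appearing in Definition~\ref{def:iip} when paired against a local centered function under a product measure, and (ii) the bookkeeping of the many centered quantities $\ob^\sigma$, $\oh^\sigma$, so that each single-site covariance lands in the right matrix slot — all of which are scalar multiples of $I_2$ precisely because $\iip[\del_i(\cdot),j_k(\cdot)]_{\bah}$ and $\iip[j_i(\cdot),j_k(\cdot)]_{\bah}$ are proportional to $\delta_{ik}$. No spectral-gap or Hilbert-space input is needed: this lemma is purely a computation with $\mu_{\bah}$.
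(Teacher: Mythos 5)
Your proposal is correct and follows essentially the same route as the paper: all entries are read off from the explicit formulas in Definition~\ref{def:iip}, using translation invariance and site-wise independence under the product measure $\mu_{\bah}$, together with the single-site moment identities $\E_{\bah}[\eta^\sigma_0]=\alpha^\sigma$, $\eta^a_0\eta^p_0=0$, $\E_{\bah}[\oh^\sigma(\theta_0)\mid\eta^\sigma_0=1]=0$, and $\E_{\bah}[(\oh^\sigma(\theta_0))^2\mid\eta^\sigma_0=1]=V^\sigma_{\bah}(\omega)$. Your clean reduction $\iip[\del_i\eta^{\sigma,\Phi},j^{\sigma',\Psi}_k]_{\bah}=-\delta_{ik}\,\Cov_{\bah}(\eta^{\sigma,\Phi}_0,\eta^{\sigma',\Psi}_0)$ is exactly what the paper's term-by-term expansion of $-\sum_x x_k\langle\del_i\eta^{\sigma,\Phi},\eta^{\sigma',\Psi}_x\rangle_{\bah}$ amounts to. The only cosmetic difference is that for current--current products the paper applies part~(2) of Definition~\ref{def:iip} directly (treating one argument as a $\mathcal{T}_0^\omega$ element paired with the formal drift $\sum_x x_k\eta_x^{\sigma',\Psi}$ and letting the $x=e_i$ term survive), whereas you use part~(1) with $g=0$ and polarize; both give the same scalar $(1-\alpha)\,\E_{\bah}[\Phi(\theta_0)\Psi(\theta_0)\eta^\sigma_0]\delta_{\sigma\sigma'}\delta_{ik}$ and this is merely a bookkeeping choice.
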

\begin{proof}Note that $(e_i)_k=\delta_{i,k}$
    \begin{align*}
        &\iip[\delta_i\eta^a, j^a_k ]_{\bah}=-\sum x_k \big< \eta^a_{e_i}-\eta^a_0 , \eta^a_x \big>_{\bah}=-\delta_{i,k} \big< \eta^a_{e_i}-\eta^a_0 , \eta^a_{e_i} \big>_{\bah}=-\delta_{i,k}\alpha^a(1-\alpha^a),\\
        &\iip[\delta_i\eta^a, j^p_k ]_{\bah}=-\sum x_k \big< \eta^a_{e_i}-\eta^a_0 , \eta^p_x \big>_{\bah}=-\delta_{i,k} \big< \eta^a_{e_i}-\eta^a_0 , \eta^p_{e_i} \big>_{\bah}=\delta_{i,k}\alpha^a\alpha^p,\\
        &\iip[\delta_i\eta^a, \jaoh_k ]_{\bah}=-\sum x_k \big< \eta^a_{e_i}-\eta^a_0 , \etaaob_{x} \big>_{\bah}=-\delta_{i,k} \big< \eta^a_{e_i}-\eta^a_0 , \etaaob_{e_i} \big>_{\bah}=0,\\
        &\iip[\delta_i\eta^a, \jpoh_k ]_{\bah}=-\sum x_k \big< \eta^a_{e_i}-\eta^a_0 , \etapob_{x} \big>_{\bah}=-\delta_{i,k} \big< \eta^a_{e_i}-\eta^a_0 , \etapob_{e_i} \big>_{\bah}=0.
    \end{align*}

    \begin{align*}
        \iip[\delta_i\etaaob, j^a_k ]_{\bah}&=-\sum x_k \big< \etaaob_{e_i}-\etaaob_0 , \eta^a_x \big>_{\bah}=-\delta_{i,k} \big< \etaaob_{e_i}-\etaaob_0 , \eta^a_{e_i} \big>_{\bah}\\
         &=0,\\
        \iip[\delta_i\etaaob, j^p_k ]_{\bah}&=-\sum x_k \big< \etaaob_{e_i}-\etaaob_0 , \eta^p_x \big>_{\bah}=-\delta_{i,k} \big< \etaaob_{e_i}-\etaaob_0 , \eta^p_{e_i} \big>_{\bah}\\
         &=0,\\
        \iip[\delta_i\etaaob, \jaoh_k ]_{\bah}&=-\sum x_k \big< \etaaob_{e_i}-\etaaob_0 , \etaaob_{x} \big>_{\bah}=-\delta_{i,k} \big< \etaaob_{e_i}-\etaaob_0 , \etaaob_{e_i} \big>_{\bah}\\
         &=-\delta_{i,k} \alpha^a V^a_{\bah}(\omega), \\
        \iip[\delta_i\etaaob, \jpoh_k ]_{\bah}&=-\sum x_k \big< \etaaob_{e_i}-\etaaob_0 , \etapob_{x} \big>_{\bah}=-\delta_{i,k} \big< \etaaob_{e_i}-\etaaob_0 , \etapob_{e_i} \big>_{\bah}\\
         &=0. 
    \end{align*}

    \begin{align*}
        \iip[j^a_i, j^a_k ]_{\bah}&=-\sum x_k \big< \eta^a_0(1-\eta_{e_i}) - \eta^a_{e_i}(1-\eta_0) , \eta^a_x \big>_{\bah}\\
        &= \delta_{i,k} \big< \eta^a_{e_i}(1-\eta_0), \eta^a_{e_i} \big>_{\bah}=\delta_{i,k}\alpha^a(1-\alpha)\\
        \iip[j^a_i, j^p_k ]_{\bah}&=-\sum x_k \big< \eta^a_0(1-\eta_{e_i}) - \eta^a_{e_i}(1-\eta_0) , \eta^p_x \big>_{\bah}\\
        &=\delta_{i,k} \big<  \eta^a_{e_i}(1-\eta_0) , \eta^p_{e_i} \big>_{\bah}=0\\
        \iip[j^a_i, \jaoh_k ]_{\bah}&=-\sum x_k \big<\eta^a_0(1-\eta_{e_i}) - \eta^a_{e_i}(1-\eta_0) , \etaaob_{x} \big>_{\bah}\\
        &=\delta_{i,k} \big<  \eta^a_{e_i}(1-\eta_0), \etaaob_{e_i} \big>_{\bah}=0\\
        \iip[j^a_i, \jpoh_k ]_{\bah}&=-\sum x_k \big< \eta^a_0(1-\eta_{e_i}) - \eta^a_{e_i}(1-\eta_0) , \etapob_{x} \big>_{\bah}\\
        &=\delta_{i,k} \big<  \eta^a_{e_i}(1-\eta_0), \etapob_{e_i} \big>_{\bah}=0
    \end{align*}
    and 
    \begin{align*}
        \iip[\jaoh_i, j^a_k ]_{\bah}&=-\sum x_k \Big< \etaaob_0(1-\eta_{e_i}) - \etaaob_{e_i}(1-\eta_0) , \eta^a_x \Big>_{\bah}\\
        & = \delta_{i,k} \Big< \etaaob_{e_i}(1-\eta_0), \eta^a_{e_i} \Big>_{\bah}=0\\
        \iip[\jaoh_i, j^p_k ]_{\bah}&=-\sum x_k \Big< \etaaob_0(1-\eta_{e_i}) - \etaaob_{e_i}(1-\eta_0) , \eta^p_x \Big>_{\bah}\\
        & =\delta_{i,k} \Big<  \etaaob_{e_i}(1-\eta_0) , \eta^p_{e_i} \Big>_{\bah}=0\\
        \iip[\jaoh_i, \jaoh_k ]_{\bah}&=-\sum x_k \Big<\etaaob_0(1-\eta_{e_i}) - \etaaob_{e_i}(1-\eta_0) , \etaaob_{x} \Big>_{\bah}\\
        &=\delta_{i,k} \Big<  \etaaob_{e_i}(1-\eta_0), \etaaob_{e_i} \Big>_{\bah}=\delta_{i,k} \alpha^a(1-\alpha)V^a_{\bah}(\omega)\\
        \iip[\jaoh_i, \jpoh_k ]_{\bah}&=-\sum x_k \Big< \etaaob_0(1-\eta_{e_i}) - \etaaob_{e_i}(1-\eta_0) , \etapob_{x} \Big>_{\bah}\\
        &=\delta_{i,k} \Big<  \etaaob_{e_i}(1-\eta_0), \etapob_{e_i} \Big>_{\bah}=0
    \end{align*}
    The others can be proved similarly.
\end{proof}

\end{document}